\tikzset{
  on each segment/.style={
    decorate,
    decoration={
      show path construction,
      moveto code={},
      lineto code={
        \path [#1]
        (\tikzinputsegmentfirst) -- (\tikzinputsegmentlast);
      },
      curveto code={
        \path [#1] (\tikzinputsegmentfirst)
        .. controls
        (\tikzinputsegmentsupporta) and (\tikzinputsegmentsupportb)
        ..
        (\tikzinputsegmentlast);
      },
      closepath code={
        \path [#1]
        (\tikzinputsegmentfirst) -- (\tikzinputsegmentlast);
      },
    },
  },
  mid arrow/.style={postaction={decorate,decoration={
        markings,
        mark=at position .25 with {\arrow[#1]{stealth}}
      }}},
}
\tikzset{
  on each segment/.style={
    decorate,
    decoration={
      show path construction,
      moveto code={},
      lineto code={
        \path [#1]
        (\tikzinputsegmentfirst) -- (\tikzinputsegmentlast);
      },
      curveto code={
        \path [#1] (\tikzinputsegmentfirst)
        .. controls
        (\tikzinputsegmentsupporta) and (\tikzinputsegmentsupportb)
        ..
        (\tikzinputsegmentlast);
      },
      closepath code={
        \path [#1]
        (\tikzinputsegmentfirst) -- (\tikzinputsegmentlast);
      },
    },
  },
  mid arrow1/.style={postaction={decorate,decoration={
        markings,
        mark=at position .5 with {\arrow[#1]{stealth}}
      }}},
}
\tikzset{
  on each segment/.style={
    decorate,
    decoration={
      show path construction,
      moveto code={},
      lineto code={
        \path [#1]
        (\tikzinputsegmentfirst) -- (\tikzinputsegmentlast);
      },
      curveto code={
        \path [#1] (\tikzinputsegmentfirst)
        .. controls
        (\tikzinputsegmentsupporta) and (\tikzinputsegmentsupportb)
        ..
        (\tikzinputsegmentlast);
      },
      closepath code={
        \path [#1]
        (\tikzinputsegmentfirst) -- (\tikzinputsegmentlast);
      },
    },
  },
  mid arrow2/.style={postaction={decorate,decoration={
        markings,
        mark=at position .75 with {\arrow[#1]{stealth}}
      }}},
}
\newcounter{notes}%
\newcommand{\marginnote}[1]{
\refstepcounter{notes}  
\nolinebreak
$\hspace{-5pt}{}^{\text{\tiny \rm \arabic{notes}}}$
\marginpar{\tiny \arabic{notes}) #1}}
\definecolor{darkgreen}{rgb}{0.0, 0.5, 0.0}
\newtheorem{theorem}[equation]{Theorem}
\numberwithin{equation}{section}
\newtheorem{lemma}[equation]{Lemma}
\newtheorem{corollary}[equation]{Corollary}
\newtheorem{definition}[equation]{Definition} 
\newtheorem{proposition}[equation]{Proposition}
\newtheorem{remarks}[equation]{Remarks}
\def\smallskip{\vspace{.15cm}}
\def\medskip{\vspace{.3cm}}
\def\text{\mbox}
\def\RR{{\mathbb R}}
\def\RRP{{\mathbb R}_+}
\def\CC{{\mathbb C}}
\def\AA{{\mathbb A}}
\def\EE{{\mathbb E}}
\def\ZZ{{\mathbb Z}}
\def\PP{{\mathbb P}}
\def\HH{{\mathbb H}}
\def\P{{\mathbb P}}
\def\R{{\mathbb R}}
\def\RPn{\operatorname{\mathbb{R}P}^n}
\def\RP2{\operatorname{\mathbb{R}P}^2}
\def\RP3{\operatorname{\mathbb{R}P}^3}
\def\RP{\operatorname{\mathbb{R}P}}
\def\Fr{{\operatorname{Fr\,}}}
\def\interior{\operatorname{int}}
\def\SL{\operatorname{SL}}
\def\PSL{\operatorname{PSL}}
\def\PO{\operatorname{PO}}
\def\PGL{\operatorname{PGL}}
\def\GL{\operatorname{GL}}
\def\Aff{\operatorname{Aff}}
\def\Hom{\operatorname{Hom}}
\def\cl{\operatorname{cl}}
\def\Aff{\operatorname{Aff}}
\def\tr{\operatorname{tr}}
\def\Isom{\operatorname{Isom}}
\def\Hex{\operatorname{\mathbb Hex}}
\def\Par{\operatorname{\mathbb Par}}
\def\Diag{\operatorname{Diag}}
\def\flow{\Phi}
\def\C2{\operatorname{C^2}}
\def\Ccal{\mathcal C}
\def\Bcal{\mathcal B}
\def\Dcal{\mathcal D}
\def\Hcal{\mathcal H}
\def\Lcal{\mathcal L}
\def\Wcal{\mathcal W}
\def\Image{\operatorname{Im}}
\def\opensimplex{{\red\mathring{\vartriangle}}}
\def\lat{\operatorname{Lat}}
\def\clat{\operatorname{\mathcal Mod}}
\def\mlat{\operatorname{Lat}_m}
\def\cmlat{\operatorname{\mathcal T}}
\def\SS{\mathcal E}
\def\UT{\operatorname{UT}}
\def\dvol{\operatorname{dvol}}
\def\secondfund{\operatorname{II}}
\def\bdy{\partial}
\def\vol{\operatorname{vol}}
\def\infimum{\operatorname{infimum}}
\def\Log{\operatorname{\ell og}}
\def\deriv{\operatorname{D}}
\def\param{\operatorname{\Theta}}
\def\eT{T}
\def\eTalg{\mathfrak t}
\def\psicoef{\psi}
\def\ppsi{\psi}
\def\psicoefi{\psicoef} 
\def\ur{{\bf u}}
\def\rank{{\bf r}}
\def\type{{\bf t}}
\def\minusone{-{\bf I}}
\def\horogeom{\operatorname{Horo}}
\newcommand{\Abs}[1]{\left|\left|#1\right|\right|}
\newcommand{\abs}[1]{\left|#1\right|}
\definecolor{back}{RGB}{255,255,255}
\definecolor{fore}{RGB}{0,0,0}
\definecolor{title}{RGB}{255,0,90}
\definecolor{green}{rgb}{0.0, 0.5, 0.0}
\definecolor{purple}{rgb}{0.5, 0.0, 0.5}
\definecolor{bluegreen}{rgb}{0.0,0.5, 0.5}
\definecolor{orange}{rgb}{1,0.5, 0.1}
\definecolor{redgreen}{rgb}{0.5, 0.5, 0.0}
\def\blue{\color{fore}}
\def\red{\color{fore}}
\def\green{\color{green}}
\def\green{\color{green}}
\def\g2{{\green 2}}
\def\core{\operatorname{core}}
\newcommand{\bv}{\left[\begin{array}{c}}
\newcommand{\ev}{\end{array}\right]}
\newcommand{\bbmat}{\begin{bmatrix}} 
\newcommand{\ebmat}{\end{bmatrix}}
\newcommand{\bmat}{\begin{matrix}} 
\newcommand{\emat}{\end{matrix}}
\newcommand{\bpmat}{\begin{pmatrix}} 
\newcommand{\epmat}{\end{pmatrix}}
 \pgfplotsset{compat=1.12}
\title[Generalized Cusps]{Generalized Cusps in Real Projective Manifolds: Classification}
\date{\today}
\author{Samuel A. Ballas, Daryl Cooper,  and Arielle Leitner}
\begin{document}
\maketitle

\begin{abstract}
A generalized cusp $C$
is diffeomorphic to $[0,\infty)$ times a closed Euclidean manifold. Geometrically
$C$ is the quotient of a properly convex domain {\red in $\RPn$} by a lattice, $\Gamma$, in one of a family
of affine groups $G(\ppsi)$, parameterized by a point $\ppsi$ in the (dual closed)
 Weyl chamber for $\SL(n+1,\RR)$,
and $\Gamma$ determines the cusp up to equivalence. These affine 
 groups correspond to certain  fibered geometries, each of which is
a  bundle over an open simplex 
 with fiber
 a horoball in hyperbolic space, and
 the lattices are classified by certain Bieberbach
groups plus some auxiliary data.
The cusp has finite Busemann measure if and only if $G(\ppsi)$ contains unipotent elements.
There is a natural underlying Euclidean structure on $C$ unrelated to the Hilbert metric. 
\end{abstract}

A {\em generalized cusp} is a  properly convex projective manifold  $C=\Omega/\Gamma$ where
$\Omega\subset\RP^n$  is a properly convex set 
and $\Gamma\subset\PGL(n+1,\RR)$
is a virtually abelian discrete group that preserves $\Omega$. We also require that $\partial C$
is compact and strictly convex (contains no line segment) and that there is a diffeomorphism 
$h:[0,\infty)\times\partial C\longrightarrow C$. {\blue See Definition \ref{gencusp}}(a).  

An example is a cusp in a hyperbolic manifold that is the quotient of a {\red closed} horoball. 
It follows from  \cite{CLT1} that every
generalized cusp in a  {\em strictly} convex manifold of finite volume is equivalent to a {\em standard cusp}, 
i.e. a cusp  in a hyperbolic
manifold.  
A generalized cusp is {\em homogeneous} if $\PGL(\Omega)$ (the group of projective transformations that preserves $\Omega$) 
acts transitively on $\bdy\Omega$. It was shown in
\cite{CLT2} that every generalized cusp is equivalent to a homogeneous one and,
 that if the holonomy of a generalized cusp contains no hyperbolic elements, then it is equivalent to a standard cusp. Furthermore, by \cite{CLT2} it follows that generalized cusps often occur as ends of properly convex manifolds obtained by deforming finite volume hyperbolic manifolds.
 
 {\red Here is an outline of the main new results of this paper. Given  $\psi\in\Hom(\RR^{n},\RR)$
 with $\ppsi(e_1)\ge\ppsi(e_2)\ge\cdots \ge \ppsi(e_n)\ge 0$, there is a properly
 convex domain $\Omega(\psi)\subset\RPn$: see Definition \ref{psidomain}. For $\psi\ne0$ the
  {\emph {cusp Lie group}}
 $G(\psi)=\PGL(\Omega(\psi))$, and for $\psi=0$  it is the subgroup of non-hyperbolic elements.
  In each case $G(\psi)$ acts transitively on $\bdy\Omega(\ppsi)$.
 A {\bf $\psi$-cusp} is the quotient of $\Omega(\psi)$ by a lattice in $G(\psi)$. 
  Two generalized cusps $C$ and $C'$
are {\em equivalent} if there is a generalized cusp $C''$ and projective embeddings,  that are also homotopy equivalences,
of $C''$ into both $C$ and $C'$, and they are all diffeomorphic. 
\begin{theorem}[Uniformization]
\label{genislambda} 
Every  generalized cusp is equivalent to  a $\ppsi$-cusp.
 \end{theorem}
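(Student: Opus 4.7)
By \cite{CLT2} every generalized cusp is equivalent to a homogeneous one, so from the outset we may assume $H:=\PGL(\Omega)$ acts transitively on $\bdy\Omega$ and $\Gamma\subset H$ is a discrete virtually abelian subgroup with $\bdy\Omega/\Gamma$ a closed Euclidean $(n-1)$-manifold. The goal then is to produce a $g\in\PGL(n+1,\RR)$ conjugating the pair $(H,\Omega)$ to $(G(\ppsi),\Omega(\ppsi))$ for some $\ppsi$ in the closed Weyl chamber; the lattice $\Gamma$ transports to a lattice $g\Gamma g^{-1}\subset G(\ppsi)$ and exhibits $C=\Omega/\Gamma$ literally as a $\ppsi$-cusp.

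The first step is to pin down an invariant flag: strict convexity of $\bdy\Omega$ combined with the transitive action of $H$ forces $H$ to fix a unique boundary point $p\in\bdy\Omega$ (the apex) and to preserve a unique supporting hyperplane $T$ at $p$, since any two $H$-fixed points would span a segment in $\bdy\Omega$ and any two invariant supporting hyperplanes at $p$ would give an extra invariant affine direction incompatible with transitivity. Conjugating $(p,T)$ to a standard flag embeds $H$ into the affine group $\Aff(\RR^n)=\operatorname{Stab}(p,T)$. The central step is then a Lie-algebraic classification of $\mathfrak h\subset\mathfrak{aff}(\RR^n)$: strict convexity of $\bdy\Omega$ rules out non-semisimple contributions to the linear parts of $\mathfrak h$ (a non-trivial Jordan block acting on $\bdy\Omega$ near $p$ would force a flat segment in $\bdy\Omega$), so after a linear change of coordinates the linear parts of elements of $\mathfrak h$ are simultaneously diagonal. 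The resulting weights yield a linear form on $\RR^n$ which, after permuting coordinates and rescaling the diagonal, lies in the closed Weyl chamber $\ppsi(e_1)\ge\cdots\ge\ppsi(e_n)\ge 0$. Matching $\mathfrak h$ against the Lie algebra of $G(\ppsi)$ then forces the two to coincide, and the unique $G(\ppsi)$-invariant properly convex domain with apex $p$ and supporting hyperplane $T$ is $\Omega(\ppsi)$ (up to the scaling that does not affect the equivalence class).

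The principal obstacle is the Lie-algebraic classification: one must combine the geometry of strict convexity (no flat segments in $\bdy\Omega$) with the structure of virtually abelian cocompact lattices in solvable Lie subgroups of $\Aff(\RR^n)$ to rule out any exotic candidate for $\mathfrak h$ and to arrange the weights in the closed Weyl chamber. A further subtlety is the degenerate case $\ppsi=0$, where $G(0)$ is defined as the subgroup of non-hyperbolic elements rather than the full $\PGL(\Omega(0))$; this corresponds to a standard hyperbolic cusp and is essentially handled by the ``no hyperbolic holonomy'' case of \cite{CLT2}. Once the conjugation is in place, the definition of equivalence (which permits passing to a common finite-sheeted cover embedded in both cusps) absorbs any remaining finite-index discrepancy between $\Gamma$ and its lattice image in $G(\ppsi)$.
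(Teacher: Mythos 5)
Your overall skeleton (reduce to the homogeneous case via Theorem \ref{homogcusp}, then conjugate the holonomy into a model group) matches the paper's, but the two central steps of your plan are false, and they fail precisely on the cases that carry all the content. First, the ``unique apex'': $H$ acts transitively on $\bdy\Omega$, so it fixes no point of $\bdy\Omega$; its fixed points lie in the ideal boundary $\bdy_{\infty}\Omega=\cl(\Omega)\setminus\Omega$, and when $\type=n$ that ideal boundary is an $(n-1)$-simplex all of whose vertices are fixed by $T(\ppsi)$, so uniqueness fails. Your argument that two fixed points ``would span a segment in $\bdy\Omega$'' conflates the strictly convex manifold boundary $\bdy\Omega$ with the frontier $\Fr(\cl\Omega)=\bdy\Omega\sqcup\bdy_{\infty}\Omega$; the strict convexity hypothesis says nothing about $\bdy_{\infty}\Omega$, which is a flat simplex whenever $\type>0$. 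Second, and fatally, the claim that strict convexity forces the linear parts of $\mathfrak h$ to be simultaneously diagonalizable is refuted by the standard hyperbolic cusp: the parabolic group $P(\ppsi)$ of Lemma \ref{paraboliclemma} is unipotent with Jordan blocks of size $3$, acts simply transitively on a strictly convex paraboloid, and produces no flat segment in $\bdy\Omega$. Your step would therefore eliminate every cusp with $\ur(\ppsi)>0$ --- including all standard cusps and all types $0<\type<n-1$, which mix hyperbolic and parabolic elements and are not covered by your $\ppsi=0$ escape clause.

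The correct statement is the opposite of what you assert: the unipotent part cannot be killed, it must be identified, and that is the content of Theorem \ref{main}, where the real work happens. The paper puts the virtual e-hull $T(\Gamma)$ (Proposition \ref{gencuspisVFG}, Theorem \ref{uniquehull}) in block upper-triangular form, projects $\Omega$ equivariantly onto an open simplex spanned by the weight directions, and shows each fiber is a strictly convex domain on which the unipotent kernel $K$ acts simply transitively --- whence the fiber is an ellipsoid and $K$ is a standard parabolic group by Theorem \ref{ellipsoid}, forcing at most one nontrivial block. The coupling term $\sum\psi_i\log x_i$ between the diagonal part and the parabolic fiber is then recovered by computing the normalizer of $K$ and adjusting the splitting of $1\to K\to G\to H\to 1$ so that it lands in the radial flow. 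None of this is reachable by the diagonalization you propose. Finally, even granting a conjugacy of groups, you would still need Lemma \ref{invariantdomains} and Proposition \ref{Ginvdomains} to identify the invariant domain with a standard $\ppsi$-domain (when $\type=n$ there are also extended domains), a step your plan does not address.
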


 The geometry of a $\psi$-cusp depends on the {\emph type} $\type=\type_{\ppsi}$, which is the number of $i$ with $\psi(e_i)\neq 0$, and the \emph{unipotent rank} 
 $\ur(\ppsi)=\max(n-\type-1,0)$ {\blue is the dimension of the unipotent subgroup of $G(\ppsi)$.}  The {\em ideal boundary} of $\Omega:=\Omega(\psi)$ is
 $\bdy_{\infty}\Omega:=\cl(\Omega)\setminus\Omega\cong\Delta^{\blue\min(n-1,\type)}$. 
 There is a unique
 supporting hyperplane $\RP^{n-1}_{\infty}$ to $\Omega$ that contains $\bdy_{\infty}\Omega$ 
 so $\AA(\Omega):=\RPn\setminus \RP^{n-1}_{\infty}$
 is the unique affine
 patch in which $\Omega$ is properly embedded. Hence $\Omega$ has a well defined affine structure,
 and $\psi$-cusps inherit a unique affine structure that is a stiffening of the projective structure.
 The {\em (non-ideal or manifold) boundary of $\Omega$}  is a smooth, strictly-convex 
 hypersurface $\bdy\Omega:=\Omega\setminus\interior(\Omega)$ that is properly embedded  in 
 $\AA(\Omega)$. 
 Since $\Omega$ is 
 convex the {\em frontier} $\Fr(\Omega):=\bdy(\cl\Omega)\cong S^{n-1}$ and
   $\Fr(\Omega)=\bdy\Omega\sqcup\bdy_{\infty}\Omega$.

Types $0$ and $n$ are familiar.  For type $0$ then $\Fr\Omega(0)$ is a round sphere and $\bdy_{\infty}\Omega(0)$
is a single point. Thus
$\Omega(0)$ may be projectively identified with a closed horoball
 in the projective model of hyperbolic space $\HH^n$, and also with $\cl\HH^n\setminus\{\infty\}$. Then
 $\bdy\Omega=\bdy_{\infty}\HH^n\setminus\{\infty\}$ and $\bdy_{\infty}\Omega=\{\infty\}$.
  Moreover $G(0)$ is isomorphic to the subgroup of $\Isom(\HH^n)$ consisting of parabolics and elliptics
  that fix $\infty$. Whence these generalized cusps are \emph{standard}.  At the other extreme, when $\type=n$, there is an $n$-simplex 
  $\Delta^n\subset\RP^n$ and  $\Omega:=\Omega(\ppsi)\subset\interior(\Delta^n)$ 
  and 
  $\bdy\Omega$ is a properly embedded, convex smooth hypersurface that separates 
  $\interior(\Delta)$ into two components, one of which is $\interior(\Omega)$.
  Then $\bdy_{\infty}\Omega=\Delta^{n-1}$
   is a face of $\Delta^n$. Moreover
$G(\ppsi)\subset\PGL(\Delta^n)$ and thus
contains a finite index subgroup that is \emph{diagonalizable} over the reals. 

When $0<\type<n$, there is an affine
projection $\Omega:=\Omega(\ppsi)\to\interior(\Delta^{\type})$ 
with fibers that are  projectively equivalent to horoballs in  $\HH^{\ur+1}$.
In this case $\bdy_{\infty}\Omega\cong\Delta^{\type}$.
In fact one can regard a generalized cusp as a kind of fiber product
 of  a diagonalizable cusp of dimension $\type$ and a standard cusp of dimension $1+\ur$,
 and also
as a deformation of a standard cusp, where the boundary at infinity is expanded out into  a simplex.
In particular this
results in a {\em flat simplex} $\Delta^{\type}$ in the ideal boundary of any domain covering a manifold  
that contains  generalized cusps of type $\type>0$. 
In the sense of Klein geometries,
 $(G(\ppsi),\bdy\Omega(\ppsi))$ is
 a subgeometry of  Euclidean geometry. 
The orbits of $G(\ppsi)$ form a codimension-1 foliation and the leaves are called {\em{horospheres}}.
There is a 1-parameter group called the {\em{radial flow}} that centralizes $G(\psi)$ and the orbits are
orthogonal to the horospheres. These two foliations give a {\em natural} product structure on a generalized cusp.
}

{\red The following is more easily understood
 after first reading Section \ref{surfaces} about surfaces, then  Section \ref{3mfd} about 3-manifolds.}
The next goal is to classify cusps up to equivalence. For this it is useful to introduce {\em marked cusps}  and {\em marked} lattices
(see Section \ref{sec:classify} for the definition and more discussion).
A rank-2 cusp in a hyperbolic 3-manifold is determined by a  {\em cusp shape}, which is  a Euclidean torus
defined up to similarity.  This shape is usually described by a  complex number $x+iy$ with $y>0$, that uniquely determines a {\em marked} cusp.
Unmarked cusps are described by the modular surface $\HH^2/\PSL(2,\ZZ)$.

More generally, a maximal-rank cusp in a hyperbolic $n$-manifold is determined by  a lattice in $\Isom(\EE^{n-1})$
up to conjugacy and rescaling. We extend this result by showing when $\psi\ne0$ that a generalized cusp of dimension $n$
with holonomy in $G(\ppsi)$ 
 is determined by a pair
$([\Gamma],A\cdot O(\ppsi))$ consisting
of the conjugacy class of a lattice $\Gamma\subset\Isom(\EE^{n-1})$, 
 and an {\em anisotropy parameter}   which we now describe.

The second fundamental form on   $\bdy\Omega\subset\AA(\Omega)$
is {\red conformally equivalent to} a  Euclidean metric 
that is preserved by the action of $G(\ppsi)$. 
This 
identifies $G(\ppsi)$ with a subgroup of $\Isom(\EE^{n-1})$, and $G(\ppsi)=T(\ppsi)\rtimes O(\ppsi)$ 
is the semi-direct product of the \emph{translation subgroup}, $T(\ppsi)\cong\RR^{n-1}$, and a 
closed subgroup $O(\ppsi)\subset O(n-1)$ 
that fixes some point $p$ in $\bdy\Omega$,  { see Theorem \ref{Gppsistructure}.}
The Euclidean structure identifies $\Gamma$ with a lattice in $\Isom(\EE^{n-1})$. This lattice
is unique up to conjugation by an element of $O(\ppsi)$.
The {\em anisotropy parameter} is a left coset $A\cdot O(\ppsi)$ in $O(n)$ that determines the 
$O(\ppsi)$-conjugacy class.
The group $O(\ppsi)$ is computed in Proposition \ref{olambda}.


Given a Lie group $G$, the set of $G$-conjugacy classes of marked lattices in $G$ is denoted $\cmlat(G)$.
Define $\cmlat(\Isom(\EE^{n-1}),\ppsi)\subset \cmlat(\Isom(\EE^{n-1}))$ to be the subset of conjugacy classes of 
{\em marked} Euclidean lattices 
with rotational part of the holonomy (up to conjugacy) in $O(\ppsi)$.
The classification of  generalized cusps (up to equivalence) is completed by:

\begin{theorem}[Classification]\label{Classification Theorem}
\label{Classification Theorem} \
 \
 \begin{enumerate}
\item[(i)] If $\Gamma$ and $\Gamma'$ are lattices in $G(\ppsi)$ TFAE 
\begin{enumerate}
\item $\Omega(\ppsi)/\Gamma$ and $\Omega(\ppsi)/\Gamma'$ are  equivalent generalized cusps
\item $\Gamma$ and $\Gamma'$ are conjugate in $\PGL(n+1,\RR)$
\item $\Gamma$ and $\Gamma'$ are conjugate in $\PGL(\Omega(\ppsi))$ 
\end{enumerate} 
\item[(ii)] A lattice in $G(\ppsi)$ is conjugate in $\PGL(n+1,\RR)$ into $G(\ppsi')$ iff  $G(\psi)$ is conjugate to $G(\psi')$. 
\item[(iii)]$G(\ppsi)$ is conjugate in $\PGL(n+1,\RR)$  to $G(\ppsi')$ iff $\ppsi'=t\cdot\ppsi$ for some $t>0$.
\item[(iv)]  $\PGL(\Omega(\ppsi))=G(\ppsi)$ when $\ppsi\ne0$
\item[(v)] When $\psi\ne0$ the map $\param:\cmlat(\Isom(\EE^{n-1}),\ppsi)\times (O(n-1)/O(\ppsi))\longrightarrow\cmlat(G(\ppsi))$ defined in (\ref{thetadef}) is a bijection. 
\end{enumerate}
\end{theorem}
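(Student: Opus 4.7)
The plan is to prove the five parts in an order that exploits the dependencies between them: first (iii) and (iv), then (ii), then the equivalences in (i), and finally (v). For (iii), the ``if'' direction is immediate since the diagonal projective transformation rescaling coordinates by $t>0$ along each weight direction conjugates $\Omega(\ppsi)$ to $\Omega(t\ppsi)$ and hence conjugates the corresponding cusp groups. For ``only if'', any $g\in\PGL(n+1,\RR)$ with $gG(\ppsi)g^{-1}=G(\ppsi')$ must carry the unique supporting hyperplane $\RP^{n-1}_{\infty}$ containing $\bdy_{\infty}\Omega(\ppsi)$ to its counterpart for $\ppsi'$; pulling back the characters of the translation subgroup $T(\ppsi')$ then recovers $\ppsi$ up to positive rescaling. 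For (iv), the containment $G(\ppsi)\subseteq\PGL(\Omega(\ppsi))$ is built into the construction of $G(\ppsi)=T(\ppsi)\rtimes O(\ppsi)$. Conversely, any $g\in\PGL(\Omega(\ppsi))$ preserves $\bdy_{\infty}\Omega$ and hence $\RP^{n-1}_{\infty}$, so $g$ acts affinely on $\AA(\Omega)$; since $G(\ppsi)$ acts transitively on $\bdy\Omega(\ppsi)$ we may post-compose with an element of $G(\ppsi)$ so that $g$ fixes a chosen basepoint $p\in\bdy\Omega(\ppsi)$, and then $g$ preserves the Euclidean metric induced by the second fundamental form, placing it in $O(\ppsi)$.

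Given (iii) and (iv), part (ii) is short: if a lattice $\Gamma\subset G(\ppsi)$ is conjugated by $g$ into $G(\ppsi')$, then $g\Omega(\ppsi)$ is a properly convex domain preserved by $g\Gamma g^{-1}\subset G(\ppsi')$. Since $\Gamma$ is Zariski dense in the identity component of the solvable group $G(\ppsi)$, its image is Zariski dense in the identity component of $G(\ppsi')$, so $g\Omega(\ppsi)$ is preserved by $G(\ppsi')$, and uniqueness of the invariant properly convex domain gives $g\Omega(\ppsi)=\Omega(\ppsi')$. Hence $gG(\ppsi)g^{-1}\subseteq\PGL(\Omega(\ppsi'))=G(\ppsi')$, with equality by symmetry. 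For part (i), (c)$\Rightarrow$(b) is tautological and (c)$\Rightarrow$(a) is direct from the definition of equivalence. The implication (b)$\Rightarrow$(c) uses the same Zariski-density idea: if $g\Gamma g^{-1}=\Gamma'$ with both lattices in $G(\ppsi)$, then $g\Omega(\ppsi)$ is preserved by $\Gamma'$, hence by $G(\ppsi)$, hence equals $\Omega(\ppsi)$, so $g\in\PGL(\Omega(\ppsi))=G(\ppsi)$ by (iv). Finally (a)$\Rightarrow$(b) follows by lifting the two projective embeddings of the common cusp $C''$ to developing maps into $\Omega(\ppsi)$; the resulting change-of-coordinates is an element of $\PGL(n+1,\RR)$ conjugating a finite-index subgroup of $\Gamma$ to one in $\Gamma'$, and a normalizer argument extends this to the full lattices.

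For (v), I would construct an explicit inverse to $\param$. Given a class $[\Gamma']\in\cmlat(G(\ppsi))$, the identification $G(\ppsi)=T(\ppsi)\rtimes O(\ppsi)\hookrightarrow\Isom(\EE^{n-1})$ arising from the second fundamental form produces a marked Euclidean lattice whose rotational part lies in $O(\ppsi)$; the ambiguity in how $O(\ppsi)\subset O(n-1)$ is embedded is precisely parametrized by the coset space $O(n-1)/O(\ppsi)$, which supplies the anisotropy parameter. Surjectivity of $\param$ is then immediate from the construction. The main obstacle will be injectivity: one must show that if $\param([\Gamma],A\cdot O(\ppsi))=\param([\Gamma'],A'\cdot O(\ppsi))$, then the marked Euclidean conjugacy classes coincide and $A\cdot O(\ppsi)=A'\cdot O(\ppsi)$. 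This reduces to computing the normalizer of $O(\ppsi)$ in $O(n-1)$ (which Proposition \ref{olambda} shows is $O(\ppsi)$ itself) and verifying that any $G(\ppsi)$-conjugation carrying one data pair to the other decomposes as a translation (which does not affect the Euclidean marked conjugacy class) composed with an element of $O(\ppsi)$ (which acts trivially on the coset). Showing the two sources of ambiguity are independent in this way is the heart of (v) and is exactly what the formula (\ref{thetadef}) is engineered to encode.
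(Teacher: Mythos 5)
The central gap is in your treatment of (ii) and of (i)(b)$\Rightarrow$(c), where you invoke Zariski density of $\Gamma$ in the identity component of $G(\ppsi)$ and ``uniqueness of the invariant properly convex domain.'' Neither holds. The paper notes explicitly (after Proposition \ref{gencuspisVFG}) that the Zariski closure of $\Gamma$ is in general strictly larger than $T(\Gamma)$: for $\ppsi$ with irrational ratios the Zariski closure of a lattice in the diagonalizable part of $T(\ppsi)$ is the full positive diagonal torus, which does \emph{not} preserve $\Omega(\ppsi)$; correspondingly the stabilizer of $\Omega(\ppsi)$ is not Zariski closed, so you cannot pass from ``$g\Gamma g^{-1}$ preserves $g\Omega(\ppsi)$'' to ``$G(\ppsi')$ preserves $g\Omega(\ppsi)$'' by taking Zariski closures. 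The tool the paper uses instead is Theorem \ref{uniquehull}: $T(\ppsi)$ is the \emph{unique virtual e-hull} of $\Gamma$, a conjugation-invariant characterization, whence $PT(\ppsi)P^{-1}=T(\ppsi')$. Moreover there is no unique $T(\ppsi)$-invariant properly convex domain: by Lemma \ref{invariantdomains} and Proposition \ref{Ginvdomains} the irreducible invariant domains form an orbit of $\SS(\type)\oplus\Phi^{\ppsi}$ (with extended domains when $\type=n$), and one must correct $P$ by an element of this centralizer before concluding $P(\Omega(\ppsi'))=\Omega(\ppsi)$.

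Two further steps would fail as written. In (iv) you assert that once $g\in\PGL(\Omega(\ppsi))$ is normalized to fix a boundary point it preserves the second-fundamental-form metric; but an affine map only acts \emph{conformally} on $\secondfund$ (Proposition \ref{conformallemma}), and your argument never uses $\ppsi\ne0$ --- as stated it would ``prove'' $\PGL(\Omega(0))=G(0)$, which is false. The missing idea is that of Proposition \ref{invariantEuclmetric}: a non-isometric element is (up to inversion) a contraction of $\bdy\Omega$ fixing a point, and conjugating a neighborhood of the identity in $T(\ppsi)$ properly into itself forces $T(\ppsi)$ to be unipotent, i.e.\ $\type=0$. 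In (v), injectivity does not reduce to the normalizer of $O(\ppsi)$ in $O(n-1)$ --- that normalizer is typically strictly larger than $O(\ppsi)$, and it is not what Proposition \ref{olambda} computes; the paper instead analyzes the centralizer of the lattice $\Gamma$ in $\Isom(\EE^{n-1})$ and applies Bieberbach's first theorem (the pure translations form a finite-index lattice, so any centralizing isometry has trivial rotational part) to force the two anisotropy cosets to coincide. Your sketches of (iii) and of (i)(a)$\Rightarrow$(b) are essentially sound, though for (iii) you still need that $T(\ppsi)$ is characteristic in $G(\ppsi)$ (Theorem \ref{Gppsistructure}(f)) before pulling back weights, together with the ordering convention on the coordinates of $\ppsi$ to exclude a permutation.
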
 

One might view (ii) in the context of super-rigidity: {\em an embedding of a lattice determines an embedding of the Lie group that contains it}. 
Throughout this paper we repeatedly stumble over two exceptional cases. 
A generalized cusp with $\ppsi=0$ is projectively equivalent to a cusp in a hyperbolic manifold. This is the only case when $\PGL(\Omega(\ppsi))$
is strictly larger than $G(\ppsi)$, and occurs because there are elements of $\PGL(\Omega(0))\subset\Isom(\HH^n)$ that permute horospheres. These elements are
hyperbolic  isometries of $\HH^n$ that fix $\red\infty$. This accounts for the fact that the equivalence class of a cusp in a hyperbolic manifold 
 is determined by the {\em similarity class} 
 ($\PGL(\Omega(\ppsi))$-conjugacy class) of the
lattice, rather than the {\em $G(\psi)$-conjugacy class}, as in every other case.
 The other exceptional case is the diagonalizable case $\type=n$, and in this case the radial flow is hyperbolic
instead of parabolic. Fortunately both these exceptional cases are easy to understand, but tend to require proofs that consider
various cases.
 
Let $\Ccal^n$ denote the set of equivalence classes of generalized cusps of dimension $n$.  Let
$\clat^n$ denote the (disjoint) union over all $\ppsi$  with $\ppsi(e_1)=1$ of conjugacy classes of (unmarked) lattice in $G(\ppsi)$,
union lattices in $G(0)\cong\Isom(\EE^{n-1})$ up to conjugacy and scaling. 
Every non-standard
 generalized cusp is equivalent
to one given by a lattice in $G(\ppsi)$ with $\psi(e_1)=1$, that is unique up to conjugacy in $G(\ppsi)$ giving:

\begin{corollary}[Cusps classified by lattices]
\label{cor:cusplattice} There is a bijection $F:\clat^n\longrightarrow\Ccal^n$ defined for $[\Gamma]\in\clat^n$ by $F([\Gamma])=[\Omega(\ppsi)/\Gamma]$
when $\Gamma$ is a lattice in $G(\ppsi)$.
\end{corollary}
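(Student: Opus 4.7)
The strategy is to verify that $F$ is well-defined, surjective, and injective by combining the Uniformization Theorem~\ref{genislambda} with parts~(i)--(iv) of the Classification Theorem~\ref{Classification Theorem}, with extra care at the normalization $\ppsi(e_1)=1$ and at the standard case $\ppsi=0$.

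\textbf{Well-definedness.} Suppose $\Gamma$ and $\Gamma'$ represent the same class in $\clat^n$. For $\ppsi\ne 0$, they are $G(\ppsi)$-conjugate, hence $\PGL(\Omega(\ppsi))$-conjugate by~(iv), and (i)(c)$\Rightarrow$(a) gives equivalent cusps. For $\ppsi=0$, they differ by a $G(0)$-conjugation composed with a scaling; since the hyperbolic isometries of $\HH^n$ fixing $\infty$ (the elements of $\PGL(\Omega(0))\setminus G(0)$) realize scaling on the horospherical Euclidean structure, the two representatives are $\PGL(\Omega(0))$-conjugate and (i) again produces equivalent cusps.

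\textbf{Surjectivity.} Given $[C]\in\Ccal^n$, Theorem~\ref{genislambda} yields a $\ppsi$-cusp $\Omega(\ppsi)/\Gamma$ equivalent to $C$. If $\ppsi\ne 0$, set $t=1/\ppsi(e_1)>0$; by (iii) there is $g\in\PGL(n+1,\RR)$ conjugating $G(\ppsi)$ to $G(t\ppsi)$, and by (iv) this $g$ carries the $\PGL(\Omega(\ppsi))$-invariant domain $\Omega(\ppsi)$ onto the $\PGL(\Omega(t\ppsi))$-invariant domain $\Omega(t\ppsi)$. This yields a projective isomorphism of cusps $\Omega(\ppsi)/\Gamma\cong\Omega(t\ppsi)/(g\Gamma g^{-1})$, so the class $[g\Gamma g^{-1}]\in\clat^n$ is a preimage of $[C]$. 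The case $\ppsi=0$ needs no rescaling since $\clat^n$ already accepts $[\Gamma]$ directly.

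\textbf{Injectivity.} Suppose $F([\Gamma])=F([\Gamma'])$ with $\Gamma\subset G(\ppsi)$ and $\Gamma'\subset G(\ppsi')$. Unwinding the definition of equivalence through a common refinement $C''$, lifting the projective homotopy-equivalences to universal covers, and extending the resulting projective comparison map to $\PGL(n+1,\RR)$, identifies $\Gamma$ and $\Gamma'$ up to $\PGL(n+1,\RR)$-conjugacy. Then (ii) forces $G(\ppsi)$ and $G(\ppsi')$ to be conjugate, and (iii) gives $\ppsi'=t\ppsi$ for some $t>0$. For $\ppsi\ne 0$, the normalization forces $t=1$ and hence $\ppsi=\ppsi'$; now (i)(b)$\Rightarrow$(c) together with (iv) shows $\Gamma,\Gamma'$ are $G(\ppsi)$-conjugate. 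For $\ppsi=0$, $t$ is unconstrained, but the scaling equivalence built into the definition of $\clat^n$ absorbs it, so $[\Gamma]=[\Gamma']$.

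The subtle point is the bookkeeping at $\ppsi=0$: one must match the extra scaling freedom in $\PGL(\Omega(0))$ (beyond $G(0)$) with the scaling equivalence in the definition of $\clat^n$, so that the standard-cusp case plugs correctly into the general framework; everything else reduces to mechanical application of the Classification Theorem to the two types of representatives.
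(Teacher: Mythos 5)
Your proof is correct and follows essentially the same route as the paper: surjectivity via the Uniformization Theorem plus a rescaling to normalize $\ppsi(e_1)=1$, and injectivity via Lemma \ref{equivalentcusps} followed by parts (ii), (iii), and (i) of the Classification Theorem, with the $\ppsi=0$ scaling freedom absorbed by the definition of $\clat^n$. Your explicit well-definedness check (which the paper leaves implicit in part (i)) is a welcome addition; the only nitpick is that carrying $\Omega(\ppsi)$ onto $\Omega(t\ppsi)$ is really justified by Proposition \ref{transgpscale} (or Lemma \ref{invariantdomains}) rather than by part (iv) alone.
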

\begin{corollary}[Standard parabolics]	
\label{stdparabolics} Suppose $M=\Omega/\Gamma$ 
is a properly convex $n$-manifold such that every end of $M$ is a generalized cusp.
If $[A]\in\Gamma$ and {\red $d_{\Omega}$ is the Hilbert metric on $\Omega$, and if  $\inf\{d_{\Omega}(x,[A]x )|x\in \Omega\}=0$},
then $[A]$ is the holonomy of  an element of $\pi_1C$ for some generalized cusp $C\subset  M$,
 and $[A]$ is conjugate  in $\PGL(n+1,\RR)$ {\red to a parabolic in} $\PO(n,1)$.
\end{corollary}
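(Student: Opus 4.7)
The plan is to first localize $[A]$ to the holonomy of a single cusp end of $M$, and then to use the explicit structure of the cusp Lie group $G(\ppsi)$ to identify the $\PGL(n+1,\RR)$-conjugacy class of $[A]$. Both assertions of the corollary then follow.

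For the localization, use the hypothesis together with \Cref{genislambda} to write $M=M_0\cup C_1\cup\cdots\cup C_k$ with $M_0$ compact and each $C_j$ a $\ppsi_j$-cusp. Assume $[A]\ne 1$ (else the conclusion is trivial). Choose $x_m\in\Omega$ with $d_{\Omega}(x_m,[A]x_m)\to 0$. If $\pi(x_m)\in M$ accumulated at a point of $M$, then after translating by suitable elements of $\Gamma$ and passing to a subsequence one would obtain a convergent subsequence $x_m\to y\in\Omega$ together with a fixed $\Gamma$-conjugate of $[A]$ having $y$ as a fixed point; by torsion-freeness this forces $[A]=1$, a contradiction. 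Hence $\pi(x_m)$ eventually leaves $M_0$ and enters some cusp $C_j$. A standard Margulis-type argument, using that the thin part of a convex projective manifold all of whose ends are generalized cusps consists precisely of the cusp neighborhoods, then shows that up to $\Gamma$-conjugacy $[A]$ lies in the stabilizer $\Gamma_j\subset G(\ppsi_j)$ of a lift $\widetilde C_j\subset\Omega$ of $C_j$. This proves the first assertion of the corollary.

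To identify the conjugacy class of $[A]$, write $G(\ppsi_j)=T(\ppsi_j)\rtimes O(\ppsi_j)$ using \Cref{Gppsistructure}, with $O(\ppsi_j)$ compact. Since $\Gamma_j$ is torsion-free, a Bieberbach-type argument forces the rotational component of $[A]$ to be trivial, so $[A]\in T(\ppsi_j)$. The translation subgroup splits as a $\type_j$-dimensional diagonalizable part (whose nontrivial elements are projectively hyperbolic with distinct real eigenvalues) and a $\ur_j$-dimensional unipotent part. A cross-ratio computation along a projective line through an attracting fixed point shows that nontrivial diagonalizable translations have strictly positive Hilbert translation length on $\Omega(\ppsi_j)$. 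Combined with $\inf_x d_{\Omega}(x,[A]x)=0$, this forces the diagonalizable component of $[A]$ to vanish, so $[A]$ is unipotent in $\PGL(n+1,\RR)$. Using the description of $\Omega(\ppsi_j)$ as a horoball bundle over $\interior(\Delta^{\type_j})$ with horoball fibers in $\HH^{\ur_j+1}$, each nontrivial unipotent element of $G(\ppsi_j)$ acts as a pure parabolic translation along a fiber and as the identity on the simplex base; its Jordan form is therefore one $3\times 3$ unipotent block together with $n-2$ identity blocks of size $1$, which is exactly the Jordan type of a translation parabolic in $\PO(n,1)$. Since unipotents in $\PGL(n+1,\RR)$ are classified up to conjugacy by their Jordan type, $[A]$ is conjugate to such a parabolic.

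I expect the main technical obstacle to be the cross-ratio lower bound for diagonalizable translations: one must pin down the configuration of the fixed hyperplanes of such a translation in $T(\ppsi_j)$ relative to $\cl(\Omega(\ppsi_j))$ and extract a uniform positive lower bound for the Hilbert displacement along an appropriate line. The explicit coordinate models of $\Omega(\ppsi)$ constructed earlier in the paper should make this tractable, but this is where the projective (as opposed to purely group-theoretic) structure really enters.
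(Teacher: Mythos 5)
Your overall strategy---localize $[A]$ to the holonomy of a single cusp end, then analyze it inside the cusp Lie group---is the same as the paper's, and your localization step is essentially sound (the paper does it more economically: since $\delta([A])=0$, conjugates of $[A]$ are represented by arbitrarily short loops, which cannot be based in any fixed compact subset of $M$, so $[A]$ is carried by an end and hence by some generalized cusp $C=\Omega(\ppsi)/\Gamma_C$). The genuine gap is in the second half. You assert that ``since $\Gamma_j$ is torsion-free, a Bieberbach-type argument forces the rotational component of $[A]$ to be trivial, so $[A]\in T(\ppsi_j)$.'' This is false: Bieberbach's theorem only guarantees that the pure translations form a finite-index subgroup, and torsion-free crystallographic groups routinely contain elements with nontrivial rotational part (glide reflections in the Klein bottle group, screw motions in the holonomy of the non-toral flat $3$-manifolds). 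Consequently your further claim that $[A]$ is unipotent with a single $3\times3$ Jordan block is also false in general. Such an $[A]$ is a ``screw parabolic,'' which is still conjugate into $\PO(n,1)$ --- the normal form (\ref{stdparabolic}) carries an $O(n-2)$ factor for exactly this reason --- but your argument as written does not reach this case.

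The repair is the paper's Corollary \ref{parabolicsinGpsi}. Using Theorem \ref{Gppsistructure}, any $g\in G(\ppsi)$ is conjugate to $a\cdot t$ with $a\in O(\ppsi)$, $t\in T(\ppsi)$ and $at=ta$. Since $\delta([A])=0$ forces all eigenvalues of $A$ to have the same modulus (the criterion from \cite{CLT1}), and the eigenvalues of $a$ have modulus $1$ while those of $t$ are positive, every eigenvalue of $t$ equals $1$; hence $t\in P(\ppsi)$ is a standard parabolic by Lemma \ref{paraboliclemma}, and the commuting product $a\cdot t$ is conjugate into $\PO(n,1)$. Your proposed cross-ratio computation showing that nontrivial elements of $T_2(\ppsi_j)$ have strictly positive Hilbert displacement is a correct and essentially equivalent substitute for the eigenvalue-modulus criterion, but it must be applied to the $T(\ppsi_j)$-factor $t$ of $[A]$, not to $[A]$ itself after an unjustified reduction to $T(\ppsi_j)$.
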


{ Generalized cusps are modeled on the geometries $(G(\ppsi),\Omega(\ppsi))$, and these
are all  
isomorphic to subgeometries of Euclidean geometry, see Corollary \ref{lambdageomEuclidean}.  In fact 
 there is a natural
Euclidean metric:

\begin{theorem}[Underlying Euclidean structure]\label{euc_str}
{\red There is a  metric $\beta$ on $\Omega=\Omega(\ppsi)$
 that is preserved by $G(\ppsi)$ and by the radial flow,
and $(\Omega, \beta)$ is isometric to $\RR^{n-1}\times[0,\infty)$ with the usual Euclidean metric.  
The restriction of $\beta$ to $\bdy\Omega$ is {\red conformally equivalent to} the second fundamental form of $\bdy\Omega$ in $\AA(\Omega)$.}\end{theorem}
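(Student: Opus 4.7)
The plan is to exploit the commuting actions of $G(\ppsi)$ and the radial flow $\Phi$ to realize $\Omega$ as a product $\bdy\Omega\times[0,\infty)$, and then transport the obvious flat product metric.

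First, by Theorem \ref{Gppsistructure} we have $G(\ppsi)=T(\ppsi)\rtimes O(\ppsi)$ with $T(\ppsi)\cong\RR^{n-1}$ acting simply transitively on each horosphere and $O(\ppsi)\subset O(n-1)$ stabilizing some $p_0\in\bdy\Omega$. The orbit map $T(\ppsi)\to\bdy\Omega$, $v\mapsto v\cdot p_0$, is thus a diffeomorphism that intertwines the $G(\ppsi)$-action on $\bdy\Omega$ with the standard action of $\RR^{n-1}\rtimes O(n-1)$ on $\RR^{n-1}$. Pulling back the standard inner product on $\RR^{n-1}$ gives a $G(\ppsi)$-invariant Riemannian metric $\beta_0$ on $\bdy\Omega$. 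Since the paper has already established that the second fundamental form of $\bdy\Omega$ is conformally equivalent to a $G(\ppsi)$-invariant Euclidean metric, and such a metric is determined up to positive scalar on each $O(\ppsi)$-isotypic component, we may arrange $\beta_0$ to be that metric, establishing the conformal assertion about $\beta|_{\bdy\Omega}$.

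Next, extend $\beta_0$ to all of $\Omega$. Because $\Phi$ centralizes $G(\ppsi)$ and its orbits are transverse to (and uniformly parametrize) the horosphere foliation, the map $H:\bdy\Omega\times[0,\infty)\to\Omega$ defined by $H(p,t)=\Phi_t(p)$ is a diffeomorphism. Define $\beta$ as the pushforward under $H$ of the product metric $\beta_0\oplus dt^2$. The centralizer property yields $g\cdot H(p,t)=H(g\cdot p,t)$ for $g\in G(\ppsi)$, so $G(\ppsi)$ acts as $(g,\mathrm{id})$ in product coordinates and preserves $\beta$; the flow acts as $\Phi_s\cdot H(p,t)=H(p,t+s)$, which preserves $dt^2$ and hence $\beta$. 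Composing $H$ with the identification $\bdy\Omega\cong\RR^{n-1}$ above produces the required isometry
$$(\Omega,\beta)\ \cong\ \bigl(\RR^{n-1}\times[0,\infty),\ g_{\mathrm{Euc}}\oplus dt^2\bigr).$$

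The main obstacle is not analytic but Lie-theoretic bookkeeping: one must know that $O(\ppsi)$ really does sit inside $O(n-1)$ (so that $\beta_0$ deserves to be called Euclidean), that $O(\ppsi)$ fixes a point of $\bdy\Omega$ (so the starting identification with $T(\ppsi)$ is available), and that $\Phi$ centralizes $G(\ppsi)$ with orbits foliating $\Omega$ transversely to the horospheres. All three are supplied by Theorem \ref{Gppsistructure} together with the earlier description of the radial flow, after which the present theorem is a formal assembly.
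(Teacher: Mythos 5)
Your proposal is correct in outline but follows a genuinely different route from the paper, and one step needs shoring up. The paper does not transport an abstract product metric: it defines $\beta=(\deriv^2 h\circ\deriv\pi)+(\deriv h)^2$ directly from the horofunction (Theorem \ref{betathm}), gets invariance under $G(\ppsi)$ and the radial flow because both preserve $h$ up to an additive constant, gets flatness from the simply transitive isometric action of the enlarged translation group $T_{\type}\cong\RR^n$, and obtains the conformal statement from the pointwise identity $\beta|T_q\Hcal=\lambda(q)\secondfund_q$, which falls out of $\deriv_q h=\lambda(q)\eta_q$ (Proposition \ref{secondfundformmetric}). Your construction buys a cleaner, coordinate-free assembly in which the product structure and the isometry type of $(\Omega,\beta)$ are immediate; the paper's buys the extra content of Theorem \ref{betathm} (the horofunction is the last coordinate of the isometry $F$, flow lines are orthogonal to horospheres) and, crucially, a self-contained proof of the conformal claim.

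That conformal claim is the soft spot in your write-up. An invariant metric $\beta_0$ obtained by translating an arbitrary $O(\ppsi)$-invariant inner product is in general \emph{not} conformally equivalent to $\secondfund$: when $O(\ppsi)$ is trivial (e.g.\ $\type=n$ with distinct $\psi_i$) every inner product on the Lie algebra of $T(\ppsi)\cong\RR^{n-1}$ qualifies, and two translation-invariant metrics on $\RR^{n-1}$ are conformally equivalent only if they are proportional, so ``we may arrange $\beta_0$ to be that metric'' is doing real work and the appeal to $O(\ppsi)$-isotypic components does not supply it. Either you import Proposition \ref{secondfundformmetric} wholesale --- at which point you are leaning on the paper's Hessian computation anyway --- or you should argue directly: set $\beta_0|_{p_0}=\secondfund_{p_0}$, which is $O(\ppsi)$-invariant because by Proposition \ref{conformallemma} the compact group $O(\ppsi)$ acts on $\secondfund_{p_0}$ through a homomorphism to $\RR_{>0}$, necessarily trivial; extend by $T(\ppsi)$-translation; and then apply Proposition \ref{conformallemma} to each $g^{-1}$ with $g\in T(\ppsi)$ to get $\beta_0|_{T_{gp_0}\bdy\Omega}=(g^{-1})^*\secondfund_{p_0}=\alpha(g)\cdot\secondfund_{gp_0}$ with $\alpha(g)>0$, i.e.\ $\beta_0$ is pointwise a positive multiple of $\secondfund$. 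With that inserted, your argument is complete.
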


{\red Theorem \ref{euc_str}} implies a generalized cusp has an {\em underlying Euclidean
structure},
and also an underlying {\em hyperbolic structure}, see Theorem \ref{genishyp}.}
It is a well known that, if $C$ is a maximal rank cusp in a hyperbolic manifold $M$, then $C$ has finite hyperbolic volume. {\red For properly convex manifolds there is a natural notion of volume (see Section \ref{hilbmet} for details). 
}

\begin{theorem}[parabolic $\Leftrightarrow$ finite vol]
\label{volthm} 
 Suppose $C=\Omega/\Gamma$ is a generalized cusp
 in the interior a properly convex manifold $M$    and $\Gamma$ is conjugate into $G(\psi)$.
  Then $C$ has finite volume in $M$ (with respect to the Hausdorff measure induced by the Hilbert metric on $M$) iff   $\ur(\ppsi)>0$
 iff $G(\ppsi)$ contains a  parabolic element.
\end{theorem}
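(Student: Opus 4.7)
The plan is to use the product structure on $\Omega(\psi)$ from Theorem \ref{euc_str} together with an explicit Jacobian calculation for the radial flow. The equivalence of the last two conditions is immediate from Theorem \ref{Gppsistructure}: the unipotent radical of $G(\psi)$ has dimension $\ur(\psi)$, so $\ur(\psi)>0$ precisely when $G(\psi)$ contains a non-trivial unipotent, which is the projective meaning of \emph{parabolic} (cf.\ Corollary \ref{stdparabolics}).

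For the main equivalence, by Theorem \ref{Classification Theorem} we may assume $\Gamma\subset G(\psi)$ is a lattice acting on $\Omega=\Omega(\psi)$. The radial flow gives $\Omega\cong\bdy\Omega\times[0,\infty)$, with $G(\psi)$ preserving the horospheres $\bdy\Omega\times\{t\}$ and $\Gamma$ acting cocompactly on each. Choose a fundamental domain $F\times[0,\infty)$ with $F\subset\bdy\Omega$ compact. Since the radial flow centralizes $G(\psi)$, the Hilbert (Busemann--Hausdorff) volume form is $G(\psi)$-invariant, hence has the form $\phi(t)\,dt\wedge d\mu$ where $d\mu$ is the (essentially unique) $G(\psi)$-invariant measure on a horosphere. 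Thus $\vol(C)=\mu(F)\int_0^\infty\phi(t)\,dt$, and the problem reduces to deciding when this integral converges.

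To compute $\phi(t)$, recall that the Hilbert density at $x$ in a properly convex domain depends on the distances from $x$ to the supporting hyperplanes of $\Fr(\Omega)$, and under the radial flow these scale according to the eigenvalues of the flow generator, which is a specific element centralizing $G(\psi)$ determined by $\psi$. The $\type$-dimensional semisimple (simplex) directions contribute bounded, non-decaying factors to $\phi(t)$, while the $\ur(\psi)$-dimensional unipotent directions contribute exponential contraction. In the extreme case $\psi=0$, one recovers the standard horoball in $\HH^n$ with $\phi(t)\asymp e^{-(n-1)t}$, giving finite volume. In the opposite extreme $\type=n$, the radial flow is hyperbolic (as noted in the introduction), $\Omega$ sits inside a simplex, and a direct calculation in the explicit affine model shows $\phi(t)$ is bounded below by a positive constant, so the integral diverges. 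For intermediate $0<\type<n$, the description of $\Omega(\psi)$ as a bundle over $\interior(\Delta^\type)$ with fibers projectively horoballs in $\HH^{\ur+1}$ lets us decompose the volume form: along the base the flow contributes no decay, while along the fiber it contributes the standard horoball decay, yielding an integrable $\phi(t)$ exactly when $\ur(\psi)>0$.

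The main obstacle is the Jacobian calculation in the mixed case $0<\type<n$, where one must correctly factor the volume form with respect to the base-fiber structure so as to isolate an $e^{-\alpha t}$ decay factor (with $\alpha>0$ iff $\ur(\psi)>0$) from the non-decaying base-simplex contribution. Once this decomposition is in hand, integration of $\phi(t)$ together with cocompactness of $\Gamma$ on horospheres gives the conclusion.
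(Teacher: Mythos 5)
Your strategy is essentially the one the paper follows: decompose the cusp via the radial flow, write its volume as an integral over the flow parameter of cross-sectional volumes, and show the cross-sectional volume decays exponentially iff $\ur(\psi)>0$. The paper's Proposition \ref{radialflowchangesvolume} is exactly your Jacobian $\phi(t)$ (realized as the Radon--Nikodym derivative of the pushed-forward horosphere measure), and it is derived not from the bundle decomposition of Theorem \ref{lambdageometryproduct} but from a direct two-plane cross-ratio computation (Lemma \ref{parabolicdirectionsshrink}): restricting to the affine $2$-plane spanned by two flow lines, the parabola (resp.\ simplex) geometry shows that parabolic separations decay like $\exp(-d_{\Omega}(\Hcal_1,\Hcal_t))$ while non-parabolic separations tend to a positive limit. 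If you pursue the bundle route instead, be aware that the Busemann volume of a Finsler metric does not literally factor as a product of base and fiber volume forms; one only gets a factorization up to uniform multiplicative constants, which suffices here but must be stated and justified.

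There is one genuine gap: the theorem concerns volume with respect to the Hilbert metric of the ambient manifold $M$, i.e.\ of a domain $\Omega_M$ properly containing $\Omega(\psi)$, whereas your computation is carried out entirely in the intrinsic Hilbert metric of $\Omega(\psi)$. These metrics differ, and a priori finiteness of one volume need not imply finiteness of the other. The paper bridges this with Lemma \ref{balldistanceestimate}: at points whose distance to the complement of the cusp is bounded below, the two Finsler norms are uniformly comparable, so it suffices to estimate the volume of a slightly smaller cusp in the intrinsic metric. You should also make explicit the reparametrization between the flow parameter and arc length along flow lines (Lemma \ref{parabolicdirectionsshrink}(a) and Proposition \ref{RForthogonal}): for $\type<n$ one has $d_{\Omega}(p_1,p_t)=\tfrac{1}{2}\log t$, so your integral $\int_0^{\infty}\phi(t)\,dt$ must be taken in the arc-length variable for the asserted exponential decay to be the one that decides convergence.
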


The original definition \cite{CLT2} of {\em generalized cusp} differs from the one in the introduction
by replacing the word {\em abelian} by {\em nilpotent}. {\red To avoid confusion, we have decided to call the generalized cusps of \cite{CLT1} \emph{g-cusps}. See Definition \ref{gencusp} for the precise definition.} The reason {\em nilpotent} was
used originally is the connection between cusps and the Margulis lemma.
A consequence of the analysis in this
paper is that these definitions are equivalent:

\begin{theorem}[nilpotent $\Rightarrow$ abelian] 
\label{vnilisvabelian}Suppose $C=\Omega/\Gamma$ is a properly convex manifold and $C\cong\bdy C\times[0,\infty)$
and $\bdy C$ is compact and strictly convex, and $\pi_1C$ is virtually nilpotent. Then $C$ is a generalized cusp
and $\pi_1C$ is virtually abelian.
\end{theorem}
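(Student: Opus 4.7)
The plan is to show that under the given hypotheses, $\Gamma$ is conjugate in $\PGL(n+1,\RR)$ into some cusp Lie group $G(\ppsi)$. Once established, Theorem \ref{Gppsistructure} identifies $G(\ppsi) = T(\ppsi)\rtimes O(\ppsi)$ as a closed subgroup of $\Isom(\EE^{n-1})$, so $\Gamma$ sits as a discrete group of Euclidean isometries, and Bieberbach's theorem yields a finite-index pure-translation subgroup, giving virtual abelianness. Once $\pi_1 C$ is known to be virtually abelian, the remaining hypotheses of Definition \ref{gencusp}(a) are exactly those assumed in the theorem, so $C$ is automatically a generalized cusp.

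To place $\Gamma$ inside a cusp Lie group, one may first pass to a finite-index (torsion-free) nilpotent subgroup $\Gamma_0 \le \Gamma$; the hypotheses all lift to the finite cover $\Omega/\Gamma_0$, which still has product structure and compact strictly convex boundary. The quotient $\Omega/\Gamma_0$ is then precisely a g-cusp in the sense of \cite{CLT2}, and the homogenization results there provide a connected Lie subgroup $G \subseteq \PGL(n+1,\RR)$ preserving $\Omega$, containing a conjugate of $\Gamma_0$, and acting transitively on $\bdy\Omega$. By the Lie-theoretic classification of such $G$ established in the structural sections of this paper, every such $G$ is conjugate to some $G(\ppsi)$, which is what we want.

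The principal obstacle is confirming that the classification of homogeneous stabilizer groups of a strictly convex properly convex domain in $\RPn$ is truly Lie-theoretic, i.e. that the list $\{G(\ppsi)\}$ is produced without implicit use of virtual abelianness of a cocompact discrete subgroup. A careful reading of the structural sections, in particular the parts analyzing the Jordan decomposition of elements of $G$, the unipotent radical, and the ideal boundary simplex $\bdy_{\infty}\Omega$, is needed to confirm this; the inputs are geometric (proper convexity, strict convexity of $\bdy\Omega$, transitive action on $\bdy\Omega$) rather than algebraic properties of $\Gamma$. Provided that check succeeds, no new ingredient is required beyond what appears earlier in the paper, and a virtually nilpotent $\pi_1 C$ is forced to be virtually abelian.
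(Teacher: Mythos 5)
Your proposal follows essentially the same route as the paper's proof: the hypotheses make $C$ a g-cusp, homogenization and the virtual e-hull construction from \cite{CLT2} (Theorem \ref{homogcusp}, Proposition \ref{gencuspisVFG}) produce a connected nilpotent Lie group acting simply transitively on $\bdy\Omega$ and containing a finite-index subgroup of $\Gamma$ as a cocompact lattice, and Theorem \ref{main} classifies such translation groups as the $T(\ppsi)$ using only the geometric and nilpotency inputs you list, so the non-circularity check you flag does succeed. The only cosmetic difference is at the end: the paper reads off virtual abelianness directly from $T(\ppsi)\cong\RR^{n-1}$ being abelian, whereas you detour through the embedding $G(\ppsi)\subset\Isom(\EE^{n-1})$ and Bieberbach's theorem; both work.
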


Another aspect of the definition of {\em generalized cusp} is that $\bdy C$ is {\em compact}.
 In the theory of Kleinian groups,
rank-1 cusps are important. These are diffeomorphic to $A\times[0,\infty)$
 where $A$ is a (non-compact) annulus. For
hyperbolic manifolds of higher dimensions
there are more possibilities, however  the 
fundamental group of such a cusp  is always virtually abelian.
This is not the case for properly convex manifolds. In \cite{Coop} there is an example of a strictly convex
manifold with unipotent (parabolic) holonomy, and with fundamental group the integer Heisenberg group.
There might to be a nice theory of properly convex manifolds $C\cong\bdy C\times[0,\infty)$ with $\pi_1C$ virtually nilpotent
and $\bdy C$ strictly convex, but without requiring $\bdy C$ to be compact.

The definition of the term {\em generalized cusp} was the end result of a lot of experimentation with definitions, and was
modified as more was discovered about their nature. In retrospect it turns out they are all {\em deformations} of cusps in hyperbolic manifolds.
This theme will be developed in a subsequent paper. 

{\red Choi (\cite{Choi} \cite{Choibook}) has studied certain kinds of ends of projective manifolds, and generalized cusps in this paper correspond to some lens type ends and quasi-joined ends in his work. } 

{\red
\section*{acknowledgements}
	We would like to thank the  referees for several suggestions that improved the paper. Work partially supported by U.S. National Science Foundation grants DMS 1107452, 1107263, 1107367 {\em RNMS: GEometric structures And Representation varieties} (the GEAR Network). Ballas was partially supported by NSF grant DMS 1709097. Cooper was partially supported by NSF grants DMS 1065939, 1207068 and  1045292, and thanks Technische Universit\"{a}t Berlin for hospitality during completion of this work. Leitner was partially supported by the ISF-UGC joint research program framework grant No. 1469/14 and No. 577/15.  
 }

\section{The Geometry of $\ppsi$-Cusps}\label{cuspgeom}

{ We recall some definitions, see \cite{Marquis3} for more background. A subset $\Omega\subset\RP^n$
is {\em properly convex} if the intersection with every projective line is connected, and omits
at least 2 points. The {\em boundary} is used in the sense of manifolds: 
$\bdy\Omega=\Omega\setminus\interior(\Omega)\subset\Omega$ and is usually distinct from
the {\em frontier} which is $\Fr(\Omega):=\bdy(\cl\Omega)=\cl\Omega\setminus\interior(\Omega)$. A properly convex domain  
{\em has strictly convex boundary} if  $\bdy\Omega$ contains no line segment. 
 \if0
In order to describe the geometry of cusps it is necessary to introduce various flavors of convex subsets of projective space\marginnote{SB: Added some definitions regarding convex sets. { the paper is quite long. We could just add a reference for all this} I tried to keep this as brief as possible, but am open to suggestions}. Let $\RP^n$ denote $n$-dimensional real projective space. If $H$ is a projective hyperplane then $RP^n\backslash H\cong \R^n$ can be identified with affine space and is thus called an \emph{affine patch}. Given $\Omega\subset \RP^n$ we call $\Omega$ \emph{convex} if $\Omega$ is 
\begin{itemize}
\item $\Omega$ is contained in some affine patch
\item If $a,b\in \Omega$ then the intersection of $\Omega$ and projective line connecting $a$ and $b$ is connected. 	
\end{itemize}
This defintion is equivalent to $\Omega$ being a convex (in the affine sense) subset of any affine patch in which it is contained. A convex set $\Omega$ is called \emph{properly convex} if $\overline{\Omega}$ is contained in an affine patch. With these definitions any compact, convex subset of $\R^n\subset \RP^n$ is properly convex and $\R^n\subset \RP^n$ is convex, but not properly convex. Unless otherwise stated, all properly convex subsets in this paper will be assumed to have non-empty interior. 

Suppose that $\Omega$ is properly convex, then $\partial\Omega=\overline{\Omega}\backslash int(\Omega)$. A point $p\in \partial \Omega$ is called \emph{strictly convex} if $p$ is not contained in the interior of any projective line segment in $\partial \Omega$ and $\Omega$ is \emph{strictly convex} if every point of $\partial \Omega$ is strictly convex.  \marginnote{AL Can't we just say the $\partial \Omega - \Omega$ contains no line segment}
\fi
An {\em affine patch}
is the complement of a projective hyperplane.
\if0
A \emph{supporting hyperplane to $\Omega$ at $p$}, or just \emph{supporting hyperplane} if $\Omega$ and $p$ are clear from context, is a hyperplane that contains $p$ and is disjoint from the interior of $\Omega$. For each $p\in \partial \Omega$ it is possible to find a supporting hyperplane to $\Omega$ at $p$. 
\fi 
If there is a unique supporting hyperplane to $\Omega$ at $p\in\Fr(\Omega)$
then $p$ is a \emph{$C^1$ point}.  

A {\em geometry} is a pair $(X,G)$ where $G$ is a subgroup of the group of homeomorphisms of $X$ onto itself. 
In this section we describe a family of geometries parameterized by points in the {\em (closed dual) Weyl chamber} 
\begin{equation}\label{eq:weyl}
A=\{\psi\in\Hom(\RR^n,\RR): \psicoef_i:=\psi(e_i) \quad \psicoef_1\ge\psicoef_2\ge\cdots \ge\psicoef_{n}\ge 0\}
\end{equation}
For each $\ppsi\in A$, there is a closed convex subset $\Omega(\ppsi)\subset\RR^n$ and a Lie subgroup
$G(\ppsi)$ of $\Aff(\RR^n)$,  described by Corollary \ref{Gppsistructure}, that preserves $\Omega(\ppsi)$ and acts transitively on $\partial\Omega$. The pair $(\Omega(\ppsi),G(\ppsi))$ is
called {\em $\ppsi$-geometry}. It is isomorphic to a subgeometry of Euclidean geometry (Corollary \ref{lambdageomEuclidean}).

Given $\psi\in A$
the {\em type} { is   $\type=\type_{\ppsi}=|\{i:\psi(e_i)>0\}|$ and}
\begin{equation}\label{eq:Vppsi}V=V_\ppsi=\RRP^\type\times\RR^{n-\type}\end{equation}
 
{ \begin{definition}\label{psidomain} The {\em $\ppsi$-horofunction} $h_\ppsi:V_{\ppsi}\to\RR$ is defined by
\begin{equation}\label{eq:horo} h_{\ppsi}(x_1,\cdots,x_n)=\left\{
\begin{array}{lcl}  -x_{\type+1}-\sum_{i=1}^{\type}\psicoefi_i\log x_i+\frac{1}{2}\sum_{i=\type+2}^{n}x_i^2  & \textrm{if} & \type<n\\
-\left(\sum_{i=1}^n\psicoefi_i\right)^{-1}\sum_{i=1}^{n}\psicoefi_i\log x_i
& \textrm{if} & \type=n
\end{array}\right.
\end{equation}
Also  $\Omega(\ppsi)=h_\ppsi^{-1}((-\infty,0])\subset\RR^n$  is called a \emph{$\ppsi$-domain}
and ${\mathcal H}_t=h^{-1}_\ppsi(t)$
is called a  \emph{horosphere}.
\end{definition}}

\begin{center}
\begin{figure}[h]
\includegraphics[scale=1]{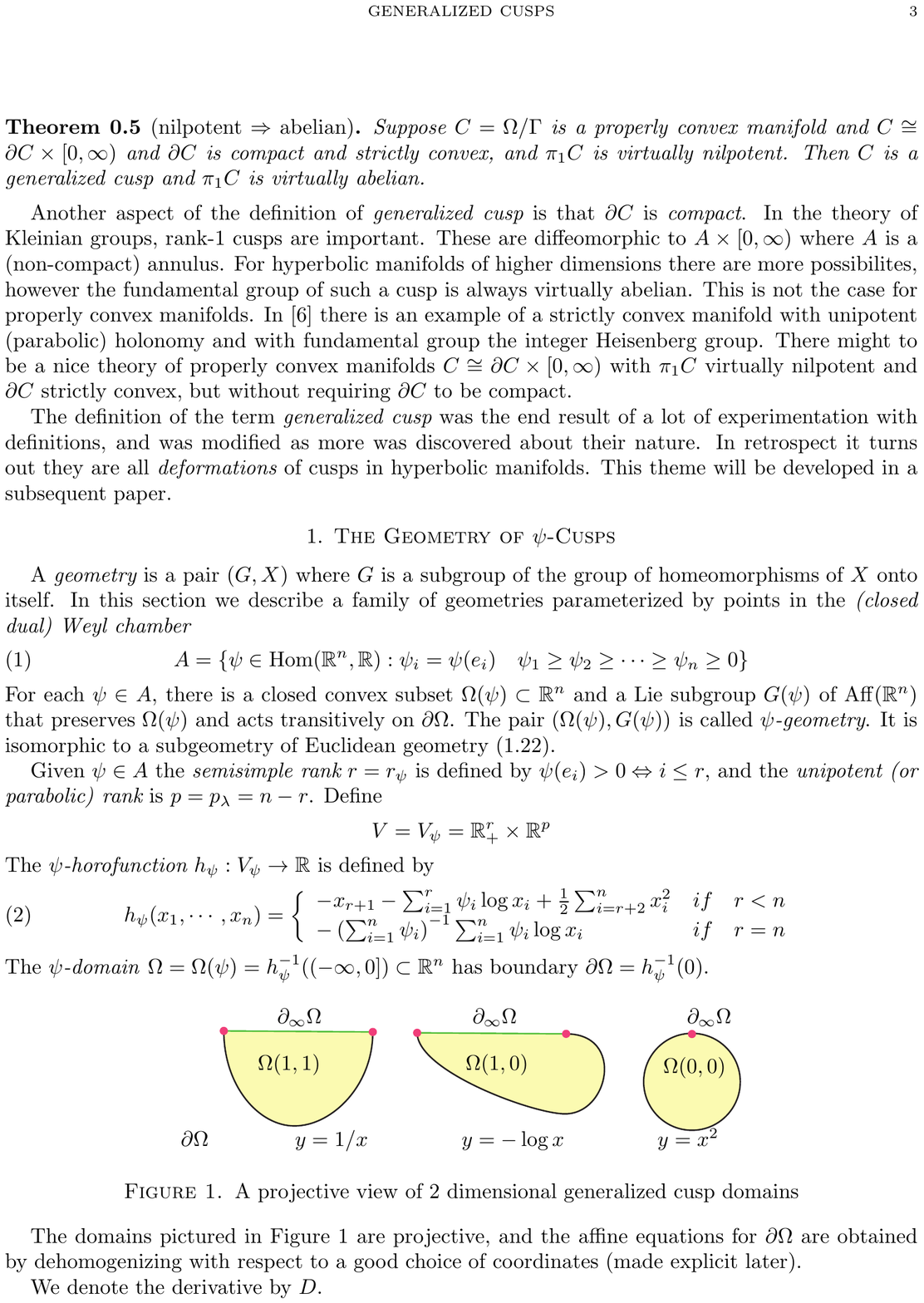}
 \caption{A projective view of some 2 dimensional {$\ppsi$-}domains} \label{dpic}
\end{figure}
\end{center}



\begin{proposition}\label{Omegaconvex}  { $\Omega(\ppsi)$ is a closed, { unbounded, convex} subset of $\RR^n$,
and a properly convex subset of $\RPn$.  The horospheres $\Hcal_t$ are smooth, 
strictly-convex, hypersurfaces  that foliate $V_{\ppsi}$,
  and
$\bdy\Omega(\ppsi)=\Hcal_0$.}\end{proposition}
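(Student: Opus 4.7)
The plan is to reduce everything to smoothness and (strict) convexity of the defining function $h_\ppsi$ on $V_\ppsi$, handling the two cases $\type=n$ and $\type<n$ separately. When $\type=n$, $V_\ppsi=\RRP^n$ and the Hessian of $h_\ppsi$ is the positive-definite diagonal matrix $(\sum_j\psicoefi_j)^{-1}\diag(\psicoefi_i/x_i^2)$, so $h_\ppsi$ is strictly convex and $\Omega(\ppsi)=\{h_\ppsi\le 0\}$ is a strictly convex subset of $\RR^n$. When $\type<n$, the coordinate $x_{\type+1}$ appears only linearly in $h_\ppsi$, so the inequality $h_\ppsi\le 0$ is equivalent to $x_{\type+1}\ge f(x_1,\ldots,\widehat{x_{\type+1}},\ldots,x_n)$ where $f=-\sum_{i\le\type}\psicoefi_i\log x_i+\tfrac12\sum_{i>\type+1}x_i^2$. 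The function $f$ is strictly convex on $\RRP^\type\times\RR^{n-\type-1}$ (its Hessian is the diagonal matrix with entries $\psicoefi_i/x_i^2>0$ for $i\le\type$ and $1$ for $i>\type+1$), so $\Omega(\ppsi)$ is the epigraph of $f$ in the $+e_{\type+1}$ direction and hence convex.

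Next I would verify that $\Omega(\ppsi)$ is closed and unbounded in $\RR^n$ and properly convex in $\RPn$. Closedness follows because $h_\ppsi(x)\to+\infty$ as any $x_i\to 0^+$ with $i\le\type$ (since $\psicoefi_i>0$ there), so sequences in $\Omega(\ppsi)$ cannot limit to points of $\bdy V_\ppsi\subset\RR^n$, while on $V_\ppsi$ closedness is immediate from continuity. Unboundedness is clear: a ray in the $+e_{\type+1}$ direction through any point of $\Omega(\ppsi)$ stays in $\Omega(\ppsi)$, and when $\type=n$ a ray $\{(s,1,\ldots,1):s\ge 1\}$ eventually enters $\Omega(\ppsi)$. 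For proper convexity in $\RPn$ I would use the standard fact that a closed convex $C\subset\RR^n$ is properly convex in $\RPn$ iff its recession cone contains no affine line; a direct calculation shows $v\in\operatorname{rec}(\Omega(\ppsi))$ iff $v_j=0$ for $j>\type+1$ (else the quadratic term in $h_\ppsi(x+tv)$ blows up), $v_i\ge 0$ for $i\le\type$ (else the $\log$ terms diverge), and (when $\type<n$) $v_{\type+1}\ge 0$ (else the linear term $-tv_{\type+1}$ diverges). Hence $\operatorname{rec}(\Omega(\ppsi))$ is the pointed convex cone generated by $e_1,\ldots,e_{\type+1}$ (by $e_1,\ldots,e_n$ when $\type=n$), which contains no line.

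Finally I would verify the statements about horospheres. The gradient $\nabla h_\ppsi$ is nowhere zero on $V_\ppsi$—for $\type<n$ because $\partial h_\ppsi/\partial x_{\type+1}=-1$, and for $\type=n$ because each $\partial h_\ppsi/\partial x_i=-(\sum\psicoefi)^{-1}\psicoefi_i/x_i\ne 0$. Thus $h_\ppsi:V_\ppsi\to\RR$ is a smooth submersion, so its fibers $\Hcal_t$ are smooth codimension-one submanifolds foliating $V_\ppsi$; surjectivity of $h_\ppsi$ onto $\RR$ is immediate by varying $x_{\type+1}$ (or $x_n$ in the type-$n$ case), so every fiber is nonempty. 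Strict convexity of each $\Hcal_t$ follows from the convexity analysis of the first paragraph: for $\type=n$, $\Hcal_t$ is the boundary of the strictly convex set $\{h_\ppsi\le t\}$, and for $\type<n$, $\Hcal_t$ is the graph of the strictly convex function $f-t$. Lastly, $\bdy\Omega(\ppsi)=\Hcal_0$ because $\{h_\ppsi<0\}$ is open and coincides with $\interior\Omega(\ppsi)$, while every point of $\Hcal_0$ is a boundary point by the submersion property.

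The main obstacle is the proper-convexity step, since one must translate the calculus information about $h_\ppsi$ into a projective statement; the recession-cone criterion handles this cleanly but requires a case analysis over the coordinates of $v$, with the subcases $\type<n$ and $\type=n$ behaving differently (the linear term in the $\type<n$ formula is crucial for giving a pointed cone, while in the $\type=n$ case it is the absence of any unbounded variable of $h_\ppsi$ that does the job). Everything else reduces to standard multivariable calculus applied to the explicit formula for $h_\ppsi$.
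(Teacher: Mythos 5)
Your proof is correct and follows essentially the same route as the paper's: convexity and the strict convexity of the horospheres come from the explicit (semi-)definiteness of $D^2h_{\ppsi}$, and the smoothness and foliation claims from $h_{\ppsi}$ being a submersion. The only place you diverge is the proper-convexity step: the paper supposes a complete affine line $\ell\subset\Omega(\ppsi)$, notes that the coordinates $x_i$ with $i\le\type$ must be constant along $\ell$, and derives a contradiction from $h_{\ppsi}|\ell=C_1-t+C_2t^2$, whereas you compute the full recession cone and check that it is pointed --- the same non-existence statement proved a bit more systematically, with the small bonus that the cone you identify is exactly the cone on $\bdy_{\infty}\Omega(\ppsi)$, consistent with Lemma \ref{idealboundary}.
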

\begin{proof} {   Since $h$ is a smooth submersion, 
 $\Hcal_t$ are 
  smooth hypersurfaces that foliate $V_{\ppsi}$
  and $\Omega=\Omega(\ppsi)$ is a closed submanifold of $\RR^n$ with boundary $\Hcal_0$.
   Moreover $\Omega$
 is unbounded because $h_{\psi}$ is a decreasing function of $x_s$
where $s=\max(\type,1)$.
  The second derivative of $x^2$, and of $-\log(x)$, are both positive on $V_{\ppsi}$,
so the second derivative $D^2h_{\ppsi}$ is positive semi-definite on $V_{\ppsi}$. For $\type<n$ it  has nullity 1, given by the $x_{\type+1}$ direction. When $\type=n$ it is positive definite. The tangent space
to $\Hcal_t$ is $T_*\Hcal_t=\ker D h_{\ppsi}$ which does not contain the $x_{\type+1}$ direction. Thus $D^2h_{\ppsi}$
restricted to $T_*\Hcal_t$ is positive definite hence
$\Hcal_t$
is strictly convex.
}

Suppose $\ell$ is a line segment with endpoints $a,b\in \Omega$. Set  $f=h|\ell$ then $f''\geq 0$
so $f$ attains its maximum at an endpoint. Thus $f\leq \max(f(a),f(b))$ and $f(a),f(b)\leq 0$ since $a,b\in\Omega$.
 Thus $f\leq0$  so $\ell\subset\Omega$ and $\Omega$ is convex. 


 Suppose $\ell$  is a complete affine line contained in
$\Omega$. Then $\ell$ is contained in $\RRP^\type\times\RR^{n-\type}$, so $x_i$ is constant along $\ell$
for $i\le \type$. Thus $\type<n$ and $h_{\ppsi}|\ell=C_1-t+C_2t^2,$ where $t$ is an affine coordinate on $\ell$. But $\ell\subset\Omega$ implies this function is nowhere positive, a contradiction. Hence
$\Omega$ contains no complete affine line, and is thus properly convex .
%
 %
\end{proof}
 {\begin{remarks}\label{positivelemma}  (a)  If $\forall i\ |\ppsi_{i}|\ge|\ppsi_{i+1}|$ then $h_{\ppsi}$ is convex iff either  $\forall i\ \ppsi_i\ge0$ or   $\forall i\ \ppsi_i<0$.\\
(b) It follows from Lemma \ref{uniquesupphyperplane} and the discussion in Section 3 of \cite{CLT1}
that the $\mathcal H_t$ are  horospheres in the sense of Busemann, and from
  (\ref{eq:horosphere}) that
 they are also  {\em algebraic horospheres} as defined in \cite{CLT1}. \end{remarks} }
\begin{definition}\label{psicuspdef} The {\em $\ppsi$ cusp Lie group} is  the { sub}group,
$G=G(\ppsi)\subset\PGL(n+1,\RR)$
that preserves each horosphere. A {\em $\ppsi$-cusp} is  $C=\Omega(\ppsi)/\Gamma$ where
 $\Gamma\subset G$ is a torsion-free lattice.
\end{definition}
The condition that $G(\ppsi)$ preserves each horosphere is equivalent to preserving the horofunction, {
thus $G(\ppsi)\subset\PGL(\Omega(\ppsi))$.
 It follows { from Propostion \ref{Omegaconvex}} that a $\ppsi$-cusp
is  a properly-convex manifold.}
{ The torsion free hypothesis on $\Gamma$ is strictly a matter of convenience.} If $\Gamma$ is a lattice in $G(\ppsi)$ that contains torsion then {$C$} is an orbifold. 

\subsection{The Radial Flow}
 The {\em unipotent rank} is 
$\ur=\max(n-1-\type,0)$ and the {\em rank} $\rank$ is defined by 
$ \rank+\ur=n-1$.  Then $\rank=\min(\type,n-1)$. { A more conceptual interpretation of $\rank$ and $\ur$ is given by Equation (\ref{rprime:eq}).}  It is convenient to use coordinates on  $V_{\ppsi}$ given by
\begin{equation}
\label{eq:coords}
(x,z,y)\in V_{\ppsi}=
\left\{\begin{array}{lcl} \RRP^{\rank}\times\RR\times \RR^{\ur} & \textrm{if} & \type<n\\
\RRP^{\rank}\times\RR_+\times \RR^{\ur} & \textrm{if} & \type=n
\end{array}\right.
\end{equation}
When {$\type=0$ the  $x$-coordinate is empty; and when $\type\ge n-1$ then $\ur=0$ so the $y$-coordinate} is empty.
The $z$-coordinate 
is called the {\em vertical direction}. This terminology is motivated by regarding
 the horospheres as graphs of functions, { see Equation \eqref{eq:Logpsif}.} 
 { The $y$-coordinate is called the {\em parabolic direction} and 
 the $x$-coordinate is called the {\em hyperbolic direction}, see Equation (\ref{eq:Tmatrix}).} 

\begin{definition}\label{basepointdef} The {\em basepoint} of $\Omega(\ppsi)$  is $b=b_{\ppsi}=e_1+\cdots+e_\type\in\RR^n$. 
 \end{definition}

Thus for $\type=0$ the basepoint is $b=0\in\RR^n$. The basepoint satisfies $h_{\ppsi}(b)=0$ so
$b\in\bdy\Omega$. When $\type<n$ then 
$b=(x_0,z_0,y_0)$
 where $x_0=(1,\cdots,1)\in \RRP^{\rank}$ and the remaining coordinates are $0$. When $\type=n$ then
 $b=(1,\cdots,1)$. In projective coordinates the basepoint is $[b_{\ppsi}+e_{n+1}]\in\RP^n$. 
Define $$U=U_{\ppsi}=\RRP^{\rank}\times \RR^{\ur}$$
{\em Radial projection} is the map $\pi=\pi_{\ppsi}:V_{\ppsi}\to U_{\ppsi}$ given by  
\begin{equation}
\label{eq:radialproj}
\pi(x,z,y)=\left\{\begin{array}{lcl} (x,y) & \textrm{if} & \type<n\\
 (x/z,y/z)& \textrm{if} &\type=n
 \end{array}\right.
 \end{equation}
\begin{definition} The {\em radial flow} { $\flow=\flow^{\psi}\subset\PGL(n+1,\RR)$ is
the $1$-parameter subgroup that acts on $V(\ppsi)$ by}
\begin{equation}
\label{eq:radialflow}
\flow_t(x,z,y)=\Phi((x,z,y),t)=\left\{\begin{array}{lcl}(x,z-t,y) & \textrm{if} &\type<n\\
e^{-t}(x,z,y) & \textrm{if} & \type=n
\end{array}\right.
\end{equation}\end{definition}
In the first case the radial flow is called \emph{parabolic} and in the second case it is \emph{hyperbolic}. 
This terminology agrees with that of \cite{CLT2}. The orbit of a point
is called a {\em flowline}.  Each flowline maps to one point under radial projection. When $\type<n$  flowlines are vertical lines, and when $\type=n$ they are open rays that limit on $0\in\RR^n$.  

The reason for the name {\em radial flow} 
is that this group acts on $\RP^n$
and there is a point $\alpha\in\RP^n$ called the {\em center of the radial flow} with the property that,
 if a point $\beta\in \RP^n$ is not fixed by the flow, then
the orbit  of $\beta$ is contained in the projective line containing $\alpha$ and $\beta$. Moreover $\Phi_t(\beta)\to\alpha$ as $t\to\infty$. { The center is
 \begin{equation}\label{eq:center}\alpha=[e_{\type+1}]\end{equation} If $\type<n$ then 
 $\alpha\in\RP_{\infty}^{n-1}$
  corresponds to  the $z$-axis, and  $\alpha=0\in\RR^n$ when $\type=n$.}

 Observe that the radial flow has the following equivariance property:
\begin{equation}\label{eq:flowequivariance}
h_{\ppsi}(\Phi_t(x))=h_{\ppsi}(x)+t 
\end{equation}
This equation would need to be modified without the first  factor in  the definition of $h_{\ppsi}$ (see \eqref{eq:horo}) when $\type=n$.
 It follows that the radial flow permutes the level sets of the horofunction  and hence permutes
the horospheres and
\begin{equation}\label{eq:horosphere}{\mathcal H}_{t}=\Phi_{-t}(\partial\Omega)\end{equation}

\if0
The horofunctions and radial flows both admit a natural $\R^+$ action. Specifically, given $c\in \R^+$ we get a new horofunction $h^c_\ppsi=ch_\ppsi$ and a new radial flow given by $\Phi^c_t=\Phi_{t/c}$. A simple computation shows that \eqref{eq:flowequivariance} continues to be satisfied, namely
\begin{equation}\label{flowhorofn}
h^c_\ppsi(\Phi_t^c(x))=h^c_\ppsi(x)+t
\end{equation}

Using \eqref{eq:flowequivariance} we see that the horofunction determines the radial flow and vice versa 
since given one there is a unique choice of the other so that \eqref{eq:flowequivariance} is satisfied. 
Clearly $\Omega(\psi)$ is the set of points with image in $(-\infty,0]$ under $h^c_\ppsi$
so, in a sense, each $\Omega(\ppsi)$ is equipped with a 1-parameter family of horofunctions and radial flows.
 The various horofunctions/radial flows will become important in Section \ref{euclideanstructure}. 
There is a product structure on a $\ppsi$-cusp defined using a choice of horofunction/radial-flow. In Corollary \ref{invariantproduct} 
it is shown this product structure depends only on the set $\Omega(\ppsi)$ up to projective equivalence.
\fi

\begin{definition}\label{transversefoliations} A {\em product structure} on a manifold $M$ is a pair of transverse foliations
on $M$ determined by a diffeomorphism $P\times Q\to M$.  There is a 
diffeomorphism $f:\bdy\Omega(\ppsi)\times[0,\infty)\to \Omega(\ppsi)$  given by $f(x,t)=\Phi_{-t}(x)$.
This defines a product structure  on $\Omega(\ppsi)$,  
with a   foliation by  {\em horospheres}, and
a transverse foliation by {\em (half-)flowlines}. 
\end{definition}

If $C=\Omega(\ppsi)/\Gamma$ is a $\ppsi$-cusp, then $\Gamma$
preserves this product structure, so it covers a product structure on $C$. { The image in $C$ of a horosphere is called a {\em horomanifold}.} 
The set $\Omega$ is {\em backwards invariant} which means that 
$\Phi_t(\Omega)\subset\Omega$ for all $t\le 0$ and
$\Omega$ is the backwards orbit of $\partial\Omega$
$$\Omega=\bigcup_{t\le 0}\Phi_t(\partial\Omega)$$
 For $\type< n$ it is convenient to introduce $\ppsi^\type:\RR^\type\rightarrow\RR$ given by
\begin{equation}\label{ppsir}
\ppsi^\type(x)=\ppsi(x,0,\cdots,0)
\end{equation} 
Define  $\Log:\RR\to\RR$ by
\begin{equation}
\label{eq:Log}
\Log(x)=\left\{\begin{array}{lcr}
0 & \textrm{if} & x\le 0\\
\log(x) & \textrm{if} & x>0
\end{array}
\right.
\end{equation} and extend this to a map $\Log:\RR^r\to\RR^r$ by applying $\Log$ componentwise.
Then define $f=f_{\ppsi}:U\to\RR$ by 
\begin{equation}
\label{eq:Logpsif}
\begin{array}{rcl}
 f_{\psi}(x,y) & = &
\left\{
\begin{array}{lcl} -\ppsi^\type\circ\Log(x)+\|y\|^2/2& \textrm{if} & \type<n\\
\prod_{i=1}^{n-1}x_i^{-\psicoef_i/\psicoef_n} & \textrm{if} & \type=n
\end{array}\right.
\end{array}
\end{equation}
The map $F=F_{\ppsi}:U\to\bdy\Omega$ given  by 
\begin{equation}\label{eq:graph}F(x,y)=(x,f(x,y),y)\end{equation} 
is the inverse of the 
restriction of {\em vertical} projection $\pi|:\partial\Omega\to U$, so
 $\partial\Omega$
  is the graph $z=f(x,y)$ of $f$ and $\Omega=\{(x,z,y): z\ge f(x,y)\}$  is the supergraph of $f$.
{ From Definition \ref{psidomain} when} $\type<n$ the horofunction is  expressed more compactly as
\begin{equation}\label{eq:compacth}
h_{\ppsi}(x,z,y)=
 -z+f_{\ppsi}(x,y)
 \end{equation}
{  but for $\type=n$  this does not work.}

\subsection{The Ideal Boundary $\bdy_{\infty}\Omega$}
In what follows  $\ppsi$ { is omitted} from the notation. We describe the closure $\overline\Omega$ in $\RP^n$. 
Identify affine space $\RR^n$ with an affine patch in projective space $\RP^n$ by identifying $(x,z,y)$ in $\RR^n$ with  $[x:z:y:1]$ in $\RP^n$. Then
\begin{equation}
\label{eq:omegadef}
\Omega=\{[x:z:y:1]\ |\  z\ge f(x,y),\ \ x\in\RRP^{\rank}\}\subset\RP^n
\end{equation}
Observe that $\overline\Omega\cap\RR^n=\Omega$.
 The {\em points at infinity} are $\RP^{n-1}_{\infty}=\RP^n\setminus\RR^n$ and
\begin{equation}
\label{eq:bdyinfinity}
\overline\Omega=\Omega\sqcup\bdy_{\infty}\Omega\qquad{\rm with}\quad \bdy_{\infty}\Omega:=\overline\Omega\setminus\Omega\subset\RP^{n-1}_{\infty}
\end{equation}
The set $\bdy_{\infty}\Omega$ is called the {\em ideal boundary} or the {\em boundary at infinity} of $\Omega$.  See \cite{DKG} Definition 1.17. The {\em non-ideal boundary} or just {\em boundary} of $\Omega$ is $\partial\Omega=\RR^n\cap\bdy\overline\Omega$. Thus
$$\partial\overline\Omega=\bdy\Omega\sqcup\bdy_{\infty}\Omega.$$
\begin{lemma}\label{idealboundary}$\bdy_{\infty}\Omega(\ppsi)$ is the simplex of dimension $\rank$
$$\bdy_{\infty}\Omega(\ppsi)=\Delta^{\rank}:=\{[x_1:\cdots:x_{\rank+1}:{ 0}:\cdots:0]\  |\  x_i\ge 0\}$$
\end{lemma}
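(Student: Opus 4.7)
The plan is to identify $\bdy_{\infty}\Omega$ with the projectivization of the \emph{recession cone}
\[
\mathrm{rec}(\Omega) := \{v\in\RR^{n} : p+tv\in\Omega \text{ for all } p\in\Omega \text{ and all } t\ge 0\},
\]
and then read this cone off directly from the defining inequality $h_\ppsi\le 0$, in each of the two cases $\type<n$ and $\type=n$.

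Because $\Omega$ is closed and convex in $\RR^{n}$ (Proposition \ref{Omegaconvex}), a sequence $p_k\in\Omega$ converges in $\RPn$ to a point $[v:0]\in\RP^{n-1}_{\infty}$ precisely when there is $s_k\to\infty$ with $p_k/s_k\to v$: indeed, $\lambda_k(p_k,1)\to(v,0)$ in $\RR^{n+1}$ forces $\lambda_k\to 0$ and $s_k:=1/\lambda_k\to\infty$. A standard fact from convex analysis then identifies
\[
\overline{\Omega}\cap\RP^{n-1}_{\infty} \;=\; \bigl\{[v:0] : v\in\mathrm{rec}(\Omega),\ v\ne 0\bigr\}.
\]
Thus it suffices to compute $\mathrm{rec}(\Omega(\ppsi))$.

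In the case $\type<n$, write $v=(v_x,v_z,v_y)\in\RR^{\type}\times\RR\times\RR^{\ur}$ and $p=(x,z,y)\in\Omega$. Requiring $p+tv\in V_{\ppsi}$ for all $t\ge 0$ forces $v_{x,i}\ge 0$ for $i\le\type$, and the condition $h_{\ppsi}(p+tv)\le 0$ then reads
\[
z+tv_z \;\ge\; -\sum_{i=1}^{\type}\psi_i\log(x_i+tv_{x,i}) \,+\, \tfrac{1}{2}\|y+tv_y\|^2.
\]
As $t\to\infty$ the right side has leading term $\tfrac{t^2}{2}\|v_y\|^2$ while the left side grows linearly, forcing $v_y=0$. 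Once $v_y=0$ and $v_{x,i}\ge 0$, the right side is nonincreasing in $t$, so the inequality holds for every $t\ge 0$ iff it holds at $t=0$ and $v_z\ge 0$. Projectivizing $\{v_{x,i}\ge 0,\ v_z\ge 0,\ v_y=0\}$ gives $\{[x_1:\cdots:x_{\type+1}:0:\cdots:0] : x_i\ge 0\}$, which is $\Delta^{\rank}$ since $\rank=\type$ here. In the remaining case $\type=n$, the defining condition reads $\sum_{i=1}^{n}\psi_i\log(x_i+tv_i)\ge 0$ together with $x_i+tv_i>0$; these force $v_i\ge 0$, and under that sign constraint the sum is nondecreasing in $t$, so the condition propagates from $t=0$. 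Hence $\mathrm{rec}(\Omega)=\RR_{\ge 0}^{n}$, with projectivization $\Delta^{n-1}=\Delta^{\rank}$.

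The only conceptually nontrivial step is the initial identification of $\bdy_{\infty}\Omega$ with the projectivized recession cone, which is classical convex analysis for closed convex subsets of $\RR^{n}$ but worth stating explicitly. After that, everything reduces to bookkeeping: the quadratic $y$-term in $h_{\ppsi}$ kills the parabolic direction, while the monotonicity of $-\log$ combined with the sign convention $\psi_i\ge 0$ handles all remaining nonnegativity constraints automatically.
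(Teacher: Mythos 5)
Your proof is correct, but it takes a genuinely different route from the paper's. You identify $\bdy_{\infty}\Omega$ with the projectivization of the recession cone $\operatorname{rec}(\Omega)$ and then compute that cone from the inequality $h_{\ppsi}\le 0$; this is legitimate because Proposition \ref{Omegaconvex} guarantees $\Omega(\ppsi)$ is closed and convex in $\RR^n$, so the classical fact (e.g.\ Rockafellar, Theorem 8.2) that $\operatorname{rec}(\Omega)$ is exactly the set of limits $\lim \lambda_k p_k$ with $p_k\in\Omega$, $\lambda_k\downarrow 0$, applies and yields $\cl\Omega\cap\RP^{n-1}_{\infty}=\PP(\operatorname{rec}(\Omega)\setminus\{0\})$. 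The paper instead argues directly on sequences $[x:z:y:1]$ with $\|(x,z,y)\|\to\infty$: for the inclusion $\bdy_{\infty}\Omega\subset\Delta^{\rank}$ it splits into cases according to whether $\ppsi^{\type}\Log(x)$ is eventually smaller than $\|y\|^2/4$, deducing $y/z\to 0$ or $y/x_i\to 0$ respectively, and for the reverse inclusion it exhibits explicit rays $[e^tx:e^tz:0:1]$ using $f(e^tx,0)<0$. Your approach buys uniformity: the elimination of the parabolic ($y$) direction and the nonnegativity constraints fall out of a single growth-rate comparison (quadratic versus linear versus logarithmic), and the reverse inclusion is automatic once the cone is computed, with no need for the separate ray construction. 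What it costs is the reliance on the recession-cone characterization, which you correctly flag as the one nontrivial imported ingredient. One small point worth making explicit in your $\type<n$ case: the necessity of $v_z\ge 0$ is not purely a consequence of the right side being nonincreasing --- you should note that if $v_z<0$ the left side decreases linearly while the right side decreases at most logarithmically (since each $-\psi_i\log(x_i+tv_{x,i})$ term decays like $-\psi_i\log t$ at worst), so the inequality eventually fails; as written, the "iff" is asserted slightly faster than it is justified.
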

\begin{proof} From Equation (\ref{eq:omegadef}) $\bdy_{\infty}\Omega$ consists of all the points that are the limit of a sequence of 
points $[x:z:y:1]$ with $\|(x,z,y)\|\to\infty$ for which $z\ge f(x,y)$.

First assume $\type<n$, and so $\type=\rank$. We claim that $y/\|(x,z,y)\|\to 0$ along the sequence. If eventually $\psi^{\type}\Log(x)<\|y\|^2/4$   then by Equation (\ref{eq:Logpsif}) it follows that $z>\|y\|^2/4$ and, since $y/\|y\|^2\to 0$ as $\|y\|\to\infty$, it follows that $y/z\to 0$.
Otherwise we may take a subsequence so $\psi^{\type}\Log(x)\ge \|y\|^2/4\to+\infty$. Since $\psi_j>0$ for all $j\le \rank$,
 this means  for some $i\le \rank$ the coordinate 
$x_i$ of $x$ is positive
and 
larger than some fixed multiple of $\exp\|y\|$, hence $y/x_i\to0$. This proves the claim.
Hence
$\bdy_{\infty}\Omega\subset\Delta^{\rank}$.

From (\ref{eq:Logpsif}) we see that  $f(e^tx,0)<0$  for large $t$. Then by Equation (\ref{eq:omegadef}) 
$$\bdy_{\infty}\Omega\supset \{\lim_{t\to\infty}[e^t x:e^t z:0:1]\ |\  z \ge 0,\ \ x\in\RRP^r\}=\Delta^{\rank}$$
which proves the result for $\type<n$. 

When $\type=n$ then  $\rank=n-1$ and $\partial_\infty\Omega\subset \partial_\infty V_\ppsi=\Delta^{n-1}$. On the other hand if $v\in \interior\Delta^{n-1}$ then $v=\lim_{t\to\infty}[tx:tz:1]$, where $x\in \RRP^{n-1}$ and $z\in \RRP$. From the definition of $f(x)$ (see Equation \eqref{eq:Logpsif}), it is easy to check, { when $t$ is large}, that $tz>f(tx)$, hence $(tx,tz)\in\Omega$, and so $\interior \Delta^{n-1}\subset \partial_\infty\Omega$. Since $\partial_\infty\Omega$ is closed it follows that $\Delta^{n-1}=\partial_\infty\Omega$.
\end{proof}

\begin{lemma}\label{uniquesupphyperplane} { Every point in} the relative interior of $\bdy_{\infty}\Omega$ 
is a $C^1$-point, {  and
\begin{itemize}
\item[(a)] $\RP^{n-1}_{\infty}$ is the unique  hyperplane in $ \RPn$ that contains $\bdy_{\infty}\Omega$ and is disjoint from
$\Omega$. 
\item[(b)]  $\AA(\Omega):=\RPn\setminus\RP^{n-1}_{\infty}$ is the unique affine patch that contains $\Omega$
as a closed subset.
\item[(c)] $\PGL(\Omega)\subset\Aff(\RR^n)$.
\end{itemize}}
\end{lemma}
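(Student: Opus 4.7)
My plan is to first establish the $C^1$ statement at relative interior points of $\bdy_\infty\Omega$, from which (a) follows, and then derive (b) and (c) as formal consequences. The starting observation is that $\RP^{n-1}_\infty$ is itself a supporting hyperplane of $\overline\Omega$ at every point of $\bdy_\infty\Omega$: by Lemma \ref{idealboundary} it contains $\Delta^\rank=\bdy_\infty\Omega$, and it is trivially disjoint from $\Omega\subset\RR^n$. So the only task is to prove uniqueness.

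Fix $p$ in the relative interior of $\Delta^\rank$ and let $H$ be any supporting hyperplane of $\overline\Omega$ at $p$. A standard convexity argument forces $\Delta^\rank\subset H$: for each vertex $q$ of $\Delta^\rank$, the segment from $q$ through $p$ extends slightly past $p$ inside $\Delta^\rank\subset\Fr(\Omega)$, so $p$ lies in the relative interior of a segment in $\overline\Omega$ with $q$ as an endpoint, and any supporting hyperplane at such a point must contain the entire segment; since this holds for every vertex $q$, $H$ contains all of $\Delta^\rank$. Writing $H$ in projective coordinates as $\sum_{i=1}^{n+1}\phi_ix_i=0$, the inclusion $\Delta^\rank\subset H$ forces $\phi_1=\cdots=\phi_{\rank+1}=0$. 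If $\rank=n-1$, then $\Delta^\rank$ already spans $\RP^{n-1}_\infty$ and $H=\RP^{n-1}_\infty$ automatically. Otherwise $\ur\ge 1$, so that the parabolic $y$-coordinates are present, and $H\cap\RR^n$ has equation $\sum_{i=\rank+2}^n\phi_ix_i+\phi_{n+1}=0$, involving only the $y$-coordinates. Since the projection of $\Omega$ onto $\RR^\ur$ in the $y$-coordinates is surjective --- for any $y\in\RR^\ur$ the point with $x=(1,\dots,1)$ and $z=\tfrac12\|y\|^2$ satisfies $h_\ppsi\le 0$ --- disjointness of $H$ from $\Omega$ forces the remaining coefficients $\phi_{\rank+2},\dots,\phi_n$ to vanish, leaving once again $H=\RP^{n-1}_\infty$.

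The $C^1$ statement is now established, and (a) follows because any hyperplane satisfying the hypotheses of (a) is a supporting hyperplane at each relative interior point of $\bdy_\infty\Omega$. For (b), an affine patch $\RPn\setminus H$ contains $\Omega$ as a closed subset exactly when $H$ is disjoint from $\Omega$ and contains $\overline\Omega\setminus\Omega=\bdy_\infty\Omega$, so (b) reduces immediately to (a). For (c), any $g\in\PGL(\Omega)$ preserves $\Omega$, its closure, and hence $\bdy_\infty\Omega$; then $g(\RP^{n-1}_\infty)$ is a hyperplane containing $\bdy_\infty\Omega$ and disjoint from $g(\Omega)=\Omega$, so by (a) it equals $\RP^{n-1}_\infty$, showing $g$ preserves $\AA(\Omega)$ and therefore lies in $\Aff(\RR^n)$.

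The main obstacle is ruling out the non-trivial hyperplanes through $\Delta^\rank$ that exist when $\rank<n-1$; the essential ingredient is the unboundedness of $\Omega$ in the parabolic directions (surjectivity of the projection to the $y$-coordinates), which is read off directly from the explicit formula for $h_\ppsi$ in Definition \ref{psidomain}.
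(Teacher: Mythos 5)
Your proof is correct, but it takes a genuinely different and more elementary route than the paper's. The paper establishes the $C^1$ claim by a local analysis: it slices $\Fr\Omega$ by a $(\ur+1)$-dimensional projective subspace $P$ through $q$, changes affine patch, and uses the inverse function theorem to show the slice $\cl(P\cap\bdy\Omega)$ is $C^1$-close to an ellipsoid at $q$; uniqueness of $H$ then follows from a dimension count, since $H$ must contain both $\bdy_{\infty}\Omega$ (dimension $\rank$) and $Q=P\cap\RP^{n-1}_{\infty}$ (dimension $\ur$), which are transverse and together span $H$. You instead argue globally: the segment argument forces $\Delta^{\rank}\subset H$, killing $\phi_1,\dots,\phi_{\rank+1}$, and then surjectivity of the projection of $\Omega$ onto the parabolic $y$-coordinates kills $\phi_{\rank+2},\dots,\phi_n$. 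This is shorter, avoids the case split between $\type=0$, $0<\type<n-1$ and $\rank=n-1$, and needs no smoothness of $\bdy\Omega$; what it gives up is the finer local picture the paper extracts along the way (the flat-times-nearly-ellipsoidal structure of $\Fr\Omega$ near $q$, used again in the proof of Proposition \ref{characterizecenterRF}). One small imprecision: a supporting hyperplane at $p\in\bdy_{\infty}\Omega$ is only guaranteed disjoint from $\interior(\Omega)$, not from all of $\Omega$, and your witness point with $z=\tfrac12\|y\|^2$ lies on $\bdy\Omega$; take $z>\tfrac12\|y\|^2$ to land in $\interior(\Omega)$ and the contradiction is honest. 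The reductions of (a), (b), (c) to the uniqueness statement match the paper's ``(a) implies (b) and (c)'' remark.
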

\begin{proof} { Clearly (a) implies (b) and (c).  For (a) the result follows from the following  picture
that we will establish. Near a point $q\in\interior(\bdy_{\infty}\Omega)$ the 
frontier  $\Fr\Omega$ looks like a (flat) open set in $\Delta^{\rank}$
product a hypersurface in $\RR^{n-\rank}$ that is $C^1$ close to an ellipsoid, and is thus $C^1$.

Now (a) is clear for $\rank=n-1$. It is also clear in the case
 $\type=0$ since $\cl\Omega$ is a round ball, and $q=\bdy_{\infty}\Omega$ is a single $C^1$ point.
 Thus we may suppose that $\ur\ge 1$ and $\rank=\type>0$. 
We use coordinates $(a,y,w)\in\RR^{\rank+1}\oplus\RR^{\ur}\oplus\RR\equiv\RR^{n+1}$ where 
$a=(x,z)\in\RR^{\rank}\oplus\RR$ in the coordinates above.
Thus the affine patch used above is $[a:y:1]$, and $\RP^{n-1}_{\infty}$ is
$[a:y:0]$.
Given $q\in\interior(\bdy_{\infty}\Omega)$ then $q=[a:0:0]$ with
 $a=(a_1,\cdots,a_{\rank+1})\in\RR_+^{\rank+1}$ by  Lemma \ref{idealboundary}. Let
 $$P=\PP(\RR\cdot a\oplus\R^{\ur}\oplus \RR)\subset\RPn$$ Then $P\cong\RP^{\ur+1}$ and $P\cap\bdy_{\infty}\Omega=q$.
Moreover $W:=P\cap\bdy\Omega\subset\RR^n$ is  the subset of $\bdy\Omega$ where  
the $(x,z)$-coordinate is $z\cdot a$ for some $z$.  We may scale $a$ so that $a_{\rank+1}=1$
then $z\cdot a=(za_1,\cdots,za_{\rank},z)$.
By Equation (\ref{psidomain}) $\bdy\Omega\cap\left(\RR\cdot a\oplus\RR^{\ur}\right)$ is given by
\begin{equation} { 0}=h(za_1,\cdots,za_{\rank},z,y)=-z-\sum_{i=1}^{\rank}\ppsi_i\log(za_i)+\|y\|^2/2
\end{equation}
This may be rewritten as
\begin{equation} z+\alpha\log z +\beta =\|y\|^2/2\qquad \alpha
=\sum\ppsi_i,\quad\beta = \sum\ppsi_i\log a_i\end{equation}
Near $q$ then $z$ is large and $W$ is $C^0$-close to the ellipsoid $z=\|y\|^2/2$ in $\RP^{\ur+1}$. We now show it is $C^1$-close
by changing to a different affine patch using
 $[y:z:1]=[yz^{-1}:1:z^{-1}]=[u:1:v]$. Then  $q$ is the point $(u,v)=(0,0)\in\RR^{\ur+1}$ and $u\in\RR^{\ur}$ and $v\in\RR$ satisfy
\begin{equation} v^{-1}+\alpha\log v^{-1}+\beta=\|v^{-1} u\|^2/2
\end{equation}
Which can be expressed as $f(v)=\|u\|^2/2$ where $f(v)=v-\alpha v^2\log v +\beta v^2$ for $v\ne0$. There is
a $C^1$ extension of $f$ given by $f(0)=0$, then $f'(0)=1$, so by the inverse function theorem
near $0\in\RR^{\ur}\oplus\RR$ we have $v=f^{-1}(\|u\|^2/2)$ is $C^1$ close to $v=\|u\|^2/2$ at $q$. {\blue In
particular $q$ is a $C^1$ point of $\cl W$.} It
is interesting that $f''(v)\to\infty$ as $v\to0$.

Let $H\cong\RP^{n-1}$ be a supporting hyperplane to $\Fr\Omega$ at $q$,
 then since $q$ is a $C^1$ point of $\cl(W)$ in $P$ it follows that $H$ contains $Q:=P\cap\RP_{\infty}^{n-1}$. Furthermore, $\dim Q=\dim P-1=\ur$.
Since $q\in\interior(\bdy_{\infty}\Omega)$ and $H$ is a supporting hyperplane, it follows that $H$  contains $\bdy_{\infty}\Omega$. By Lemma \ref{idealboundary}, $\dim \partial_\infty \Omega=\rank$ and hence $\dim(\bdy_{\infty}\Omega)+\dim Q=\rank +\ur=n-1=\dim H.$
Since $\bdy_{\infty}\Omega$ and $Q$ are transverse in $H$ it follows that $H$ is the unique hyperplane that 
contains $\bdy_{\infty}\Omega\cup Q$.}
\end{proof}

{ The next result implies that a generalized cusp has a natural affine structure that is a stiffening of the
projective structure. 

\begin{proposition}\label{characterizecenterRF}\label{invariantproduct}
 Let $\Omega=\Omega(\ppsi),\  \Phi=\Phi^{\psi},\ h=h_{\psi}$ and let $\blue\alpha$ be the center of $\Phi$. Let $\Omega',\Phi',h'$ and ${\blue\alpha}'$ be the corresponding objects for $\psi'$. Suppose that $P\in\PGL(n +1,\RR)$ and $P(\Omega)=\Omega'$
 then
 \begin{itemize}
 \item[(a)] $\type=\type'$
 \item[(b)] $P({\blue\alpha})={\blue\alpha}'$
\item[(c)] $P\in\Aff(\RR^n)$
\item[(d)] $\Phi'=P\cdot \Phi\cdot P^{-1}$
\item[(e)] $\exists\ {\blue c}>0\ \ \forall \ t\ \ \ \flow'_{{\blue c} t}=P\cdot\flow_t\cdot P^{-1}$
\item[(f)]  $h'\circ P={\blue c}\cdot h$
\item[(g)] $P$ sends the product structure of $\Omega$ to that of $\Omega'$
\end{itemize}
 \end{proposition}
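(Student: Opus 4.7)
The plan is to first deduce (c) from the uniqueness of the affine patch in Lemma \ref{uniquesupphyperplane}, then intrinsically recover the radial flow on $\Omega'$ by conjugation, and finally read off the other items.

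For (c), by Lemma \ref{uniquesupphyperplane}(b) the affine patch $\AA(\Omega)=\RPn\setminus\RP^{n-1}_\infty$ is the unique one in $\RPn$ in which $\Omega$ is a closed subset, and by Lemma \ref{idealboundary} the same standard $\RP^{n-1}_\infty$ also serves $\Omega'$. Since $\Omega'=P(\Omega)$ is closed in $P(\AA(\Omega))$, uniqueness forces $P(\RP^{n-1}_\infty)=\RP^{n-1}_\infty$, giving $P\in\Aff(\RR^n)$. Combined with $P(\cl\Omega)=\cl\Omega'$ this gives $P(\bdy_\infty\Omega)=\bdy_\infty\Omega'$, so Lemma \ref{idealboundary} yields $\rank=\rank'$. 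This proves (a) except in the borderline case $\rank=\rank'=n-1$, where $\type$ could be either $n-1$ or $n$; I would distinguish these by a projective invariant, for instance: $\cl\Omega$ is contained in an enclosing $n$-simplex iff $\type=n$ (the ambient $V_\psi=\RR^n_+$ is a strictly convex cone), while for $\type=n-1$ the ambient $V_\psi=\RR^{n-1}_+\times\RR$ contains complete affine lines in the $z$-direction so no such enclosing simplex exists. Both conditions are preserved by $P$.

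For (b), (d), (e), (f), set $\tilde\Phi_t:=P\circ\Phi_t\circ P^{-1}$. This is a 1-parameter subgroup of $\PGL(n+1,\RR)$ that preserves $\AA(\Omega')$, carries horospheres of $\Omega'$ to horospheres (since $\Phi_t$ permutes horospheres of $\Omega$ and $P$ takes the horosphere foliation of $\Omega$ to that of $\Omega'$), and whose orbits project to points under radial projection \eqref{eq:radialproj}. I would show that these properties, together with the direction in which horospheres are shifted into the complement of $\Omega'$, characterize the radial flow up to a positive rescaling $t\mapsto ct$; this can be extracted from the explicit formulas \eqref{eq:radialflow} and \eqref{eq:flowequivariance}. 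Hence $\tilde\Phi_t=\Phi'_{ct}$, which is (e), and (d) follows. The center $\alpha$ is $\lim_{t\to+\infty}\Phi_t(\beta)$ for any $\beta\in\interior(\Omega)$ — equivalently, the unique attractor of $\Phi_t$ on the affine patch — a property preserved under projective conjugation, so $P(\alpha)=\alpha'$, giving (b). For (f), set $H=h'\circ P$; combining (e) with $h'\circ\Phi'_s=h'+s$ gives $H\circ\Phi_t=H+ct$, while $h\circ\Phi_t=h+t$, so $H-c\,h$ is $\Phi$-invariant. Both $H$ and $c\,h$ vanish on $\bdy\Omega$ because $P(\bdy\Omega)=\bdy\Omega'$; since every $\Phi$-orbit meets $\bdy\Omega$ exactly once (at $\Phi_{-h(x)}(x)$), the $\Phi$-invariant function $H-c\,h$ vanishes everywhere on $V_\psi$, giving $h'\circ P=c\,h$. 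Item (g) is then immediate from (d) and (f): $P$ takes horospheres of $\Omega$ (level sets of $h$) to horospheres of $\Omega'$ and $\Phi$-orbits (flowlines) to $\Phi'$-orbits, transporting the product structure of Definition \ref{transversefoliations}.

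The main obstacle is the intrinsic characterization of the radial flow used in (e), especially in the exceptional case $\ppsi=0$, where $\PGL(\Omega(0))$ properly contains $G(0)$ and admits additional 1-parameter subgroups (hyperbolic isometries of $\HH^n$ fixing the cusp point) that also permute horospheres; in that case the radial flow is pinned down by being parabolic (unipotent), whereas for $\ppsi\ne 0$ a separate argument tailored to whether $\Phi$ is parabolic ($\type<n$) or hyperbolic ($\type=n$) is needed. A clean alternative would be to invoke Theorem \ref{Classification Theorem}(iv), provided (iv) has been established independently.
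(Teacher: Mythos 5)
Your treatment of (a), (c), (f) and (g) is sound and close to the paper's, but the central cluster (b)--(d)--(e) contains a circularity that you partially acknowledge and do not resolve. You propose to identify $\tilde\Phi_t:=P\Phi_tP^{-1}$ with $\Phi'_{ct}$ by showing $\tilde\Phi$ permutes the horospheres of $\Omega'$ and has orbits that are fibers of the radial projection of $\Omega'$. But the first property is exactly the statement that $P$ carries the horosphere foliation of $\Omega$ to that of $\Omega'$ (i.e.\ (f)/(g)), and the second presupposes $P(\alpha)=\alpha'$ (i.e.\ (b)); neither is available at that stage. Your fallback of invoking Theorem \ref{Classification Theorem}(iv) is also circular, since (iv) is deduced from Proposition \ref{invariantEuclmetric}, which rests on \ref{horometricesinvariant}, which rests on part (f) of the present proposition. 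The missing idea is the paper's direct, projectively invariant characterization of the center \emph{before} touching the flow: for $0<\type<n$, $\alpha=[e_{\type+1}]$ is singled out among the vertices of $\bdy_\infty\Omega$ because, on a $2$-plane section of $\Fr(\overline\Omega)$ through $[e_{\type+1}]$, $[e_s]$ and a boundary point, the curve is $C^1$ at $[e_s]$ but not at $[e_{\type+1}]$ (the $y=-\log x$ picture); for $\type=n$, $\alpha$ is the unique vertex of the minimal enclosing simplex $\Delta$ not lying in $\bdy_\infty\Omega$; for $\type=0$, $\alpha=\bdy_\infty\Omega$ is a single point. This gives (b) outright, and then (d) is immediate because a one-parameter group fixing $\RP^{n-1}_\infty$ pointwise and preserving every line through a prescribed center, with no other fixed points, is unique (elations, resp.\ homotheties); (e) and (f) then follow as you say.

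Two smaller points. In (a), ``$\cl\Omega$ is contained in an enclosing $n$-simplex'' is not by itself an invariant (every properly convex compact set lies in some simplex); the correct statement, as in the paper, is the existence of an $n$-simplex containing $\Omega$ \emph{with $\bdy_\infty\Omega$ as a face}, and your justification for its non-existence when $\type=n-1$ (complete affine lines in $V_\ppsi$) does not address this: what one actually uses is that the $z$-coordinate is unbounded in both directions on $\Omega$, so $\Omega$ lies on no side of any hyperplane through the facet of $\bdy_\infty\Omega$ opposite $\alpha$. Also, for $\type=n$ the paper needs minimality and uniqueness of $\Delta$ (not just existence) in order to conclude $P(\Delta)=\Delta'$ for part (b), which your version of (a) does not supply.
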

\begin{proof} Clearly $P(\bdy_{\infty}\Omega)=\bdy_{\infty}\Omega'$. 
By Lemma \ref{idealboundary} $\bdy_{\infty}\Omega$ is a simplex
of dimension $\rank=\min(\type,n-1)$ and it follows that if $\type\le n-2$ then $\type=\type'$. It remains
to distinguish  $\type=n-1$ from $\type=n$ in a projectively invariant way.

{\bf Claim} If $\type\ge n-1$ then there is a unique minimal closed $n$-simplex $\Delta\subset\RPn$, such that $\Omega\subset\Delta$ and $\bdy_{\infty}\Omega$ is a face of $\Delta$,
 if and only if $\type=n$.
 
 In the case $\type=n$ then $\Delta$ is
  the closure in $\RPn$ of $\RR_+^n\subset\RR^n$.
Minimality and uniqueness follows from the fact that $\bdy\Omega$ is {\blue
asymptotic in $\RR_+^n$ to  
 $\bdy\Delta$ near $\RP^{n-1}_{\infty}$. Consider a ray $\gamma:[0,\infty)\to\RR_+^n$  
 then $h(\gamma(t))\to-\infty$ as $t\to\infty$. { Thus $\gamma(t)\in\Omega$ for $t$ large.} If $\Delta'\ne\Delta$, and $\Delta'$ contains $\Omega$, and 
 $\Delta'=p*\bdy_{\infty}\Omega$, then there is such a ray in $\bdy\Delta'$, unless $\Delta'\supset\Delta$.
 }  Thus any simplex
  that contains $\Omega$ also contains $\Delta$. 
  
  In the case $\type=n-1$
  the analysis below shows that there is a projective plane $H\cong\RP^2$
such that $\Omega\cap H$ looks like $\Omega(1,0)$ in Figure $1$. This implies no such $\Delta$ exists, which 
proves the claim and (a).
  
{\blue By Equation (\ref{eq:center})}, ${\blue\alpha}=[e_{\type+1}]$. When $\type=n$ then $\type'=n$ and uniqueness of $\Delta$
   implies $P(\Delta)=\Delta'$. Observe that
  ${\blue c}=[e_{n+1}]$ is the unique
   vertex of $\Delta$ that is not in $\bdy_{\infty}\Omega$. Thus $P({\blue\alpha})={\blue\alpha}'$ in this case.
 When $\type=0$ then
${\blue\alpha}=[e_1]=\bdy_{\infty}\Omega$ and ${\blue\alpha}'=\bdy_{\infty}\Omega'$ so (b) follows in this case.
 
 For (b) this leaves the case $0<\type<n$ then ${\blue\alpha}=[e_{\type+1}]\in\RP^{n-1}_{\infty}$. By Lemma \ref{idealboundary} the
 vertices of $\bdy_{\infty}\Omega$ are $[e_s]$ with $1\le s\le\type+1$.
Given $s\le \type$  let $H\cong\RP^2$ be any
 projective plane that  contains the vertices 
$[e_{\type+1}]$ and $[e_s]$ of $\Delta^\rank$, and also some point $[(v_1,\cdots,v_n,1)]\in\bdy\Omega$.
The intersection of $H$ with the affine patch $\RR^n$  is the affine subspace
$V=\langle e_{s}, e_{\type+1}\rangle + (v_1,\cdots,v_n)$.
{ Using Definition \ref{psidomain}, the} restriction of $h=h_{\ppsi}$ to $V$ is
$$f(X,Y):=h(Xe_{s}+Y e_{\type+1}+v) =
 -Y -\ppsi_s\log (X+v_s) +C
$$ { where $C$ is a constant independent of $X$ and $Y$ that depends on $v$.}
The curve $V\cap \partial \Omega$ is given by $f(X,Y)=0$. The  affine 
 change of coordinates   $(X,Y)=(x-v_s,\ppsi_s y+C)$  maps this curve to $y=-\log x$. It follows that, on the curve $H\cap\bdy\overline\Omega$, the point
  $[e_s]$ is $C^1$ and $[e_{\type+1}]$ is not $C^1$ (see the middle domain in Figure \ref{dpic}). 
  Thus the center ${\blue\alpha}=[e_{\type+1}]$ is a vertex of $\bdy_{\infty}\Omega$ that
   is distinguished (in a projectively invariant way)
  from every other vertex $[e_s]$ of $\bdy_{\infty}\Omega$. This completes the proof of (b).
  
  By Lemma \ref{uniquesupphyperplane}(c)  $P$ 
preserves $\RP^{n-1}_{\infty}$ proving (c).  The radial flow is characterized as the one-parameter subgroup  $\flow\subset \PGL$ that
fixes every point in the stationary hyperplane, preserves every line containing the center, and no non-trivial element fixes any other point.
This and (b) implies (d). Every automorphism  of $\RR$ is multiplication by some
 ${\blue c}\ne0$. Since $\Omega$ is backward invariant, ${\blue c}>0$  which proves  (e). By Equation (\ref{eq:flowequivariance}) $h'\circ P={\blue c}\cdot h$ which proves (f).  The level sets of $h$ gives the foliation by horospheres, so $P$ preserves this foliation. Similarly $P$ preserves the $\Phi$-orbits of points, which are the flowlines,
  giving (g). \end{proof}}

\subsection{{ The Structure of $G(\ppsi)$}}
  { Theorem \ref{Gppsistructure} gives a  decomposition of the group 
  $G(\ppsi)\cong T(\ppsi)\rtimes O(\ppsi)$ corresponding
  to the decomposition 
  $\Isom(\R^n)\cong\R^n\rtimes O(n)$ into translation and orthogonal subgroups. We begin by describing the translation subgroup $T(\ppsi)$.}

Recall the standard identification of the affine group $\Aff(\RR^n)$ with the subgroup 
$$\left\{\begin{pmatrix} A & v\\ 0 & 1\end{pmatrix}\ :\  A\in \GL(n,\RR),\ v\in\RR^n\right\}\subset \GL(n+1,\RR)$$
The affine action on $\R^n$ is realized by the embedding $\RR^n\to\RR^{n+1}$ given by $a\mapsto(a,1)$.

 Our next task is to define a subgroup of $G(\ppsi)$,
called the {\em translation subgroup} $T(\ppsi)\cong \RR^{n-1}$, that 
 acts simply transitively on
$\bdy\Omega(\ppsi)$. We first define the {\em enlarged translation group} $T_{\type}\cong\RR^n$ that  acts simply transitively on $V_{\psi}=\RR_+^\type\times\RR^{n-\type}$. Then  $T(\ppsi)=\ker\ppsi_*$ for a certain homomorphism 
$\ppsi_*:T_{\type}\to\RR$ derived from $\ppsi$. 
The enlarged translation group is  the direct sum of the translation group and the radial flow: $T_{\type}=T(\ppsi)\oplus\flow^{\ppsi}$.

The {\em enlarged translation group}  $\eT_{\type}$  has Lie algebra $\eTalg_{\type}$ that is the image
of the map $\Psi_{\type}:\RR^n\to\mathfrak{gl}(n+1,\RR)$ given by
\begin{equation}\label{enlargedtranslationgrp}
\Psi_{\type}(X,Z,Y):= \begin{pmatrix}
\Diag(X) & 0 \\
 0& \begin{pmatrix}
0 & Y^t & Z\\
0 &0 & Y\\
0 & 0 & 0
\end{pmatrix}
\end{pmatrix}
\end{equation}
Here $X\in\RR^\rank$ and $Z\in\RR$ and $Y\in\RR^{\ur}$, except
 when $\ur=0$ there is no $Y$, and when $\type=n$ there is no $Z$ and the bottom right block is $(0)$.
  It is easy to check that all Lie 
brackets in $\mathfrak t_{\type}$ are $0$ and so $\mathfrak t_{\type}$ is an abelian Lie subalgebra, and $T_{\type}\cong\RR^n$ as a  Lie group.
Define $m_{\type}(X,Z,Y)=\exp\Psi_{\type}(X,Z,Y)$ then $T_{\type}$ consists of all matrices
\begin{equation}\label{eq:mmatrix}
m_{\type}(X,Z,Y)=\begin{pmatrix}
\exp\Diag(X) & 0 \\
 0& \begin{pmatrix}
1 & Y^t & Z+\|Y\|^2/2\\
0 &I_{\ur} & Y\\
0 & 0 & 1
\end{pmatrix}
\end{pmatrix}
\end{equation}

\begin{definition}\label{Tdef} 
The {\em translation group}  
$T:=T(\ppsi)$ is the kernel of the homomorphism $\ppsi_*:T_{\type}\to\RR$  defined for $\type<n$ by $\ppsi_*(m_{\type}(X,Z,Y))=\ppsi^\type(X)+Z$,
and for $\type=n$ by $\ppsi_*(m_{n}(X))=(\sum\ppsi_i)^{-1}\ppsi(X)$.
\end{definition}

For $\type<n$,   the translation group $T(\ppsi)$ consists of the matrices $m_{\type}(X,Y,Z)$ given by Equation (\ref{eq:mmatrix}) for which $Z=-\ppsi^\type(X)$.
It will occasionally be convenient to write the translation group as the image of a linear map, instead of as the kernel of a linear map. 
For $\type<n$ the translation group $T(\ppsi)$ is the image of $m^*_{\ppsi}:\RR^\rank\times\RR^{\ur}\to\GL(n+1,\RR)$ given by
\begin{equation}
\label{eq:Tmatrix}m^*_{\ppsi}(X,Y)=\begin{pmatrix}
 \exp\Diag(X) & 0 \\
 0& \begin{pmatrix}
1 & Y^t & \|Y\|^2/2-\ppsi^\type(X)\\
0 & I_{\ur} & Y\\
0 & 0 & 1
\end{pmatrix}
\end{pmatrix}
\end{equation} 
and for $\type=n$ the translation group $T(\ppsi)$ is the image of $m^*_{\ppsi}:\ker\psi\to\GL(n+1,\RR)$ by
\begin{equation}\label{eq:Tmatrix2}
	m^*_{\ppsi}(X)=\begin{pmatrix}
 \exp\Diag(X) & 0 \\
 0& 1
\end{pmatrix} 
\end{equation}
It is worth pointing out that with this formalism the case $\type=n-1$ means $\ur=0$ and gives
\begin{equation}\label{eq:Tmatrix1}
m^*_{\ppsi}(X,Y) =\begin{pmatrix}
 \exp\Diag(X) & 0 \\
 0& \begin{pmatrix}
1 &-\ppsi^\type(X)\\
0 & 1
\end{pmatrix}
\end{pmatrix}
\end{equation}

\begin{lemma}\label{Tpluslemma} $\eT_{\type}$ acts simply transitively on $V_{\ppsi}=\RR_+^\type\times\RR^{n-\type}$ and  
\begin{enumerate}
\item[(a)]  $h_\ppsi\circ m_\type{(X,Z,Y)}=h_\ppsi-\ppsi_\ast\circ m_\type{(X,Z,Y)}$
\item[(b)]  $T(\ppsi)$ is the the subgroup of $T_{\type}$ that preserves $h_{\ppsi}$  
\item[(c)]  $\eT(\ppsi)$ preserves the foliation of $V_\ppsi$ by horospheres
\item[(d)] $\eT(\ppsi)$ preserves the transverse foliation by flowlines.  
\end{enumerate}
\end{lemma}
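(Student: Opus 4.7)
The plan is to compute the affine action of $m_\type(X,Z,Y)$ on a point $(x,z,y)\in V_\ppsi$ explicitly from the matrix in \eqref{eq:mmatrix}. For $\type<n$, applying $m_\type(X,Z,Y)$ to $[x:z:y:1]^t$ gives the affine map
\[
(x,z,y)\ \longmapsto\ \bigl(e^{X}\!\cdot x,\ z+Y\!\cdot\! y+Z+\tfrac12\|Y\|^2,\ y+Y\bigr),
\]
where $e^{X}\!\cdot x$ denotes the componentwise product $(e^{X_i}x_i)_{i\le\type}$; for $\type=n$ it reduces to $x\mapsto e^{X}\!\cdot x$. Given a source $(x,z,y)$ and target $(x',z',y')$ in $V_\ppsi$, the equations $e^{X_i}x_i=x_i'$, $Y=y'-y$, and $Z=z'-z-Y\!\cdot\! y-\tfrac12\|Y\|^2$ each admit a unique solution (using that $x_i,x_i'>0$), which establishes simple transitivity. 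The analogous solution for $\type=n$ is trivial.

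For (a), I would substitute the formula above into \eqref{eq:horo}. When $\type<n$, the $-z$ term contributes $-Z-Y\!\cdot\! y-\tfrac12\|Y\|^2$, the logarithmic terms contribute $-\ppsi^\type(X)$, and expanding $\tfrac12\|y+Y\|^2$ produces $Y\!\cdot\! y+\tfrac12\|Y\|^2$. The $Y\!\cdot\! y$ and $\tfrac12\|Y\|^2$ contributions cancel, leaving $h_\ppsi-(Z+\ppsi^\type(X))=h_\ppsi-\ppsi_*(m_\type(X,Z,Y))$. The case $\type=n$ is an even shorter computation using $\log(e^{X_i}x_i)=X_i+\log x_i$ together with the normalization $(\sum\ppsi_i)^{-1}$ in $\ppsi_*$. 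Part (b) is then immediate: $g\in\eT_\type$ fixes $h_\ppsi$ pointwise iff $\ppsi_*(g)=0$, and by Definition~\ref{Tdef} this is exactly $T(\ppsi)$.

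Parts (c) and (d) follow from these computations. Horospheres are the level sets $h_\ppsi^{-1}(t)$, and (a) shows that $m_\type(X,Z,Y)$ carries the level set at height $t$ to the level set at height $t-\ppsi_*(m_\type(X,Z,Y))$, so the foliation is preserved. For flowlines, when $\type<n$ a flowline is a vertical line $\{(x_0,z,y_0):z\in\RR\}$, and the explicit action carries it to $\{(e^X\!\cdot x_0,\, z+C,\, y_0+Y):z\in\RR\}$ with $C$ independent of $z$—another vertical line. When $\type=n$, flowlines are open rays $\{tv:t>0\}$ from the origin, and the diagonal map $x\mapsto e^X\!\cdot x$ sends such a ray to the ray through $e^X\!\cdot v$.

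There is no real obstacle beyond careful bookkeeping; the two cases $\type<n$ and $\type=n$ must be handled in parallel because the definitions of $h_\ppsi$, $m_\type$, and $\ppsi_*$ differ between them, but in each case the proof reduces to substitution into the defining formulas. The conceptual content is simply that $\eT_\type$ was designed to act affinely on $V_\ppsi$ while shifting the horofunction by the character $\ppsi_*$, from which all four assertions fall out.
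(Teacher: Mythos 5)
Your proof is correct and follows essentially the same route as the paper: compute the explicit affine action of $m_\type(X,Z,Y)$ on $(x,z,y)$, substitute into the horofunction to get (a), and deduce (b)--(d) from that, handling $\type<n$ and $\type=n$ separately. You supply slightly more detail than the paper for simple transitivity (solving uniquely for $X$, $Y$, $Z$), which the paper dismisses as clear, but the argument is the same.
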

\begin{proof} It is clear the action is simply transitive, and that (a) implies both (b) and (c), and that (d) holds.  We first prove (a) in the case $\type<n$. From Equation (\ref{eq:compacth})
$$-h_{\ppsi}(x,z,y)=
 z+\ppsi^\type(\Log x)-\|y\|^2/2$$
and
 $$\begin{array}{l}m_{\type}(X,Z,Y)(x,z,y)^t\\
 =(\exp(X_1)x_1,\cdots,\exp(X_\rank)x_\rank,z+Y\cdot y+Z+\|Y\|^2/2,Y_1+y_1,\cdots, Y_{\ur}+y_{\ur})^t
 \end{array}$$
so
$$\begin{array}{l}
-h_{\ppsi}(m_{\type}(X,Z,Y)(x,z,y)^t)\\
= z+Y\cdot y+Z+\|Y\|^2/2+\ppsi^\type\Log(\exp(X_1)x_1,\cdots,\exp(X_\rank)x_\rank)-\|Y+y\|^2/2\\
=z+Z + \psi^\type(X+\Log x) - \|y\|^2/2\\
=(Z + \ppsi^\type(X)) +\left( z+\ppsi^\type(\Log x) - \|y\|^2/2\right)\\
=\ppsi_*(m{_\type}(X,Z,Y))-h_{\ppsi}(x,z,y)\\
\end{array}$$
A similar but simpler argument applies when $\type=n$, by omitting the $Y$ and $Z$ coordinates.
\end{proof}

\begin{lemma}\label{Tlemma} $T(\ppsi)\subset G(\ppsi)$ and  $T(\ppsi)$ acts simply transitively on $\bdy\Omega(\ppsi)$.
\end{lemma}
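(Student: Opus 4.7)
The plan is to deduce both statements directly from Lemma \ref{Tpluslemma}. First I would verify that $T(\ppsi)\subset G(\ppsi)$. By Lemma \ref{Tpluslemma}(b), $T(\ppsi)$ is exactly the subgroup of $T_{\type}$ that preserves the horofunction $h_{\ppsi}$, hence it preserves every level set $\Hcal_t = h_{\ppsi}^{-1}(t)$. Since each element of $T(\ppsi)$ is an affine transformation of $\RR^n$ (so in particular lies in $\PGL(n+1,\RR)$) and permutes the horospheres trivially, Definition \ref{psicuspdef} gives $T(\ppsi)\subset G(\ppsi)$.

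Next I would establish the simply transitive action on $\bdy\Omega = \Hcal_0$. Freeness is immediate: by Lemma \ref{Tpluslemma} the enlarged group $T_{\type}$ acts simply transitively on $V_{\ppsi}$, so the subgroup $T(\ppsi)$ acts freely on $V_{\ppsi}\supset\bdy\Omega$. For transitivity, let $b = b_{\ppsi}$ be the basepoint, which satisfies $b \in \bdy\Omega$ and hence $h_{\ppsi}(b) = 0$. Given any $p \in \bdy\Omega$, the simply transitive action of $T_{\type}$ on $V_{\ppsi}$ produces a unique $g \in T_{\type}$ with $g(b) = p$. The point is then to check $g \in T(\ppsi)$, i.e.\ $\ppsi_*(g) = 0$. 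Applying Lemma \ref{Tpluslemma}(a),
\[
0 = h_{\ppsi}(p) = h_{\ppsi}(g(b)) = h_{\ppsi}(b) - \ppsi_*(g) = -\ppsi_*(g),
\]
so indeed $g \in \ker \ppsi_* = T(\ppsi)$, which finishes the transitivity. Combined with freeness, the action of $T(\ppsi)$ on $\bdy\Omega$ is simply transitive.

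I do not expect any real obstacle here: the construction of $T(\ppsi)$ as the kernel of $\ppsi_*$ was engineered precisely so that the horofunction identity of Lemma \ref{Tpluslemma}(a) forces the preserved hypersurface through the basepoint to be an orbit. The only mild subtlety is to note that the case $\type = n$ (where there is no $Z$ coordinate and $\ppsi_*$ is defined by a different formula) is handled by the same argument, since Lemma \ref{Tpluslemma}(a) covers both cases uniformly. Hence the cases split only notationally, not logically.
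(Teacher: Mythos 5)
Your proof is correct and follows essentially the same route as the paper: the inclusion $T(\ppsi)\subset G(\ppsi)$ via Lemma \ref{Tpluslemma}(b), and simple transitivity deduced from the simply transitive action of $T_{\type}$ on $V_{\ppsi}$ together with the horofunction identity of Lemma \ref{Tpluslemma}(a). You merely spell out the transitivity computation that the paper leaves implicit.
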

\begin{proof} By Equations (\ref{eq:Tmatrix}) and (\ref{eq:mmatrix})   $T(\ppsi)$ is the subgroup of $\eT_{\type}$ given by 
$Z=-\psi^\type(X)$. It follows from (Lemma \ref{Tpluslemma})(b) that $T(\ppsi)$
is the subgroup of $\eT_{\type}$ that preserves the horofunction, hence $T(\ppsi)\subset G(\ppsi)$. Simple transitivity on $\partial\Omega(\ppsi)$
also follows from Lemma \ref{Tpluslemma}.
\end{proof}

{ The following is from \cite{CLT1}. If $\Omega$ is open and properly convex and $A\in\PGL(\Omega)$ the {\em displacement distance} of $A$ is
\begin{equation}\label{displacementdistance}
\delta(A)=\inf\{d_{\Omega}(x,Ax)|x\in\Omega\}
\end{equation}
 where $d_{\Omega}$ is the Hilbert metric on $\Omega$. Then $A$ is called {\em hyperbolic} if $\delta(A)>0$, and {\em elliptic} if $A$ fixes a point in $\Omega$,
otherwise it is 
 called {\em parabolic} if $A$ does not fix any point in $\Omega$ and $\delta(A)=0$. Moreover $\delta(A)=0$
if and only if all eigenvalues of $A$ have the same modulus.  
A parabolic $A\in\PGL(n+1,\RR)$ is called {\em standard} if it is conjugate into 
$PO(n,1)\cong\Isom(\HH^n)$. This is equivalent to
there are $\lambda,t\ne0$ such that 
$\lambda A$ is conjugate in $\GL(n+1,\RR)$ into 
\begin{equation}\label{stdparabolic}\begin{pmatrix} 1 & t & t^2/2\\ 0 & 1 & t\\ 0 & 1 & 1
\end{pmatrix}
\oplus O(n-2)\end{equation}
Standard parabolics have a Jordan block of size $3$. 
It follows from { Equation (\ref{eq:Tmatrix}) } that:

\begin{lemma}\label{paraboliclemma} The  parabolic subgroup  $P(\ppsi)\subset T(\ppsi)$ 
 consists of all unipotent elements of $T(\ppsi)$. Moreover
 $P(\ppsi) = \{ m^*_{\ppsi}(0,Y):Y\in\RR^{\ur}\}$ and non-trivial elements are standard standard parabolics.
\end{lemma}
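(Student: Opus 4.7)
The plan is to read off everything directly from the matrix formulas (\ref{eq:Tmatrix}), (\ref{eq:Tmatrix1}), (\ref{eq:Tmatrix2}), and then compare with the standard parabolic form (\ref{stdparabolic}).

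First I would identify the unipotent elements of $T(\ppsi)$. An element $m^*_{\ppsi}(X,Y)\in T(\ppsi)$ is block-diagonal: the top block $\exp\Diag(X)$ is diagonal with eigenvalues $e^{X_1},\ldots,e^{X_{\rank}}$, and the bottom block is upper triangular with $1$'s on the diagonal. Hence the spectrum of $m^*_{\ppsi}(X,Y)$ is $\{e^{X_1},\ldots,e^{X_{\rank}}\}\cup\{1\}$, and $m^*_{\ppsi}(X,Y)$ is unipotent iff $X=0$. (In the degenerate cases $\type=n-1$ and $\type=n$ where $\ur=0$, the same argument shows only $m^*_{\ppsi}(0)=I$ is unipotent, so $P(\ppsi)$ is trivial, consistent with $\RR^{\ur}=0$.) Substituting $X=0$ in (\ref{eq:Tmatrix}) gives $\ppsi^{\type}(0)=0$, so the unipotent elements are exactly
\[
m^*_{\ppsi}(0,Y)=\begin{pmatrix} I_{\rank} & 0\\ 0 & M(Y)\end{pmatrix},\qquad M(Y)=\begin{pmatrix} 1 & Y^{t} & \|Y\|^{2}/2\\ 0 & I_{\ur} & Y\\ 0 & 0 & 1\end{pmatrix},
\]
with $Y\in\RR^{\ur}$. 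A direct block computation shows $M(Y)M(Y')=M(Y+Y')$, so this set forms a subgroup isomorphic to $(\RR^{\ur},+)$, and since it is contained in $T(\ppsi)$ and consists precisely of its unipotent members, it coincides with $P(\ppsi)$.

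Next I would verify that any non-trivial $m^*_{\ppsi}(0,Y)$ is a standard parabolic. Given $Y\ne 0$, pick $R\in O(\ur)$ with $RY=\|Y\|\, e_{1}\in\RR^{\ur}$, and conjugate by $Q:=I_{\rank+1}\oplus R\oplus I_{1}\in O(n+1)$. This replaces $Y$ by $RY=\|Y\|e_1$ inside $M(Y)$, and the resulting matrix has a $3\times 3$ Jordan block
\[
\begin{pmatrix} 1 & \|Y\| & \|Y\|^{2}/2\\ 0 & 1 & \|Y\|\\ 0 & 0 & 1\end{pmatrix}
\]
acting on the span of the $(\rank+1)$-st, $(\rank+2)$-nd, and $(n+1)$-st basis vectors, while the remaining coordinates (the $\rank$ hyperbolic directions and the $\ur-1$ leftover parabolic directions) are fixed. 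This is precisely the form (\ref{stdparabolic}) with $t=\|Y\|$, $\lambda=1$, and the $O(n-2)$-factor taken to be the identity; hence $m^*_{\ppsi}(0,Y)$ is a standard parabolic.

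The argument is almost entirely bookkeeping; the only mildly delicate point is matching the three coordinates on which the $3\times 3$ Jordan block acts with the coordinates appearing in (\ref{stdparabolic}), and checking that the orthogonal conjugation $Q$ is indeed the right way to normalize $Y$ without disturbing the rest of the matrix. Once the block decomposition is made explicit, both claims—that $P(\ppsi)$ is the full unipotent subgroup of $T(\ppsi)$ and that its non-trivial elements are standard parabolics—follow immediately.
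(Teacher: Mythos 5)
Your proof is correct and is essentially the argument the paper intends: the paper states Lemma \ref{paraboliclemma} as an immediate consequence of Equation (\ref{eq:Tmatrix}) with no written proof, and your verification (the eigenvalues $e^{X_1},\dots,e^{X_{\rank}}$ force $X=0$ for unipotence, followed by conjugation by $1\oplus R\oplus 1$ to normalize $Y$ to $\|Y\|e_1$ and exhibit the size-$3$ Jordan block of the standard parabolic form (\ref{stdparabolic})) is exactly the bookkeeping being elided. No gaps.
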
}

 Let $T_1\subset T_2\subset T$ where $T_1$ is
 the subgroup of diagonalizable elements,
and $T_2$ is the subgroup of elements for which  every Jordan block has size at most $2$}. This description
is invariant under conjugacy, and
\begin{equation}\label{T1T2}
\begin{array}{rcccl}
T_1  & = &{ T_1(\ppsi)}& = &\{ m^*_{\ppsi}(X,0):X\in\ker\ppsi^{\rank}\}\\
T_2  & = &{ T_2(\ppsi)}& = &\{ m^*_{\ppsi}(X,0):X\in\RR^{\rank}\}
\end{array}
\end{equation}
Then $T(\ppsi)=P(\ppsi)\oplus T_2$, and { $\dim T_2= 1+\dim T_1$  if $0<\type<n$}. 
{ Non-trivial} elements of $T_2$ are  hyperbolic. { 
 A {\em weight} is a homomorphism $\lambda:T(\ppsi)\to \RR^{\times}$
such that $\det(A-\lambda I)=0$ for all $A\in T(\ppsi)$. Let $\Wcal$ be the set of such weights.
Here are conceptual descriptions of $\ur$, $\rank$ and $\type$:
\begin{equation}\label{rprime:eq}
\ur=\dim P(\ppsi)\qquad \rank=\dim T_2(\ppsi)\qquad\type=|\Wcal|-1\end{equation} 
Thus $\rank$ is the dimension of the subgroup of hyperbolics in the translation group, 
$\ur$ the dimension of the unipotent (parabolic) subgroup, $\dim T(\ppsi)=\ur+\rank=n-1$.}


\begin{definition}\label{Oppssidef}
$O(\ppsi)$ is the subgroup of $G(\ppsi)$ that fixes the basepoint $b_{\ppsi}$.
 \end{definition}
When $\psi=0$ (the case of a  cusp in $\HH^n$) then $O(\ppsi)\cong O(n-1)$ is the subgroup of 
$O(n)\subset\Aff(\RR^n)$ that fixes $e_1$. 
At the other extreme, when $\type=n$ and all the coordinates of $\psi$ are distinct, then $O(\ppsi)$ is trivial. The general case is:
\begin{proposition}\label{olambda} Suppose $\ppsi\in A$ has type $\type=\type(\psi)$.
Let $e_1,\cdots,e_{n+1}$ be the standard basis of $\RR^{n+1}$ and 
 $S(\ppsi)\subset \GL(\type,\RR)$  be the subgroup that permutes $\{e_1,\cdots,e_\type\}$ 
  and preserves the vector $\sum_{i=1}^\type\psicoef_ie_i.$
 Then $O(\ppsi)$ is equal to the subgroup $ O'(\ppsi) \subset \Aff(\RR^n)\subset\GL(n+1,\RR)$ given by
$$\begin{array}{cccccccc}
&\type<n-1 & & & \type=n-1 & & &\type=n\\
O'(\ppsi) = &
\begin{pmatrix}
S(\ppsi) & 0 & 0 & 0\\
 0  & 1 & 0 &0\\
0 & 0 & O(\ur) & 0\\
0 &0 & 0 & 1
\end{pmatrix} & &  & 
\begin{pmatrix}
S(\ppsi) & 0 &  0\\
 0  & 1 & 0\\
0 &0 &  1
\end{pmatrix} & &  &\begin{pmatrix}
S(\ppsi) & 0 \\
 0  & 1 
\end{pmatrix}
\end{array}
$$
\end{proposition}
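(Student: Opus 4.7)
The plan is to prove the containments $O'(\ppsi)\subseteq O(\ppsi)$ and $O(\ppsi)\subseteq O'(\ppsi)$ separately. The first is a direct computation: the $S(\ppsi)$-block of any $A\in O'(\ppsi)$ permutes $\{e_1,\ldots,e_\type\}$, hence fixes $b_\ppsi=\sum_{i\le\type}e_i$, and by definition preserves the weight vector $\sum\ppsi_i e_i$. Inspecting Equation \eqref{eq:horo} shows $A$ preserves each term of $h_\ppsi$: the sum $-\sum\ppsi_i\log x_i$ is invariant under the weight-preserving permutation of the $x_i$'s, $-x_{\type+1}$ is invariant because the $(\type+1)$-st coordinate is fixed by the middle $1\times 1$ block, and $\tfrac12\|y\|^2$ is invariant under the $O(\ur)$-block. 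Thus $A$ fixes $b_\ppsi$ and preserves $h_\ppsi$, so $A\in O(\ppsi)$.

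For the reverse inclusion, let $A\in O(\ppsi)$. By Lemma \ref{uniquesupphyperplane}(c), $A\in\Aff(\RR^n)$, so $A(x)=Mx+v$ with $v=b_\ppsi-Mb_\ppsi$ from $A(b_\ppsi)=b_\ppsi$. Applying Proposition \ref{characterizecenterRF}(b,e,f) with $\ppsi'=\ppsi$ and $P=A$, and using that $A$ preserves each horosphere (so the scaling factor there is $c=1$), $A$ fixes the center $\alpha=[e_{\type+1}]$ and commutes with the radial flow $\Phi_t$. Writing out this commutation for the translation $\Phi_t(x)=x-te_{\type+1}$ when $\type<n$ yields $Me_{\type+1}=e_{\type+1}$; for $\type=n$ the flow is $\Phi_t(x)=e^{-t}x$ and the commutation forces $v=0$. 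Next, $A$ permutes the vertices of $\bdy_\infty\Omega=\Delta^{\rank}$ by Lemma \ref{idealboundary}, and since it fixes $[e_{\type+1}]$ it permutes $\{[e_1],\ldots,[e_\type]\}$ via some $\sigma$; positivity of $\Omega\subset V_\ppsi$ gives $Me_i=\mu_i e_{\sigma(i)}$ with $\mu_i>0$. The translation $v$ must have vanishing first $\type$ coordinates in order to preserve $V_\ppsi=\RR_+^\type\times\RR^{n-\type}$, and combined with $A(b_\ppsi)=b_\ppsi$ this forces $\mu_i=1$, so $M$ permutes $\{e_1,\ldots,e_\type\}$ by $\sigma$.

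It remains to analyze $M$ on $e_{\type+2},\ldots,e_n$ (relevant only when $\type<n-1$) and to pin down $\sigma$. Write $Me_j=a_j e_{\type+1}+\sum_{l>\type+1}c_{lj}e_l+\sum_{i\le\type}b_{ij}e_i$ for each $j>\type+1$. The identity $h_\ppsi(b_\ppsi+Mw)=h_\ppsi(b_\ppsi+w)$ holds for all $w$ near $0$; expanding both sides in a Taylor series using $\log(1+u)=u-u^2/2+\cdots$ and matching coefficients gives: the linear-in-$w_k$ terms for $k\le\type$ force $\ppsi_{\sigma(k)}=\ppsi_k$, hence $\sigma\in S(\ppsi)$; the linear-in-$w_j$ terms for $j>\type+1$ force $a_j=0$; invariance of the degenerate Hessian $D^2h_\ppsi|_{b_\ppsi}=\mathrm{diag}(\ppsi_1,\ldots,\ppsi_\type,0,1,\ldots,1)$ (equivalently the mixed quadratic coefficients) forces $b_{ij}=0$; and the remaining pure quadratic-in-$w_j$ terms force $(c_{lj})_{l,j>\type+1}$ to be orthogonal. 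The cases $\type=n-1$ (where $\ur=0$, so the $y$-block is absent) and $\type=n$ (where there is no $z$-coordinate and $h_\ppsi$ is logarithmic in $x$ alone) follow by the same scheme with fewer terms. The main obstacle is organizing the coefficient-matching bookkeeping cleanly across the three cases $\type<n-1$, $\type=n-1$, $\type=n$, since the horofunction has structurally different pieces in each; once this is done, the deductions are purely algebraic.
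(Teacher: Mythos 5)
Your proof is correct, but for the harder containment $O(\ppsi)\subseteq O'(\ppsi)$ it takes a noticeably different route from the paper. The paper's mechanism for extracting the block structure is a classification of affine lines $\ell_u$ through the basepoint by the qualitative behaviour of $f_u=h_\ppsi\circ\gamma_u$: whether $f_u$ is defined on all of $\RR$ or only a half-line, and whether it is linear, grows logarithmically, or is a quadratic with minimum $0$ at $b$. Since $\tau$ is affine and preserves $h_\ppsi$, it preserves each class of lines, hence the subspaces $b+\langle e_1,\dots,e_\type\rangle$, the line $\ell_{e_{\type+1}}$, and $b+\langle e_{\type+2},\dots,e_n\rangle$; the blocks are then read off from $h_\ppsi$ restricted to each invariant subspace. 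You replace this with two other devices: the radial-flow equivariance of Proposition \ref{characterizecenterRF} (with $c=1$) to force $Me_{\type+1}=e_{\type+1}$ (resp.\ $v=0$ when $\type=n$), and a direct Taylor-coefficient matching of $h_\ppsi(b+Mw)=h_\ppsi(b+w)$ to kill the off-block entries. Both proofs share the step of permuting the vertices of $\bdy_\infty\Omega$ via Lemmas \ref{idealboundary} and \ref{characterizecenterRF}. Your version is more computational but self-contained at the level of coefficients; the paper's line-type classification is more conceptual and avoids expanding the logarithm. One small caution in your bookkeeping: the linear-in-$w_j$ coefficient ($j>\type+1$) is actually $a_j+\sum_{i\le\type}\ppsi_i b_{ij}$, so you must first extract $b_{ij}=0$ from the mixed quadratic terms $\ppsi_{\sigma(k)}b_{\sigma(k)j}w_kw_j$ (using $\ppsi_i>0$ for $i\le\type$) before concluding $a_j=0$; the deductions go through, but not in the order you listed them.
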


\begin{proof} 
It is easy to check that $O'(\ppsi)$ fixes the basepoint and preserves the 
horofunction $h=h_{\ppsi}$ so $O'(\ppsi)\subset O(\ppsi)$.  For the converse, $\PGL(\Omega)\subset\Aff(\RR^n)$ so $O(\ppsi)\subset \Aff(\RR^n)$.
It is easy to check the result when $\type=n$, so assume $\type<n$.
From { Equation (\ref{eq:horo})} the horofunction  $h:\RRP^\type\times\RR^{n-\type}\to \RR$  is
$$h(x,z,y)=-\psi^\type(\Log(x)){-}z { +}\|y\|^2/2$$
If $\tau\in O(\ppsi)$ then $h=h\circ\tau$. Given a unit vector $u=(x,z,y)\in\RR^\rank\times\RR\times\R^{\ur}$ there is an affine line $\ell_u$ in $\RR^n$
 containing the basepoint that is the image of the map
 $\gamma_u(t)=b+t\cdot u$. The horofunction is only defined  on the subset of this line in $V_{\ppsi}$.
 This gives a  function $f=f_u:I_u\to\RR$ defined on some maximal interval
 $I_u\subset\RR$ by
$$f_u(t):=h\circ\gamma_u(t)={-}t z{ +} t^2\| y\|^2/2 -\sum_{i=1}^{\type} \psicoefi_i\log(1+tx_i)$$
here $x=(x_1,\cdots,x_{\type})$. We distinguish two classes of  line $\ell_u$ according to the behaviour of $f$. The function $f$ is defined on $I_u=\RR$
iff $x=0$, and it is defined on $[0,\infty)\subset I_u$ and grows logarithmically as $t\to\infty$  iff $z=y=0$ and each coordinate of $x$ is non-negative. Since $\tau$ is affine, it preserves the smallest affine subspace that contains all the lines of a given type. Since $\tau$ fixes the basepoint $b$, and preserves the type of lines, $\tau$ preserves the affine subspaces
$P=b+\langle e_{1},\cdots,e_\type\rangle$ and $Q=b+\langle e_{\type+1},\cdots,e_n\rangle$. Notice that $P=\langle e_{1},\cdots,e_\type\rangle$.

{ By Lemmas \ref{idealboundary} and \ref{characterizecenterRF}, $\tau$ preserves the simplex spanned by the ideal boundary $\bdy_{\infty}\Omega$ and the center $\alpha$ of the radial flow $\Phi$ of $\Omega$ (this simplex is exactly $\bdy_\infty \Omega$ unless $\type=n$ in which case it is larger). It follows that  $\tau$ permutes the vertices $\{[e_i]: 1\le i\le \type+1\}$ of this simplex.  }
On $P$ we have  $h(x_1e_1+\cdots x_\type e_\type)=-\sum\psicoefi_i\log x_i$.  Since $\tau|P$  preserves $h$, it follows that
must $\tau$ preserve $\psi|P$. Thus the first $\type$  columns of  $\tau$
 are as shown in $O'(\ppsi)$. 

The only $u$  for which $f_u$ is linear is when $u=\pm e_{\type+1}$.
Since $\tau$ fixes the basepoint and preserves $h$ 
it follows that $\tau$ maps the line $\ell_{e_{\type+1}}$ to itself by the identity. 
 This gives column $(\type+1)$ in $O'(\ppsi)$.
Finally  $f_u$ is a quadratic polynomial with a minimum of $0$ at the basepoint exactly when $x=0$ and $z=0$ so
 $u\in \langle e_{\type+2},\cdots,e_n\rangle$. On this subspace
$h(y_1e_{\type+2}+\cdots+y_{\ur}e_n)=\|y\|^2/2$. Since $\tau$ preserves this function, the columns $\type +2$ to $n$
 of $\tau$  in $O'(\ppsi)$ (those that contain $O(\ur)$) are as shown.
Since $\tau$ is affine and fixes the basepoint the last column is as shown in $O(\psi')$.
The result now follows.\end{proof}

A {\em morphism} between two geometries
$(G,X)$ and $(H,Y)$
is a homomorphism  $\rho:G\to H$, and an immersion
 $f:X\to Y$, such that  $$\forall g\in G, x\in X\quad f(g\cdot x)=\rho(g)\cdot(fx)$$
If $f$ and $\rho$ are both inclusions we say $(G,X)$ is a {\em subgeometry} of $(H,Y)$. 
{  \begin{theorem}\label{Gppsistructure}  $G(\ppsi)=T(\ppsi)\rtimes O(\ppsi)$ and:
\begin{itemize}
\item[(a)] $T(\ppsi)\cong\RR^{n-1}$ acts simply transitively on $\bdy\Omega(\ppsi).$
\item[(b)] $O(\ppsi)$ is the stabilizer of a point in $\bdy\Omega(\ppsi)$.
\item [(c)] $O(\ppsi)$ is a {maximal compact} subgroup of $G(\ppsi)$.
\item[(d)] $(G(\ppsi),\bdy\Omega)$ is isomorphic to a subgeometry of $(\Isom(\EE^{n-1}),\EE^{n-1})$
\item[(e)] $T(\ppsi)$ is the unique Lie subgroup of $G(\ppsi)$ isomorphic to $\RR^{n-1}$.
\item[(f)]  $T(\ppsi)$ is the subgroup of $G(\ppsi)$ of elements all of whose eigenvalues are positive. 
\end{itemize}
\end{theorem}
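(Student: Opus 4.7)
The plan is to prove the parts in a bootstrapping order: (a), (b), the semi-direct decomposition, then (d), (c), (f), and finally (e). Part (a) is exactly Lemma \ref{Tlemma}, and part (b) is Definition \ref{Oppssidef} combined with the observation $h_{\ppsi}(b_{\ppsi})=0$. For the semi-direct decomposition, given $g\in G(\ppsi)$, simple transitivity produces a unique $t\in T(\ppsi)$ with $t\cdot b=g\cdot b$, and then $o:=t^{-1}g$ fixes $b$, so $o\in O(\ppsi)$, giving $g=to$ uniquely (uniqueness because $T(\ppsi)\cap O(\ppsi)$ is trivial by simple transitivity). Normality of $T(\ppsi)$ is verified by a direct block computation using (\ref{eq:Tmatrix}) and Proposition \ref{olambda}: conjugation of $m^*_{\ppsi}(X,Y)$ by the element of $O(\ppsi)$ with data $(S,K)\in S(\ppsi)\times O(\ur)$ yields $m^*_{\ppsi}(SX,KY)$, which still lies in $T(\ppsi)$ because $S\in S(\ppsi)$ satisfies $\ppsi^\rank\circ S=\ppsi^\rank$ and $K\in O(\ur)$ satisfies $\|KY\|=\|Y\|$, so the scalar entry $\|Y\|^2/2-\ppsi^\rank(X)$ of the matrix is preserved.

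For part (d), I transport the standard Euclidean inner product on the parameter space $\RR^\rank\times\RR^\ur$ along the orbit diffeomorphism $T(\ppsi)\to\bdy\Omega(\ppsi)$, $t\mapsto t\cdot b$. Then $T(\ppsi)$ acts by translations and $O(\ppsi)$ acts (via the conjugation above) by orthogonal transformations $(X,Y)\mapsto(SX,KY)$, so the full action of $G(\ppsi)$ on $\bdy\Omega(\ppsi)$ is by Euclidean isometries. The representation $G(\ppsi)\to\Isom(\EE^{n-1})$ is faithful because any element of $\Aff(\AA(\Omega))$ pointwise fixing the strictly convex hypersurface $\bdy\Omega(\ppsi)$, which is not contained in any affine hyperplane, must be the identity. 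For part (c), $O(\ppsi)$ is compact by Proposition \ref{olambda}; if $K\supset O(\ppsi)$ is also compact, then $\pi|_K:K\to G(\ppsi)/T(\ppsi)\cong O(\ppsi)$ has trivial kernel (since $\RR^{n-1}$ has no nontrivial compact subgroup), hence is injective. Since $\pi|_K$ restricted to $O(\ppsi)\subset K$ is the identity, any $k\in K$ satisfies $\pi|_K(\pi(k))=\pi(k)=\pi|_K(k)$, and injectivity forces $k=\pi(k)\in O(\ppsi)$, giving $K=O(\ppsi)$.

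For part (f), the forward direction is read off (\ref{eq:Tmatrix}): the eigenvalues of $m^*_{\ppsi}(X,Y)$ are $\{e^{X_1},\ldots,e^{X_\rank}\}$ from the $\exp\Diag(X)$ block together with $1$'s from the unipotent upper-triangular block, all positive. Conversely, write $g=to\in G(\ppsi)$ and split $o$ into $(S,K)\in S(\ppsi)\times O(\ur)$ via Proposition \ref{olambda}. In the lower block, a nontrivial $K\in O(\ur)$ has an eigenvalue on the unit circle other than $+1$, which is not positive real. In the upper block, for a nontrivial permutation $\sigma\in S(\ppsi)$ with a cycle $C$ of length $k\ge 2$, the matrix $\exp\Diag(X)\cdot P_\sigma$ restricted to $\mathrm{span}\{e_i:i\in C\}$ has characteristic polynomial $\lambda^k-\prod_{i\in C}e^{X_i}$, whose roots are $c\cdot\omega$ for $c>0$ and $\omega$ a $k$-th root of unity, so at least one is not positive. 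Either case contradicts positivity of all eigenvalues, so $o=1$ and $g\in T(\ppsi)$.

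Finally, for part (e), let $T'\cong\RR^{n-1}$ be a Lie subgroup of $G(\ppsi)$. By (d), $T'$ injects into $\Isom(\EE^{n-1})=\RR^{n-1}\rtimes O(n-1)$ as a connected abelian Lie subgroup of dimension $n-1$. I then use the standard Lie-algebra fact that any abelian subalgebra of the Lie algebra of $\Isom(\EE^{n-1})$ of dimension $n-1$ equals the translation subalgebra: if the projection onto $\mathfrak{so}(n-1)$ has nonzero dimension $h$, then because every nonzero skew matrix has rank at least $2$, the common kernel on $\RR^{n-1}$ has dimension at most $n-1-2h$, and the subalgebra has dimension at most $(n-1-2h)+h=n-1-h<n-1$. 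Hence the image of $T'$ coincides with that of $T(\ppsi)$, and faithfulness forces $T'=T(\ppsi)$. The principal obstacle is the eigenvalue case analysis in (f), which must also be adapted to the degenerate shapes of Proposition \ref{olambda} when $\type=n-1$ (no $O(\ur)$ block) and $\type=n$ (no lower block at all).
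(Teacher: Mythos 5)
Your proposal is correct, but it reaches several of the conclusions by a genuinely different route than the paper. The paper proves (d) first by an averaging argument --- $O(\ppsi)$ acts on the Lie algebra of $T(\ppsi)$ as a compact group, hence preserves \emph{some} inner product, which is then left-translated to a flat metric --- and then deduces the semidirect decomposition, the maximality in (c), (e), and (f) as formal consequences of (d); in particular the paper's (f) rests on the fact that a Euclidean isometry is conjugate to a commuting product of an orthogonal element and a translation, so eigenvalues multiply. You instead make everything explicit: the semidirect product and normality of $T(\ppsi)$ come from the block computation $(S,K)\cdot m^*_{\ppsi}(X,Y)\cdot (S,K)^{-1}=m^*_{\ppsi}(SX,KY)$; the Euclidean structure in (d) is the \emph{standard} inner product on the parameter space (no averaging needed, since Proposition \ref{olambda} already exhibits $O(\ppsi)$ acting orthogonally in those coordinates); (c) is a short projection argument; and (f) is a direct eigenvalue computation of the two diagonal blocks of $t\cdot o$. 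Your version buys concreteness and makes the embedding $G(\ppsi)\hookrightarrow\Isom(\EE^{n-1})$ canonical, and your explicit attention to faithfulness of the restriction to $\bdy\Omega$ (via the fact that a strictly convex hypersurface affinely spans $\RR^n$) is a point the paper leaves implicit; the paper's version is shorter and avoids the cycle-by-cycle case analysis.

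One step deserves a stronger justification. In (e) you assert that the common kernel of an $h$-dimensional space $\mathfrak{h}$ of skew matrices has codimension at least $2h$ ``because every nonzero skew matrix has rank at least $2$.'' That reason alone only handles $h=1$, and the statement is false for a general space of skew matrices: all of $\mathfrak{so}(3)$ has $h=3$ and trivial common kernel in $\RR^3$, so the codimension is $3$, not $6$. What saves you is that your $\mathfrak{h}$ is the projection of an abelian subalgebra and is therefore itself abelian, and an abelian subalgebra of $\mathfrak{so}(m)$ acting with trivial common kernel has dimension at most $\lfloor m/2\rfloor$ (complexify and simultaneously diagonalize; the nonzero purely imaginary characters come in conjugate pairs and separate points of $\mathfrak{h}$). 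Writing $W=\bigcap_{A\in\mathfrak{h}}\ker A$ with $\dim W=w$, the algebra $\mathfrak{h}$ acts faithfully on $W^{\perp}$ with trivial common kernel there, so $h\le (n-1-w)/2$, which is exactly the bound you need. Insert that sentence and (e) is complete.
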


\begin{proof}   (a) and (b) follow from Lemma \ref{Tlemma} and Definition \ref{Oppssidef}. By Proposition \ref{olambda} $O(\lambda)$ is compact giving part of (c).
By (a) we may regard the orbit map $\tau:T(\ppsi)\to\bdy\Omega$ given by $\tau(g)=g(b_{\ppsi})$ as an identification,
then $O(\ppsi)$ acts smoothly on $T(\ppsi)$ fixing the identity. The derivative of this action acts linearly on the Lie algebra
of $T(\ppsi)$ as a compact group. Thus there is an inner product on the Lie algebra of $T(\ppsi)$ that is preserved by this action. Using left translation
gives a flat Riemannian metric on $T(\ppsi)$, which is therefore isometric to $\EE^{n-1}$. Then $\tau$
conjugates the action of $G(\ppsi)$ on $\bdy\Omega$ into a subgroup of $\Isom(\EE^{n-1})$. This proves (d). 

{\blue Clearly (d)
 implies the maximality claim in (c), as well as (e), and} also implies that $G(\ppsi)$ is an internal semidirect product as claimed.
{\blue A Euclidean isometry is conjugate by a translation to the composition
of an orthogonal element and a translation that commute. 
  It follows by (d) that  $g\in G(\ppsi)$  is conjugate to $a\cdot t$ with $t\in T(\ppsi)$ and $a\in O(\ppsi)$ and $a\cdot t=t\cdot a$.
By definition all eigenvalues of elements of $T(\ppsi)$ are positive. Since $a$ and $t$ commute the eigenvalues of $a\cdot t$
are products of eigenvalues of $a$ and of $t$. Thus, if all the eigenvalue of $g$ are positive, then all those of $a$ are positive.}
An  element of the orthogonal group with all eigenvalues positive is trivial which proves (f).
\end{proof}}

\begin{corollary}\label{parabolicsinGpsi} Every parabolic in $G(\ppsi)\subset\GL(n+1,\RR)$ is conjugate into $O(n,1)$.
\end{corollary}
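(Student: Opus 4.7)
The plan is to apply the semidirect decomposition $G(\ppsi)=T(\ppsi)\rtimes O(\ppsi)$ of Theorem~\ref{Gppsistructure} and then exhibit an explicit nondegenerate quadratic form of signature $(1,n)$ on $\RR^{n+1}$ that is preserved by the given parabolic. After replacing the parabolic $g$ by a $G(\ppsi)$-conjugate, the proof of Theorem~\ref{Gppsistructure}(f) writes $g=at$ with $a\in O(\ppsi)$, $t\in T(\ppsi)$ and $at=ta$. Commuting matrices are simultaneously triangularizable, so the eigenvalues of $g$ are products of paired eigenvalues of $a$ (on the unit circle, since $O(\ppsi)$ is compact) and of $t$ (positive reals, by Theorem~\ref{Gppsistructure}(f)). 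Parabolicity of $g$ (equal eigenvalue moduli) forces all eigenvalues of $t$ to coincide; comparing with Equation~(\ref{eq:Tmatrix}), whose eigenvalues are $\exp X_1,\dots,\exp X_{\rank}$ together with a block of $1$'s coming from the unipotent part (present when $\ur\ge 1$), this gives $X=0$ and hence $t\in P(\ppsi)$. When $\ur=0$ we have $P(\ppsi)=\{I\}$, so $g=a$ commutes with the radial flow (both preserve $h_\ppsi$ and fix $b$), and hence $g$ fixes every point on the backward flowline $\{\flow_s(b):s<0\}\subset\interior\Omega(\ppsi)$, making $g$ elliptic and contradicting the hypothesis; so this case does not arise.

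Using the block description of $O(\ppsi)$ in Proposition~\ref{olambda} together with $t=m^{*}_{\ppsi}(0,Y)$ from Lemma~\ref{paraboliclemma}, the relation $at=ta$ reduces to $RY=Y$, where $R\in O(\ur)$ is the $O(\ur)$-block of $a$. Multiplying the block matrices yields
\begin{equation*}
g \;=\; S(\ppsi)\oplus B,\qquad B\;=\;\begin{pmatrix}1 & Y^{T} & \tfrac{1}{2}\|Y\|^{2}\\ 0 & R & Y\\ 0 & 0 & 1\end{pmatrix},
\end{equation*}
with respect to the splitting $\RR^{n+1}=\RR^{\rank}\oplus\RR\oplus\RR^{\ur}\oplus\RR$, where $S(\ppsi)$ is the permutation block from Proposition~\ref{olambda}. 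The key step is to consider
\begin{equation*}
Q(x,u,v,w)\;=\;-\|x\|^{2}+uw-\tfrac{1}{2}\|v\|^{2},
\end{equation*}
a nondegenerate quadratic form on $\RR^{n+1}$ of signature $(1,n)$ (the $uw$-block contributes $(1,1)$), so that $O(Q)$ is $\GL(n+1,\RR)$-conjugate to $O(n,1)$. The permutation $S(\ppsi)\in O(\rank)$ preserves $-\|x\|^{2}$, and a short calculation using $RY=Y$ (whence $R^{T}Y=Y$, since $R$ is orthogonal) shows that $B$ preserves $uw-\tfrac{1}{2}\|v\|^{2}$, the cross-terms collapsing as $w\,v^{T}(Y-R^{T}Y)=0$. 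Therefore $g\in O(Q)$, and conjugation by any linear change of basis carrying $Q$ to the standard signature-$(n,1)$ form places $g$ inside $O(n,1)$.

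The only arithmetic is the verification that $B$ preserves the indefinite block of $Q$. The main obstacle is conceptual: the signs in $Q$ must be chosen so that the signature assembles as $(1,n)$ rather than $(\rank+1,\ur+1)$, which requires making the $\rank$ coordinates carrying $S(\ppsi)$ negative-definite and letting the single positive-definite direction come entirely from the $uw$-block on which the parabolic factor $B$ acts.
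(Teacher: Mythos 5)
Your argument is correct, and its first half coincides with the paper's: both conjugate the parabolic $g$ inside $G(\ppsi)$ to a commuting product $a\cdot t$ with $a\in O(\ppsi)$, $t\in T(\ppsi)$ (via Theorem \ref{Gppsistructure}(d),(f)), and both use positivity of the eigenvalues of $t$ together with equal eigenvalue moduli to force $t\in P(\ppsi)$. Where you diverge is in how the conclusion is reached: the paper simply asserts at that point that $g$ is a standard parabolic, implicitly leaning on Lemma \ref{paraboliclemma} and the normal form (\ref{stdparabolic}) and leaving unaddressed why the commuting orthogonal factor $a$ can be absorbed; you instead compute the block form of $g=S(\ppsi)\oplus B$ from Proposition \ref{olambda} and Equation (\ref{eq:Tmatrix}), extract the relation $RY=Y$ from commutativity, and exhibit an explicit $g$-invariant nondegenerate form $Q=-\|x\|^{2}+uw-\tfrac12\|v\|^{2}$ of signature $(1,n)$. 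Your route is more self-contained and actually verifies the step the paper elides; it also cleanly disposes of the $\ur=0$ case (where $P(\ppsi)$ is trivial and $g$ would be elliptic, fixing interior points of a flowline), which the paper handles only by the unexplained assertion $t\ne 1$. The cost is a page of matrix bookkeeping and the sign-bookkeeping needed to make the signature come out as $(1,n)$ rather than $(\rank+1,\ur+1)$, but every step checks out, including the cancellation $w\,v^{T}(Y-R^{T}Y)=0$.
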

\begin{proof}{ An element $g\in G(\ppsi)$  is parabolic iff all eigenvalues of $g$ have 
modulus $1$ and $g$ is not conjugate into  $O(n+1)$. Such $g$ is conjugate to $a\cdot t$ with $a\in O(\ppsi)$
and $1\ne t\in T(\ppsi)$ and $a\cdot t=t\cdot a$. Since the eignevalues of $t$ are all positive, they are all $1$, 
so $t\in P(\ppsi)$.
Thus $g$ is a standard parabolic.}
\end{proof}

\if0 Suppose that $M\in \GL(n+1,\R)$ conjugates $T:=T(\ppsi)$ to $T':=T(\ppsi')$. If $T$ is diagonal then so is $T'$. Furthermore $\ppsi$ defines a linear equation that must be satisfied by the logarithms of all elements of $T$. If $\ppsi'\neq s \ppsi$ then the solution sets of these linear equations are different and so $T$ and $T'$ are not conjugate. 

	 Next, assume that $T$, and hence $T'$, are not diagonal. Since they are conjugate it follows that $\type_\ppsi=\type_{\ppsi'}=:\type$, $\ur_\ppsi=\ur_{\ppsi'}=:\ur$, and $P(\ppsi)=P(\ppsi'):=P$. Furthermore, the actions of $T$ and $T'$ on $\R^{n+1}$ have the same generalized eigenspace decompositions. Namely, let for $1\leq i\leq \type_{\ppsi}$ let $U_i=\langle e_i\rangle$ and let $U'=\langle e_{\type+1},\ldots, e_{n+1}\rangle$, where $e_i$ is the $i$th standard basis vector of $\R^{n+1}$. Then both $T$ and $T'$ preserve the splitting $\R^{n+1}=U_1\oplus \ldots \oplus U_{\type}\oplus U'$. The matrix $M$ must therefore preserve $U'$ and possibly permute the $U_i$.
	
	Since $P(\ppsi)=P(\ppsi')$ it follows that $M$ normalizes $P$. { By inspection,
	 the action of $P$ on $U'$ 
	is the action
	of a standard parabolic group on the domain $\Omega \cap U'$. The  orbits of $P$ in $U'$ are a single point (center of the radial flow), a plane through that point (hyperplane at infinity), and a collection of concentric horospheres. }

	It follows that $M$ preserves the point and the plane and permutes the collection of horospheres.
	{ We may assume $\det M=\pm1$ and} it follows that $M=LG$, where $G\in G(\ppsi)$ and $L$ is the form
	$$\begin{pmatrix}
		S & 0 & 0 & 0\\
		0 & s & 0 & 0\\
		0 & 0 & I & 0\\
		0 & 0 & 0 & s^{-1}
	\end{pmatrix},$$ 
where $S$ is a permutation of $\{e_1\ldots,e_\type\}$ factors. The action of $M$ is thus given by 
$$M\cdot\begin{pmatrix}\Diag (X)& 0\\
0 & \begin{pmatrix}
1 & Y^t & 1/2\Abs{Y}^2-\ppsi^\type(X)\\
0 & I & Y\\
0 & 0 & 1
\end{pmatrix}
\end{pmatrix}$$
$$=\begin{pmatrix}\Diag(X_S)& 0\\
0 & \begin{pmatrix}
1 & s(Y')^t & s^2(1/2\Abs{Y}^2-\ppsi^\type(X))\\
0 & I & sY'\\
0 & 0 & 1
\end{pmatrix}
\end{pmatrix},
$$
where $X_S$ is the vector obtained from $X$ by permuting the coordinates according to $S$, $Y'$ is a vector of the same length as $Y$. Since $\Abs{sY'}^2=s^2\Abs{Y}^2$ it follows that $\ppsi'$ is obtained from $\ppsi$ by scaling by $s^2$ and permuting the $\{e_1,\ldots,e_\type\}$. Finally, since $\ppsi'\in A$ the permutation can only map $e_i$ to $e_j$ if $\ppsi(e_i)=\ppsi(e_j)$ and so $\ppsi'=s^2\ppsi$.
\end{proof}
\fi

\if0
\begin{corollary}\label{conjugacyfacts} The following are equivalent: 
\begin{enumerate}
\item $\bdy \Omega (\ppsi)$ is projectively equivalent to $\bdy \Omega (\ppsi')$
\item $T(\ppsi)$ is conjugate to $T( \ppsi')$ in $\GL(n+1,\RR)$
\item $[\ppsi] = [ \ppsi ']$ 
\end{enumerate}
\end{corollary} 
\begin{proof} $(1)\Leftrightarrow(2)$ by \ref{domaintransgp} and $(2)\Leftrightarrow(3)$ by \ref{transgpscale}.
\end{proof}
\fi

From Equation (\ref{eq:radialflow}) the radial flow $\Phi\subset\GL(n+1,\RR)$ is given by
\begin{equation}\label{RFmatrix}
\begin{array}{ccc}  & \type<n & \type=n\\
\\
\Phi_s= &\exp\begin{pmatrix}0_{\type\times \type} & 0 & 0\\
0 & 0 & -s\\
0 & 0_{n-\type\times n-\type} & 0
\end{pmatrix}
&
\ \  \ \ \exp\begin{pmatrix}-s \operatorname{I}_{n\times n} & 0\\
0 &  0
\end{pmatrix}
\end{array}
\end{equation}
Observe that the one-parameter group $\Phi$ is a subgroup of $T_{\type}$ and $T_{\type}=T{(\psi)}\oplus\Phi$.
In particular  $T{=T(\ppsi)}$ commutes with the radial flow, so $T$ sends radial flows lines to radial flow lines.
Thus $T$ induces an action on  the space of flowlines in $V_{\ppsi}$, and radial projection identifies this space with $U_{\ppsi}$. The action
of $T$ on $U_{\ppsi}$ is affine, and given by
omitting row and column $\type+1$ to give
$$\begin{pmatrix}
 \exp\Diag(X) & 0 & 0 \\
 0& 
 I_{\ur} & Y\\
0 & 0 & 1
\end{pmatrix}\quad  Y\in\RR^{\ur}$$ 
with both $Y$ and $I_{\ur}$ interpreted as empty for $\ur= 0$. This happens when $\rank=n-1$. 
In the case $\type<n$ then $X\in\RR^\rank$ and when $\type=n$ then $X\in\ker\psi^\type$. From this it follows that:

\begin{lemma} Under radial projection $\pi:V_{\ppsi}\to U_{\ppsi}$
the action of $T(\ppsi)$ on $V_{\ppsi}$ is 
 semi-conjugate to a simply transitive affine action of $T(\ppsi)$ on $U_{\ppsi}$, 
{that} is  topologically conjugate to the action of $\RR^{n-1}$ on itself by translation.
\end{lemma}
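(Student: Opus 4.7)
The plan is to read off the semi-conjugacy directly from the commutation of $T(\ppsi)$ with the radial flow $\Phi$, and then extract simple transitivity on $U_{\ppsi}$ from the explicit matrix description displayed just above the lemma.

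First I would observe that, by the decomposition $\eT_{\type}=T(\ppsi)\oplus\Phi$ of the enlarged translation group (which is abelian by Lemma \ref{Tpluslemma}), every element of $T(\ppsi)$ commutes with every element of $\Phi$. Consequently each $g\in T(\ppsi)$ permutes the $\Phi$-orbits in $V_{\ppsi}$, that is, it sends flowlines to flowlines. Since radial projection $\pi:V_{\ppsi}\to U_{\ppsi}$ is by definition the quotient map of $V_{\ppsi}$ by the $\Phi$-action (see Equation \eqref{eq:radialproj} and Equation \eqref{eq:radialflow}), we get a well-defined induced action of $T(\ppsi)$ on $U_{\ppsi}$ satisfying $\pi(g\cdot v)=g\cdot \pi(v)$, which is exactly the semi-conjugacy.

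Second, to identify the induced action as a simply transitive affine action, I would use the matrix form obtained by deleting row and column $\type+1$ from the generators of $T(\ppsi)$ as displayed in the paragraph preceding the lemma. For $\type<n$ this gives an $(n-1)$-parameter family with parameter $(X,Y)\in\RR^{\rank}\times\RR^{\ur}$ acting on $U_{\ppsi}=\RR_{+}^{\rank}\times\RR^{\ur}$ by $(x,y)\mapsto(\exp\Diag(X)\,x,\,y+Y)$; for $\type=n$ the parameter $X\in\ker\psi^{\type}$ acts by $(x)\mapsto \exp\Diag(X)\,x$ on $U_{\ppsi}=\RRP^{n-1}$. In either case the action is clearly affine, and it is simply transitive because the orbit of the image of the basepoint is all of $U_{\ppsi}$ and the only element acting trivially is the identity (the exponential map from the Lie algebra of $T(\ppsi)$ onto its orbit is a diffeomorphism onto $U_{\ppsi}$).

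Finally, once I have a simply transitive smooth action of $T(\ppsi)\cong\RR^{n-1}$ on $U_{\ppsi}$, the orbit map $g\mapsto g\cdot\pi(b_{\ppsi})$ is a homeomorphism $T(\ppsi)\to U_{\ppsi}$ that conjugates the induced action to left translation of $\RR^{n-1}$ on itself. I do not expect any genuine obstacle here; the only mildly delicate point is handling the two edge cases $\type=n$ and $\ur=0$ in the matrix computation, which I would dispatch by inspection using Equations \eqref{eq:Tmatrix}, \eqref{eq:Tmatrix2}, and \eqref{eq:Tmatrix1}.
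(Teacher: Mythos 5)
Your proposal is correct and follows essentially the same route as the paper: commutation of $T(\ppsi)$ with the radial flow (via $T_{\type}=T(\ppsi)\oplus\Phi$) gives the induced affine action on $U_{\ppsi}$ read off by deleting row and column $\type+1$, and the final conjugacy to translations is the paper's map $(x,y)\mapsto(\Log x,y)$, which is exactly the inverse of your orbit map at the basepoint. The only difference is presentational.
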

\begin{proof}
The second conclusion follows by conjugating with the map $\R_+^{\rank}\times \RR^{\ur}\to\RR^{n-1}$ given by $(x,y)\mapsto(\Log x,y)$.
\end{proof}


\if0
\begin{definition}\label{Pardef}
 $\Par^p=(H,\HH^p)$ is the subgeometry  of $(\Isom(\HH^p),\HH^p)$ where $H\subset\Isom(\HH^p)$ is the 
subgroup that preserves a particular horosphere in $\HH^p$. 
\end{definition}
\fi

\subsection{Domains preserved by $T(\ppsi)$}\label{extendedomainssec}
{  $\Omega=\Omega(\ppsi)$ is not the only properly convex domain preserved by $T=T(\ppsi)$. 
If $B\in\Aff(\RR^n)$ normalizes $T$, then $T$ also preserves $B(\Omega)$. However the cusp
$\Omega/\Gamma$ is affinely equivalent to $B(\Omega)/\Gamma$.
When $T$ is diagonal
there is a different class of examples given by gluing two copies of $\Omega$ along 
$\interior(\bdy_{\infty}\Omega)$, and then deleting one boundary component.}
\if0Consider dimension $n=2$ and refer to Figure 1. 
Set $\Omega:=\Omega(1,1)\subset\RR^2$, then reflecting through the origin gives an open convex set
 $A=-\interior(\Omega)\subset\RR^2$ that is preserved by $T$.  Moreover
$W=\Omega\cup A\cup\interior(\bdy_{\infty}\Omega)$ is preserved by $T$, properly convex, 
and $W\cong\bdy W\times[0,\infty)$.
This can be done in all dimensions. }
\fi

\begin{definition}\label{signeddef} $\SS(\type)\subset\GL(n+1,\RR)$ is the the group of all diagonal matrices $\epsilon$ with $\epsilon_{i,i}=\pm1 $ for 
$i\le \type=\type(\ppsi)$ and $\epsilon_{i,i}=1$ for $i>\type$.  { Moreover $\SS(\type,\psi)$ is the subgroup of $\SS(\type)$ that normalizes $G(\ppsi)$
 }.
\end{definition}

\begin{lemma}\label{signdefnormal} { $\SS(\type)$ centralizes $T(\ppsi)$;
and  $\SS(\type,\psi)$ } consists of all $\epsilon\in \SS(\type)$ such that
 $\epsilon_{ k,k}=\epsilon_{j,j}$ whenever $\ppsi_{ k}=\ppsi_j$. Furthermore, $\SS(\type,\ppsi)$ also centralizes $G(\ppsi)$.
\end{lemma}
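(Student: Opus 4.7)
My plan is to compute everything block by block using the explicit matrix form of $T(\ppsi)$ in Equation (\ref{eq:Tmatrix}) and of $O(\ppsi)$ in Proposition \ref{olambda}. First, I would verify that $\SS(\type)$ centralizes $T(\ppsi)$. An element $\epsilon\in\SS(\type)$ is diagonal, with $\pm 1$ in positions $1,\ldots,\type$ and $1$ elsewhere. In the block decomposition of $m^\ast_\ppsi(X,Y)$, the top-left $\type\times\type$ block is $\exp\Diag(X)$, which is diagonal and therefore commutes with $\epsilon$ restricted to those coordinates; on the remaining coordinates $\epsilon$ is the identity and so commutes trivially with the lower block. Hence $\epsilon\cdot m^\ast_\ppsi(X,Y) = m^\ast_\ppsi(X,Y)\cdot\epsilon$, proving the first claim.

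Since $\epsilon$ centralizes $T(\ppsi)$ and $G(\ppsi)=T(\ppsi)\rtimes O(\ppsi)$ by Theorem \ref{Gppsistructure}, to decide whether $\epsilon$ normalizes $G(\ppsi)$ it suffices to determine when $\epsilon\, O(\ppsi)\epsilon^{-1}\subset G(\ppsi)$. From Proposition \ref{olambda}, $O(\ppsi)$ is block-diagonal with three relevant blocks: a permutation block $S(\ppsi)$ in positions $1,\ldots,\type$, identity in position $\type+1$, and (when $\type<n-1$) an $O(\ur)$ block in positions $\type+2,\ldots,n$. The last two blocks sit in coordinates where $\epsilon$ is the identity, so $\epsilon$ commutes with them automatically. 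The whole question thus reduces to understanding $\epsilon P_\sigma\epsilon^{-1}$ for $\sigma\in S(\ppsi)$, where $P_\sigma$ is the permutation matrix with $(P_\sigma)_{\sigma(i),i}=1$.

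A direct computation gives $(\epsilon P_\sigma\epsilon^{-1})_{\sigma(i),i}=\epsilon_{\sigma(i),\sigma(i)}\,\epsilon_{i,i}^{-1}$, which is $\pm 1$. For this matrix to lie in $G(\ppsi)$, I compare with the form of a general element $t\cdot o$ where $t\in T(\ppsi)$ and $o\in O(\ppsi)$: in the top-left $\type\times\type$ block this product equals $\exp\Diag(X)\cdot P_{\sigma'}$, a monomial matrix with strictly positive entries. Consequently the $\pm 1$ entries of $\epsilon P_\sigma\epsilon^{-1}$ must all be $+1$, forcing $\epsilon_{\sigma(i),\sigma(i)}=\epsilon_{i,i}$ for every $i\le\type$, and in that case the conjugate equals $P_\sigma$ itself. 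Recall that $\sigma\in S(\ppsi)$ can swap indices $k$ and $j$ if and only if $\ppsi_k=\ppsi_j$, and $S(\ppsi)$ is generated by such transpositions. Therefore $\epsilon$ normalizes $G(\ppsi)$ if and only if $\epsilon_{k,k}=\epsilon_{j,j}$ whenever $\ppsi_k=\ppsi_j$, as claimed. Under this condition the above computation actually shows $\epsilon$ commutes with every $P_\sigma$, hence with $S(\ppsi)$, and therefore with all of $O(\ppsi)$; combined with the first paragraph, $\epsilon$ centralizes $G(\ppsi)=T(\ppsi)\rtimes O(\ppsi)$.

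The only potential subtlety is to rule out that $\epsilon P_\sigma\epsilon^{-1}$ might land in $G(\ppsi)$ via a nontrivial combination of a translation and an orthogonal part; but the positivity of the top-left block of $\exp\Diag(X)\cdot P_{\sigma'}$ eliminates this possibility, which is the crux of the argument.
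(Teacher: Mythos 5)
Your proof is correct and follows essentially the same route as the paper: verify centralization of $T(\ppsi)$ from the block form of $m^*_{\ppsi}$, then reduce to the permutation block $S(\ppsi)$ and observe that conjugating a permutation matrix by $\epsilon$ yields a signed permutation with entries $\epsilon_{\sigma(i),\sigma(i)}\epsilon_{i,i}$, which forces the stated sign condition. The only difference is that you make explicit (via positivity of the top-left block of any $t\cdot o\in G(\ppsi)$) why a signed permutation with a $-1$ entry cannot lie in $G(\ppsi)$, a step the paper leaves implicit.
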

\begin{proof} { The first statement easily follows from the presentation of $T(\ppsi)$, see Equations (\ref{eq:Tmatrix2}) and (\ref{eq:Tmatrix}).}
{ By} Proposition \ref{olambda}, { we may} regard $S(\ppsi)$ and $O(\ur)$ as subgroups of $\Aff(\R^n)\subset\GL(n+1,\RR)$ 
acting on $\RR^n$.
It is easy to check that  $\SS(\type)$ centralizes   $O(\ur)$. 
An element $A\in S(\ppsi)$  permutes the $x_i$ coordinates for $1\le i\le \type$, and $\epsilon\in\SS(\type)$ assigns a sign
to each of these coordinates so that
\begin{equation}\label{signedperm}
(\epsilon A\epsilon)_{j,k}=\epsilon_{j,j}\epsilon_{k,k}A_{j,k}\end{equation} 
is a signed permutation. Thus $\epsilon\in\SS(\type,\ppsi)$ if and only if $\epsilon_{ k,k}=\epsilon_{j,j}$ whenever $\ppsi_{ k}=\ppsi_j$. Moreover in this case $\epsilon$ commutes with $A$.
\end{proof}

For $1\le i\le \type$ let $H_i\subset\RP^n$ be the hyperplane $x_i=0$.  
{ Then $X:=\RP^n\setminus(\RP_{\infty}^{n-1}\cup_i H_i)$
has $2^\type$ components, each affinely equivalent to $V_{\ppsi}$.} It is easy to check that:
\begin{lemma}\label{Xtransitive}
 $\SS(\type)$ acts { simply} transitively on the components of $X$, and $T_{\type}\oplus \SS(\type)$ acts { simply} transitively on $X$.
 \end{lemma}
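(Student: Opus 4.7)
The plan is to unwind the definitions and reduce both claims to straightforward bookkeeping, using the already established fact (Lemma \ref{Tpluslemma}) that $T_{\type}$ acts simply transitively on $V_{\ppsi}=\RR_+^{\type}\times\RR^{n-\type}$.

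First I would describe the components of $X$ concretely. Since $\RP^{n-1}_{\infty}\subset\RP^n$ is the hyperplane at infinity, $X\cap\RR^n=\RR^n\setminus\bigcup_{i=1}^{\type}\{x_i=0\}$, and the intersection of $X$ with $\RP^{n-1}_{\infty}$ adds nothing (any point at infinity where all of $x_1,\ldots,x_\type$ are defined and nonzero already lies in the closure of a unique component in $\RR^n$; in fact, it is simplest to note that each component of $X\cap\RR^n$ is an open orthant $\{\sign x_i=\sigma_i\ \text{for}\ i\le\type\}$ parameterized by a sign vector $\sigma\in\{\pm1\}^{\type}$, giving $2^{\type}$ components in total). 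The map $\SS(\type)\to\{\pm1\}^{\type}$ sending $\epsilon\mapsto(\epsilon_{i,i})_{i\le\type}$ is a bijection, and $\epsilon\in\SS(\type)$ sends the component with sign vector $\sigma$ to the one with sign vector $(\epsilon_{1,1}\sigma_1,\ldots,\epsilon_{\type,\type}\sigma_\type)$, yielding the simply transitive action on the set of components.

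For the second assertion, the key compatibility I would verify is that $\SS(\type)$ and $T_{\type}$ commute as subgroups of $\GL(n+1,\RR)$: from the matrix description \eqref{eq:mmatrix}, the top-left $\type\times\type$ block $\exp\Diag(X)$ commutes with any diagonal $\pm 1$ matrix, and the lower right block of an element of $\SS(\type)$ is the identity, so conjugation by $\epsilon\in\SS(\type)$ fixes $m_{\type}(X,Z,Y)$. Thus $T_{\type}\oplus\SS(\type)$ really is an (internal) direct product acting on $X$. Given $p\in X$ lying in a component $C$, I would choose the unique $\epsilon\in\SS(\type)$ with $\epsilon(V_{\ppsi})=C$ by the first part, then apply Lemma \ref{Tpluslemma} to produce the unique $t\in T_{\type}$ with $t(b_{\ppsi})=\epsilon^{-1}(p)\in V_{\ppsi}$, giving $(\epsilon t)(b_{\ppsi})=p$.

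For uniqueness, suppose $\epsilon_1 t_1(b_{\ppsi})=\epsilon_2 t_2(b_{\ppsi})$. Both sides belong to the components $\epsilon_1(V_{\ppsi})$ and $\epsilon_2(V_{\ppsi})$ respectively (because $t_i(b_{\ppsi})\in V_{\ppsi}$ and $\epsilon_i$ is a bijection of $X$ permuting components); hence $\epsilon_1(V_{\ppsi})=\epsilon_2(V_{\ppsi})$, and simple transitivity on components forces $\epsilon_1=\epsilon_2$. Canceling gives $t_1(b_{\ppsi})=t_2(b_{\ppsi})$, and then simple transitivity of $T_{\type}$ on $V_{\ppsi}$ forces $t_1=t_2$. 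There is no real obstacle here; the only point requiring care is the commutation of $\SS(\type)$ with $T_{\type}$ (as opposed to merely with $T(\ppsi)$, which is what Lemma \ref{signdefnormal} records), and this is an immediate block-matrix calculation.
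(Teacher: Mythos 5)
Your proof is correct and is precisely the routine verification the paper omits (the lemma is prefaced only by ``It is easy to check that:''): identify the components of $X$ with sign vectors in $\{\pm1\}^{\type}$, observe that $\SS(\type)$ acts on these exactly as $\{\pm1\}^{\type}$ acts on itself, and combine with the simple transitivity of $T_{\type}$ on $V_{\ppsi}$ from Lemma \ref{Tpluslemma}. Your remark that one must check $\SS(\type)$ centralizes the \emph{enlarged} group $T_{\type}$, not merely $T(\ppsi)$ as recorded in Lemma \ref{signdefnormal}, is the right point of care, and the block computation you give settles it (for $\type=n$ both groups are diagonal, so it is immediate there as well).
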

It follows that the only projective hyperplanes that are preserved by $T(\ppsi)$ are $\RP_{\infty}^{n-1}$ 
and the hyperplanes $H_i$.
{ If $g\in  \SS(\type)\oplus \flow^{\ppsi}$ then  $g(\Omega)$ is called a {\em standard domain}.
Since $g$ normalizes $T(\ppsi)$,  this domain
 is preserved by $T(\ppsi)$.  Since 
 $\blue\Phi_t\left(\Omega(\ppsi)\right)\subset\Phi_s\left(\Omega(\ppsi)\right)$
 if  $t\le s$, it follows that standard domains intersect if and only if one contains the other.}

 A properly convex set $U\cong\bdy U\times[0,\infty)$ that is preserved by the action of $T(\ppsi)$ is called {\em reducible} if there is a projective
 hyperplane $H$ that is preserved by $T(\ppsi)$ and $H\cap U\ne\emptyset$, otherwise $U$ is {\em irreducible}. If such $H$
 exists then $H$ separates $U$ into two properly convex sets that are preserved by $T(\ppsi)$. It follows from the above
 that, if $U$ is irreducible, then $U$ is contained in some component of $X$.

\begin{lemma}\label{invariantdomains}
 If $U\subset\RPn$ is an irreducible  properly convex set
  that is preserved by $T(\ppsi)$, and $U\cong\bdy U\times[0,\infty)$, then 
 $U$ is a standard $\ppsi$-domain.
Moreover there is a unique $g\in \SS(\type)\oplus\Phi^{\psi}$ such that
$U=g(\Omega(\ppsi))$. 
\end{lemma}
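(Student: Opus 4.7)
The plan is to reduce to the case $U \subset V_{\ppsi}$, recognize the orbits of $T(\ppsi)$ on $V_\ppsi$ as horospheres, identify $\bdy U$ with a single horosphere, and then use convexity to conclude $U$ is the corresponding sublevel set of $h_\ppsi$.

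\emph{Step 1 (Reduction to $U\subset V_\ppsi$).} By hypothesis $U$ is irreducible, so no $T(\ppsi)$-invariant projective hyperplane meets $U$. By the discussion preceding the lemma, the $T(\ppsi)$-invariant hyperplanes are $\RP^{n-1}_\infty$ and $H_1,\dots,H_\type$, so $U\subset X=\RPn\setminus(\RP^{n-1}_\infty\cup_i H_i)$. Since $U$ is convex hence connected, it lies in a single component of $X$. By Lemma \ref{Xtransitive} there is a (unique) $s\in\SS(\type)$ with $s(U)\subset V_\ppsi$, and $s(U)$ is again $T(\ppsi)$-invariant by Lemma \ref{signdefnormal}; replacing $U$ by $s(U)$, I may assume $U\subset V_\ppsi$.

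\emph{Step 2 ($\bdy U$ is a horosphere).} Lemma \ref{Tlemma} gives that $T(\ppsi)$ acts simply transitively on $\bdy\Omega=\Hcal_0$. Because $\eT_\type=T(\ppsi)\oplus\flow^\ppsi$ is abelian, $\flow_t$ centralizes $T(\ppsi)$, so conjugation by $\flow_t$ transports the simply transitive action on $\Hcal_0$ to one on $\Hcal_t=\flow_{-t}(\bdy\Omega)$ (cf.\ Equation \eqref{eq:horosphere}). Thus the $T(\ppsi)$-orbits in $V_\ppsi$ are exactly the horospheres $\{\Hcal_t\}_{t\in\RR}$, and they foliate $V_\ppsi$. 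Since $U$ is $T(\ppsi)$-invariant and closed, so is $\bdy U\subset U\subset V_\ppsi$, and $\bdy U$ is therefore a union of horospheres. The product structure $U\cong\bdy U\times[0,\infty)$ together with convexity of $U$ forces $\bdy U$ to be connected, so $\bdy U=\Hcal_t$ for a unique $t\in\RR$.

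\emph{Step 3 ($U=\flow_{-t}(\Omega)$).} Set $V:=h_\ppsi^{-1}((-\infty,t])=\flow_{-t}(\Omega)$. Since $\Hcal_t$ is a smooth strictly convex hypersurface (Proposition \ref{Omegaconvex}) separating $V_\ppsi$ into the two pieces $h_\ppsi^{-1}((-\infty,t))$ and $h_\ppsi^{-1}((t,\infty))$, and $\interior U$ is open, connected, and disjoint from $\bdy U=\Hcal_t$, it lies in one of these two components. Convexity of $h_\ppsi$ implies that only the sublevel side is convex (the superlevel side contains no full line through $\Hcal_t$ with appropriate curvature), so $\interior U\subset\interior V$, and taking closures gives $U\subset V$. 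For the reverse inclusion, if some $q\in\interior V$ were not in $U$, then the line segment from $q$ to any point of $\interior U$ would lie in $\interior V$ (which is convex) yet, since $U$ is closed, it would have to cross $\bdy U=\Hcal_t\subset\bdy V$; this contradicts the segment staying in $\interior V$. Hence $\interior V\subset U$, and so $V=\cl(\interior V)\subset U$. Therefore $U=V=\flow_{-t}(\Omega)$, and undoing the reduction of Step 1 we obtain $U=g(\Omega(\ppsi))$ with $g=s^{-1}\flow_{-t}\in\SS(\type)\oplus\flow^\ppsi$ (this is a direct sum since $\SS(\type)$ and $\flow^\ppsi$ act on complementary coordinates when $\type<n$, and commute as diagonal matrices when $\type=n$).

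\emph{Step 4 (Uniqueness).} Suppose $g_1(\Omega)=g_2(\Omega)=U$ with $g_i=s_i\flow_{t_i}\in\SS(\type)\oplus\flow^\ppsi$. Then $s_1^{-1}s_2\flow_{t_2-t_1}$ preserves $\Omega$. The component of $X$ containing $\flow_{t_2-t_1}(\Omega)=h_\ppsi^{-1}((-\infty,t_2-t_1])$ is $V_\ppsi$, and $s_1^{-1}s_2$ then moves this into the component $s_1^{-1}s_2(V_\ppsi)$, which by Lemma \ref{Xtransitive} equals $V_\ppsi$ only when $s_1=s_2$. With $s_1=s_2$, the identity $\flow_{t_2-t_1}(\Omega)=\Omega$ forces $t_1=t_2$ via Equation \eqref{eq:flowequivariance} (different values of $t$ give disjoint boundary horospheres). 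Hence $g_1=g_2$.

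The step I expect to be delicate is Step 3: verifying that any convex $U$ whose manifold boundary is exactly $\Hcal_t$ must coincide with the sublevel set $V$, rather than being a proper convex subset of $V$ with the same boundary. The key is combining strict convexity of $\Hcal_t$ with the product-end hypothesis $U\cong\bdy U\times[0,\infty)$ to rule out extraneous boundary at infinity; this is what allows the two-sided argument $U\subset V$ and $V\subset U$ to close up.
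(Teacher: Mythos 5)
Your proof is correct and follows essentially the same route as the paper's (which is far terser): use Lemma \ref{Xtransitive} to pick out the unique element of $\SS(\type)$ carrying $U$ into $V_{\ppsi}$, then use simple transitivity of $T(\ppsi)$ on horospheres together with the radial flow to match $\bdy U$ with $\bdy\Omega(\ppsi)$; your Steps 2--3 additionally verify that matching the boundary pins down the whole domain, a point the paper's three-sentence proof leaves implicit. The only soft spot is the parenthetical in Step 3 justifying why $\interior U$ lies on the sublevel side ("no full line through $\Hcal_t$ with appropriate curvature" is garbled) --- the clean argument is that $U\supset\Hcal_t$ and convexity force $U$ to contain the chords of the strictly convex hypersurface $\Hcal_t$, and these lie in the open sublevel region --- and the reverse inclusion tacitly uses that $U$ is closed in the affine patch, a convention the paper itself adopts without comment.
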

\begin{proof} There is unique $\epsilon\in\SS(\type)$ such that $\epsilon(U)\subset V_{\ppsi}$.
If $x\in\bdy U$ then there is $h\in T_{\type}$ such that $h\circ \epsilon(x)\in\bdy\Omega(\psi)$. Since $T(\psi)$
acts simply transitively on $\bdy\Omega$, and is the subgroup of $T_{\type}\oplus\Phi^{\ppsi}$ that preserves $\bdy\Omega$,
 it follows there is a unique $h\in \Phi^{\psi}$ with this property, and $g=h\circ\epsilon$.
\end{proof}

 When $\type=\type(\ppsi)=n$
  let $\minusone\in \SS(\type)$ be the map
   that restricts to be the affine map of $\RR^n$ given by $x\mapsto-x$. In
 the above notation $\minusone_{i,i}=-1$ for all $1\le i\le n$ and $I_{n+1,n+1}=1$.
 { In what follows, let $\Omega\subset V:=V_{\psi}$ and $\Omega'\subset \minusone(V)$ be standard domains and
  observe that 
 $$\Delta^{n-1}\cong\bdy_{\infty}\Omega=\bdy_{\infty}\Omega'$$
Then
 $$W:=\Omega\sqcup\interior(\Omega')\sqcup \interior(\bdy_{\infty}\Omega)\subset\RPn$$
is called an {\em extended domain}.
 
 \begin{lemma}\label{extendeddomain}
 The extended domain $W$ is properly convex,  preserved by $G(\ppsi)$, and $W\cong \bdy W\times[0,\infty)$. 
 \end{lemma}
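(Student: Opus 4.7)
The plan is to work in the affine patch $\AA'\subset\RPn$ complementary to the hyperplane $H:=\{[x]:x_1+\cdots+x_n=0\}$. Since in the standard patch $x_{n+1}=1$ the set $\Omega$ has all first $n$ coordinates positive, $\Omega'=\minusone(\Omega)$ has them all negative, and $\bdy_{\infty}\Omega=\{[v:0]:v_i\ge 0,\ v\ne0\}$ is a nonzero simplex, the quantity $\sum_{i\le n}x_i$ is nonzero on every projective representative of a point of $\cl W=\Omega\cup\Omega'\cup\bdy_{\infty}\Omega$; hence $\cl W\subset\AA'$. Normalize representatives in $\AA'$ by $\sum_{i=1}^n x_i=1$, write $t:=x_{n+1}$, and set $\sigma:=\sum_j\ppsi(e_j)$ and $g(x):=\prod_i x_i^{\ppsi(e_i)/\sigma}$. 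Translating the defining inequalities (taking $\Omega=\Omega(\ppsi)$; the general standard-domain case differs only by an affine rescaling of $t$) yields
\[\cl W=\{(x,t):x\in\Delta^{n-1},\ |t|\le g(x)\},\qquad W=\{(x,t):x\in\interior(\Delta^{n-1}),\ -g(x)<t\le g(x)\}.\]
The weighted geometric mean $g$ is concave on $\Delta^{n-1}$ by AM--GM, so $|t|\le g(x)$ carves out a convex set (the intersection of the hypograph of $g$ with the epigraph of $-g$); intersecting with the convex set $\interior(\Delta^{n-1})$ preserves convexity, and $\cl W$ is compact in $\AA'$. Hence $W$ is properly convex. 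Choosing $\AA'$ is the crux of the argument: the affine patch $\AA(\Omega)$ used for $\Omega$ alone has $\bdy_{\infty}\Omega$ on its hyperplane at infinity and cannot contain $W$.

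For $G(\ppsi)$-invariance, $T(\ppsi)$ and $O(\ppsi)$ visibly preserve $\Omega$ and $\interior(\bdy_{\infty}\Omega)$: the former acts by positive diagonal scalings and the latter by permutations of $\{e_1,\dots,e_n\}$ fixing $\sum_i\ppsi(e_i)e_i$, each preserving the positive simplex together with its faces. For $\Omega'=\minusone(\Omega)$ I apply Lemma \ref{signdefnormal}: since $\type=n$, every diagonal entry of $\minusone$ in positions $\le\type$ equals $-1$, so the condition ``$\minusone_{i,i}=\minusone_{j,j}$ whenever $\ppsi(e_i)=\ppsi(e_j)$'' holds vacuously, placing $\minusone\in\SS(\type,\ppsi)$; thus $\minusone$ centralizes $G(\ppsi)$, and for any $h\in G(\ppsi)$, $h(\Omega')=h\minusone(\Omega)=\minusone h(\Omega)=\minusone(\Omega)=\Omega'$.

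Finally, $\bdy W=\bdy\Omega$: each point of $\interior(\bdy_{\infty}\Omega)$ lies in the interior of a vertical segment of $W$ that crosses from $\Omega$ into $\interior(\Omega')$, hence is a manifold-interior point, while $\interior(\Omega')$ contributes no boundary. Define $F:\bdy\Omega\times[0,\infty)\to W$ in the patch coordinates by
\[F(q,s):=\bigl(x(q),\ g(x(q))\cdot(1-s)/(1+s)\bigr),\]
where $q\in\bdy\Omega$ has patch coordinates $(x(q),g(x(q)))$. As $s$ runs over $[0,\infty)$ the $t$-coordinate decreases monotonically from $g(x(q))$ (i.e., $q\in\bdy\Omega$) through $0$ (the ideal-boundary point $(x(q),0)$) and limits to the excluded value $-g(x(q))$ (the missing $\bdy\Omega'$); since vertical projection $(x,t)\mapsto x$ identifies each fiber of $W$ bijectively with a point of $\interior(\Delta^{n-1})\cong\bdy\Omega$, $F$ is a smooth bijection with smooth inverse $(x,t)\mapsto\bigl((x,g(x)),\,(g(x)-t)/(g(x)+t)\bigr)$, producing the required diffeomorphism $W\cong\bdy W\times[0,\infty)$.
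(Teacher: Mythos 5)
Your proof is correct, but it takes a genuinely different route from the paper's. The paper establishes proper convexity softly: it exhibits a supporting hyperplane at every point of $\Fr W$ (coordinate hyperplanes $x_i=0$ along $\bdy_{\infty}\Omega$, tangent hyperplanes along $\bdy\Omega\sqcup\bdy\Omega'$) and combines this with the disjointness of $\cl W$ from $\{\sum x_i=0\}$; it then obtains the product structure by splitting $W=A\cup A'$ with $A\cong\interior(\bdy_{\infty}\Omega)\times[0,1]$ and $A'\cong\interior(\bdy_{\infty}\Omega)\times[1,\infty)$ glued along $\interior(\bdy_{\infty}\Omega)$, without writing formulas. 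You instead make everything explicit: after passing to the affine patch complementary to $\{\sum_{i\le n}x_i=0\}$ (the same hyperplane the paper uses, but exploited as a coordinate chart rather than merely as a separating hyperplane), $W$ becomes the region $-g(x)<t\le g(x)$ over the open simplex with $g$ the weighted geometric mean, convexity reduces to concavity of $g$ (weighted AM--GM), invariance of $\Omega'$ comes from $\minusone\in\SS(\type,\ppsi)$ centralizing $G(\ppsi)$ via Lemma \ref{signdefnormal}, and the product structure is an explicit fiberwise M\"obius reparametrization with a closed-form inverse. Your version is longer but self-contained and checkable line by line; the paper's is terser but leans on the unproved principle that local supporting hyperplanes imply convexity and merely asserts the two product decompositions. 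One small point of hygiene: $W$ is not literally the intersection $\{|t|\le g\}\cap\{x\in\interior\Delta^{n-1}\}$, since it omits the lower graph $t=-g(x)$; the convexity of $W$ itself still follows because the strict inequality $t>-g(x)$ is preserved under convex combination by the same concavity estimate, so this is a phrasing issue rather than a gap.
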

 \begin{proof} At each point $x\in\Fr W=(\bdy\Omega\sqcup\bdy\Omega')\sqcup\bdy_{\infty}\Omega$ there is a supporting hyperplane $H$. 
 If $x\in\bdy_{\infty}\Omega$ then $H$ is the projectivization of some coordinate hyperplane $x_i=0$. For 
 $x\in\bdy\Omega\sqcup\bdy\Omega'$  it is clear $H$ exists.
 Moreover $\cl(W)$ is disjoint from the projectivization of the affine hyperplane $\sum x_i=0$, so $W$ is properly convex.
  
  In what follows, closure is taken in $\RPn$. Observe that $W=A\cup A'$ where $A=\cl(\Omega)\setminus\bdy(\bdy_{\infty}\Omega)$ and $A'=\cl(\Omega')\setminus\cl(\bdy\Omega')$ so $A\cap A'=\interior(\bdy_{\infty}\Omega)$.
Then $A \cong\interior(\bdy_{\infty}\Omega)\times [0,1]$
 and  $A'\cong \interior(\bdy_{\infty}\Omega)\times [1,\infty)$ so $W\cong \interior(\bdy_{\infty}\Omega)\times[0,\infty)$.
 Since $G(\ppsi)$ preserves $A$ and $A'$ it preserves $W$.
 \end{proof} }
   
     \begin{proposition}\label{Ginvdomains} If $U\subset\RP^n$ is an open properly convex set that is 
  preserved by $T(\ppsi)$,  and $U\cong\bdy U\times[0,\infty)$, then either 
$U$ is a standard $\ppsi$-domain,
or else $\type=n$ and $U$ is an extended domain. \end{proposition}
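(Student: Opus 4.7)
\medskip
\noindent\textbf{Proof outline.} The plan is to combine Lemma \ref{invariantdomains} with a case analysis of how $U$ can span components of $X:=\RP^n\setminus(\RP^{n-1}_\infty\cup\bigcup_{i=1}^\type H_i)$. If no $T(\ppsi)$-invariant hyperplane meets $U$, Lemma \ref{invariantdomains} gives that $U$ is a standard $\ppsi$-domain and we are done. So assume $U$ is reducible; the task is to force $\type=n$ and identify $U$ with an extended domain.

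First describe each piece $U_\epsilon:=U\cap\epsilon V_\ppsi$ for $\epsilon\in\SS(\type)$. By Lemma \ref{Tpluslemma}, $T(\ppsi)$ preserves the horofunction $h_\ppsi\circ\epsilon^{-1}$ on $\epsilon V_\ppsi$, whose level sets are precisely the codimension-one $T(\ppsi)$-orbits. Any $T(\ppsi)$-invariant open convex subset $U_\epsilon$ therefore corresponds to a convex subset of the one-dimensional orbit space parametrized by the horofunction; strict convexity of $h_\ppsi$ (Proposition \ref{Omegaconvex}) makes slabs non-convex in dimension $\ge 2$, so $U_\epsilon$ is either empty or a sublevel set $\{h_\ppsi\circ\epsilon^{-1}<t_\epsilon\}=\epsilon\Phi_{-t_\epsilon}(\interior\Omega)$ (the alternative $U_\epsilon=\epsilon V_\ppsi$ is either not properly convex when $\type<n$ or incompatible with the $\bdy U\times[0,\infty)$ structure when $\type=n$). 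The horofunction formula gives $h_\ppsi\to+\infty$ as $x_i\to 0^+$ for $i\le\type$, so $\cl U_\epsilon$ meets each $H_i$ only in $\bdy_\infty\Omega$. Hence if $U_\epsilon$ and $U_{\epsilon'}$ are both nonempty with $\epsilon\ne\epsilon'$, any line segment in $U$ between them must cross $\interior\bdy_\infty\Omega\subset\RP^{n-1}_\infty$.

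Now split on $\type$. If $\type<n$, then Lemma \ref{idealboundary} gives $\bdy_\infty\Omega=\Delta^\type$ of codimension $\ur\ge 1$ in $\RP^{n-1}_\infty$, and the proof of Lemma \ref{uniquesupphyperplane} shows that at each interior point of $\Delta^\type$, in a transverse $\RP^{\ur+1}$-plane, $\bdy\Omega$ is $C^1$-equivalent to a horoball in $\HH^{\ur+1}$ with $\RP^{n-1}_\infty$ as the unique supporting hyperplane. A second piece $U_{\epsilon'}$ glued across $\interior\Delta^\type$ would have to lie on the opposite side of this supporting hyperplane, but the horoball model admits no such properly convex extension; so only one $U_\epsilon$ is nonempty, contradicting reducibility. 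Hence $\type=n$. When $\type=n$, $\bdy_\infty\Omega=\Delta^{n-1}$ is full-dimensional in $\RP^{n-1}_\infty$; among the standard domains $\{\epsilon\Omega:\epsilon\in\SS(n)\}$, only antipodal pairs $\{\epsilon\Omega,\minusone\epsilon\Omega\}$ share the same projective ideal boundary (since $\minusone$ acts trivially on $\RP^{n-1}_\infty$ but other $\epsilon$ send $\Delta^{n-1}$ to a differently-signed simplex). A direct check using the defining inequalities then shows the unique properly convex gluing across $\interior\Delta^{n-1}$ is $U=g\Omega\cup\interior(\minusone g\Omega)\cup\interior\bdy_\infty\Omega$ for some $g\in\SS(n)\oplus\flow^\ppsi$, which is precisely an extended domain in the sense of Lemma \ref{extendeddomain}.

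The principal obstacle is the case $\type<n$: ruling out exotic reducible gluings depends on extending the $C^1$ analysis at $\interior\Delta^\type$ from Lemma \ref{uniquesupphyperplane} to conclude that, across the unique supporting hyperplane $\RP^{n-1}_\infty$ there, no $T(\ppsi)$-invariant properly convex extension of $\Omega$ exists. The $\type=n$ case is more cosmetic: one simply enumerates the possible antipodal gluings and checks that the extended domain construction of Lemma \ref{extendeddomain} is the only properly convex outcome.
\medskip
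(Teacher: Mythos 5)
Your skeleton matches the paper's: dispose of the irreducible case with Lemma \ref{invariantdomains}, show each piece $U_\epsilon=U\cap\epsilon V_\ppsi$ is (the interior of) a standard domain, and then rule out more than one piece unless $\type=n$. Your analysis of the individual pieces via the orbit space of $T(\ppsi)$ and convexity of sublevel sets of $h_\ppsi$ is sound, and is arguably more careful than the paper's own one-line appeal to Lemma \ref{invariantdomains} for the pieces. Your $\type=n$ endgame is also the paper's: only $\minusone$ acts trivially on $\RP^{n-1}_{\infty}$, so the second piece must be $\Phi_{-t}\minusone g\Omega$, producing an extended domain.

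The gap is exactly where you flag it, the case $\type<n$, and the mechanism you propose cannot close it. You argue that at an interior point of $\Delta^{\type}$ the supporting hyperplane $\RP^{n-1}_{\infty}$ is unique and that ``the horoball model admits no properly convex extension'' across it. But both of those facts hold verbatim when $\type=n$: there $\Delta^{n-1}=\cl\Omega\cap\RP^{n-1}_{\infty}$ is a full-dimensional exposed face, so its interior points are $C^1$ points whose unique supporting hyperplane is $\RP^{n-1}_{\infty}$ --- and yet the extension across it exists and is properly convex (Lemma \ref{extendeddomain}). So uniqueness of the supporting hyperplane is not the distinguishing feature and cannot carry the argument. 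Worse, the configuration you describe never arises for $\type<n$: any other standard domain has ideal boundary $\epsilon\Delta^{\type}$ with $\epsilon$ flipping some coordinate $i\le\type$, and $\epsilon\Delta^{\type}\cap\Delta^{\type}$ is a proper face of dimension at most $\type-1$, so no second piece even has $\interior\Delta^{\type}$ in its closure; the gluing locus, if any, sits in proper faces, which your transverse-slice picture (taken at interior points, and only valid for $\ur\ge1$) does not see. The subcase $\type=n-1$, where $\ur=0$ and the transverse ``horoball in $\HH^{\ur+1}$'' degenerates to a pair of points in $\RP^1$, is not addressed at all. The paper instead uses two devices your proposal lacks: a dimension count --- the set $U\setminus X$ along which pieces are glued lies in the union of the simplices $\epsilon\Delta^{\rank}$ and must separate the open connected set $\interior(U)$, forcing $\rank=n-1$ and hence $\type\in\{n-1,n\}$ --- and, for $\type=n-1$, an explicit two-dimensional slice $\{x_i=1:\ i<n-1\}$ in which $\Omega$ looks like $\{x_n\ge-\ppsi_{n-1}\log x_{n-1}\}$ and the putative union visibly fails to be convex at the endpoint of $\bdy_{\infty}\Omega$ where $\bdy\Omega$ is tangent to $\RP^{n-1}_{\infty}$. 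You need both of these (or genuine substitutes) for a complete proof.
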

\begin{proof} { As usual we drop $\ppsi$ from the notation}.
Since $T$  preserves { each component of $X$}, it preserves 
 each component of $U\cap  X$. The latter are
properly convex so by Lemma \ref{invariantdomains}, {the closure in $\RR^n$ of each of} 
these components is
a standard domain. It suffices to show that if there is more than one component, then $\type=n$ and there
are exactly two components. 

If there is more than one component then, since $\blue U$ is connected, the closure in $\RP^n$ of two distinct 
components must intersect. { We may assume one component $\Omega$ is contained in $V$
and  the other is $g\Omega$ for some $g\in \SS(\type)\oplus\Phi$.}
The intersection {$\Omega\cap g\Omega$} is contained in $\bdy_{\infty}\Omega\cong\Delta^{\rank}$ and
separates the open set  $\blue\interior(U)$. It follows that $\rank=n-1$ so $\type=n-1$ or $\type=n$, {\blue and that there are at most two components.}

We claim that if $\type=n-1$ then  $ U$ is not convex. 
This is because {using Definition \ref{psidomain}} the intersection of $\Omega$ with the 2 dimensional affine
subspace given by $x_i=1$ for $ i< n-1$  is $x_{ n}=-\ppsi_{ n-1}\log x_{ n-1}$
which looks like $y=-\log x$ shown in Figure \ref{dpic}. In this case it is clear that an extended domain is not convex at the right hand { endpoint of $\bdy\Omega(1,0)$}. 
If $\type=n$ then $g$ must preserve $\bdy_{\infty}\Omega$ which implies $ g\in\minusone\circ\Phi$ 
 completing the proof. 
\end{proof}

\begin{corollary}\label{holdeterminespsicusp}
 If $C$ is a generalized cusp with holonomy  $\Gamma\subset G(\ppsi)$ then $C$ is equivalent to a $\ppsi$-cusp.
\end{corollary}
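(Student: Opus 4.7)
The plan is to construct a common sub-cusp $W/\Gamma$ of both $C=U/\Gamma$ and $\Omega(\ppsi)/\Gamma$, where $W$ will be a sufficiently deep $\Phi$-translate of $\Omega(\ppsi)$. After possibly conjugating by an element of $\SS(\type)$ we may assume $U\subset V_\ppsi$, so that $h_\ppsi$ is defined on $U$; the only remaining possibility from Proposition~\ref{Ginvdomains}, namely the ``extended domain'' case with $\type=n$, is immediate because then $\Omega(\ppsi)\subset U$ directly and the sub-cusp $\Omega(\ppsi)/\Gamma$ already lives inside $C$.

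Since $\Gamma\subset G(\ppsi)$ preserves $h_\ppsi$ by definition and the quotient $\bdy C=\bdy U/\Gamma$ is compact, the horofunction descends to a bounded continuous function on $\bdy C$; set $M:=\sup_{p\in\bdy U}|h_\ppsi(p)|<\infty$ so that $\bdy U\subset\{|h_\ppsi|\le M\}$. Then the open convex set $S:=\{h_\ppsi<-M-1\}=\interior\Phi_{-M-1}\Omega(\ppsi)$ is connected and disjoint from $\bdy U$, hence lies entirely inside one component of $V_\ppsi\setminus\bdy U$. To show that component is $\interior U$ it suffices to exhibit a single point of $\interior U$ with $h_\ppsi<-M-1$. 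I would produce such a point as follows: the $\Gamma$-invariance of $U$ forces the limit set of $\Gamma$ in $\RPn$ to lie in $\cl U$, and the fact that $\Gamma$ is a cocompact lattice in $G(\ppsi)$ acting on $\Omega(\ppsi)$ identifies that limit set with $\bdy_\infty\Omega(\ppsi)=\Delta^\rank\subset\RP^{n-1}_\infty$; thus $\bdy_\infty U\supset\bdy_\infty\Omega(\ppsi)$. A cusp-end ray in $\interior U$ chosen so as to accumulate on a point of $\interior\bdy_\infty\Omega(\ppsi)$ from within $V_\ppsi$ then has $h_\ppsi\to-\infty$, because $h_\ppsi\to-\infty$ under such an approach, and eventually enters $S$.

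Setting $W:=\Phi_{-M-1}\Omega(\ppsi)=\cl(S)$, we have $W\subset U$ by the previous step and $W\subset\Omega(\ppsi)$ because $-M-1<0$. Since $\Phi$ centralizes $G(\ppsi)\supset\Gamma$, the domain $W$ is $\Gamma$-invariant and $\Phi_{M+1}:W\to\Omega(\ppsi)$ is a $\Gamma$-equivariant projective isomorphism, so $W/\Gamma$ is a $\ppsi$-cusp naturally isomorphic to $\Omega(\ppsi)/\Gamma$. The inclusions $W\subset U$ and $W\subset\Omega(\ppsi)$ descend to projective embeddings of $W/\Gamma$ into $C$ and into $\Omega(\ppsi)/\Gamma$, and each is a homotopy equivalence because all three spaces are diffeomorphic to $\bdy C\times[0,\infty)$. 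This establishes the equivalence claimed in the corollary. The main obstacle is the construction of a ray in $\interior U$ that both accumulates on $\interior\bdy_\infty\Omega(\ppsi)$ and stays asymptotic to $V_\ppsi$ so that $h_\ppsi\to-\infty$ along it; this rests on pinning down the $\Gamma$-limit set in $\RPn$ and a convexity argument that $U$ approaches $\Omega(\ppsi)$ asymptotically near the shared ideal simplex, with the purely parabolic case $\ppsi=0$ requiring a separate treatment from the case $\rank>0$.
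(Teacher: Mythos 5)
Your route is genuinely different from the paper's, but as written it has two gaps, one of which you flag yourself. First, you invoke Proposition \ref{Ginvdomains} at the outset to reduce to $U\subset V_{\ppsi}$ (and to isolate the extended-domain case), but that proposition, like Lemma \ref{invariantdomains} on which it rests, requires $U$ to be invariant under the whole translation group $T(\ppsi)$. Your $U$ is only invariant under the lattice $\Gamma$, so neither result applies, and there is no a priori reason that $\bdy U$ avoids the $T(\ppsi)$-invariant hyperplanes $H_i$ and $\RP^{n-1}_{\infty}$, i.e.\ that $h_{\ppsi}$ is even defined on all of $\bdy U$ after a sign conjugacy. (One could try to argue this directly from strict convexity of $\bdy U$ and freeness of the $\Gamma$-action, but that argument is not in your proposal.) Second, and more seriously, the step you yourself call ``the main obstacle''--- producing a single point of $\interior U$ with $h_{\ppsi}<-M-1$ --- is exactly the hard content of the statement and is not carried out. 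Identifying the accumulation set of a $\Gamma$-orbit in $\cl U$ with $\bdy_{\infty}\Omega(\ppsi)$, and then choosing a ray in $\interior U$ whose direction makes $h_{\ppsi}\to-\infty$ (note that a general path converging to a point of $\bdy_{\infty}\Omega(\ppsi)$ need not have $h_{\ppsi}\to-\infty$: already for $\ppsi=0$ the path $(t,s\sqrt{t},0,\dots,0)$ converges to $[e_1]$ while $h_0\to+\infty$ when $s^2>2$), requires a real argument that you have only named, not given.

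The paper sidesteps both issues by quoting Theorem \ref{homogcusp} (Theorem 6.3 of \cite{CLT2}): there is a $G(\ppsi)$-invariant subdomain $U\subset U'$ with $U/\Gamma$ equivalent to $C$. Once $U$ is $G(\ppsi)$-invariant, Proposition \ref{Ginvdomains} applies legitimately, $U$ is a standard or extended domain, and the conclusion is immediate. In effect your sketch is an attempt to reprove that homogenization step from scratch; the payoff would be a more self-contained argument, but the missing ingredient --- that a $\Gamma$-cocompact strictly convex hypersurface preserved by a lattice in $G(\ppsi)$ is trapped between two horospheres --- is precisely where the substance of Theorem \ref{homogcusp} lies, so until that is supplied the proposal does not close.
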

\begin{proof}  We have $C=U'/\Gamma$ for some $U'\cong \bdy U'\times[0,\infty)$ that is preserved by $\Gamma$.
By { Theorem 6.3 from \cite{CLT2}} there is a $G(\ppsi)$-invariant subset $U\subset U'$ and $U/\Gamma$
is
equivalent to $C$, so $U\cong\bdy U\times[0,\infty)$. By Proposition \ref{Ginvdomains}, either $U$ is a standard $\ppsi$-domain
or else an extended domain. 
Otherwise, if $U$ is extended, then $U$ contains a  standard domain, $\Omega$, that is $G(\ppsi)$ invariant, and 
$C$ is equivalent to the $\ppsi$-cusp $\Omega/\Gamma$.
\end{proof}

If $C'$ is a generalized cusp 
that properly contains another generalized cusp $C$, 
and they have the
same boundary, then $\type=n$ and the holonomy is diagonalizable.
Equivalent cusps are not always projectively equivalent after
removing suitable collars of the boundary.  If $\type=n-1$, 
then $\bdy_{\infty}\Omega(\ppsi)\cong\Delta^{n-1}$, but there is 
no larger $G(\ppsi)$-invariant
domain
that contains $\bdy_{\infty}\Omega(\ppsi)$ in its interior.

\if0
 We will henceforth also use the term \emph{horosphere} to refer to the image of a horosphere in $\Omega(\ppsi)$ under $d_\sigma$, for some $\sigma\in D_\ppsi$. It is easy to see that $\Omega^{t,\sigma}(\ppsi)$ is a convex subset of $V_\ppsi^\sigma$ that is foliated by horospheres. For the most part, $\Omega^{t,\sigma}(\ppsi)$ is properly convex, however there are some exceptional cases when $t=\infty$. If $t=\infty$ then $\Omega^{t,\sigma}(\ppsi)=V_\ppsi^\sigma$ and this set is properly convex iff $r_\ppsi=n$.

Let $\R_\infty=\R\cup\{\infty\}$, we can now describe a family of properly convex domains parameterized by $\R_\infty\times D_\ppsi$. If $\R_\infty\ni t\neq \infty$, let $\Omega^{t,\sigma}(\ppsi)$ be the image of $\Omega(\ppsi)$ under the map $\Phi_t\circ d_\sigma$ and if $t=\infty$, let $\Omega^{t,\sigma}(\ppsi)=V_\ppsi^\sigma$.
When $r_\ppsi=n$ there another way of constructing $G(\ppsi)$-invariant properly convex sets by taking unions of the domains above which we now describe. Inside $D_\ppsi$ there is a distinguished element $\sigma_{-1}$ whose coordinates are all equal to $-1$. If $(t,\sigma)\in \R_\infty\times D_\ppsi$ and $t'\in \R_\infty$ let
$$\Omega^{t,\sigma,t'}(\ppsi):=\overline{\Omega^{t,\sigma}(\ppsi)\cup \Omega^{t',\sigma_{-1}\sigma}(\ppsi)}=\Omega^{t,\sigma}(\ppsi)\cup\sigma(\partial_\infty \Omega(\ppsi))\cup \Omega^{t',\sigma_{-1}\sigma}(\ppsi)$$
Informally, $\Omega^{t,\sigma,t'}$ is constructed by taking the closure of the union of two $G(\ppsi)$-invariant properly convex sets that have the same ideal boundary. It is easy to verify that $\Omega^{t,\sigma,t'}$ is an convex set that is properly convex if and only if either $(t,t')\neq (\infty,\infty)$. The following Lemma shows that any interesting $G(\ppsi)$-invariant properly convex sets come from these two constructions.

\begin{lemma}\label{Ginvdomains} Suppose that $\Omega$ is an open $G(\ppsi)$-invariant properly convex set. \marginnote{AL where do we use this later?}

\begin{itemize}
\item If $r_\ppsi<n$ then there is $(t,\sigma)\in \R\times D_\ppsi$ such that $\Omega=\interior(\Omega^{t,\sigma}(\ppsi))$.
\item If $r_\ppsi=n$ then there are $(t,\sigma)\in \R_\infty\times D_\ppsi$ and $t'\in \R$ such that $\Omega=\interior(\Omega^{t,\sigma}(\ppsi))$ or $\Omega=\interior(\Omega^{t,\sigma,t'}(\ppsi))$ 
\end{itemize}
\end{lemma}

\begin{proof}
First, suppose that $r_\ppsi<n$ and that $\Omega$ is an open $G(\ppsi)$-invariant properly convex subset of $\RP^n$. Since $\Omega$ is open it must intersect $V_\ppsi^\sigma$ for some $\sigma\in D_\ppsi$. By applying $d_\sigma$ we can assume without loss of generality that $\sigma=(1,\ldots,1)$ and that $V_\ppsi^\sigma=V_\ppsi$. Let $p$ be a point in $\Omega\cap V_\ppsi$. By applying the radial flow we can further assume without loss of generality that $p\in \partial \Omega(\ppsi)$.  Thus the $G(\ppsi)$-orbit of $p$ is $\partial \Omega(\ppsi)$. Since $\Omega(\ppsi)$ is the convex hull of $\partial \Omega(\ppsi)$ we see that $\Omega(\ppsi)\subset \Omega$. Furthermore, it is easy to see that $\Omega$ does not intersect $V_\ppsi^\sigma$ for $\sigma\neq (1,\ldots,1)$ since the convex hull of a horosphere in $V_\ppsi$ and a horosphere in $V_\ppsi^\sigma$ for $\sigma\neq (1,\ldots,1)$ is not properly convex, thus $\Omega\subset V_\ppsi$. Finally, since $\Omega$ is properly convex, there is a $t\in \R$ such that $\Phi_t(p)\in \partial \Omega$. This implies that $\partial \Omega\cap V_\ppsi=\Phi_t(\partial\Omega(\ppsi))$, and thus $\Omega=\interior(\Phi_t(\Omega(\ppsi)))=\interior(\Omega^{t,e}(\ppsi))$. 

Next, suppose that $r_\ppsi=n$ and that $\Omega$ is an open $G(\ppsi)$-invariant properly convex subset of $\RP^n$. Using the same arguments as before we can assume that $\Omega(\ppsi)\cap V_\ppsi= \interior(\Phi_t(\Omega))$ for some $t\in \R_\infty$. If $\Omega$ is disjoint from $V_\ppsi^\sigma$ for $\sigma\neq(1,\ldots,1)$ then as in the previous case this implies that $\Omega=\interior(\Omega^{t,e}(\ppsi))$. On the other hand, $G(\ppsi)$-invariance and convexity imply that if $\Omega\cap V_\ppsi^\sigma\neq \emptyset$ for $\sigma\neq(1,\ldots,1)$ then $\sigma=\sigma_{-1}$. Using a similar argument we also see that $\Omega\cap V_\ppsi^{\sigma_{-1}}=\Phi_{t'}\circ\sigma_{-1}(\Omega(\ppsi))$ for some $t'\in \R_\infty$. This implies that $\Omega=\interior(\Omega^{t,e,t'}(\ppsi))$. Finally as remarked above, since $\Omega$ is properly convex, it must be the case that either $t$ or $t'$ is different from $\infty$. 
\end{proof}

\fi

\subsection{Hex geometry} { In this section $\opensimplex=\opensimplex^\rank$ denotes the interior of a simplex $\Delta=\Delta^{\rank}$}.
Let $\{v_0,\cdots,v_\rank\}\in\RR^{\rank+1}$ be a basis,
then $[v_i]$ are the vertices of an $r$-simplex $\Delta$. The identity component  $D^\rank\subset\PGL(\opensimplex)$ 
is the projectivization of the
 positive diagonal subgroup,
and $\PGL(\opensimplex)= D^\rank\rtimes S_{\rank+1}$  is an internal semidirect product, where $S_{\rank+1}$ 
is the group of coordinate permutations. 

\begin{definition}\label{Hexdef} The {\em $\rank$-dimensional  Hex geometry} 
 is { $\Hex^\rank=(\PGL(\opensimplex^\rank),\opensimplex^\rank)$}.\end{definition}

{ Let $\{u_i:0\le i\le\rank\}\subset\RR^{\rank+1}$
be a spanning set of unit vectors with $\sum u_i=0$. 
The map $[\sum x_iv_i]\mapsto \sum (\log |x_i|)u_i$}
is an isometry taking
$(\opensimplex,d_{\opensimplex})$ to a certain normed vector space $(\RR^\rank,\|\cdot\|)$. 
The name {\em Hex geometry}
 comes from the fact that when $\rank=2$, the unit ball is a regular hexagon.  
 It follows that
$(\Isom(\opensimplex),\opensimplex)$ is isomorphic to a subgeometry of Euclidean geometry. Moreover  $\PGL(\opensimplex)$ is an index-2 subgroup of $\Isom(\opensimplex,d_{\opensimplex})$. This is all due to de la Harpe \cite{delaharpe}.

Recall that $\psicoef_1\ge\psicoef_2\ge\cdots\ge\psicoef_\rank>0$ and $\psicoef_i=0$ for all $\rank<i\leq n$. 
Recall Proposition \ref{olambda} that $S(\ppsi)\subset\PGL(\opensimplex^\rank)$ is the group of coordinate permutations that preserve $\sum_{i=1}^{\rank}\psi_ie_i$. 
It is clear that $S(\ppsi)$ is isomorphic to a product of symmetric groups $\prod S_{k_j}$. There is one factor 
isomorphic to the symmetric group $S_k$
for each maximal consecutive
sequence $\psicoef_i=\psicoef_{i+i}=\cdots=\psicoef_{i+k-1}$ of non-zero coordinates in $\ppsi$.


\begin{definition}\label{hexlambdadef} The subgeometry $(D^\rank\rtimes S(\ppsi),\opensimplex^\rank)$ of
$\Hex^\rank$
is called $\Hex^\rank(\ppsi)$
\end{definition}

When $X$ is a metric space 
we { denote} {\em $(\Isom(X),X)$-geometry} { by $X$}. For example $\HH^n$ is hyperbolic geometry in dimension $n$. { The geometry $[0,\infty)$ has $G= \{ 1\}$.}
The {\em product geometry} of $(G,X)$ and $(H,Y)$ is $(G\times H,X\times Y)$ with the product action.
 {\em Horoball geometry} is the  
subgeometry $\horogeom^{\ur+1}=(\Bcal,G)$ of $\HH^{\ur+1}$ where $\Bcal\subset\HH^{{ \ur}+1}$
is a horoball, and $G\subset\Isom(\HH^{\ur+1})$ is the subgroup that preserves $\Bcal$. 
In the following theorem interpret both $\Hex^0(\ppsi)$ { and} $\EE^0$ as the trivial geometry on one point,
and $\horogeom^0$ as the trivial geometry  $[0,\infty)$.

\begin{theorem}\label{lambdageometryproduct} $(G(\ppsi),\Omega(\ppsi))$ is isomorphic to
 the product geometry  $\Hex^{\rank}(\ppsi)\times \horogeom^{\ur+1}$ and also to
 $\Hex^{\rank}(\ppsi)\times\EE^{\ur}\times[0,\infty)$.
 \end{theorem}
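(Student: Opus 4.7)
The plan is to exhibit both isomorphisms by producing an explicit product decomposition of $\Omega(\ppsi)$, then checking the action of $G(\ppsi)$ decomposes correspondingly. The key observation, available for free from Equation~\eqref{eq:Logpsif}, is that the horofunction of a $\ppsi$-domain is \emph{additively separable} in the hyperbolic variable $x\in\RR_+^{\rank}$ and the parabolic variable $y\in\RR^{\ur}$ when $\type<n$. Concretely, for $\type<n$ I would introduce a shifted vertical coordinate $z' := z+\ppsi^{\type}(\Log x)$, so that
\[
\Omega(\ppsi)\ =\ \RR_+^{\rank}\ \times\ \{(z',y):z'\ge\|y\|^2/2\}.
\]
The first factor is projectively the open simplex $\opensimplex^{\rank}$, and the second factor is a closed horoball $\Bcal$ in $\HH^{\ur+1}$ (this is exactly the model already used in the $\ppsi=0$ case discussed after Lemma~\ref{idealboundary}).

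Next I would verify equivariance by computing directly with the matrix formula \eqref{eq:Tmatrix}: an element $m^{*}_{\ppsi}(X,Y)\in T(\ppsi)$ sends $(x,z',y)\mapsto(e^X x,\ z'+Y\!\cdot y+\|Y\|^2/2,\ y+Y)$, a genuine product action. The $x$-factor is then acted on by the positive diagonal subgroup $D^{\rank}$ and, using Proposition~\ref{olambda}, the $O(\ppsi)$ part contributes $S(\ppsi)$ acting by coordinate permutations together with $O(\ur)$ acting only on $y$; since $S(\ppsi)$ preserves $\sum\psicoef_i e_i$ coordinatewise, the new coordinate $z'$ is left fixed. Hence the $x$-action gives $D^{\rank}\rtimes S(\ppsi)=\Hex^{\rank}(\ppsi)$, while the $(z',y)$-action is the action of $\RR^{\ur}\rtimes O(\ur)\cong\Isom(\EE^{\ur})$ by horoball-preserving isometries of $\HH^{\ur+1}$, i.e.\ $\horogeom^{\ur+1}$.

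For the degenerate case $\type=n$, where $\ur=0$ and the radial flow is hyperbolic, I would instead parametrize $\Omega$ by $(\tilde x,s)\in\opensimplex^{n-1}\times[0,\infty)$, taking $\tilde x$ to be the projective class of $x$ and $s=-h_{\ppsi}(x)$. Since $T(\ppsi)$ preserves each horosphere, it acts trivially on $s$; the quotient map $\ker\ppsi\to\RR^{n}/\RR(1,\dots,1)$ is an isomorphism (as $(1,\dots,1)\notin\ker\ppsi$), so by \eqref{eq:Tmatrix2} the action on $\tilde x$ fills out all of $D^{n-1}$, while $O(\ppsi)=S(\ppsi)$ contributes the permutations. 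This matches the convention that $\horogeom^{1}=([0,\infty),\{e\})$, giving the first claim. The second isomorphism of the theorem then follows from the elementary fact that a horoball in $\HH^{\ur+1}$, foliated by the parallel horospheres at given distance from $\bdy\Bcal$, is isometrically $\EE^{\ur}\times[0,\infty)$ as a bundle with trivial action on the $[0,\infty)$ factor and the usual $\Isom(\EE^{\ur})$-action on the first factor.

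The only real subtlety is the equivariance check: it must be verified that the nonlinear change of coordinates $z\mapsto z'=z+\ppsi^{\type}\circ\Log x$ converts the non-product action of $T(\ppsi)$ given by \eqref{eq:Tmatrix} into the product action claimed above. This reduces to the identity $\ppsi^{\type}(\Log(e^X x))=\ppsi^{\type}(X)+\ppsi^{\type}(\Log x)$ together with completing the square $Y\!\cdot y+\tfrac12\|Y\|^2=\tfrac12\|Y+y\|^2-\tfrac12\|y\|^2$, both of which are routine once unpacked. The remaining bookkeeping is checking the boundary values $\rank=0$ (forcing the first factor to be trivial, recovering the standard cusp) and $\ur=0$ (forcing $\Bcal=[0,\infty)$), both handled automatically by the conventions stated just before the theorem.
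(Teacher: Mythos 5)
Your proposal is correct and follows essentially the same route as the paper: the shear $z\mapsto z'=z+\ppsi^{\type}\circ\Log x$ is exactly the diffeomorphism $\theta$ used in the paper's proof, the equivariance computation with \eqref{eq:Tmatrix} and the splitting of $O(\ppsi)$ via Proposition \ref{olambda} match, and the $\type=n$ case is handled by the same horosphere/radial-flow product structure. The only addition is your explicit check that $\ker\ppsi\to\RR^{n}/\RR(1,\dots,1)$ is an isomorphism in the diagonalizable case, a detail the paper leaves implicit.
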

\begin{proof}  
In what follows most  functions and sets should be decorated with $\ppsi$. This is  often omitted for clarity.
First assume $\type<n$.
{ The diffeomorphism} $\theta:V_{\ppsi}\to V_{\ppsi}$ { is defined  by}
$$\theta(x,z,y)=(x,z+\psi^\type(\Log x),y)$$
 { By Equation (\ref{eq:graph})}  $\bdy\Omega$ is the graph $z=f(x,y)$ { and} it follows that
 $\theta(\bdy\Omega)$ is the graph of $z=f(x,y)+\psi^\type(\Log x)$. 
 Using Equation (\ref{eq:Logpsif}) this simplifies to $z=\|y\|^2/2$ when $\ur>0$ and to $z=0$ when $\ur=0$.
  In each case $\theta(\Omega)=\opensimplex^{\rank}\times B$ where
$${ \opensimplex^{\rank}:= (0,\infty)^{\rank}},\qquad B:= \{(z,y)\in\R\times\RR^\ur: z\ge  \|y\|^2/2\ \}$$
and $G^{\theta}:=\theta\circ G\circ\theta^{-1}$ acts on this set. 
This gives an isomorphism of geometries $(G,\Omega)\to(G^{\theta},{ \opensimplex^{\rank}}\times B)$.

The subgroup  $T^{\theta}:=\theta\circ T(\ppsi)\circ \theta^{-1}$ of $G^{\theta}$ acts on {$\theta(\Omega)$} by the affine transformations of $\RR^n$ 
$$T^{\theta}=\begin{pmatrix}
 \exp\Diag(X) & 0 \\
 0& \begin{pmatrix}
1 & Y^t & \|Y\|^2/2\\
0 & I_{\ur} & Y\\
0 & 0 & 1
\end{pmatrix}
\end{pmatrix}\qquad X\in\RR^\rank,\ Y\in\RR^{\ur}$$ 
 { By Proposition \ref{olambda} $(O(\ppsi))^{\theta}=O(\ppsi)$, which acts affinely on $\RR^{n+1}$}. By Corollary \ref{Gppsistructure},  $G(\ppsi)=T(\ppsi)\rtimes O(\ppsi)$
and it follows that the action of $G^{\theta}=G_{{ \opensimplex^{\rank}}}\times G_B$ is affine and splits into the  direct sum of actions on 
$\RR^{\rank}\oplus\RR^{\ur+1}$ given by 
$$G_{{ \opensimplex^{\rank}}}=D^\rank\rtimes S(\ppsi),
\qquad G_B:=\begin{pmatrix}
1 & Y^t & \|Y\|^2/2\\
0 & O({\ur}) & Y\\
0 & 0 & 1
\end{pmatrix}$$
Then $(B,G_B)\cong\horogeom^{\ur+1}$, which is obviously isomorphic to $\mathbbm{E}^\ur\times[0,\infty)$.

For $\type=n$ the {set} $\Omega(\ppsi)$ has a product structure coming from the horospheres, and the radial flow. The group $G(\ppsi)$ {acts} trivially on the radial flow factor,
 and projection along the radial flow gives a $G(\ppsi)$-equivariant diffeomorphism {from} each horosphere {to}
  $\partial_\infty\Omega(\ppsi)\cong \Delta^{n-1}$. 
\end{proof}

\begin{corollary}\label{lambdageomEuclidean} $(G(\ppsi),\Omega(\ppsi))$ is isomorphic to a subgeometry of Euclidean geometry.
\end{corollary}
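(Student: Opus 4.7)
The plan is to use Theorem \ref{lambdageometryproduct} to reduce the claim to showing that each factor of the product $\Hex^{\rank}(\ppsi)\times\EE^{\ur}\times[0,\infty)$ sits inside a Euclidean geometry, and then assemble these into one Euclidean embedding. A key observation that makes this clean is that if $(G_i,X_i)$ is a subgeometry of $(\Isom(\EE^{k_i}),\EE^{k_i})$ for $i=1,2$, then the product $(G_1\times G_2, X_1\times X_2)$ embeds as a subgeometry of $(\Isom(\EE^{k_1+k_2}),\EE^{k_1+k_2})$ via the orthogonal direct sum, since a product of Euclidean isometries is a Euclidean isometry of the product metric.

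First I would handle the trivial factors. The geometry $\EE^{\ur}\times[0,\infty)$ is manifestly a subgeometry of $\EE^{\ur+1}$: realize it as a closed affine half-space, with group the stabilizer of the bounding hyperplane inside $\Isom(\EE^{\ur+1})$. (Equivalently, $[0,\infty)$ with trivial group embeds in $\RR$, then take the orthogonal product with $\EE^{\ur}$.)

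Next I would handle the Hex factor $\Hex^{\rank}(\ppsi)=(D^\rank\rtimes S(\ppsi),\opensimplex^\rank)$. Using the de la Harpe identification mentioned in the text, the map $[\sum x_iv_i]\mapsto\sum(\log|x_i|)u_i$ carries $\opensimplex^{\rank}$ diffeomorphically onto $\RR^{\rank}$, conjugates $D^{\rank}$ onto the full translation subgroup of $\RR^{\rank}$, and conjugates $S(\ppsi)\subset S_{\rank+1}$ onto a \emph{finite} subgroup $F\subset\GL(\rank,\RR)$. Averaging any inner product on $\RR^{\rank}$ over the finite group $F$ produces an $F$-invariant Euclidean inner product, so $F$ acts by orthogonal transformations. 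With respect to this Euclidean structure, the semidirect product $\RR^{\rank}\rtimes F$ is a subgroup of $\Isom(\EE^{\rank})$, and hence $\Hex^{\rank}(\ppsi)$ is isomorphic to a subgeometry of $\EE^{\rank}$.

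Finally I would combine: taking the orthogonal product of the two Euclidean embeddings gives
\[
\Hex^{\rank}(\ppsi)\times\EE^{\ur}\times[0,\infty)\ \hookrightarrow\ \EE^{\rank}\times\EE^{\ur+1}\ =\ \EE^{n},
\]
and Theorem \ref{lambdageometryproduct} identifies the left-hand geometry with $(G(\ppsi),\Omega(\ppsi))$. I do not expect any serious obstacle; the only mildly delicate point is verifying that the group $S(\ppsi)$ really is finite and acts linearly on the log-chart of $\opensimplex^{\rank}$, which is immediate from its description in Proposition \ref{olambda} as a subgroup of the coordinate-permutation group $S_{\rank+1}$.
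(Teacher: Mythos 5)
Your proposal is correct and follows essentially the same route as the paper, which simply observes that each factor in Theorem \ref{lambdageometryproduct} is a subgeometry of Euclidean geometry (citing de la Harpe for the Hex factor). Your averaging argument over the finite group $S(\ppsi)$ in the log-chart is a self-contained substitute for that citation, and the orthogonal-direct-sum assembly of the factors is exactly what the paper leaves implicit.
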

\begin{proof} Each of the factors in Theorem \ref{lambdageometryproduct} is isomorphic to a subgeometry of Euclidean geometry.
\end{proof}

{ The next section gives a particular isomorphism}.

\if0
It follows from \textcolor{blue}{Theorem \ref{lambdageometryproduct}} that $T(\ppsi)$ can be characterized more abstractly 
as the subset of all elements in $G(\ppsi)$ such that all the  eigenvalues are positive. As a result $T(\ppsi)$ is a characteristic subgroup of $G(\ppsi)$.  \textcolor{blue}{In other words, we first defined $T( \ppsi)$ using an exponential map to have positive eigenvalues, and then $G( \ppsi)$ as a semidirect product with $O( \ppsi)$.  We see that instead we could abstractly define $G(\ppsi)$ as the isometry group of this geometry, and then $T(\ppsi)$ is the group with positive eigenvalues. } In the terminology of \cite{CLT1}, $T(\ppsi)$
is an {\em e-group}, see Theorem \ref{uniquehull}. 

\fi

\section{Euclidean Structure}\label{euclideanstructure}
This section is devoted to showing that a generalized cusp has an
 underlying Euclidean structure with flat (totally geodesic) boundary. 
 This provides a natural map from a generalized cusp to a 
 standard cusp, modelled on $\HH^n$.
 A metric is first defined on
 $V_{\ppsi}\subset\RR^n$ in terms of a horofunction, and may be viewed as
 a kind of modified  {\em Hessian metric} \cite{Shima}.

\begin{theorem}\label{betathm} 
Let $h=h_{\ppsi}$ be  the horofunction on 
 $V=V_{\ppsi}$. Given $q\in V$ let $\Hcal$ be the horosphere containing $q$ 
and $\pi:V\rightarrow \Hcal$ be 
projection
along the radial flow.  Then
{ $\beta=(\deriv^2h\circ \deriv\pi)+ (\deriv h)^2$  is a quadratic form on} $T_qV$ that defines a Riemannian metric on $V$ and:
\begin{enumerate}
\item[(a)] There is an isometry $F:(V,\beta)\to (\RR^n,\|\cdot\|^2)$ where $\|\cdot\|^2$ is the standard Euclidean metric.
\item[(b)]  $F(\Omega(\ppsi))=\RR^{n-1}\times(-\infty,0]$.
\item[(c)]  The horofunction is  the $n$'th coordinate of $F$ i.e. $h(p)=F_n(p)$.
\item[(d)]  The action of $G(\ppsi)$ on $V$ is by isometries of this metric.
\item[(e)]   The radial flow $\Phi_{t}$ on $V$ is conjugated by $F$ to $x\mapsto x{ +}t\cdot e_n$
\item[(f)]  The radial flow acts on $V$ by isometries.
\item[(g)]  Radial flow lines are orthogonal to  horospheres.
\item[(h)] The action of $T(\ppsi)$ on $\bdy\Omega(\ppsi)$ is conjugated by $F$ to the group of translations of $x_n=0$. 
\end{enumerate}
\end{theorem}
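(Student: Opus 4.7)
My plan is to produce an explicit diffeomorphism $F:V\to\RR^n$ in each of the two cases $\type<n$ and $\type=n$, and then read off (a)--(h) directly from $F$. The guiding intuition is that $(Dh)^2$ captures the squared length of the vertical (radial-flow) component of a tangent vector while $D^2h\circ D\pi$ captures the horizontal (horospherical) part, so $\beta$ should be realized by taking $F_n=h$ and letting $F_1,\ldots,F_{n-1}$ be flow-invariant coordinates that flatten the horospheres. In the case $\type<n$, the radial flow moves along $-\partial_z$ and, by \eqref{eq:horo}, $D^2h$ has no $dz$-component, so $D\pi(v)=v+Dh(v)\,e_{\type+1}$ satisfies $D^2h(D\pi v,D\pi v)=D^2h(v,v)=\sum_{i=1}^{\type}\psicoef_i(dx_i/x_i)^2+\sum_j dy_j^2$; hence $\beta$ equals this plus $(Dh)^2$, which is visibly positive definite. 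Setting $F_i=\sqrt{\psicoef_i}\log x_i$ for $1\le i\le\type$, $F_{\type+j}=y_j$ for $1\le j\le\ur$, and $F_n=h$ gives $\sum dF_i^2=\beta$, and solving for $z$ last realizes $F$ as a diffeomorphism onto $\RR^n$.

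In the case $\type=n$, the flow direction at $q$ is $-q$, and the identities $Dh(q)=-1$, $D^2h(q,q)=1$, $D^2h(v,q)=-Dh(v)$ give $D\pi(v)=v+Dh(v)\,q$ and then $D^2h(D\pi v,D\pi v)=D^2h(v,v)-(Dh(v))^2$, so $\beta=D^2h=\sum_i(\psicoef_i/\sigma)(dx_i/x_i)^2$ with $\sigma=\sum\psicoef_i$, again positive definite. The naive coordinates $F^0_i=\sqrt{\psicoef_i/\sigma}\log x_i$ give $\sum(dF^0_i)^2=\beta$, but only yield $h=-\langle w,F^0\rangle$ where $w_i=\sqrt{\psicoef_i/\sigma}$ is a unit vector in $\RR^n$. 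Post-composing with any $R\in O(n)$ with $Rw=-e_n$ produces $F:=RF^0$, an isometry onto $\RR^n$ with $F_n=h$.

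Items (a)--(c) are built into $F$. For (e), $F_n\circ\Phi_t=h+t$ by \eqref{eq:flowequivariance}; in the $\type<n$ case the other $F_i$ are flow-invariant, while in the $\type=n$ case $F^0\circ\Phi_t=F^0-tw$ so $F\circ\Phi_t=F+te_n$ (using $Rw=-e_n$). Items (f) and (g) are then immediate from the Euclidean model. For (d), invoke the splitting $G(\ppsi)=T(\ppsi)\rtimes O(\ppsi)$ of Theorem \ref{Gppsistructure}: the explicit matrices \eqref{eq:Tmatrix}--\eqref{eq:Tmatrix2} show $T(\ppsi)$ acts on $F$-coordinates by translations whose last coordinate vanishes (in the $\type=n$ case the defining condition $\langle\psicoef,X\rangle=0$ is exactly what forces this after applying $R$), while Proposition \ref{olambda} shows $O(\ppsi)$ permutes the $x_i$ with matching $\psicoef_i$ (giving an orthogonal action on $F_1,\ldots,F_\type$) and acts by an $O(\ur)$-block on the $y$-coordinates, fixing $F_n$. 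Finally (h) follows by restriction to $\bdy\Omega=\{F_n=0\}$, with surjectivity onto all translations of $\{x_n=0\}$ supplied by the simple transitivity in Theorem \ref{Gppsistructure}(a). The principal obstacle is the $\type=n$ case: $D\pi$ introduces a cross term whose contribution would spoil the formula were it not for the exact cancellation with $(Dh)^2$, and the coordinates $\sqrt{\psicoef_i/\sigma}\log x_i$ do not place $h$ in the last slot, so the twist by $R$ with $Rw=-e_n$ is essential both to achieve $F_n=h$ and to conjugate the flow to translation by exactly $te_n$ rather than some other vector.
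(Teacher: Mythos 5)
Your proposal is correct, but it reaches the conclusion by a genuinely different route than the paper at the key step. You both begin the same way, splitting $T_qV=T_q\Hcal\oplus\RR\cdot v$ with $v$ tangent to the flow and reducing positive-definiteness to the positivity of $\deriv^2h$ on $\ker\deriv h$ (your explicit verification that the cross terms from $\deriv\pi$ cancel against $(\deriv h)^2$ in the $\type=n$ case, so that $\beta=\deriv^2h$ there, is a sharper statement than the paper records). Where you diverge is in producing the isometry $F$: the paper argues abstractly that the enlarged translation group $T_{\rank}\cong\RR^n$ acts simply transitively by isometries of $\beta$, so $\beta$ is a left-invariant metric on a vector group, hence flat and isometric to $\EE^n$; items (b)--(h) are then deduced from the orbit structure (horospheres are $T(\ppsi)$-orbits, hence parallel hyperplanes, flow lines orthogonal by the quadratic-form splitting, and unit speed along flow lines by $h\circ\Phi_t=h+t$). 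You instead write $F$ down in closed form, $F_i=\sqrt{\psicoef_i}\log x_i$, $F_{\type+j}=y_j$, $F_n=h$ for $\type<n$, and the rotated $R\circ(\sqrt{\psicoef_i/\sigma}\log x_i)_i$ for $\type=n$, and read everything off. Your construction buys explicit formulas and avoids invoking the Lie-theoretic flatness argument, at the cost of a two-case computation and the extra twist by $R\in O(n)$; the paper's argument is case-free but nonconstructive. Both are complete proofs.
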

\begin{proof} { In what follows derivatives are  at $q$,
so $\deriv$ means $\deriv_q$ and so on.}
Cleary $\beta$ is symmetric and  we first verify that it is also positive definite.
Given $q\in V$ let  
$\mathcal{H}\subset V$ be the horosphere   containing  $q$.
The radial flow line through $q$ is $f:\RR\to V$, given by
$f(t)=\Phi_t(q)$, and is transverse to $\Hcal$. 
Thus $T_qV=T_q\mathcal{H}\oplus \RR\cdot v$ where $v=f'(0)$ is tangent to the radial flow at $q$.
If $w\in T_qV$ then $w=a+t\cdot v$ for some $a\in T_q\mathcal{H}$ and $\deriv\pi(w)=a$. 

Observe that $T_q\mathcal{H}=\ker \deriv h$. From Equation (\ref{eq:flowequivariance}) 
 $\deriv h(v)=1$
 so $(\deriv h)^2(a+t\cdot v)=t^2$. Thus
 \begin{equation}\label{eq:beta}
 \beta(w)=(\deriv^2 h)(a) + t^2
 \end{equation} and it suffices to check that $\deriv^2h$ is positive definite on 
 $\ker\deriv h$.

 When $\rank=n$
\begin{equation} \deriv^2 h=\left(\sum_{i=1}^n\psicoef_i\right)^{-1}\sum_{i=1}^n \psicoef_ix_i^{-2}dx_i^2
\end{equation} 
and since all $\psicoef_i>0$, and  $x_i>0$ on $V$, it follows that $\deriv^2 h$ is positive definite on $T_qV$, 
and so  is positive definite on $\ker \deriv h$.

When $\rank<n$ 
\begin{equation}
\deriv h =
-dx_{\rank+1}-\sum_{i=1}^{\rank}\psicoefi_ix_i^{-1}dx_i + \sum_{i=\rank+2}^{n}x_idx_i
\end{equation}

\begin{equation}\label{d2h}
\deriv^2h =
\sum_{i=1}^{\rank}\psicoefi_ix_i^{-2}dx_i^2 + \sum_{i=\rank+2}^{n}dx_i^2
\end{equation}

In this case, by Equation (\ref{eq:radialflow}), the radial flow is vertical translation and $v=-\partial/\partial x_{\rank+1}$.
Thus $\deriv^2h$ is  positive semi-definite 
and vanishes only in the $v$-direction, hence it is positive definite on $\ker\deriv h$.

 Thus $\beta$ is  a Riemannian metric on $V$. Since $G(\psi)$ preserves $h_{\psi}$
  { and commutes with $\pi$},
 it acts by isometries of $\beta$ proving (d). The radial flow preserves $h$ up to adding a constant, and so preserves $\deriv h$ and $\deriv^2h$,
 and is therefore also an isometry of $\beta$ proving (f).
Hence the extended translation group $T_{\rank}$ acts by isometries of $\beta$. Since this action is simply transitive
 we may identify $T_{\rank}$ with $V$. Since $T_{\rank}\cong\RR^{n}$ as a Lie group, it follows that this metric is flat, so
 there is an isometry 
 $F:(V,\beta)\to(\RR^n,\|\cdot\|^2)$, proving (a).  
 We use $(u_1,\cdots,u_n)$ as the coordinates of a point in the codomain $\RR^n$.

 Each horosphere in $V$ is the orbit of a point under the subgroup 
 $\RR^{n-1}\cong T(\ppsi)\subset T_\rank$
 therefore the horospheres are identified with parallel  hyperplanes in $\RR^n$. 
 We can choose the isometry  $F$ 
 so that the horosphere $\bdy\Omega$ is sent to the subspace $u_n=0$, and so that $\Omega$
 is identified with the half space  $u_n\le 0$. 
 
Observe that $T_qV=T_q\Hcal \oplus \RR\cdot v$, and $\beta$ is the sum of two
 quadratic forms, each of which vanishes on one summand and is positive definite on the other.
 It follows the two summands are orthogonal with respect to $\beta$, which proves (g).
 
From (g) it follows that flow lines are lines parallel to the $u_n$ direction.
Along a flow line $\beta$ is $(\deriv h)^2$ so the distance between $x$ and $\Phi_t(x)$ 
is $|h(\Phi_t(x))-h(x)|=|t|$ by Equation (\ref{eq:flowequivariance}). Moroever since $\Omega$ is $u_n\le 0$
 the radial flow $\Phi_t$  is conjugated by $F$ to
$u\mapsto u+t\cdot e_n$. This proves (b),  (c), (e) and (h). 
 \end{proof}
 
 {  \begin{definition}  Set $I=[0,\infty)$.
  Suppose $(A,ds_A)$ is a Euclidean manifold and $(I,dt)$ is a complete Riemannian metric on $I$.
 The metric $ds^2=ds_A^2+dt^2$ on $A\times I$ is called a {\em product Euclidean structure}. 
 Given $c>0$ the metric $c\cdot ds_A^2+c^2 dt^2$ is called a {\em horoscaling} of  $ds^2$. A diffeomorphism
 $f:A\times I\to M$ is a {\em horoscaling} if the pullback of the metric on $M$ is a horoscaling of the metric on $A\times I$. A horofunction metric on $\Omega$ is a horoscaling of the metric $\beta|\Omega$ in Theorem \ref{betathm}.
  \end{definition}

Thus $\beta|\Omega$ is a product Euclidean structure, and the metric $\beta^c$ obtained by replacing  $h$  in the definition of $\beta$ by  $c\cdot h$
 is a horoscaling of $\beta$
      \begin{equation}\label{horofunctionmetric}
\beta^c=(\deriv^2(c\cdot h)|\ker \deriv c\cdot h)+ (\deriv c\cdot h)^2= c(\deriv^2h|\ker \deriv h)+ c^2(\deriv h)^2
\end{equation}

\begin{proposition}\label{horometricesinvariant} Suppose $\Omega=\Omega(\ppsi)$ has horofunction metric $\beta$ and $\Omega'=\Omega(\ppsi')$ has horofunction metric $\beta'$ and
 $P\in\PGL(\RR,n+1)$ and $P(\Omega)=\Omega'$, then $P^*\beta'$
is a horoscaling of  $\beta$. Thus the set of horofunction metrics on $\Omega$ is an invariant of the projective equivalence
class of $\Omega$.
\end{proposition}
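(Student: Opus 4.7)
The plan is to combine Proposition \ref{characterizecenterRF} with a direct calculation of how the two summands in the definition of $\beta$ transform under $P$.

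First I would extract the relevant conclusions from Proposition \ref{characterizecenterRF}: since $P(\Omega)=\Omega'$, there is a constant $c>0$ such that $P$ is affine, $h'\circ P = c\cdot h$, the radial flows satisfy $\flow'_{ct} = P\circ\flow_t\circ P^{-1}$, and $P$ carries the horosphere/flow-line product structure of $\Omega$ to that of $\Omega'$. In particular, for each $q\in V_\psi$, the derivative $D_qP$ carries the splitting $T_qV = \ker D_qh \oplus \RR\cdot v_q$ (with $v_q$ the unit radial-flow generator) to the corresponding splitting of $T_{P(q)}V'$.

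Next I would compute how the two ingredients of $\beta'=(\deriv^2h'\circ\deriv\pi')+(\deriv h')^2$ pull back. Differentiating $h'\circ P = ch$ once gives $D_{P(q)}h'\circ D_qP = c\,D_qh$, so for $w=a+tv_q$ with $a\in\ker D_qh$, one reads off $D_qP(a)\in\ker D_{P(q)}h'$ and $D_qP(v_q)=c\,v'_{P(q)}$ (the second equality coming from differentiating $\flow'_{ct}\circ P = P\circ\flow_t$ at $t=0$). Because $P$ is \emph{affine} (Lemma \ref{uniquesupphyperplane}(c)/Prop.\ \ref{characterizecenterRF}(c)), the pullback commutes with the second derivative: $P^*(\deriv^2h') = \deriv^2(h'\circ P) = c\,\deriv^2h$ as symmetric bilinear forms. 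Thus
\begin{equation*}
(P^*\beta')_q(w) = (\deriv^2h')_{P(q)}(D_qP(a)) + \bigl(D_{P(q)}h'(D_qP(w))\bigr)^2 = c\,(\deriv^2h)_q(a) + c^2 t^2,
\end{equation*}
where in the first term I used that $D_qP(a)\in\ker D_{P(q)}h'$ so $\deriv\pi'$ acts as the identity on it.

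Comparing with \eqref{horofunctionmetric} gives $P^*\beta' = \beta^c$, which is the desired horoscaling. The ``thus'' clause follows by taking $\Omega'=\Omega$ and noting that any horoscaling $\beta^c$ of a horofunction metric on $\Omega$ is itself a horofunction metric (for the horofunction $c\cdot h$), so the collection of horofunction metrics is closed under pullback by projective equivalences. The main obstacle is bookkeeping: one must be careful that the ``restriction to $\ker\deriv h$'' in the definition of $\beta$ is respected by $P$, which is exactly what Proposition \ref{characterizecenterRF}(g) provides, and that the quadratic term $(\deriv h)^2$ picks up $c^2$ while the Hessian part picks up only $c$, matching the asymmetric scaling in \eqref{horofunctionmetric}. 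No new computation beyond the chain rule is required once Proposition \ref{characterizecenterRF} is in hand.
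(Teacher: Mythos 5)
Your argument is correct and is essentially the paper's proof: the paper likewise reduces everything to the identity $h'\circ P=c\cdot h$ from Proposition \ref{characterizecenterRF}(f) and then cites Equation (\ref{horofunctionmetric}), which encodes exactly the computation you carry out (the Hessian term scaling by $c$ and the $(\deriv h)^2$ term by $c^2$). You have simply written out the chain-rule bookkeeping that the paper compresses into that citation.
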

\begin{proof}
 $h'\circ P=c\cdot h$ for some $c>0$
  by Proposition \ref{invariantproduct}(f), so $P^*\beta'$ is a horoscaling of $\beta$ by Equation (\ref{horofunctionmetric}). \end{proof}
 
 \begin{proposition}\label{invariantEuclmetric}  Let $\Omega=\Omega(\ppsi)$ and $\type=\type(\ppsi)$. If $\type>0$ then $\PGL(\Omega)=G(\ppsi)$.  If $\type=0$ then $\PGL(\Omega)$ acts by horoscalings,
 and $G(0)\vartriangleleft \PGL(\Omega)$ and $\PGL(\Omega)/G(0)\cong \RR$.
\end{proposition}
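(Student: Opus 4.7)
My plan is to package the constant $c>0$ from Proposition~\ref{invariantproduct}(f) into a multiplicative character $\chi:\PGL(\Omega)\to\RRP$, $\chi(P)=c$, where $h\circ P=c\cdot h$ and $P\flow_tP^{-1}=\flow_{ct}$. That $\chi$ is a homomorphism is the one-line check $h\circ(PQ)=c_P\cdot(h\circ Q)=c_Pc_Q\cdot h$. Because $G(\ppsi)$ is by Definition~\ref{psicuspdef} the stabilizer of each horosphere---equivalently of $h$---one has $\ker\chi=G(\ppsi)$. This immediately yields $G(\ppsi)\vartriangleleft\PGL(\Omega)$ and an injection $\PGL(\Omega)/G(\ppsi)\hookrightarrow\RRP\cong\RR$, and Proposition~\ref{horometricesinvariant} already packages the horoscaling claim (with scale~$c$). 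What remains is to identify the image of $\chi$ in the two cases.

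For $\ppsi\ne 0$ (i.e.\ $\type>0$) I will show $\chi\equiv 1$. First, $P$ normalizes $G(\ppsi)$: $(PgP^{-1})^*h=c\cdot(P^{-1})^*h=h$. Hence $P$ also normalizes $T(\ppsi)$, which is the invariantly-defined subgroup of elements with all positive eigenvalues (Theorem~\ref{Gppsistructure}(f)), and---combined with $P\flow_tP^{-1}=\flow_{ct}$---the enlarged translation group $\eT_\type$. Conjugation by $P$ therefore acts as a linear automorphism $\rho_P$ of $\mathfrak{t}_\type\cong\RR^\type\oplus\RR\oplus\RR^\ur$ (coordinates $(X,Z,Y)$). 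The Jordan decomposition of $\mathfrak{t}_\type$ inside $\mathfrak{gl}(n+1,\RR)$ has semisimple part $\{(X,0,0)\}$ and nilpotent part $\{(0,Z,Y)\}$, and $\rho_P$ respects this splitting. On the semisimple summand, the $\type$ weights of $\mathfrak{t}_\type$ on $\RR^{n+1}$ are the $\type$ distinct coordinate functionals $X\mapsto X_i$, so $\rho_P|_{\text{ss}}$ must be a coordinate permutation $\sigma\in S_\type$; on the nilpotent summand, $\rho_P(0,-1,0)=c(0,-1,0)$ reads off the $Z$-scaling, and preservation of $\mathfrak{p}$ (Lemma~\ref{paraboliclemma}) makes $\rho_P|_{\text{nil}}$ block diagonal in $(Z,Y)$.

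The decisive step is then to force $\rho_P$ to preserve $\mathfrak{t}(\ppsi)=\ker(\psi^\type(X)+Z)$, applied to the elements $(X,-\psi^\type(X),0)\in\mathfrak{t}_2$: this yields the functional equation $\psi^\type(\sigma X)=c\,\psi^\type(X)$, equivalently $\psi_{\sigma(j)}=c\,\psi_j$ for each $j\le\type$. Summing over $j$ and using $\sum_{j=1}^\type\psi_j>0$ forces $c=1$, whence $\PGL(\Omega)=G(\ppsi)$. The subcase $\type=n$ is identical once one notes that $\mathfrak{t}_n$ is purely semisimple and the generator of the (now hyperbolic) radial flow is the permutation-fixed constant vector $(-1,\ldots,-1)$. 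Finally for $\ppsi=0$: $\bdy_\infty\Omega(0)=\{[e_1]\}$ by Lemma~\ref{idealboundary}, so $\overline{\Omega(0)}\subset\RPn$ is a projective ball (the Klein model of $\HH^n$), identifying $\PGL(\overline{\Omega(0)})=\Isom(\HH^n)$. Hence $\PGL(\Omega(0))$ is the stabilizer in $\Isom(\HH^n)$ of the cusp point $[e_1]$, which is the similarity group $\Sim(\EE^{n-1})=\Isom(\EE^{n-1})\rtimes\RRP$; the normal factor $\Isom(\EE^{n-1})$ is precisely $G(0)$, and the $\RRP$ factor is the family of hyperbolic dilations along the axis through $[e_1]$---on the paraboloid model, explicitly $(z,y)\mapsto(\mu z,\sqrt{\mu}\,y)$---which surjects onto $\RRP$ under $\chi$. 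Thus $\PGL(\Omega(0))/G(0)\cong\RR$. The principal technical hurdle is the character-theoretic argument in the $\type>0$ case, in particular justifying that $\rho_P$ respects the semisimple/nilpotent splitting and genuinely permutes the $\type$ semisimple weights; once that is in place, the reduction to $\psi_{\sigma(j)}=c\psi_j$ is immediate.
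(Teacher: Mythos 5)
Your argument is correct, but it reaches the conclusion by a genuinely different mechanism than the paper, so it is worth recording the contrast. The paper's proof is metric and dynamical: by Proposition \ref{horometricesinvariant} every $P\in\PGL(\Omega)$ acts as a horoscaling, hence restricts to a similarity of the flat metric $\beta|\bdy\Omega$; if $P$ is not an isometry then (after inverting) it is a contraction of $\bdy\Omega$, so it fixes a point $x$, and under the identification $T(\ppsi)\equiv\bdy\Omega$ via the orbit of $x$, conjugation by $P$ carries a neighborhood of the identity of $T(\ppsi)$ strictly inside itself --- which forces $T(\ppsi)$ to be unipotent, i.e. $\type=0$. Your proof replaces this with an infinitesimal weight computation: you package the scaling constant into a character $\chi:\PGL(\Omega)\to\RRP$ with $\ker\chi=G(\ppsi)$, observe that $\operatorname{Ad}_P$ preserves the Jordan (semisimple/nilpotent) splitting of $\eTalg_{\type}$, permutes the coordinate weights $X_i$, and scales the radial-flow direction by $c$, so that invariance of $\ker(\ppsi^{\type}+Z)$ yields $\psicoef_{\sigma(j)}=c\,\psicoef_j$ and hence $c=1$ by summing. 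Each approach buys something: the paper's is shorter and exploits the Euclidean structure it has just built (and, incidentally, works without identifying how $P$ permutes weights), whereas yours is purely Lie-algebraic, is close in spirit to the weight analysis in the proof of Proposition \ref{transgpscale}, and --- unlike the paper's proof as written --- explicitly establishes the $\type=0$ assertions ($G(0)\vartriangleleft\PGL(\Omega)$ and $\PGL(\Omega)/G(0)\cong\RR$) by exhibiting the quotient as the image of $\chi$ and identifying $\PGL(\Omega(0))$ with the stabilizer of a boundary point in $\Isom(\HH^n)$, namely $\Isom(\EE^{n-1})\rtimes\RRP$. The only place I would ask you to be a touch more explicit is the claim that $\operatorname{Ad}_P$ permutes the semisimple weights: for $0<\type<n$ this needs the remark that the zero-weight space $\langle e_{\type+1},\dots,e_{n+1}\rangle$ is the unique weight space of dimension greater than one (hence is preserved), and for $\type=n$ you need Proposition \ref{characterizecenterRF}(b) to see that $[e_{n+1}]$ is fixed; both are one-line additions.
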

\begin{proof}
Proposition \ref{horometricesinvariant} implies  $\PGL(\Omega)$ acts by horoscalings.
Thus if $P\in\PGL(\Omega)$ then $P|\bdy\Omega$ 
 is  a Euclidean similarity of $\beta|\bdy\Omega$.
   If $P$ is not an isometry of $\beta$, after replacing $P$ by $P^{-1}$ if needed, we may assume $P$ is a contraction of $\bdy \Omega$.
Then there is a  point $x\in\bdy\Omega(\ppsi)$ fixed by $P$. 
 By Theorem \ref{Gppsistructure}(a) the group $T$ acts simply transitively on $\bdy\Omega$
and this
gives an identification  $T(\ppsi)\equiv\bdy\Omega(\ppsi)$ via $t\mapsto t(x)$. 
 Under this identification $x$ is identified with the $id\in T(\ppsi)$.

Let $U\subset\bdy\Omega(\ppsi)$ be the ball of $\beta$-radius $1$ with center $x$. 
 Then $P(U)\subset U$ is a ball of strictly smaller radius.  Under the identification, $U$ gives a neighborhood
 $V\subset T$ of the identity in $T$, and $P V P^{-1}\subset V$ is a strictly smaller neighborhood. 
 This inclusion is given by conjugacy thus $T$ is unipotent, so $\type=0$. Thus if $\type>0$ then $P$ is an isometry of $\beta$. 
 A horosphere
 in $\Omega(\ppsi)$ is  characterized as the set of points some fixed $\beta$-distance from $\bdy\Omega(\ppsi)$. Therefore $P$
 preserves each horosphere. Hence $P\in G(\ppsi)$.
 \end{proof}

 If $\type=n$ and $s>0$ then $T(\ppsi)=T(s^2\cdot\ppsi)$. By Equation (\ref{eq:Tmatrix}), if $\type<n$ then  \begin{equation}\label{Tconj}
T(s^2\cdot\ppsi)=P T(\ppsi) P^{-1}\qquad {\rm with}\quad P= \Diag(I_{\rank},s,I_{\ur},s^{-1})
\end{equation}
In both cases $\Omega(\ppsi)$ is projectively equivalent to $\Omega(s^2\cdot\ppsi)$. 

\begin{proposition}\label{transgpscale}\label{domaintransgp} If $\ppsi,\ppsi'\in A$ and $\Omega=\Omega(\ppsi),\ \Omega'=\Omega(\ppsi'),\  T=T(\ppsi)$ and $T'=T(\ppsi')$ 
then the following are equivalent
\begin{itemize}
\item[(a)] There is a projective transformation that sends $\Omega$ to $\Omega'$ 
\item[(b)] $T$ and $T'$ are conjugate in $\GL(n+1,\RR)$
\item[(c)]  $\ker\ppsi=\ker\ppsi'$
\item[(d)] $\ppsi'=c\cdot\ppsi$ for some $c>0$
\end{itemize}
\end{proposition}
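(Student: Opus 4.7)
I plan to establish the equivalences via the cycle $(a) \Rightarrow (b) \Rightarrow (d) \Rightarrow (a)$, together with the easy equivalence $(c) \Leftrightarrow (d)$. For the latter, since both $\ppsi$ and $\ppsi'$ lie in $A$ and are therefore sorted weakly decreasing with non-negative entries, equality of kernels forces them to be positive scalar multiples of each other (or both zero), and conversely (d) trivially implies (c). The implication $(d) \Rightarrow (a)$ is essentially recorded just before the proposition: setting $s = \sqrt c$, Equation (\ref{Tconj}) (with the trivial parallel statement for $\type = n$) shows $\Omega(\ppsi)$ is projectively equivalent to $\Omega(s^2 \ppsi) = \Omega(\ppsi')$.

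For $(a) \Rightarrow (b)$, I would start with $P \in \PGL(n+1, \RR)$ sending $\Omega(\ppsi)$ to $\Omega(\ppsi')$. Lemma \ref{uniquesupphyperplane}(c) places $P$ in $\Aff(\RR^n)$, and Proposition \ref{invariantproduct}(g) guarantees that $P$ carries the horosphere foliation of $\Omega(\ppsi)$ onto that of $\Omega(\ppsi')$. Since any $g \in G(\ppsi)$ fixes every horosphere of $\Omega(\ppsi)$, the conjugate $PgP^{-1}$ fixes every horosphere of $\Omega(\ppsi')$, giving $P G(\ppsi) P^{-1} \subseteq G(\ppsi')$. Theorem \ref{Gppsistructure}(f) characterizes $T(\ppsi)\subseteq G(\ppsi)$ as the subgroup of elements all of whose eigenvalues are positive---a condition preserved by conjugation in $\GL(n+1,\RR)$---so $P T(\ppsi) P^{-1} \subseteq T(\ppsi')$. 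Matching dimensions $n-1$ forces equality.

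The heart of the argument is $(b) \Rightarrow (d)$. Assume $P T(\ppsi) P^{-1} = T(\ppsi')$. The dimension $\ur$ of the maximal unipotent subgroup of $T$ is a conjugation invariant, yielding $\ur = \ur'$ and $\rank = n-1-\ur = \rank'$; in the borderline case $\ur = 0$, the fact that $T(\ppsi)$ is entirely diagonalizable precisely when $\type = n$ resolves the remaining ambiguity and gives $\type = \type'$. Now the action of $T(\ppsi)$ on $\RR^{n+1}$ has one-dimensional common weight spaces $\langle e_i\rangle$ for $1 \le i \le \rank$, with pairwise distinct non-trivial characters $\lambda_i((X,Y)) = \exp X_i$, together with the trivial-weight line $\langle e_{\rank+1}\rangle$ and the generalized trivial-weight space $\langle e_{\rank+1}, \ldots, e_{n+1}\rangle$. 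Conjugation by $P$ sends the $\rank$ non-trivial weight lines of $T(\ppsi)$ to those of $T(\ppsi')$, so after relabeling $P e_i \in \RR \cdot e_{\sigma(i)}$ for some $\sigma \in S_{\rank}$. Comparing the action of $P T(\ppsi) P^{-1}$ on $e_{\sigma(i)}$ with the action of $T(\ppsi')$ forces the induced group isomorphism $\phi: T(\ppsi) \to T(\ppsi')$, expressed in the $(X,Y)$ parameterization, to satisfy $X'_{\sigma(i)} = X_i$, i.e. to act on $X$-coordinates by the permutation $\sigma$. Since $\phi$ must also restrict to an isomorphism $T_1(\ppsi) \to T_1(\ppsi')$, we obtain $\sigma(\ker \ppsi^\rank) = \ker (\ppsi')^\rank$, which forces $\ppsi_{\sigma^{-1}(j)} = c\, (\ppsi')_j$ for some $c \in \RR^\times$. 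The multiset of entries of $\ppsi$ thus equals $c$ times the multiset of entries of $\ppsi'$; since both vectors are sorted weakly decreasing with non-negative entries, this forces $c > 0$ and $\ppsi' = c^{-1}\ppsi$, which is (d). The main obstacle is handling the borderline types $\type \in \{0, n-1, n\}$, where the structure of $T(\ppsi)$ degenerates (purely unipotent, missing the $Y$-factor, or purely diagonalizable) and the weight analysis and identification of $\type = \type'$ must be adapted before the uniform argument applies.
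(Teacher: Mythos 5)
Your proposal is correct, and on the hard implication it uses the same essential ideas as the paper: a weight-space analysis of the $T(\ppsi)$-action on $\RR^{n+1}$, the resulting coordinate permutation, and the observation that the decreasing-order normalization of $\ppsi,\ppsi'\in A$ forces the proportionality constant to be positive and the sorted entries to agree. The organization differs in two ways worth noting. First, the paper runs the cycle $(c)\Rightarrow(b)\Rightarrow(a)\Rightarrow(b)\Rightarrow(c)$ and proves $(b)\Rightarrow(a)$ as a separate step (using Lemma \ref{Xtransitive} to move the $T'$-orbit $P(\bdy\Omega)$ onto $\bdy\Omega'$ and then taking convex hulls); it then uses that reduction inside its proof of $(b)\Rightarrow(c)$ so that Proposition \ref{characterizecenterRF}(b) applies, pinning down the center $\alpha=[e_{\type+1}]$ and showing $P$ preserves $W=\langle e_1,\dots,e_\type\rangle$ before restricting $T_1,T_2$ to $W$. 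You instead extract the permutation of $\{\langle e_1\rangle,\dots,\langle e_{\rank}\rangle\}$ directly from the fact that these are the weight lines for pairwise distinct non-trivial characters, so you never need to arrange $P(\Omega)=\Omega'$ and never prove $(b)\Rightarrow(a)$ at all, closing the cycle with $(d)\Rightarrow(a)$ via the explicit conjugator of Equation (\ref{Tconj}); this is a genuine (if modest) streamlining of the key step. Second, your $(a)\Rightarrow(b)$ via Proposition \ref{invariantproduct}(g) and Theorem \ref{Gppsistructure}(f) is interchangeable with the paper's appeal to Proposition \ref{horometricesinvariant}. Two small points to tidy: if you prove $(d)\Rightarrow(a)$ by direct computation rather than by citing the remark preceding the proposition, note that the matrix of Equation (\ref{Tconj}) sends $\Omega(\ppsi)$ to a radial-flow translate of $\Omega(s^2\ppsi)$ (there is an additive constant $s^2\log s\sum\ppsi_i$ in the horofunction), so one must compose with an element of $\Phi^{\ppsi'}$; and in the diagonalizable case $\type=n$ the permutation lives in $S_n$ acting on all $n$ non-trivial weight lines rather than $S_{\rank}=S_{n-1}$, which you have correctly flagged as one of the borderline adaptations.
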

\begin{proof}	$(d)\Leftrightarrow(c)$ is immediate because $\ppsi\ne0$ implies $\ppsi(e_1)>0$ and similarly for $\ppsi'$. Suppose $P\in\GL(n+1,\RR)$ and $P T P^{-1}=T'$.
Since $T$ acts transitively on
 $\bdy\Omega$ it follows that $P(\bdy\Omega)$ is a $T'$-orbit.  By Lemma \ref{Xtransitive}
 there is a projective transformation $R$ taking this orbit to $\bdy\Omega'$. After replacing $P$
 by $R\circ P$ we may assume $P(\bdy\Omega)=\bdy\Omega'$.
  But $\Omega$ is the convex hull of $\bdy\Omega$, so $P(\Omega)=\Omega'$. Thus $(b)\Rightarrow (a).$
 Conversely if $P(\Omega)=\Omega'$ then $S:=P T P^{-1}$ is contained in $G(\ppsi')$
 by Proposition
 \ref{horometricesinvariant}.
Since $S\cong\RR^{n-1}$,  
Theorem \ref{Gppsistructure}(f) implies $S=T'$ proving $(a)\Rightarrow(b)$.

$(c)\Rightarrow (b)$ by Equation (\ref{Tconj}).  This leaves $(b)\Rightarrow(c)$. If $\rho:G\to\GL(V)$ then a subspace $0\ne U\subset V$ is a {\em weight space}
 with (real) weight $\lambda:G\to\RR^{\times}$ if
$U=\{v\in V:\ \forall\ g\in G\ \ \rho(g)\cdot v=\lambda(g)\cdot v\}$. By Equations (\ref{eq:Tmatrix}) and (\ref{eq:Tmatrix2}), the weight spaces of $m^*_{\ppsi}$ are $\langle e_i\rangle$, and 
the points $[e_i]\in\RPn$ are the fixed points of $T(\ppsi)$, for $1\le i\le\type+1$. 

  If $P T P^{-1}=T'$ then  $P$ sends the fixed
points of $T$ to those of $T'$. By the first paragraph we may assume $P(\Omega)=\Omega'$.
The center of the radial flow for both $\Omega$ and $\Omega'$ is $\alpha=[e_{\type+1}]$
and Proposition \ref{characterizecenterRF}(b) implies $P(\alpha)=\alpha$.
 Thus $P$ preserves the $T$-invariant 
  subspace $W= \langle e_1,\cdots,e_{\type}\rangle$.
The definition of $T_1$ is conjugacy invariant so
$PT_1 P^{-1}=T_1'$. 

{\blue 
Restricting the action of $T_2$ to $W$ gives a subgroup $T_2^W\subset\GL(W)$ of 
the positive diagonal group, 
and $T_1^W$ is the subgroup of $T_2^W$ given by $\ker\psi$.
The conjugacy $R=(P|W)\in GL(W)$ from $T_1^W$  to $(T_1')^W$ permutes the fixed points
 $\{\langle e_1\rangle,\cdots,\langle e_{\type}\rangle\}$ of $T_1^W$.
Since $T_1^W$ is diagonal, conjugating by a diagonal matrix centralizes $T_1^W$,
so we may assume $R$ is  a permutation matrix. Then
$RT_1^WR^{-1}=(T_1')^W$
and $R$ permutes the coordinates of $\psi$. But the coordinates of both $\psi$ and $\psi'$
are in decreasing order, so $R$ must leave $\psi$ unchanged, thus $RT_1^WR^{-1}=T_1^W$.
 It follows that $T_1^W=(T_1')^W$, which implies (c).}
\end{proof}
}


\if0
 If $C=\Omega(\ppsi)/\Gamma$
 is a generalized cusp then a horofunction metric on $\Omega(\ppsi)$ covers a Riemannian metric on $C$ which is  
 called a {\em horofunction metric on $C$.} 
 
 \begin{corollary}\label{cusphorometric} A generalized cusp $C$  with a horofunction metric  is isometric to 
 the  Euclidian 
 manifold  $\bdy C\times[0,\infty)$  with the product metric, and $\partial C$ is a compact Euclidean manifold.
 \end{corollary}

\fi

\subsection{Normalizing the metric}

Given that $\Omega(\ppsi)$ comes equipped with a family of Euclidean (flat)
 metrics, it is natural to ask if there is any intrinsic way of distinguishing different metrics. 
 When $\ppsi=0$ then the interior of $\Omega(0)$ can be identified with
 $\HH^n$ and for each $c>0$ there is a (hyperbolic) 
 element $\gamma\in \PGL(\Omega)\subset \Isom(\HH^n)$  that rescales the horofunction:
 $h_0\circ\gamma=c \cdot h_0$. As a result,  there is no projectively invariant way
 to assign a distinguished metric to $\Omega(0)$.
This corresponds to the familiar fact that the complement of a point in the sphere at infinity
 for $\HH^n$ only
has an invariant Euclidean {\em similarity structure} 
rather than a Euclidean metric. But when $\ppsi\ne 0$ the story is different.

If $(X,d)$ is a metric space and $f:X\to X$ is an isometry
the {\em displacement distance} of  $f$ is $\delta_{ d}(f)=\inf\{d(x,fx):x\in X\}$. 
If $\ppsi\ne0$ define  the subset $J(\ppsi)\subset T_2(\ppsi)$ to consists of all $ A\in T_2(\ppsi)$ such that
  the largest eigenvalue of $A$ is { $\exp(1)$}. This set is non-empty and compact. A horofunction metric, $\beta$, on $\Omega(\ppsi)$ is {\em normalized} if $\sup\{\delta_{\beta}(A):A\in J(\ppsi)\}=1$. This metric is Euclidean by Theorem \ref{betathm}.
  If $C=\Omega(\ppsi)/\Gamma$
  is a generalized cusp the {\em normalized horofunction metric} on $C$ is the metric covered
  by $\beta$.

\begin{corollary}\label{normalizedmetric} If $\ppsi\ne0$ then there is a unique normalized horofunction metric on $\Omega(\ppsi)$.
If $P\in\PGL(n+1,\RR)$ and $P(\Omega(\ppsi))=\Omega(\ppsi')$ then 
$P$ is an isometry between the normalized horofunction metrics.

There is a  unique normalized horofunction metric on a $\ppsi$-cusp $C=\Omega(\ppsi)/\Gamma$
{and $C$ is isometric to 
 the  Euclidian 
 manifold  $\bdy C\times[0,\infty)$  with a product metric, and $\partial C$ is a compact Euclidean manifold.} 

If $C$ and $C'$ are generalized cusps {of type $\type >0$}, and $P:C\to C'$ is a projective diffeomorphism, then $P$ is an
isometry between these metrics.
\end{corollary}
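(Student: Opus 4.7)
The plan is to first reduce everything to how the horofunction-metric family $\{\beta^c : c > 0\}$ interacts with displacements of elements of $T_2(\ppsi)$, and then see that the normalization condition uniquely picks out one metric from the family.

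The key scaling observation is: for every $A \in T(\ppsi)$ one has $\delta_{\beta^c}(A) = \sqrt c \,\delta_\beta(A)$. Indeed, Equation \eqref{horofunctionmetric} writes $\beta^c = c\,(\deriv^2 h|_{\ker \deriv h}) + c^2(\deriv h)^2$, so $\beta^c$ rescales horospherical lengths by $\sqrt c$ and vertical lengths by $c$. Since $A \in T(\ppsi)$ preserves every horosphere, acting on each as a Euclidean translation (Theorem \ref{betathm}(d),(h)), its displacement is realized tangent to horospheres and therefore scales by $\sqrt c$. Granting this, existence and uniqueness of a normalized metric on $\Omega(\ppsi)$ is immediate: set $M := \sup_{A \in J(\ppsi)}\delta_\beta(A)$. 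Compactness and nonemptiness of $J(\ppsi)$, together with the fact that $\ppsi \ne 0$ forces $\rank \ge 1$ and hence $T_2(\ppsi)$ to contain non-trivial translations, give $0 < M < \infty$. Then $c = 1/M^2$ is the unique scaling making $\beta^c$ normalized.

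For the second assertion, let $P \in \PGL(n+1,\RR)$ with $P(\Omega(\ppsi)) = \Omega(\ppsi')$ and choose a lift $\tilde P \in \GL(n+1,\RR)$. Proposition \ref{horometricesinvariant} yields $\tilde P^* \beta'_{\mathrm{norm}} = \beta^c$ for some $c > 0$; I must show $c = 1$. Proposition \ref{invariantEuclmetric} gives $\tilde P G(\ppsi) \tilde P^{-1} = G(\ppsi')$. Theorem \ref{Gppsistructure}(f) characterizes $T(\ppsi)$ intrinsically as the subgroup of elements with all eigenvalues positive, a conjugation-invariant property, so $\tilde P T(\ppsi) \tilde P^{-1} = T(\ppsi')$. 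Jordan block sizes and individual eigenvalues are preserved by conjugation in $\GL$, and conjugation is independent of the scalar chosen for the lift, so $\tilde P T_2(\ppsi)\tilde P^{-1} = T_2(\ppsi')$ and $\tilde P J(\ppsi)\tilde P^{-1} = J(\ppsi')$. Taking sups of displacement distances on both sides, combined with the scaling relation from the first paragraph, gives $\sqrt c = 1$, so $\tilde P$ is an isometry.

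For the cusp statements, $\Gamma \subset G(\ppsi)$ acts by isometries of $\beta_{\mathrm{norm}}$ (Theorem \ref{betathm}(d)), so the metric descends to $C = \Omega(\ppsi)/\Gamma$, and its uniqueness is inherited. The isometry of Theorem \ref{betathm} identifies $\Omega(\ppsi)$ with $\RR^{n-1} \times (-\infty,0]$, conjugates the radial flow to vertical translation, and conjugates $\Gamma \subset T(\ppsi) \rtimes O(\ppsi)$ to a discrete group of Euclidean isometries fixing the last coordinate (since $G(\ppsi)$ preserves each horosphere). Hence $C$ is isometric to $(\bdy\Omega/\Gamma) \times [0,\infty)$ with the product metric, and $\bdy C$ is a quotient of $\EE^{n-1}$ by a discrete isometry group, Euclidean and compact by the standing hypothesis on generalized cusps. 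Finally, a projective diffeomorphism $P:C\to C'$ lifts to a projective equivalence $\Omega(\ppsi)\to\Omega(\ppsi')$ (with $\type,\type'>0$ by Proposition \ref{invariantproduct}(a)), which is an isometry of normalized metrics by the preceding paragraph; hence $P$ is an isometry on the quotients.

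The main obstacle is the $\GL$-versus-$\PGL$ bookkeeping in the third paragraph: the defining condition ``largest eigenvalue $\exp(1)$'' for membership in $J(\ppsi)$ makes sense only after fixing lifts, and I must check that the relevant conjugation takes $J(\ppsi)$ to $J(\ppsi')$ setwise; this rests on the intrinsic (Jordan-block and positive-eigenvalue) characterizations of $T_2(\ppsi)$ and $T(\ppsi)$ inside $G(\ppsi)$, and on the scalar-invariance of the conjugation action.
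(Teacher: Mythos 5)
Your proposal is correct and follows essentially the same route as the paper's proof: Proposition \ref{horometricesinvariant} reduces everything to a single horoscaling factor, the conjugation-invariance of the eigenvalue condition defining $J(\ppsi)$ (together with the $\sqrt{c}$ scaling of displacements along horospheres) forces that factor to be $1$, and the cusp statements descend via Theorem \ref{betathm}. The $\GL$-versus-$\PGL$ bookkeeping you flag at the end is closed by Lemma \ref{uniquesupphyperplane}(c): $P$ is affine, so it admits a lift with bottom-right entry $1$, and conjugation by that lift carries the matrix group $T(\ppsi)$ (hence $J(\ppsi)$) onto $T(\ppsi')$ with no scalar ambiguity.
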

{ \begin{proof} Unicity is clear. Suppose $\Omega=\Omega(\ppsi)$ (resp. $\Omega'=\Omega(\ppsi')$)
 has normalized horofunction metric
$\beta$ (resp. $\beta'$).
If $P(\Omega)=\Omega'$ then by Proposition \ref{horometricesinvariant}
 $P^*\beta'$ is a horoscaling of $\beta$. The normalization condition implies these metrics
are equal because
conjugation does not change eigenvalues. By Theorem \ref{betathm} $\beta|\Omega$ is
isometric to a Euclidean half-space, and $G(\ppsi)$ preserves this metric,
so $\beta$ covers a Euclidean metric on $C$ with $\bdy C$ flat. The last conclusion follows from
the second conclusion.
\end{proof}
}

 \if0

{ We should compute the normalized metric for a cusp in 2 
dimensions, and obtain the translation length of non-unipotent elements.
This will  then clarify the normalization.}
Consider the domain $\Omega(\ppsi)\subset\RP^2$ for $\ppsi=(\ppsi_1,\ppsi_2)$ with $\ppsi_1\ge\ppsi_2>0$.
Then $h_{\ppsi}(x_1,x_2)=-(\ppsi_1+\ppsi_2)^{-1}(\ppsi_1\log x_1+\ppsi_2\log x_2)$
  
\fi

 \subsection{The Second Fundamental Form}\label{secfundform}
 \begin{definition} 
 Suppose $S\subset\RR^n$ is a transversally oriented, smooth hypersurface. The {\em second fundamental form $\secondfund$ on $S$} is the quadratic
 form defined on each tangent space $\secondfund_q:T_qS\rightarrow \RR$ by 
 $$\secondfund_q(\gamma'(0))=\langle\gamma''(0), n_q\rangle$$
 where $\gamma:(-\epsilon,\epsilon)\to S$ is a smooth curve in $S$ with $\gamma(0)=q$ and $n_q$ is a unit normal vector to $S$ at $q$ in the direction
 given by the transverse orientation.
 \end{definition}
 
It is routine to verify that $\secondfund$ is well defined.
The sign of $\secondfund$ depends on a choice of normal orientation. If $S$ is a convex hypersurface and $q\in S$
 and $n_q$ points to the convex side, then $\secondfund_q$ is
 positive definite and  defines a Riemannian metric on $S$ see \cite{Sch}. 
 {There is a cotangent vector $\eta_q\in T_q^*\RR^n$ defined by $\eta_q(v)=\langle v,n_p\rangle$ and $$\secondfund_q(\gamma'(0))=\eta_q(\gamma''(0))$$
 Observe that $\ker\eta_q=T_qS\subset T_q\RR^n$. We refer to $\eta_q$ as the {\em inward unit cotangent vector} for $S$ at $q$. 
 
 Now apply this to the horosphere $S=\Hcal$ in Theorem \ref{betathm}.
 Suppose $v=\gamma'(0)\in T_q\Hcal$. Then $(\deriv_q\pi) v=v$
and $v\in\ker \deriv_q h$, hence  the definition of $\beta$ in Theorem \ref{betathm} implies  $\beta(v)=\deriv_q^2h(v)$. 
Also $\deriv_q h=\lambda(q)\eta_q$ for some $\lambda(q)>0$.
  Using
tangent plane coordinates at $q$ gives
\begin{equation}\label{betaissecondfund}
\beta|T_q\Hcal=\lambda(q)\secondfund_q\
\end{equation}  
In particular since $\bdy\Omega$ is a horosphere:
\begin{proposition}\label{secondfundformmetric}
The restriction of the horofunction metric to $\bdy\Omega$  is conformally equivalent to the second fundamental form of $\bdy\Omega$ in $\AA(\Omega)=\RR^n$.
\end{proposition}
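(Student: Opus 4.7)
The plan is to observe that Proposition \ref{secondfundformmetric} is an immediate specialization of equation \eqref{betaissecondfund} derived in the preceding paragraph. Indeed, by Proposition \ref{Omegaconvex} the boundary $\bdy\Omega$ coincides with the horosphere $\Hcal_0=h^{-1}(0)$, so taking $\Hcal=\bdy\Omega$ in \eqref{betaissecondfund} gives $\beta|T_q\bdy\Omega=\lambda(q)\,\secondfund_q$ for a positive function $\lambda$, which is precisely the assertion of conformal equivalence.

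Since the derivation of \eqref{betaissecondfund} is the real content, I would expand that derivation in three short steps. First, for $v=\gamma'(0)\in T_q\Hcal$ I would note that $\deriv_q\pi(v)=v$ (because $\pi$ restricts to the identity on the horosphere through $q$) and $v\in\ker\deriv_q h$ (because $\Hcal$ is a level set of $h$), so the definition of $\beta$ from Theorem \ref{betathm} collapses to
\[
\beta(v)=\deriv_q^2 h(v,v).
\]
Second, choosing the curve $\gamma$ to lie in $\Hcal$ and differentiating $h\circ\gamma\equiv 0$ twice yields
\[
\deriv_q^2 h(v,v)=-\deriv_q h(\gamma''(0)).
\]
Third, since $T_q\Hcal=\ker\deriv_q h=\ker\eta_q$, the covectors $\deriv_q h$ and $\eta_q$ are proportional, say $\deriv_q h=\mu(q)\eta_q$ with $\mu(q)\ne0$, and then $\deriv_q h(\gamma''(0))=\mu(q)\secondfund_q(v)$. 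Combining the three displays gives $\beta|T_q\Hcal=-\mu(q)\,\secondfund_q$, which is \eqref{betaissecondfund} with $\lambda(q)=-\mu(q)$.

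The only real thing to check is the sign, namely that $\lambda(q)>0$, so that this is a genuine conformal equivalence rather than an anti-conformal one. This will follow automatically from positivity considerations already established: $\beta$ is a Riemannian metric (Theorem \ref{betathm}), hence $\beta|T_q\bdy\Omega$ is positive definite; and $\secondfund_q$ is positive definite because $\bdy\Omega$ is a strictly convex hypersurface of $\AA(\Omega)$ when oriented by the inward normal (Proposition \ref{Omegaconvex}). Two positive definite quadratic forms proportional by a nonzero scalar must be proportional by a positive scalar, so $\lambda(q)>0$ as required. The whole argument is short and essentially bookkeeping; I do not expect any genuine obstacle beyond tracking the normal orientation consistently.
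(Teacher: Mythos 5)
Your proposal is correct and follows essentially the same route as the paper: the paper's own proof consists exactly of deriving Equation \eqref{betaissecondfund} (via $\beta(v)=\deriv_q^2h(v)$ on $T_q\Hcal$ and the proportionality $\deriv_qh=\lambda(q)\eta_q$) and then specializing to the horosphere $\Hcal_0=\bdy\Omega$. Your expansion of the ``tangent plane coordinates'' step by differentiating $h\circ\gamma$ twice, and your resolution of the sign of the conformal factor by comparing two positive definite forms, is a slightly more careful rendering of the same argument.
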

}

  The following elementary fact does not seem to be well known, cf Proposition 1.1 in \cite{Calabi}.
  It  { implies that $G(\ppsi)$ acts conformally on $\bdy\Omega(\ppsi)$. Since the action of $T(\ppsi)$
  on $\bdy\Omega$ is simply transitive
  the second fundamental form is conformally equivalent to a flat metric. 
  }

 \begin{proposition}\label{conformallemma} Suppose $S \subset\RR^n$ is a smooth strictly convex hypersurface and $\tau:\RR^n\rightarrow\RR^n$ is an affine isomorphism
 and $\tau(S)=S'$.
 Then $\tau:(S,\secondfund)\to(S',\secondfund')$ is a conformal map. 
 
 Suppose $\eta_p$ and $\eta'_q$ are the inward unit cotangent vectors to $S$ at $p$, and to $S'$ at $q=\tau(p)$ respectively.
 Then $\tau^*\eta'_q=\alpha\cdot \eta_p$ for some $\alpha=\alpha(p)>0$ and
 $\tau_p^*\secondfund'_q=\alpha\cdot\secondfund_p$.
  \end{proposition}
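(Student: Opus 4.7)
The plan is to exploit the fact that an affine map has vanishing second-order part, so that the second fundamental form transforms in a particularly simple way, and then show the conformal factor is exactly the factor relating the two inward cotangent vectors.

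Write $\tau(x)=Lx+b$ with $L=D\tau$ the linear part, so that for any smooth curve $\gamma$ in $\RR^n$ we have $(\tau\circ\gamma)'(t)=L\gamma'(t)$ and $(\tau\circ\gamma)''(t)=L\gamma''(t)$. Since $\tau$ is a diffeomorphism with $\tau(S)=S'$, its derivative sends $T_pS$ isomorphically onto $T_qS'=\ker\eta'_q$. Therefore the pullback $\tau^*\eta'_q\in T_p^*\RR^n$ annihilates $T_pS=\ker\eta_p$, and hence $\tau^*\eta'_q=\alpha\,\eta_p$ for a unique scalar $\alpha=\alpha(p)$.

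Next I would verify $\alpha>0$. Because $\tau$ is affine, it carries the convex side of $S$ to the convex side of $S'$; equivalently, if $n_p$ is an inward normal to $S$ at $p$ then $Ln_p$ points to the convex side of $S'$ at $q$, so $\eta'_q(Ln_p)>0$. On the other hand $\eta_p(n_p)=1>0$, and the equality $\tau^*\eta'_q=\alpha\eta_p$ evaluated at $n_p$ yields $\alpha>0$.

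For the second claim, given $v\in T_pS$ choose any curve $\gamma$ in $S$ with $\gamma(0)=p$, $\gamma'(0)=v$. Then $\tau\circ\gamma$ is a curve in $S'$ with $(\tau\circ\gamma)(0)=q$ and first derivative $Lv$, and by the affine computation above its second derivative is $L\gamma''(0)$. Hence
\begin{equation*}
(\tau^*\secondfund'_q)(v)=\secondfund'_q(Lv)=\eta'_q\bigl(L\gamma''(0)\bigr)=(\tau^*\eta'_q)\bigl(\gamma''(0)\bigr)=\alpha\,\eta_p\bigl(\gamma''(0)\bigr)=\alpha\,\secondfund_p(v).
\end{equation*}
Thus $\tau_p^*\secondfund'_q=\alpha(p)\,\secondfund_p$ with $\alpha(p)>0$, which by definition means $\tau$ is conformal from $(S,\secondfund)$ to $(S',\secondfund')$.

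There is no serious obstacle here; the whole content is the observation that affine maps have trivial second intrinsic derivative, so one can commute $\tau_*$ past the second derivative in the definition of $\secondfund$. The only thing one must actually argue, beyond a line of algebra, is the sign of $\alpha$, which is handled by the convex-side preservation of affine maps.
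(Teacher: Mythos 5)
Your proof is correct and follows essentially the same route as the paper's: both rest on the observation that an affine map commutes with second derivatives of curves, so $\secondfund'_q\circ d\tau=\eta'_q\circ d\tau\circ(\cdot)''=\alpha\,\eta_p\circ(\cdot)''=\alpha\,\secondfund_p$. You additionally spell out why $\alpha>0$ (preservation of the convex side), a point the paper asserts in the statement but leaves implicit in its proof, which is a worthwhile addition.
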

 \begin{proof} Given $p\in S$ set $q=\tau(p)$. We must show that $\tau^*(\secondfund_q')= \alpha(p)\secondfund_p$ for some $\alpha(p)>0$. 
  Let $H\subset\RR^n $ be the hyperplane tangent to $S$ at $p$.  Translate $H$ infinitesimally so that it intersects $S$ in
 an infinitesimal ellipsoid centered on $p$. This gives a foliation of an infinitesimal neighborhood of $p$ in $S$ by ellipsoids
 which we may identify with the levels sets of  $\secondfund_p$ in $T_p$. Since affine maps send parallel hyperplanes to
 parallel hyperplanes, the foliation near $p$ is sent to the foliation near $q$. If two quadratic forms have the same level sets then  one is a scalar multiple of the other.
 
 More formally,  
 suppose $\gamma:(-\epsilon,\epsilon)\to S$
 is smooth with $\gamma(0)=p$. Then 
 $$(\tau^*(\secondfund_q))(\gamma'(0)) =\secondfund_q((\tau\circ\gamma)'(0))=\eta_q((\tau\circ\gamma)''(0))$$
 Since $\tau$ is an affine map $$(\tau\circ\gamma)''(0)=(d\tau)(\gamma''(0))$$
 Since $d\tau(T_pS)=T_qS$ it follows that $\eta_q\circ d\tau=\alpha\cdot\eta_p$ for some $\alpha=\alpha(p)$.
 Thus $$\eta_q(d\tau(\gamma''(0)))=\alpha\cdot \eta_p(\gamma''(0))=\alpha\cdot\secondfund_p(\gamma'(0))$$
\end{proof}

 \begin{proof}[Proof of Theorem \ref{euc_str}]  The metric $\beta$ with the required properties
  is given by Theorem \ref{betathm} {in the affine patch $\AA(\Omega)$, and the restriction of $\beta$ to 
  { $\bdy\Omega$}} is { conformally equivalent to}
  the second fundamental form, {by Proposition \ref{secondfundformmetric}}.
  \end{proof}

\if0
 Suppose $C$ is a generalized cusp of dimension $n$ bounded by a horomanifold. Then
 $C=\Omega(\ppsi)/\Gamma$ is a $\ppsi$-cusp. 
 There is a unique horofunction metric, $\beta_C$,
 on $C$ such that the Euclidean volume of $\bdy C$ is $1$. This metric is $\kappa(C)$.
 If $C=\Omega/\Gamma$ and $C'=\Omega'/\Gamma'$ are generalized cusps, and $P:C\to C'$ is a projective diffeomorphism, then $P$ is covered by a projective isomorphism $\Omega\to\Omega'$, which
 is an
isometry between horofunction metrics.Thus $P$ is an isometry.

There is a unique hyperbolic cusp $H$ bounded by a horomanifold, with $\bdy H$ isometric to $(\bdy C,\kappa(C))$.
The restriction of the hyperbolic metric to $\bdy H$ equals the restriction of $\kappa(H)$ to $\bdy H$. Thus
there is an isometry $(C,\kappa(C))\to(H,\kappa(H))$, that identifies $C$ with a hyperbolic cusp.
\fi

\if0 
  Observe that the conformal factor $\alpha$ is the reciprocal of the multiplicative factor by which planes parallel to $T_pS$ are moved apart by $\tau$.
  \subsection{A convex foliation determines a metric}
 The horospheres form  a foliation of $V\equiv V(\ppsi)$ by convex hypersurfaces.  We show how such a foliation gives 
  a natural Riemannian metric on $V$. 
 Given $x\in V$ let $\mathcal H$ be the horosphere that contains $x$, and let $\secondfund_x:T_x\mathcal H\to\RR$ be the second fundamental form
 and $\eta_x$ the inward unit cotangent vector to $\mathcal H$ at $x$. Let $\pi_x:T_x\RR^n\to T_x\mathcal H$ be orthogonal projection.
 Define $\beta_x:T_x\RR^n\to\RR$ by
 \begin{equation}
 \label{eq:Euclideanform}
 \beta_x(v)=\secondfund_x(\pi_x v)+\left(\eta_x(v)\right)^2
 \end{equation} 
Clearly this is positive definite and so defines a Riemannian metric. Observe that $\beta_x$ only depends on the horosphere $\mathcal H$ containing $x$ and
 varies continuously as $\mathcal H$ is changed. This will be useful later.
 \begin{proposition} $V(\ppsi)$ with the Riemannian metric $\beta$ is  isometric to Euclidean space. 
 Moreover $G(\ppsi)$ acts by isometries of this metric and each horosphere is flat.
 \end{proposition}
 \begin{proof} Suppose $\tau\in G(\psi)$. Then $\tau$ is an affine map that preserves each horosphere. Given $p\in V(\ppsi)$ set $q=\tau(p)$.
 It follows from Proposition \ref{conformallemma} that
\begin{equation}\label{eq:foliationmetric}
\tau^*\beta_q=\alpha\cdot \secondfund_p + (\tau^*\eta_q)^2/\tr(\alpha\cdot\secondfund_p)
\end{equation}
 This uses that $d\tau_p\circ\pi_p=\pi_q\circ d\tau_p$. 
 Observe that $\tau^*\eta_q=\alpha\cdot\eta_p$ and $\tr(\alpha\cdot \secondfund_p)=\alpha\cdot\tr\secondfund_p$ hence
 $\tau^*\beta_q=\alpha\cdot \beta_p$.  Since $G(\ppsi)$ preserves each horosphere it follows that $\alpha=1$.

 Choose $x\in V(\ppsi)$ then the orbit map $f:T^e(\ppsi)\to V$ 
  given by $f(A)=A(x)$ is a diffeomorphism. The pullback $ds=f^*(\beta)$ is a
   Riemannian metric on the real vector space $T^e$ and the action of $T^e$ on itself by left translation is by isometries of  this metric. 
   Every left-invariant Riemannian metric on a vector space is obviously isometric to Euclidean space. 
   The map $f:(S,ds)\to(V,\beta)$ is an isometry so $(V,\beta)$
   is also isometric to Euclidean space.
   
   A horosphere $\mathcal H$  is the orbit of a point under the action of $T(\ppsi)$. 
   Thus $\mathcal H=f(A+T)$ 
   for some $A\in T^e$ and $A+T$ is an affine subspace of $T^e$ and therefore flat.
Since $f$ is an isometry it follows that $\mathcal H$ is flat. 
    \end{proof}

  \begin{corollary}\label{Gisometries} The action of $G(\ppsi)$ on $\Omega(\ppsi)$ is by isometries of 
  $\beta$.
Thus $(G(\ppsi),\Omega(\ppsi))$ is isomorphic to the product of a subgeometry of Euclidean geometry $\EE^{n-1}$ and $\epsilon[0,\infty)$.
In particular  $T(\ppsi)$ is a normal subgroup of $G(\ppsi)$. \marginnote{Other than the last part, this is a duplicate of 1.26/1.27}
\end{corollary}
  
   \fi

\section{Generalized Cusps are $\ppsi$-cusps}

As mentioned in the introduction, the idea of a {\em cusp} in a projective manifold has evolved in a series of papers.
Recall that if $\Omega$ is properly convex then $[A]\in\PGL(\Omega)$ is {\em parabolic} if  all
the eigenvalues of $A$ have the same modulus and there is no fixed point in $\interior(\Omega)$.
A definition of the term {\em cusp} in a properly convex manifold was first given in Definition 5.2 of \cite{CLT1}.
There, the holonomy of a cusp $C$ consists of parabolics.
 The
definition used there was dictated by the requirement to establish a thick-thin decomposition for
strictly convex manifolds, of possibly infinite volume.  In that
paper the {\em rank} of  $C$ is defined, and {\em maximal rank} is equivalent to $\bdy C$ being compact {\red (see Proposition 5.5 of \cite{CLT1})}.
In this paper we only consider cusps of maximal rank, so we have omitted the term {\em maximal rank} from
statements.

A definition of the term {\em generalized cusp} was first given in \cite{CLT2} Definition 6.1.  
It differs from the definition in the introduction, by using the term {\em nilpotent} in place of {\em abelian}.
 Theorem \ref{vnilisvabelian} at the end of this section shows that  these definitions are equivalent.  


\begin{definition}\label{gencusp}  A {\em g-cusp} (called a generalized cusp in \cite{CLT2})  is a properly convex manifold
 $C=\Omega/\Gamma$ homeomorphic
to $\partial C\times[0,\infty)$ with $\partial C$ a connected closed manifold and $\pi_1C$ virtually {\em nilpotent}
 such that $\partial\Omega$ contains no line segment.
The group $\Gamma$ is called a {\em g-cusp group}. In addition:
\begin{itemize}
{\blue \item[(a)] If $\pi_1C$ is  virtually abelian, then $C$ is a {\em generalized cusp}. }
\item[(b)] If  $\PGL(\Omega)$ acts transitively on $\bdy\Omega$, then $C$  is {\em homogeneous}.
\item[(c)] A {\em cusp} is a generalized cusp with parabolic holonomy.
\item [(d)] A {\em standard cusp} is a cusp that is projectively equivalent to a cusp in a hyperbolic manifold. 
\end{itemize}\end{definition}

{\red Next, we restate some previous results from \cite{CLT1} and \cite{CLT2}, with respect the the terminology in Definition \ref{gencusp}}

\begin{theorem}[Theorem 0.5 in \cite{CLT1}]
\label{maximalcuspsstandard}
Every
cusp in a properly convex  real projective manifold is standard. 
\end{theorem}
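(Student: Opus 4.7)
The plan is to combine the uniformization theorem with the structure theorem for $G(\ppsi)$ and use the parabolicity hypothesis to force $\ppsi=0$. First I would apply Theorem~\ref{genislambda} to replace $C$ by an equivalent $\ppsi$-cusp $\Omega(\ppsi)/\Gamma'$ with $\Gamma'$ a lattice in $G(\ppsi)$. Since the parabolic property (every non-trivial element has eigenvalues of modulus one and no fixed point in the interior of the domain) is invariant under the projective embeddings and $\PGL(n+1,\RR)$-conjugations that define equivalence, it passes to $\Gamma'$: every non-trivial element of $\Gamma'$ is parabolic.

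Next I would exploit Theorem~\ref{Gppsistructure}, which realizes $G(\ppsi)=T(\ppsi)\rtimes O(\ppsi)$ with $T(\ppsi)\cong\RR^{n-1}$ the subgroup of all elements of $G(\ppsi)$ whose eigenvalues are positive, and $O(\ppsi)$ compact. Exactly as in the proof of Corollary~\ref{parabolicsinGpsi}, each $\gamma\in\Gamma'$ is $G(\ppsi)$-conjugate to a commuting product $a\cdot t$ with $a\in O(\ppsi)$ and $t\in T(\ppsi)$, and its eigenvalues are products of those of $a$ (unit modulus) with those of $t$ (positive). Parabolicity of $\gamma$ therefore forces all eigenvalues of $t$ to be $1$, so $t$ lies in the unipotent subgroup $P(\ppsi)$ of Lemma~\ref{paraboliclemma}. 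In particular $\Gamma'\cap T(\ppsi)\subset P(\ppsi)$.

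To conclude that $\ppsi=0$, I would carry out a rank count. Because $O(\ppsi)$ is compact, the image of $\Gamma'$ in the quotient $G(\ppsi)/T(\ppsi)\cong O(\ppsi)$ is a discrete subgroup of a compact Lie group, hence finite. Therefore $\Gamma_0:=\Gamma'\cap T(\ppsi)$ has finite index in $\Gamma'$ and is itself a cocompact lattice in the abelian group $T(\ppsi)\cong\RR^{n-1}$, so $\Gamma_0\cong\ZZ^{n-1}$. The containment $\Gamma_0\subset P(\ppsi)\cong\RR^{\ur}$ then forces $\ur\ge n-1$, which by $\ur+\rank=n-1$ forces $\type=0$, i.e.\ $\ppsi=0$.

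Once $\ppsi=0$, the domain $\Omega(0)$ is projectively a closed horoball in the Klein model of $\HH^n$ and $G(0)$ is the subgroup of $\Isom(\HH^n)$ preserving that horoball, so $\Omega(0)/\Gamma'$ is manifestly a cusp in a hyperbolic manifold. The remaining (and hardest) task is to upgrade the equivalence supplied by Theorem~\ref{genislambda} to genuine projective equivalence between $C$ and a hyperbolic cusp: one must show that the $\Gamma$-invariant domain $\Omega$ originally covering $C$ is itself a horoball in $\HH^n$. The plan for this last step is to apply the uniqueness results for $\Gamma$-invariant properly convex domains with compact strictly convex boundary (in the spirit of Proposition~\ref{Ginvdomains}, using that $\Gamma$ normalizes the radial/horospherical flow), so that $\Omega$ and the horoball fixed by the hyperbolic model differ only by an element of $\PGL(\Omega(0))$, which is by definition a projective equivalence.
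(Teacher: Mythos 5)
The first thing to say is that the paper offers no proof of this statement: it is quoted verbatim as Theorem 0.5 of \cite{CLT1} and used as an imported input, so the only ``proof'' in the text is a citation. Your proposal is therefore an attempt to re-derive an external result from the paper's own main theorems, and before anything else you must rule out circularity: Theorem \ref{genislambda} is proved via Theorem \ref{homogcusp} (Theorem 6.3 of \cite{CLT2}) and Theorem \ref{main}, and the latter leans on Theorem \ref{ellipsoid} (Theorem 9.1 of \cite{CLT1}) and Theorem 5.7 of \cite{CLT1} --- Theorem 9.1 being exactly the engine behind the original proof of Theorem 0.5, and the results quoted from \cite{CLT2} being built on top of \cite{CLT1}. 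Granting that, your reduction to $\ppsi=0$ is essentially correct: parabolicity passes to the conjugate lattice by Lemma \ref{equivalentcusps}, the commuting decomposition $\gamma\sim a\cdot t$ forces $\Gamma'\cap T(\ppsi)\subset P(\ppsi)$, and the rank count $\ur\ge n-1$ gives $\type=0$. One repair is needed: the image of $\Gamma'$ in $O(\ppsi)$ is not a priori discrete, so ``discrete subgroup of a compact group is finite'' is not a valid justification; you should instead invoke Bieberbach's first theorem (as the paper does in the proof of Theorem \ref{Classification Theorem}(v)) to conclude that $\Gamma'\cap T(\ppsi)$ has finite index in $\Gamma'$ and is a lattice in $T(\ppsi)\cong\RR^{n-1}$.

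The genuine gap is your final step. Proposition \ref{Ginvdomains} classifies properly convex sets invariant under the \emph{whole} group $T(\ppsi)$, not under a lattice $\Gamma\subset G(0)$, and a parabolic lattice preserves many properly convex domains with strictly convex, $\Gamma$-cocompact boundary that are not horoballs (take the supergraph of any $\Gamma$-invariant strictly convex perturbation of the defining paraboloid). So you cannot conclude that the original domain $\Omega$ covering $C$ differs from a horoball by an element of $\PGL(\Omega(0))$; a projective diffeomorphism of $C$ onto a horoball quotient would force $\Omega$ itself to be a horoball, which is false in general. What your argument actually delivers is that $C$ is \emph{equivalent}, in the paper's weak sense of sharing an embedded sub-cusp, to a quotient of a horoball --- which is consistent with how the result is phrased in the introduction (``equivalent to a standard cusp'') --- but it does not yield the literal conclusion of Definition \ref{gencusp}(d) by the route you describe. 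To close the gap you would need a different argument, e.g.\ that since $\bdy\Omega/\Gamma$ is compact and $\Gamma$ is (up to finite index) a lattice in $P(0)$, the hypersurface $\bdy\Omega$ lies between two horospheres, so $\Omega$ contains a $\Gamma$-invariant horoball realizing the equivalence; that is not what you wrote, and as stated the last step fails.
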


Observe that a finite cover of a $g$-cusp is also a $g$-cusp.
 
  \begin{theorem}[{\red Theorem 6.3} in \cite{CLT2}]\label{homogcusp} Every g-cusp 
is equivalent to a homogeneous g-cusp. 
 \end{theorem}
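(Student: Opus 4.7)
The plan is to produce a connected Lie group $H \subset \PGL(n+1,\RR)$ that contains a finite-index subgroup of $\Gamma$, preserves a properly convex domain equivalent to $\Omega$, and acts transitively on its boundary. My starting point: by Selberg's lemma I may pass to a torsion-free nilpotent finite-index subgroup $\Gamma_0 \le \Gamma$, and by Malcev theory $\Gamma_0$ is a cocompact lattice in a unique simply-connected nilpotent Lie group $N$. The first concrete step is to realize $N$ as a connected Lie subgroup $H$ of $\PGL(n+1,\RR)$ containing $\Gamma_0$; one natural candidate is the syndetic (equivalently, nilpotent) hull of $\Gamma_0$ inside its Zariski closure $\overline{\Gamma_0}^{\mathrm{Zar}}$, whose existence for virtually nilpotent discrete subgroups of real algebraic groups is classical.

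The second step is to verify that $H$ preserves $\Omega$ and acts nicely on $\bdy\Omega$. Preservation is automatic: the stabilizer $\{g \in \PGL(n+1,\RR): g\Omega = \Omega\}$ is closed (indeed Zariski closed) and contains $\Gamma_0$, hence contains $H$. For the action on $\bdy\Omega$, pick $p \in \bdy\Omega$; the orbit $H \cdot p$ is a smooth immersed submanifold containing the cocompact orbit $\Gamma_0 \cdot p$, which forces $\dim H \ge n-1$. If $\dim H = n-1$, strict convexity of $\bdy\Omega$ combined with the cocompactness of $\Gamma_0 \cdot p$ forces $H \cdot p = \bdy\Omega$ and we are done. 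If $\dim H = n$, I would use a one-parameter subgroup of $H$ transverse to $\bdy\Omega$ (a radial-flow-like direction intrinsic to $H$) to identify an $H$-invariant subdomain $\Omega' \subset \Omega$ whose boundary is a single $H$-orbit; the inclusion $\Omega'/\Gamma_0 \hookrightarrow \Omega/\Gamma_0$ is then a homotopy equivalence, giving the required cusp equivalence. Finally, because the nilpotent hull $H$ is canonically associated to $\Gamma_0$, the full group $\Gamma$ normalizes $H$, so $\Gamma \cdot H$ still acts transitively on $\bdy\Omega'$ through its $H$ factor.

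The main obstacle I expect is ruling out pathologically small orbits in the transitivity step: abstractly the nilpotent hull $H$ has dimension equal to the Hirsch length of $\Gamma_0$, which is known to be $\ge n-1$ because $\Gamma_0$ acts cocompactly on the $(n-1)$-manifold $\bdy\Omega$, but I need to exclude the possibility that the $H$-orbit of a generic boundary point is open yet fails to cover $\bdy\Omega$, or that it accumulates on a lower-dimensional exceptional set. The key input here is the strict convexity of $\bdy\Omega$ (no line segments), which severely restricts which submanifolds can arise as closures of nilpotent orbits; combined with the closed-orbit behaviour typical of unipotent or nilpotent flows in this setting, this should force surjectivity. A secondary subtlety is that the full Zariski closure $\overline{\Gamma_0}^{\mathrm{Zar}}$ may contain reductive factors that do not preserve $\Omega$, so I must work with the nilpotent hull $H$ rather than the whole Zariski closure — this is what keeps the extra Lie group algebra inside $\mathrm{Stab}(\Omega)$.
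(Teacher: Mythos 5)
Your overall strategy --- pass to a torsion-free nilpotent finite-index subgroup $\Gamma_0$, form its nilpotent (syndetic) hull $H$, and make $H$ act transitively on the boundary of a convex domain --- is in fact the strategy of \cite{CLT2}, where this statement is proved (the present paper only quotes it) and where $H$ appears as the virtual e-hull $T(\Gamma)=\exp\langle\log\Gamma_1\rangle$ of Proposition \ref{gencuspisVFG}. But there is a genuine gap at the step you declare ``automatic'': the claim that $\mathrm{Stab}(\Omega)=\{g\in\PGL(n+1,\RR):g\Omega=\Omega\}$ is Zariski closed, hence contains $H$. This is false, and it fails precisely for the groups this theorem is about. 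For example, $\PGL(\Omega(1,0))$ in dimension $2$ is (up to finite index) the one-parameter group $T(1,0)$ of Equation (\ref{eq:Tmatrix1}), whose entries $e^{x}$ and $-x$ satisfy no algebraic relation; its Zariski closure is the two-dimensional enlarged translation group $T(\psi)\oplus\Phi$, and the radial flow $\Phi_t$ for $t>0$ does \emph{not} preserve $\Omega(1,0)$. So ``Zariski closed and contains $\Gamma_0$'' cannot be used, and indeed the conclusion you want is simply false in general: if $\Omega$ is, say, the convex hull of a $\Gamma$-orbit, no nontrivial connected group preserves $\Omega$ at all, even though $\Gamma$ is a lattice in some $T(\psi)$. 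This is exactly why the theorem asserts only that the cusp is \emph{equivalent} to a homogeneous one, rather than that $\Omega$ itself becomes homogeneous.

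The missing work is therefore twofold. First, one must show that the hull $H$ preserves \emph{some} properly convex domain $\Omega'$ with strictly convex boundary on which it acts simply transitively; in \cite{CLT2} this is done by taking an $H$-orbit of a boundary point and proving that its convex hull is properly convex (Propositions 6.23--6.24 there, as quoted in the proof of Theorem \ref{main}), which is not formal --- it uses the cusp structure, and it also requires first passing to a further finite-index subgroup on which all eigenvalues are positive so that $\log$ is defined (the e-group condition, cf.\ Theorem \ref{uniquehull}). Second, one must prove that $\Omega'/\Gamma$ and $\Omega/\Gamma$ are equivalent cusps, i.e.\ that after shrinking, one embeds projectively in the other by a homotopy equivalence; this is where the actual content of ``equivalence'' enters and your proposal does not address it (your $\dim H=n$ contingency, where you do discuss passing to an invariant subdomain, does not arise, since the Hirsch length of $\Gamma_0$ is $n-1$; the subdomain argument is needed in the main case instead). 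Your transitivity discussion via openness of full-dimensional orbits is reasonable, but only once $H$ is known to preserve the domain in question.
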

 
\begin{definition} $\UT(n)\subset\GL(n,\RR)$ is the subgroup of upper-triangular matrices with
positive diagonal entries.  
\end{definition}

\begin{definition} An {\em e-group} is a subgroup  $G\subset \GL(n,\RR)$ such that every eigenvalue of
every element of $G$ is positive. If $\Gamma\subset\GL(n,\RR)$ is discrete, a {\em virtual e-hull} for $\Gamma$
is a connected e-group $G\subset\GL(n,\RR)$ such that $|\Gamma:G\cap\Gamma|<\infty$ and 
$(G\cap\Gamma)\backslash G$ is compact.
\end{definition}

Observe that $UT(n)$ is an e-group.
Definitions 6.1 and 6.10, and Proposition 6.12 in \cite{CLT2} imply:
 \begin{proposition}\label{gencuspisVFG} 
Suppose  $P=  \Omega/\Gamma$  is a g-cusp of dimension $n$. 
 Then $\Gamma$ contains a finite index subgroup, $\Gamma_1$,
 that
 is a lattice in the connected {\red nilpotent} group
  $T(\Gamma)=\exp\langle\log (\Gamma_1)\rangle$. Moreover $T(\Gamma)$ is  conjugate
  in $\GL(n+1,\RR)$ into $\UT(n+1)$.
 \end{proposition}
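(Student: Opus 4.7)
The plan is to assemble the three cited results from \cite{CLT2} into the stated conclusion, since the proposition is fundamentally a compilation. By Theorem \ref{homogcusp}, I may replace $P$ by an equivalent homogeneous g-cusp at the cost of changing $\Gamma$ within its commensurability class. The definition of g-cusp gives that $\pi_1 C \cong \Gamma$ is virtually nilpotent, so by Selberg's lemma I pass to a finite-index normal subgroup $\Gamma_0 \subset \Gamma$ that is torsion-free and nilpotent. I then pass to a further finite-index subgroup $\Gamma_1 \subset \Gamma_0$ to arrange that every eigenvalue of every lift of every element of $\Gamma_1$ to $\GL(n+1,\RR)$ is real and positive; this is possible because the complex or negative eigenvalues arise through a finite quotient of the real points of the Zariski closure of $\Gamma_0$ (negative and complex eigenvalues give torsion in the component group).

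With positive eigenvalues in hand, $\log A$ is well-defined for each $A \in \hat\Gamma_1$, and I set $\mathfrak{t} := \langle \log A : A \in \hat\Gamma_1 \rangle$, the $\RR$-linear span. The key point is that $\mathfrak{t}$ is in fact a nilpotent Lie subalgebra of $\mathfrak{gl}(n+1,\RR)$: since $\Gamma_1$ is finitely generated and nilpotent, its Zariski closure is a nilpotent algebraic group, whose Lie algebra contains each $\log A$ and is nilpotent, so $\mathfrak{t}$ is the Lie subalgebra generated by the $\log A$. Therefore $T(\Gamma) := \exp \mathfrak{t}$ is a connected nilpotent Lie subgroup of $\GL(n+1,\RR)$ containing $\hat\Gamma_1$. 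By the Mal'cev correspondence for simply connected nilpotent Lie groups, $\hat\Gamma_1$ is a discrete cocompact subgroup of $T(\Gamma)$; cocompactness is forced by the compactness of $\partial C$ in Definition \ref{gencusp}, since the orbit map of $T(\Gamma)$ through a boundary point is proper and $\hat\Gamma_1$-equivariant onto $\partial\Omega$.

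For the final assertion, $T(\Gamma)$ is a connected nilpotent (in particular solvable) real Lie subgroup of $\GL(n+1,\RR)$ all of whose elements have positive real eigenvalues. By Lie's theorem in its real form—applied using that the real-eigenvalue hypothesis yields a complete flag of $\RR$-invariant subspaces rather than only a $\CC$-invariant one—$T(\Gamma)$ stabilizes a full flag $0 = V_0 \subset V_1 \subset \cdots \subset V_{n+1} = \RR^{n+1}$. Choosing a basis adapted to this flag conjugates $T(\Gamma)$ into upper-triangular matrices, and positivity of the diagonal entries (the eigenvalues) places the conjugate in $\UT(n+1)$.

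The main obstacle I expect is showing that $\mathfrak{t}$ is closed under brackets, i.e.\ that it really is a Lie subalgebra and not just an $\RR$-subspace—this is exactly where nilpotency of the Zariski closure of $\Gamma_1$ is needed, via the fact that for a nilpotent algebraic group the exponential map from the Lie algebra is a bijection onto the identity component, so the Lie subalgebra generated by the $\log A$ coincides with their linear span. Everything else is then bookkeeping with the Mal'cev correspondence and a straightforward application of Lie's theorem.
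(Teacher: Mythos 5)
The paper offers no argument here at all: Proposition \ref{gencuspisVFG} is stated as an immediate consequence of Definitions 6.1 and 6.10 and Proposition 6.12 of \cite{CLT2}, so you are in effect re-proving the cited content of that paper. That is a legitimate thing to attempt, but your proof has a genuine gap at its first substantive step. You pass to a finite-index subgroup $\Gamma_1$ all of whose eigenvalues are real and positive, on the grounds that ``negative and complex eigenvalues give torsion in the component group'' of the Zariski closure. That principle is false for abstract finitely generated torsion-free nilpotent linear groups: an anisotropic (compact) torus factor of the Zariski closure is \emph{connected}, yet an element projecting to an irrational rotation in it has non-real eigenvalues of infinite multiplicative order, and no finite-index subgroup repairs this (consider the cyclic group generated by an irrational rotation times a translation inside $\Isom(\EE^2)\subset\GL(3,\RR)$). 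What actually rules out such compact factors for a g-cusp is the convex projective geometry --- the existence of the invariant properly convex $\Omega$ with strictly convex boundary covering a compact $\partial C$ --- and extracting real positive eigenvalues from that hypothesis is precisely the nontrivial content of the results of \cite{CLT2} you would need to reprove. Everything downstream (the definition of $\log$, the span being a subalgebra, the triangularization) depends on this step, so the gap is load-bearing.

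A secondary problem: your cocompactness argument asserts that the $T(\Gamma)$-orbit of a boundary point is proper and surjects onto $\partial\Omega$. But nothing you have established shows that $T(\Gamma)$ even preserves $\Omega$, let alone acts transitively on $\partial\Omega$; in this paper those facts are imported later (Propositions 6.23--6.24 of \cite{CLT2}, quoted in the proof of Theorem \ref{main}) and themselves rest on the proposition you are proving. As written, that part of your argument is circular. The reduction to a homogeneous cusp via Theorem \ref{homogcusp} at the outset is also unnecessary, since the statement concerns $\Gamma$ directly. The final triangularization step (real Lie's theorem for a connected solvable group with only real eigenvalues, by inducting on a common real eigenvector) is fine once positivity of eigenvalues is in hand.
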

In \cite{CLT2} $\Gamma_1=\core(\Gamma,n)$. The {\em Zariski closure} of $\Gamma$ generally has larger dimension
than $T(\Gamma)$.

\begin{theorem}[Theorem 6.18 \cite{CLT2}] 
\label{uniquehull} If $\Omega/\Gamma$ is a generalized cusp then $T(\Gamma)$ is the unique
virtual e-hull of $\Gamma$. \end{theorem}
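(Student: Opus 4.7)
The plan is to reduce to a convenient abelian cocompact lattice, identify $T(\Gamma)$ with the translation subgroup $T(\psi)$, and then use a dimension count inside $\UT(n+1)$ to pin down any competing virtual e-hull.

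First, since $\Gamma$ is a lattice in $G(\psi) = T(\psi)\rtimes O(\psi)$ acting as a Bieberbach group on the Euclidean space $\bdy\Omega(\psi)\cong\EE^{n-1}$ (Theorem \ref{Gppsistructure}(d)), I would pass to a finite-index torsion-free abelian subgroup $\Gamma_0\leq\Gamma$ contained in $T(\psi)$. Then $\Gamma_0\cong\ZZ^{n-1}$ is a cocompact lattice in $T(\psi)\cong\RR^{n-1}$. Because every element of $T(\psi)$ has positive eigenvalues (Theorem \ref{Gppsistructure}(f)), the map $\log:T(\psi)\to\mathfrak{t}(\psi)$ is a linear isomorphism, and $\log\Gamma_0$ spans $\mathfrak{t}(\psi)$ over $\RR$. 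Therefore
\[
T(\Gamma) = \exp\langle\log\Gamma_0\rangle = T(\psi),
\]
which is visibly a virtual e-hull: it is connected, consists of matrices with positive eigenvalues, and contains $\Gamma_0$ as a cocompact lattice.

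For uniqueness, let $H$ be any virtual e-hull of $\Gamma$. After conjugation (applying Proposition \ref{gencuspisVFG} to a finite-index $\Gamma'\leq H\cap\Gamma$ whose generated e-hull equals $H^0$), we may assume $H\subset\UT(n+1)$. Then $H$ is a connected Lie subgroup of the simply-connected solvable group $\UT(n+1)$, and $\exp:\mathfrak{h}\to H$ is a global diffeomorphism for some Lie subalgebra $\mathfrak{h}\subset\mathfrak{ut}(n+1)$. The intersection $\Gamma_2:=\Gamma_0\cap H$ has finite index in both $\Gamma_0$ and $\Gamma\cap H$, so it is a cocompact discrete abelian subgroup of $H$ isomorphic to $\ZZ^{n-1}$. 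Mal'cev rigidity for simply-connected solvable groups then forces $\dim H = n-1$. Moreover, since $\Gamma_2$ lies in $T(\psi)$ as well, the pairwise-commuting set $\log\Gamma_2\subset\mathfrak{h}$ has real span equal to $\mathfrak{t}(\psi)$ by the Step-2 argument applied to $\Gamma_2$. Hence $\mathfrak{t}(\psi)\subset\mathfrak{h}$, and a dimension comparison gives $\mathfrak{h} = \mathfrak{t}(\psi)$, so $H = \exp\mathfrak{h} = T(\psi)$.

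The main obstacle I anticipate is the dimension step: showing that a cocompact discrete subgroup of a connected e-group $H\subset\UT(n+1)$ of rank $k$ forces $\dim H=k$. The cleanest route is via Mal'cev--Mostow theory, using that $H$ is simply connected and solvable inside $\UT(n+1)$, so its cocompact lattices have Hirsch length equal to $\dim H$. A secondary subtlety is verifying that every virtual e-hull of $\Gamma$ (not merely the specific construction $T(\Gamma)$ provided by Proposition \ref{gencuspisVFG}) is conjugate into $\UT(n+1)$; this follows from the fact that any connected group all of whose elements have positive real eigenvalues is real-algebraic and, by Lie--Kolchin applied to its Zariski closure, conjugate into the group of upper-triangular matrices with positive diagonal.
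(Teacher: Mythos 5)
The paper does not actually prove this statement: it is imported verbatim from \cite{CLT2} (Theorem 6.18 there), so there is no internal proof to compare against. Judged on its own terms, your argument has one structural flaw that matters. You begin by assuming that $\Gamma$ is a lattice in $G(\psi)=T(\psi)\rtimes O(\psi)$ and you identify $T(\Gamma)$ with $T(\psi)$. That assumption is the content of the Uniformization Theorem \ref{genislambda}, and in this paper the proof of Theorem \ref{genislambda} invokes Theorem \ref{uniquehull} (via Proposition \ref{gencuspisVFG}, Definition \ref{translationgrpdef} and Theorem \ref{main}). So as written the argument is circular: the theorem has to be proved for the holonomy of an arbitrary generalized cusp, \emph{before} one knows that holonomy is conjugate into any $G(\psi)$.

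The good news is that your uniqueness mechanism survives the repair: replace $T(\psi)$ everywhere by $T(\Gamma)=\exp\langle\log\Gamma_1\rangle$ from Proposition \ref{gencuspisVFG}, which already supplies existence. Given another virtual e-hull $H$, your chain --- conjugate $H$ into $\UT(n+1)$, intersect to get a finite-index subgroup $\Gamma_2\le\Gamma_1\cap H$, observe that each $\gamma\in\Gamma_2$ has a unique logarithm with all-real spectrum (so $\log\gamma$ lies in both $\mathfrak h$ and the Lie algebra of $T(\Gamma)$ simultaneously), and compare dimensions via Mostow's theorem on lattices in simply connected solvable groups --- does give $H=T(\Gamma)$, provided you also check that $\langle\log\Gamma_2\rangle=\langle\log\Gamma_1\rangle$ for commensurable lattices in a nilpotent group (Mal'cev). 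Two smaller points need patching. First, a connected e-group is not ``real-algebraic'' (a one-parameter diagonal group $\{\diag(e^t,e^{\sqrt{2}t})\}$ is already a counterexample); what you actually need, and what is true, is that a connected linear group all of whose elements have positive real eigenvalues is solvable, and a connected solvable group with real spectrum is conjugate into $\UT(n+1)$ by the split form of Lie--Kolchin. Second, the injectivity of $\exp$ on matrices whose eigenvalues are real, which you use silently when placing $\log\Gamma_2$ inside $\mathfrak h$, deserves an explicit sentence, since it is the hinge on which the whole uniqueness argument turns.
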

{\red\begin{theorem}[Theorem 9.1 \cite{CLT1}]\label{ellipsoid}  Suppose that $\Omega$ is open and
strictly convex of dimension $n$ and $W\subset\SL(\Omega)$
is a nilpotent group that fixes $p\in\Fr(\Omega)$ and acts simply transitively of $\Fr(\Omega)\setminus\{p\}$. 
Then $\Fr(\Omega)$ is an ellipsoid, and $W$ is conjugate to the subgroup
of a parabolic group in $O(n,1)$ that has all eigenvalues $1$. \end{theorem}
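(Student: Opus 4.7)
The plan is to establish the result in three stages: (i) show $W$ (or its identity component $W^0$) is a connected, simply connected, $(n-1)$-dimensional unipotent Lie group; (ii) upgrade to $W$ abelian; and (iii) pin $W$ down, up to $\GL(n+1,\RR)$-conjugacy, to the unipotent radical of a parabolic subgroup of $\PO(n,1)$, from which $\Fr(\Omega)$ is forced to be an ellipsoid.

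For (i), strict convexity gives $\Fr(\Omega)\cong S^{n-1}$, so $\Fr(\Omega)\setminus\{p\}\cong\RR^{n-1}$. After replacing $W$ by $W^0$, simple transitivity makes the orbit map $W^0\to\Fr(\Omega)\setminus\{p\}$ a diffeomorphism, so $W^0$ is a connected, simply connected, $(n-1)$-dimensional nilpotent Lie group. I would next argue every nontrivial $w\in W^0$ is unipotent: $w$ cannot be hyperbolic in the Hilbert sense (its two fixed points on $\Fr(\Omega)$ would contradict simple transitivity); a nontrivial semisimple Jordan part would likewise supply fixed flags in $\RPn$ producing additional fixed points on $\Fr(\Omega)$ (semisimple eigenvalues of modulus $\neq 1$ come in reciprocal pairs, giving an attracting and a repelling frontier fixed point), while semisimple eigenvalues of modulus $1$ are ruled out by the fact that $W^0\cong\RR^{n-1}$ as a manifold contains no nontrivial compact subgroup. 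Combined with $\det w=1$, this forces every eigenvalue of every $w\in W^0$ to be $1$, so $W^0$ is unipotent.

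For (ii), let $H_p$ be the unique supporting hyperplane to $\Omega$ at $p$ (unique by strict convexity); $W$ preserves the flag $\langle p\rangle\subset H_p$, and in the affine patch $\AA=\RPn\setminus H_p$, in which $\Omega$ is unbounded strictly convex with $p$ at infinity, $W$ acts affinely. By Proposition~\ref{conformallemma}, the affine action of $W$ on the smooth hypersurface $\bdy\Omega\subset\AA$ (smooth because $W$-homogeneous away from $p$) preserves the positive-definite second fundamental form $\secondfund$ up to a character $\alpha\colon W\to\RR^+$; unipotency forces $\alpha\equiv 1$. Hence $W$ acts isometrically and simply transitively on $(\bdy\Omega,\secondfund)$, which is therefore equivariantly isometric to $W$ endowed with a left-invariant Riemannian metric. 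A unipotent Lie group carrying a left-invariant metric realized as the second fundamental form of a convex affine embedding must be abelian: either one invokes Milnor's classification of flat left-invariant metrics on Lie groups (so that Levi-Civita forces $[\mathfrak w,\mathfrak w]=0$), or one verifies directly that compatibility of the intrinsic flat metric with the extrinsic convex embedding kills the Lie bracket.

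For (iii), $W$ is now a connected abelian unipotent $(n-1)$-dimensional subgroup of $\SL(n+1,\RR)$ stabilizing the flag $\langle p\rangle\subset H_p$; coordinates adapted to this flag (together with a choice of base point $q\in\bdy\Omega$) put every element, up to $\GL(n+1,\RR)$-conjugacy, in the block form
\[
w_v=\begin{pmatrix} I_{n-1} & v^T & \tfrac12\|v\|^2 \\ 0 & 1 & v \\ 0 & 0 & 1 \end{pmatrix},\qquad v\in\RR^{n-1},
\]
which is precisely the unipotent translation subgroup of the stabilizer of $p$ inside $\PO(n,1)\subset\SL(n+1,\RR)$. Computing the $W$-orbit of $q$ in these affine coordinates produces a paraboloid of revolution whose projective closure, together with its tangent hyperplane $H_p$ at $p$, bounds a round ellipsoid; thus $\Fr(\Omega)$ is an ellipsoid and $W$ is the asserted subgroup. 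The hardest part of the plan is step (ii), the abelianness: extracting commutativity of $\mathfrak w$ from the combination of unipotency and the rigidity of the simply transitive action preserving the second fundamental form on a convex hypersurface is where the real work happens; once this is done, (iii) reduces to bookkeeping with Jordan normal forms.
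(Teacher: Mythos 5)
The paper does not actually prove this statement: Theorem \ref{ellipsoid} is imported verbatim as Theorem 9.1 of \cite{CLT1} and used as a black box (in the proof of Theorem \ref{main}), so there is no in-paper argument to compare yours against. Judging your proposal on its own terms, the decisive gap is in step (ii) --- the step you yourself identify as the hard part --- and it is not actually carried out. First, the second fundamental form is only \emph{conformally} invariant under affine maps: that is exactly Proposition \ref{conformallemma}, and the paper is careful to say (Theorem \ref{euc_str}, Proposition \ref{secondfundformmetric}) that its invariant metric is merely conformally equivalent to $\secondfund$. Your claim that unipotency forces the conformal factor $\alpha\equiv 1$ is false: for the model paraboloid $z=\|y\|^2/2$ with $W$ the standard parabolic group one computes $\alpha=\sqrt{1+\|y\|^2}\big/\sqrt{1+\|y+v\|^2}\neq 1$, because $\alpha$ is measured against \emph{unit} normals and a unipotent linear map does not preserve the Euclidean norm. (This is repairable by renormalizing the conormal against the $W$-invariant direction of $p$, which is how the paper builds its invariant metric via the horofunction --- but you do not do this.) Second, and more seriously, Milnor's theorem classifies \emph{flat} left-invariant metrics; every nonabelian nilpotent Lie group carries left-invariant metrics, none of them flat, so ``invariant metric $+$ nilpotent $\Rightarrow$ abelian'' is simply false unless you first prove the induced metric is flat. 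Flatness of the (correctly normalized) second fundamental form of the orbit hypersurface is essentially equivalent to that hypersurface being a quadric, i.e.\ to the conclusion you are trying to reach. As written, step (ii) is circular: the ``direct verification that the convex embedding kills the bracket'' is precisely the missing content, and it is where the actual argument of \cite{CLT1} (upper-triangularization of the nilpotent group, followed by a weight and Jordan-block analysis exploiting strict convexity of $\Fr(\Omega)$) does its work.

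There are smaller gaps elsewhere. In (i), excluding semisimple parts with non-real eigenvalues of modulus one requires knowing that the semisimple Jordan factor lies in a compact subgroup of (the closure of) $W$, or an appeal to the elliptic/parabolic/hyperbolic trichotomy for elements of $\PGL(\Omega)$; ``$W\cong\RR^{n-1}$ has no compact subgroups'' does not by itself apply to an element's semisimple part, which need not lie in $W$. In (iii), showing that a connected abelian unipotent group of dimension $n-1$ whose orbit is a strictly convex hypersurface must have exactly one Jordan block of size $3$ together with the symmetric coupling $\tfrac12\|v\|^2$ (rather than, say, several blocks of size $2$, whose orbits are affine subspaces and hence not strictly convex) is genuine work with the convexity hypothesis, not bookkeeping; it is the analogue of the derivation of the matrix $P$ in Equation (\ref{Pmatrixmainproof}).
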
}

 \begin{definition}[cf. Definition 6.17 in \cite{CLT2}]\label{translationgrpdef} A {\em translation group}
is a connected nilpotent subgroup
 $T\subset\GL(n+1,\RR)$
that is the virtual e-hull  of a g-cusp. \end{definition}

\begin{definition}[page 189  \cite{CLT1}]\label{spacedirections} Given a $1$-dimensional subspace $U\subset V$
 {\red set $p=\PP(U)$} and define
$\Dcal_p:\PP(V)\setminus\{p\}\to \PP(V/U)$ by $\Dcal_p([x])=[x+U]$. 
 The {\em space of directions} of the subset $\Omega\subset\PP(V)$ at $p$ is $\Dcal_p(\Omega\setminus p)$.\end{definition}

\begin{theorem}\label{main} If $G\subset\GL(n+1,\RR)$ is a translation group then  
there exists $\ppsi\in A$ such that {\red $G$ and $T(\ppsi)$ have images
 in $\PGL(n+1,\RR)$ that are conjugate}.  
\end{theorem}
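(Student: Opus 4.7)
The plan is to produce a basis of $\RR^{n+1}$ in which $G$ takes the explicit matrix form of Equation (\ref{eq:Tmatrix}) (or (\ref{eq:Tmatrix2}) when $\type=n$), so that $G=T(\psi)$ for the $\psi$ one reads off the diagonal. By Theorem \ref{homogcusp} I may assume the g-cusp $C=\Omega/\Gamma$ giving rise to $G$ is homogeneous, and by Proposition \ref{gencuspisVFG} I may conjugate in $\GL(n+1,\RR)$ so that $G\subset\UT(n+1)$. Writing $\Gamma_1=G\cap\Gamma$ for the cocompact lattice provided by that proposition, the Hirsch rank of $\Gamma_1$ equals $\dim\bdy\Omega=n-1$, so $\dim G=n-1$. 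A dimension count then shows that $\Gamma_1$ acts freely and cocompactly on $\bdy\Omega$ with $\dim G=\dim\bdy\Omega$, so the $G$-orbits in $\bdy\Omega$ are open, and connectedness of $\bdy\Omega$ together with $G_p\cap\Gamma_1=\{1\}$ (forcing the stabilizer to be discrete in a connected Lie group) yields that $G$ acts simply transitively on $\bdy\Omega$.

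Next I analyze the weights of $G$ on $\RR^{n+1}$. As a connected nilpotent subgroup of $\UT(n+1)$ with positive eigenvalues, $G$ has a generalized weight-space decomposition with characters $\chi:G\to\RR_+$; the trivial character $\chi_0\equiv1$ necessarily occurs, and the nontrivial characters $\chi_1,\ldots,\chi_\type$ (after suitable ordering) determine a tuple $\psi=(\log\chi_1,\ldots,\log\chi_\type,0,\ldots,0)$ which, up to a global rescaling afforded by working in $\PGL$, lies in the dual Weyl chamber $A$. The one-dimensional fixed lines $[v_i]$ attached to $\chi_i$ lie in $\cl\Omega$, and by Lemma \ref{idealboundary} and Proposition \ref{characterizecenterRF} they must be precisely the vertices of the ideal simplex $\bdy_\infty\Omega\cong\Delta^\rank$, together with the center of the radial flow; choosing the basis so these fixed points become $[e_1],\ldots,[e_{\type+1}]$ fixes the top-left hyperbolic block of Equation (\ref{eq:Tmatrix}).

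The main technical step is to bring the unipotent radical $U:=\bigcap_i\ker\chi_i$ into the standard horoball form appearing in the bottom-right block of Equation (\ref{eq:Tmatrix}). On a $G$-invariant projective subspace $P\cong\RP^{\ur+1}$ transverse to the hyperbolic fixed points, the slice $\Omega\cap P$ inherits proper and strict convexity from $\Omega$, and $U\cap\Gamma_1$ is a cocompact lattice in $U$ acting simply transitively on $\bdy(\Omega\cap P)$; Theorem \ref{ellipsoid} then forces $\Fr(\Omega\cap P)$ to be an ellipsoid and $U$ to be conjugate into the standard parabolic subgroup of $\PO(\ur+1,1)$, yielding the explicit matrix form. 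Finally, knowing that $G$ preserves each horosphere (equivalently, the horofunction $h_\psi$) forces the off-diagonal entries coupling the hyperbolic and parabolic blocks to take the specific value $\|Y\|^2/2-\psi^\type(X)$ dictated by Equation (\ref{eq:Tmatrix}); in particular all brackets vanish and $G$ is abelian, so $G=T(\psi)$ as desired.

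The principal obstacle I anticipate is the third paragraph: arranging the $U$-invariant slice $\Omega\cap P$ to be strictly convex and to support a simply transitive action of $U$ on its boundary so that Theorem \ref{ellipsoid} applies cleanly. This requires controlling how the ideal simplex $\bdy_\infty\Omega$ meets $P$ (it should meet $P$ in a single vertex, namely the image of the radial-flow center), and checking that $U\cap\Gamma_1$ is still cocompact in $U$ after the restriction; these are delicate because the characters $\chi_i$ may have nontrivial common kernels in $\Gamma_1$ and a careful lattice argument is needed to verify cocompactness of the slice.
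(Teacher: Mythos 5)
Your outline follows the same skeleton as the paper's argument (weight-space decomposition, a simplex base with horoball fibers, Theorem \ref{ellipsoid} for the unipotent part), but several load-bearing steps are circular or false as stated. The most serious problem is how you extract $\ppsi$. The characters $\chi_1,\dots,\chi_{\type}$ of $G$ do \emph{not} determine $\ppsi$: for a fixed type, all the groups $T(\ppsi)$ have the same weight data (namely $(X,Y)\mapsto\exp(X_i)$ together with the trivial weight); what distinguishes them is the coupling entry $Z=\|Y\|^2/2-\ppsi^{\type}(X)$ in Equation (\ref{eq:Tmatrix}), i.e.\ how the diagonal block is wired to the translation in the radial-flow direction. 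Consequently ``$\ppsi=(\log\chi_1,\dots)$'' is not even a tuple of numbers, and your final step --- that ``$G$ preserves each horosphere (equivalently $h_\ppsi$)'' forces the coupling term --- is circular, since $h_\ppsi$ is defined in terms of the $\ppsi$ you are trying to produce. What is actually needed is the paper's Claim 2: the normalizer of the standard parabolic $P$ inside the unipotent upper-triangular group is $\langle P,\Phi\rangle$ with $\Phi$ the radial flow, so the splitting $\sigma:H\to G$ of $1\to K\to G\to H\to 1$ can be corrected by a homomorphism into $\mathfrak k$ until its component on $V_{\type+1}$ lands in $\Phi$; the linear functional $\ppsi$ is then read off from $\deriv\sigma$, and its positivity (hence membership in $A$, which a global rescaling alone cannot give you) comes from strict convexity of the $G$-orbit $\bdy\Omega$ via Remark \ref{positivelemma}.

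Two further steps fail as written. First, there is no $G$-invariant copy of $\RP^{\ur+1}$ meeting $\Omega$: for $T(\ppsi)$ the invariant subspace $\PP(V_{\type+1})$ meets $\cl\Omega$ only at the center of the radial flow. The correct objects are the fibers $\Omega_q$ of the projection to the simplex, which are invariant only under $K=\bigcap_i\ker\chi_i$. Second, $\Gamma_1\cap K$ need not be a lattice in $K$ --- a lattice in $G\cong\RR^{n-1}$ can meet the proper subspace $K$ trivially --- so cocompactness of the slice cannot be obtained this way; simple transitivity of $K$ on $\bdy\Omega_q$ instead follows from simple transitivity of $G$ on $\bdy\Omega$ together with equivariance of the fibration. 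You also invoke Lemma \ref{idealboundary} and Proposition \ref{characterizecenterRF} to locate the fixed points of $G$ in $\cl\Omega$, but those results concern the model domains $\Omega(\ppsi)$ and cannot be applied to $\Omega$ before the conjugacy is established; and the one-dimensionality of the nontrivial weight spaces is not free --- it is deduced \emph{after} the ellipsoid step, from the fact that any two invariant subspaces of a standard parabolic group intersect nontrivially. Finally, the hypotheses of Theorem \ref{ellipsoid} (strict convexity of $\Omega_q$ and simple transitivity on its frontier minus the ideal point) require the round-point argument of Theorem 5.7 of \cite{CLT1}, which you correctly flag as the delicate point but do not supply.
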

\begin{proof}\if0 \textcolor{blue}{ The proof consists of several steps, outlined below: 
\begin{enumerate} 
\item Find an upper triangular basis for $G$ with a maximal number of vectors which have weight 1. 
\item Show $G$ is diagonal or acts transitively on a simplex. 
\item Show that $G$ splits in a short exact sequence $1 \to K \to G \to H \to 1$ where $K$ is unipotent and $H$ acts transitively on a simplex and is diagonal.  We prove this by understanding how $G$ acts on a strictly convex hypersurface, and splitting the parts of the action. 
\item Show the normalizer of $K$ is generated by $K$ and the radial flow. 
\item Put all of the pieces together to say $G$ is conjugate to a translation group. 
\end{enumerate} 
}
\fi
{\red Here is an outline. The group $G$ preserves a properly convex domain $\Omega$. The  idea is to build a bundle structure $\Omega\to\opensimplex$ where $\opensimplex$ is the interior of a simplex, and
 the fibers are standard horoballs. At several key places we use the fact that 
$\bdy\Omega$ contains no line segment.

 We can reduce to the case that $G$ is upper-triangular and
nilpotent. 
Then $G=\Image(\rho)$ is block upper-triangular and each block is of the form $\lambda_i\rho_i$ where
$\lambda_i$ is a weight and $\rho_i$ is unipotent.  Define $t+1$ to be the number of blocks.
The diagonal case is easy so we assume $G$ is not diagonal, then $t\le n-1$.

There is a projection 
$\pi:\Omega\to \opensimplex^t$ onto the interior of a simplex of dimension $t$. 
This is obtained by writing  $V\equiv\RR^{n+1}=W\oplus U$ where $W\cong\RR^{t+1}$ is the 
subspace spanned by the union over blocks of the last vector
in each block, and $U$ is spanned by the remaining basis vectors. Then $U$ is $\rho$-invariant (because $G$ is upper triangular)
and so $G$ acts diagonally by the weights on $V/U\cong W$. The vertices of $\opensimplex$ are given by the last vector in each block, so $\opensimplex$ is preserved by $G$.
Then $\pi(\bdy\Omega)$ is the orbit of a point because $G$ acts transitively on $\bdy\Omega$.
It follows that $\pi(\bdy\Omega)=\opensimplex\subset\PP(W)$ otherwise $\dim\pi(\bdy\Omega)<\dim\opensimplex<n$
so $\dim\pi(\bdy\Omega)<\dim\bdy\Omega$ which implies $\bdy\Omega$ contains a line segment and contradicts $\bdy\Omega$ is strictly convex. 

The key fact is that the fiber $\Omega_q:=\pi^{-1}(q)\subset\Omega$ over a point $q\in \opensimplex$ is a standard horoball. 
This is because the subgroup of $G$
that preserves $\Omega_q$ acts transitively on $\bdy\Omega_q$ and is unipotent, thus $\Omega_q$ is
projectively equivalent to a horoball by {\blue Theorem \ref{ellipsoid}}.
  
This implies there is at most one block of dimension bigger than $1$. We can arrange this block is
in the bottom right corner and has trivial weight $1$.
At this point we almost have $G$, it remains to find the coupling term $\sum\psi_i\log x_i$. 

There is a
 short-exact sequence $1\to K\to G\to H\to 1$.
 Here $K\cong\RR^{n-t-1}$ is a standard parabolic group acting on $\Omega_q$ and $H\cong\RR^t$ is the diagonal 
 group acting on $W$. There is a splitting $\sigma:H\to G$ that maps into the 
normalizer (in unipotent upper-triangular matrices), $N$, of $K$. Now $N$ is generated by $K$ and a 1-parameter group, $\Phi$, which turns out to be
the radial flow. This is enough structure to {\blue pin everything down. Here are the details.}

}

By {\red Proposition \ref{gencuspisVFG}},  we may assume $G$ is upper-triangular.
By {\red Propositions 6.23 and} 6.24 in \cite{CLT2}  
$G$ preserves a properly convex domain $\Omega\subset\RR^{n}$ with $S=\bdy\Omega$ strictly convex.
 Moreover $G$ acts simply transitively on $S$. 
Let $\{e_i\ |\ 1\le i\le n+1\}$ be the standard basis of $\RR^{n+1}$. Since $G$ is nilpotent, we may further assume {\red (cf proof of 9.2 in \cite{CLT1})} 
there is a decomposition $V:=\RR^{n+1}=V_1\oplus\cdots \oplus V_{{\red t}+1}$ into $G$-invariant subspaces
such that $V_i$ has ordered basis {\red $\Bcal_i=\{ e_k\ |\ m_{i-1}< k\le  m_i\}$ where $n_i:=\dim V_i=m_i-m_{i-1}$.}
By reordering the standard basis we may assume {\red $n_{i}\le n_{i+1}$}.

 Let $\UT_1(V_i)$ be the group of  unipotent, upper-triangular matrices
of size $n_i$. Then there are  distinct weights $\lambda_i:G\to \RR$  and 
homomorphisms $\rho_i:G\rightarrow \UT_1(V_i)$
so that $G$ is the image of the inclusion map $\rho:G\rightarrow \GL(V)$ given by  \begin{equation}\label{Gmatmainproof}
\rho=\bpmat
\lambda_1\rho_1 & 0 &\cdots  &  & 0\\
 0& \lambda_2\rho_2 & 0 &  \cdots & 0\\
  \vdots & \vdots & \ddots & \vdots & 0\\
0 & 0 & \cdots &  & \lambda_{{\red t}+1}\rho_{{\red t}+1}
\epmat. 
\end{equation}

{\red By scaling we may assume that $\lambda_{t+1}\equiv 1$ and hence that $G\subset\Aff(\RR^n)$}. 
Next, let $U_i=\langle\Bcal_i\setminus\{ w_i\}\rangle$, 
 then $V_i=U_i\oplus \RR\cdot w_i$. 
The subspace $U=\bigoplus U_i$ is preserved by $G$, and
there is a linear projection $\pi:V\rightarrow V/U$.
Define a subspace  $W=\langle w_1,\cdots, w_{{\red t}+1}\rangle\subset\RR^{n+1}$, 
so $\Wcal=\{w_1,\cdots,w_{{\red t}+1}\}$ is an ordered basis of $W$. There is projection $\pi:V \to V/U$
and an isomorphism $W\rightarrow V/U$ defined by $w_i\mapsto w_i+U$, {\red and
$\pi_*:{\mathbb P}(V)\setminus{\mathbb P}(U)\longrightarrow{\mathbb P}(V/U)$ is the induced projection.}

Since $G$ preserves $U$, it acts on $V/U$, and thus on $W$. We denote this action by $\rho_W:G\rightarrow\GL(W)$.
Using the basis $\Wcal$, this action is diagonal and, recalling that $\lambda_{{\red t}+1}\equiv 1$ 
\begin{equation}\label{Hmat}
\rho_W=\bpmat
\lambda_1& 0 &\cdots  &  & 0\\
 0& \lambda_2 & 0 &  \cdots & 0\\
  \vdots &\vdots &\ddots &  & \vdots\\
  0 & \cdots & & \lambda_{\red t} & 0\\
0 & 0 & \cdots &  & 1
\epmat
\end{equation}

There are ${\red t}+1$ projective hyperplanes $P_i$ in ${\mathbb P}(W)\cong\RP^{r}$
each of which contains all but one of the points $[w_i]$. {\red Each of these hyperplanes is preserved by $G$}. The complement of these hyperplanes consists of $2^{{\red t}}$
open simplices.

Since $G$ acts transitively on $S$ it also acts transitively (via $\rho_W$) on $\pi(S)\subset\P(W)$.
Choose $q=[x]\in S$, 
then $\pi q$ is in one of these open simplices{\red, say} $\opensimplex$. Otherwise, since $S$ is preserved by $G$ it follows that
 $\pi S$ is contained in some hyperplane 
$P_i\subset{\mathbb P}(W)$. But this
implies $S \subset {\red \pi_*}^{-1}(P_i)$ which is a hyperplane in ${\mathbb P}(V)$. This contradicts that $S$ is a strictly convex hypersurface in $V$.

{\bf Claim 1} Either $G$ is diagonal, or else $H:=\rho_W(G)$ acts simply transitively on $\opensimplex$. 

The fiber 
${\red \pi_*}^{-1}({\red \pi_*} q)\subset{\mathbb P}(V)\red\setminus\PP(U)$ that contains $q$
is the affine subspace $U_q:=[x+U]$. 
If $S$ is transverse to $U_q$ then $\pi S$ contains an open subset of $\opensimplex$, so $\dim H=\dim\opensimplex$.
But $\pi S$ is the $H$-orbit of a point, and $H$ is the projectivization of a diagonal subgroup, so  $H$
acts transitively on $\opensimplex$. 

Thus we may assume $S$ is not transverse to $U_q$. If a strictly convex hypersurface is
not transverse to a hyperplane, then it is to  tangent to it at one point, so
 $U_q\cap S=q$. Since $G$ acts transitively on $S$ this condition holds
at every $q\in S$.  This implies $\pi|S$ is injective so $\dim\opensimplex\ge\dim S$ thus ${\red t}\ge n-1$. 

If  ${\red t}=n$ then $G$ is diagonal as claimed. Otherwise ${\red t}=n-1$. Since $\pi|S$ is injective, and $\dim S=n-1=\dim\opensimplex$, it follows that $\pi(S)$
contains an open subset of $\opensimplex$.  As before this implies $H$ acts transitively,  which proves claim 1.

In the case $G$ is diagonal since $\dim G=\dim S = n-1$ it follows that $G$ is the kernel of some 
homomorphism $\psi:D\to\RR$ 
where $D$ is the diagonal subgroup of $UT(n+1)\cap\Aff(\RR^n)$. It follows from Remark \ref{positivelemma} that $\psi$ or $-\psi$ is positive. This proves the theorem when ${\red t}=n$.
  
  Henceforth we assume ${\red t}<n$ so $H$ acts simply transitively
on $\opensimplex$. Thus $\dim H={\red t}$ and from Equation (\ref{Hmat}) it follows that  $H\subset\GL({\red t}+1,\RR)$ consists 
of all positive diagonal matrices with $1$
in the bottom right corner.

The projection,  $\pi_{\red *}$, restricts to a $G$-equivariant surjection $\pi_{\Omega}:\Omega\longrightarrow \opensimplex$
and $K=\ker({\red \rho_W})\subset G$ 
acts trivially {\red via $\rho_W$} on $\opensimplex$, and is unipotent. 
Each fiber $\Omega_q:=U_q\cap\Omega=\pi_{\Omega}^{-1}(q)$  is a properly convex set which is
preserved by $K$.
Since $G$ acts simply transitively on $ \bdy \Omega$, it follows that
  $K$ acts simply transitively on $\bdy \Omega_q=U_q\cap \bdy \Omega$ for every $q$.
  Simple transitivity implies that the action of $K$ on $U_q$ is faithful. 
  
 {\red Since the action of $K$ on $\opensimplex$ is trivial,   $[k(x)+U]=[x+U]$ for all $k\in K$. 
    Thus} the subspace $U^+=U\oplus\RR\cdot x\red\subset V$ is preserved by $K$ and 
  $U_q=[U+x]\subset\PP(U^+)\subset\PP(V)$. The action of $K$ on $U^+$ is the restriction
  of the action on $V$, and is therefore unipotent. Moreover $U^+=\red \left(\bigoplus U_i\right)\oplus \RR\cdot x$
  so the action $K$ on $U^+$ is given by $K'=\rho'(K)$  where
  \begin{equation}
\rho':=\rho|U^+=\bpmat
\rho_1|U_1 & 0 &\cdots  &  & & *\\
 0& \rho_2|U_2 & 0 &  \cdots & 0 & *\\
  \vdots & \vdots & \ddots & \vdots & 0 & *\\
0 & 0 & \cdots &  & \rho_{{\red t}+1}|U_{{\red t}+1} & *\\
0 & 0 &  \cdots & & & 1
\epmat
\end{equation}
  
  {\red The notation $\rho|U^+$ means the {\em restriction of the action} of $\rho$ to the subspace $U^+\subset V$,  etc.}
  The properly convex set $\Omega_q=\Omega\cap U_q\subset\PP(U^+)$ is preserved by $K'$. Moreover
  $K'$ is  nilpotent, upper-triangular, and acts simply transitively on $\bdy\Omega_q$.  
  The hyperplane $\PP(U)\subset\PP(U^+)$
  is preserved by $K'$, and
  the point $s\in\bdy_{\infty}\Omega_q=\cl(\Omega_q)\cap\PP(U)$ is fixed by $K'$.
  Also $\Dcal_s\Omega_q=\Dcal_s(\bdy\Omega_q)$, hence $(\Dcal_s\Omega_q)/K'=(\bdy\Omega_q)/K'$ 
  is a single point, and thus 
  compact. It now follows from Theorem 5.7 in \cite{CLT1}
  that $s$ is a round point of $\Omega_q$ {\red (recall from \cite{CLT1} that a point is round if it is $C^1$ and strictly convex in the boundary).}   Hence $\cl(\Omega_q)=\Omega_q\sqcup\{s\}$, and $\Omega_q$ is strictly convex. 

It follows from Theorem \ref{ellipsoid} 
   that $\Omega_q$ is an 
  ellipsoid, and $K'$ is conjugate to  the  parabolic subgroup  
  \begin{equation}\label{Pmatrixmainproof}
   P=\exp\bpmat 0 & y_1 &\cdots & y_u &0\\
0 & \cdots & & & y_1\\
0 & \cdots & & & \vdots\\
0 & \cdots & & & y_u\\
0 & \cdots & & & 0\\
\epmat\subset GL(U^+)
\end{equation}
Hence, $u+2=\dim \red U^+$. If $u=0$ this is the identity matrix, and $K\red \cong K'$ is the trivial group. When $u>0$ then
 $P$ acts affinely on $\RR^{u+1}$. {\red The orbit under $P$ of the origin
is the  paraboloid $y_0=(1/2)(y_1^2+\cdots+y_u^2)$ which is the boundary (minus one point)
 of the parabolic model of $\HH^{u+1}$, and $P$ is
the group of parabolics. In particular if $A$ and $B$ are invariant subspace of $U^+$ then $A\cap B \ne 0$.
  It follows that $\dim U_i>0$ for at most one $i$. Since $\dim V_i\le \dim V_{i+1}$ and $\dim U_i=\dim V_i-1$
then $U_i=0$, and $\dim V_i=1$ for all $i\le {\red t}$. Let $\pi_{t+1}:V\to V_{t+1}$ be the projection given by the direct sum
decomposition. Then   $U^+=U_{t+1}\oplus \RR\cdot x$ and $\pi_{t+1}|:U^+\to V_{t+1}$ is an equivariant isomorphism. 
After a change of basis for $V_{t+1}$}
\begin{equation}
K=\bpmat I_{\red t} & 0\\
0 & P
\epmat\subset\GL(n+1,\RR)
\end{equation}
This formula also holds when $u=0$ since $K$ is then trivial. 
We thus have a short exact sequence 
\begin{equation}\label{groupexact}
\begin{tikzcd}
	1\arrow{r} & K\arrow{r}{incl} & G\arrow{r}{{\red \rho_W}} & H\arrow{r}\arrow[bend left=33]{l}{\sigma} & 1
\end{tikzcd}
\end{equation}
{\red If $t=0$ then $G=K=P=T(\ppsi=0)$ and the result holds. Thus we may assume $t>0$.}
{\red Since $V_i=\RR\cdot e_i$ for $i\le t$ it follows that $V_{t+1}$ has basis $\{e_{t+1},\cdots,e_{n+1}\}.$} 
Since $H\cong {\mathbb R}^{\red t}$ there is a splitting $\sigma:H\to G$. {\red Since $G$ is block-diagonal
and referring to} Equation (\ref{Gmatmainproof})
it follows that  
\begin{equation}
\sigma=
\bpmat
\bpmat
\lambda_1'& 0 &\cdots  &  \\
 0& \lambda_2' & 0 &  \cdots \\
  \vdots &\vdots &\ddots &   \vdots\\
  0 & \cdots & & \lambda_{\red t}' \\
\epmat & 0\\
 0& \psi
\epmat
\end{equation}
where $\ppsi:H\rightarrow \UT_1(V_{{\red t}+1})$, {\blue and $\lambda_i':H\to\RR$ satisfies 
$\lambda_i=\lambda_i'\circ\rho_W$}. Since $K$ is a normal subgroup of $G$ it follows that $\ppsi(H)$
is a subgroup of the normalizer, $N$, of $P$ in $\UT_1(V_{{\red t}+1})$.
Let $\Phi=\exp\RR\cdot a\subset \UT_1(V_{{\red t}+1})$ 
be the one-parameter group where $a$ is the elementary matrix
with $1$ in the top right corner. Then $\Phi$ centralizes $P$. {\red 
Thus $\Phi$ is the {\em radial flow} on $\PP(V_{t+1})$ with center  $\alpha=[e_{t+1}]$ and stationary hyperplane $H=\PP(\langle e_{t+1},\cdots,e_{n}\rangle)$. \blue It remains to show $\sigma$ can be chosen so that $\psi:H\to\Phi$.  Then $K=P(\psi)$ and
$\sigma(H)=T_2(\ppsi)$ thus $G=P(\psi)\oplus T_2(\psi)=T(\ppsi)$.} Let $\mathfrak n, \mathfrak p,\mathfrak k$ be the Lie algebras of $N,P,K$ respectively. The above provides an identification $\mathfrak k \equiv\mathfrak p$.

 {\bf Claim 2}: $N=\langle P,\Phi\rangle$, so   $\mathfrak n=\mathfrak p\oplus {\mathbb R}\cdot a$.  
 
The closures of the orbits of $P$ in $\PP(V_{{\red t}+1})$
consists of a fixed point $\red\alpha$, {\red lines in the} hyperplane $H$ containing $\red\alpha$, and a 
one-parameter family of horospheres, each tangent to $H$ at $\red\alpha$. 
Since $N$ normalizes $P$ it permutes $P$-orbits. 
Thus $N$ 
preserves the fixed set {\red and center} of  $\red\Phi$, and so normalizes the radial flow $\red\Phi$.
Since $N$ is unipotent, $N$ centralizes the radial flow.
The radial flow acts transitively on horospheres,
so if $n\in N$ there is $\red\phi=\exp(t a)\in\Phi$ such that ${\red p}=\phi\circ n$ preserves one horosphere.
But, since $N$ centralizes $\Phi$, this implies that $p$ preserves every horosphere.
Thus $p$ is an isometry of $\HH^{\red u+1}\subset\PP(V_{{\red t}+1})$ where $\dim V_{{\red t}+1}\red=u+2$.
Since $p$ is unipotent it follows that $p$ is parabolic, thus $p\in P$.
So $n= {\red\phi^{-1}}p$, which proves Claim 2.

\if0
From \eqref{groupexact} we get a short exact sequence of Lie algebras
\begin{equation}\label{algebraexact}
	0\longrightarrow
	\mathfrak{k}\longrightarrow
	\mathfrak{g}\longrightarrow
	\mathfrak{h}\longrightarrow
	0
\end{equation}
Since \eqref{groupexact} splits there is a splitting $\sigma:\mathfrak h\rightarrow \mathfrak g$. The group $\sigma(\mathfrak{h})$ preserves the weight space decomposition and $K$ is a normal subgroup of $G$ and so with respect to the weight space decomposition
\begin{equation}\label{splittingdecomp}
	\sigma=\bigoplus_{i=1}^r\overline\lambda_i\oplus\ppsi,
\end{equation}
where $\overline{\lambda_i}=\log(\lambda_i)$, regarded as a 1-dimensional representation on $V_i$ and $\ppsi:\mathfrak{h}\to\mathfrak{n}$ gives the action on $V_{r+1}$.
\fi

Taking derivatives $\deriv\sigma:\mathfrak h\to\mathfrak g$.
If $f:\mathfrak{h}\to\mathfrak{k}$ is a homomorphism then exponentiating $f+\deriv\sigma$, gives a 
new section of Equation \eqref{groupexact}, and so without loss of 
generality we may assume that $\deriv\ppsi$ has image in $\R\cdot a$, then $\red \ppsi:H\to\Phi$.

The strictly convex hypersurface 
$\partial\Omega$ is a $G$-orbit. It follows from Remark \ref{positivelemma} that $\psi$ or $-\psi$ is positive. Without loss we may assume $\psi$ is positive, so $\ppsi\in A$. 
{\red The rescaling changes $G$ by central elements of $\GL(n+1,\RR)$,
 and as a result we have only shown that the original
  $G$ and $T(\psi)$ have the same image in $\PGL(n+1,\RR)$.}
  \end{proof}

\if0
 Suppose $G$  is a translation group. By scaling, we may assume
$G\subset\Aff(\RR^n)$ and $G$ preserves a properly convex domain $\Omega\subset\RR^{n}$ with $S=\bdy\Omega$ strictly convex.

First we outline   the proof. 
All the weights of $G$ are real and positive because $G\subset\UT(n+1)$. Label these weights
 $\lambda_i:G\to\RRP$ for $0\le i\le r$ with $0\le r\le n$. Since $G\subset\Aff(\RR^n)$ it follows that some weight is $1$ and we label 
 so that $\lambda_0\equiv 1 $. 
 There is  a homomorphism $w:G\longrightarrow \PGL(r+1,\RR)$ with 
 $w(g)=[diag(\lambda_0(g),\cdots,\lambda_r(g)]$.
 There is also a projection $\pi:\Omega\to\opensimplex$
where $\opensimplex\subset\RP^r$ is a simplex of dimension $r$, and $G$ acts on $\opensimplex$ via $H=w(G)$, which is
the projectivization of a subgroup of the positive diagonal subgroup  $C_{r+1}\subset\GL(r+1,{\mathbb R})$. This projection
is  $G$-equivariant. Each fiber $\Omega_q=\pi^{-1}(q)$ is a properly
convex domain of dimension $p=n-r$ that is preserved by the action of $K=\ker w$. 
Moreover $K$ is unipotent and acts transitively on $\bdy\Omega_q$. 
Hence $\bdy\Omega_q$
is an ellipsoid. This shows $\Omega$ projects onto the simplex $\opensimplex$ with fiber the interior of an ellipsoid.
This completes the sketch.

Here are more details.  
If $r=0$ then $G=K$
 is unipotent and therefore a standard cusp Lie group, which is the case $\psi=0$. Thus
we may assume $r\ge 1$.

For $0\le i\le r$   let $V_i$ be the generalized weight space for $\psicoef_i$. We can re-order the standard
basis of
$\RR^{n+1}$ so that $G$ is still upper triangular and $V_i$ is spanned by $n_i=\dim V_i$ consecutive vectors in this basis.
Then $G$ is upper-triangular and decomposed into blocks along the main diagonal, with one block for each  $V_i$. There is an {\em ordered} basis of $V_i$ given by $n_i$ consecutive vectors
in the standard basis of $\RR^{n+1}$. Define $w_i$ to be the last vector in this basis of $V_i$.

There is a codimension-1 subspace $U_i\subset V_i$ which is $G$-invariant. The subspace $U_i$ is the span of
 the basis vectors  for $V_i$ but omitting the last one $w_i$, so $V_i=U_i\oplus\RR\cdot w_i$. 
Set $U=\bigoplus U_i$. There is a projection 
$\pi:V={\mathbb R}^{n+1}\longrightarrow W= V/U$
that is $G$-equivariant $\pi(g\cdot v)=w(g)\cdot \pi(v)$. We use the same symbol
to denote the induced projection $\pi:{\mathbb P}(V)\setminus{\mathbb P}(U)\longrightarrow{\mathbb P}(W)$.
Observe that $\dim W=r+1$.

Choose an identification $W\equiv {\mathbb R}^{r+1}$ so the standard basis vectors $e_i\equiv U+w_i$ 
 are eigenvectors of $C_{r+1}$. There are $r+1$ projective hyperplanes $P_i$ in ${\mathbb P}(W)\cong\RP^{r}$
each of which contains all but one of the points $[e_i]$. The complement of these hyperplanes consists of $2^{r}$
open simplices. 

Since $G$ acts transitively on $S$ it also acts (via $H$) transitively on $\pi(S)\subset\P(W)$.
Choose $p=[x]\in S$ 
then $\pi p$ is in one of these open simplices $\opensimplex$. Otherwise by  $G$-invariance $\pi S$ is contained in some hyperplane 
$P_i\subset{\mathbb P}(W)$, but this
implies $S \subset \pi^{-1}(P_i)$ which is a hyperplane in ${\mathbb P}(V)$. This contradicts that $S$ is a strictly convex hypersurface.

We claim that either $G$ is diagonal, or else $H$ acts transitively on $\opensimplex$. 
The fiber 
$\pi^{-1}(\pi p)\subset{\mathbb P}(V)$ that contains $p$
is the affine subspace $U_p:=[x+U]$. 
If $S$ is transverse to $U_p$ then $\pi S$ contains an open subset of $\opensimplex$, so $\dim H=\dim\opensimplex$.
But $\pi S$ is the $H$-orbit of a point, and $H$ is the projectivization of a diagonal subgroup, so  $H$
acts transitively on $\opensimplex$. 

Thus we may assume $S$ is not transverse to $U_p$.
Then $U_p\cap S=p$ because $S$ is a strictly convex hypersurface. Since $G$ acts transitively on $S$ this condition holds
at every $p\in S$.  This implies $\pi|S$ is injective so $\dim\opensimplex\ge\dim S$ thus $r\ge n-1$. 

If  $r=n$ then $G$ is diagonal as claimed. Otherwise $r=n-1$. Since $\pi|S$ is injective, and $\dim S=n-1=\dim\opensimplex$, it follows that $\pi(S)$
contains an open subset of $\opensimplex$.  As before this implies $H$ acts transitively  which proves the claim.

In the case $G$ is diagonal since $\dim G=\dim S = n-1$ it follows that $G$ is the kernel of some homomorphism $\psi:D\to\RRP$ 
where $D$ is the diagonal subgroup of $UT(n+1)\cap\Aff(\RR^n)$. It follows from Remark \ref{positivelemma} that $\psi$ or $-\psi$ is positive.
  Henceforth we assume $H$ acts simply transitively
on $\opensimplex$.

The projection  $\pi$ restricts to a $G$-equivariant surjection $\pi_{\Omega}:\Omega\longrightarrow \opensimplex$. 
The fiber $\Omega_p:=U_p\cap\Omega=\pi_{\Omega}^{-1}(p)$  is a properly convex set which is preserved by
$K=\ker(w)\subset G$. Moreover $K$ acts simply transitively on $\partial\Omega_p$ and is unipotent. 
It follows
that $K$ is conjugate to a standard (parabolic) cusp Lie group, and $\bdy\Omega_p$ is an ellipsoid. 
This implies (after relabeling) that $\dim V_i=1$ for $0\le i<r$
and $K$ is the upper-triangular subgroup of $G$ consisting of block-diagonal
 matrices $Id_{r}\oplus P$ where $P$ is the standard parabolic
group that is the image of the action of $K$ on $V_k$. We thus have a short exact sequence
$$1\longrightarrow K\longrightarrow G\longrightarrow H\longrightarrow 1$$
Since $H\cong {\mathbb R}^{r-1}$ this sequence splits.

We claim the 
 normalizer in $\UT(n+1)$ of $K$ is the group $D\cong K\oplus \Phi$ defined above. 
This is because closures of the orbits of $P$ in ${\mathbb P}(V_r)$
consists of a fixed point $p$, a hyperplane $\mathcal H$ containing $p$, and a one parameter family of spheres, each tangent to $\mathcal H$ at $p$.
The normalizer of $P$ must thus preserve $p$ and $\mathcal H$, and permute the spheres. 
The result follows easily. \marginnote{AL we should match with 2.7 here}

The splitting is thus given by a map $\psi:\mathfrak h\longrightarrow\mathfrak d$. Now $\mathfrak d=\mathfrak p\oplus {\mathbb R}a$
where $a$ is the elementary matrix with $1$ in the top right corner of $P$. We can change $\psi$ by adding any map into $\mathfrak p$
since $\mathfrak p\subset\mathfrak g$. 
Thus we may choose $\psi$ to have image in ${\mathbb R}a$. 
This means $G=G(\psi)$.  
The strictly convex hypersurface 
$\partial\Omega$ is a $G$-orbit. It follows from Remark \ref{positivelemma} that $\psi$ or $-\psi$ is positive, without loss we may assume $\psi$ is positive.
 Then $G$ is conjugate to $G(\psi)$, and $\Gamma$ is conjugate into $T(\ppsi)$.
\end{proof}\fi


\begin{proof}[Proof of Theorems \ref{genislambda} and \ref{vnilisvabelian}] Suppose $C'$ is {\red generalized cusp and hence} a $g$-cusp. By Theorem 
\ref{homogcusp},  $C'$ is equivalent to a homogeneous $g$-cusp $C=\Omega/\Gamma$. By Proposition \ref{gencuspisVFG},  and {\red Theorem \ref{uniquehull}},
$\Gamma$  contains a finite index subgroup $\Gamma_1$ that is conjugate to a subgroup of a {\red translation} group, $T$.  By
Theorem \ref{main}, after a conjugacy, $T=T(\ppsi)$ for some $\ppsi$. 
We may assume it is irreducible, then by Lemma \ref{invariantdomains}, $\Omega=g(\Omega(\ppsi))$ for some 
$g\in \SS({\psi})\oplus\Phi^{\psi}$. Thus a conjugate of $\Gamma$ preserves $\Omega(\ppsi)$,
so after conjugacy we may assume $\Gamma\subset\PGL(\Omega(\ppsi))$. 
If $\ppsi\ne0$ then $\PGL(\Omega(\ppsi))=G(\ppsi)$ and by Proposition \ref{invariantEuclmetric}, therefore $C$ is $\ppsi$-cusp.
If $\ppsi=0$
then $\Gamma_1\subset T(0)$ 
so $\Gamma_1\subset G(0)$.  Since $\Gamma/\Gamma_1$ is finite and $\PGL(\Omega(0))/G(0)\cong\RR$
it follows that $\Gamma_1\subset G(0)$ and again $C$ is a $\ppsi$-cusp. This proves Theorem \ref{genislambda}.
It follows $\Gamma$ is virtually abelian, which proves Theorem \ref{vnilisvabelian}.
\end{proof}

\begin{proof}[Proof of Corollary \ref{stdparabolics}] We identify $\pi_1M\equiv\Gamma$.
Since $\delta([A])=0$, for every $\epsilon>0$ there is a loop $\gamma$
in $M$ that has length less than $\epsilon$ and $[\gamma]$ is conjugate in $\pi_1M$ to $[A]$.
It follows that if $X\subset M$ is compact then $[A]$ is represented by a loop in $M\setminus X$.
Thus $[A]$ is represented by a loop in an end of $M$, and therefore in a generalized cusp $C\subset M$
with $C=\Omega(\ppsi)/\Gamma_C$.
Since $\delta([A])=0$, then we can conjugate so $\det A=\pm1$, and then $A\in G(\ppsi)$. 
The result
now follows from Corollary \ref{parabolicsinGpsi}.
\end{proof}

{\red Another consequence of Theorem \ref{genislambda} is that each generalized cusp is equipped with a canonical hyperbolic metric.}

\begin{theorem}[underlying hyperbolic structure]
\label{genishyp} Every generalized cusp $C$,
 with boundary a horomanifold, has a hyperbolic metric $\kappa(C)$ such that 
 $\bdy C$ is the quotient of a horosphere in $\HH^n$.
 If $C'$ is another such cusp, and if $P:C\to C'$ is a projective diffeomorphism,
then $P$ is an isometry from $\kappa(C)$ to $\kappa(C')$. 
\end{theorem}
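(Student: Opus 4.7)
The plan is to reduce to a $\psi$-cusp via Theorem \ref{genislambda}, use the subgeometric embedding of $(G(\ppsi),\bdy\Omega(\ppsi))$ into Euclidean geometry from Theorem \ref{Gppsistructure}(d), and lift this into the stabilizer of a point at infinity in $\HH^n$. Concretely, assume $C=\Omega(\ppsi)/\Gamma$ with $\Gamma\subset G(\ppsi)$. By Theorem \ref{Gppsistructure}(d) there is an injective homomorphism $\rho:G(\ppsi)\to\Isom(\EE^{n-1})$ realizing the action on $\bdy\Omega(\ppsi)$ as a subgeometry of Euclidean geometry. Using the upper-half-space model $\HH^n=\EE^{n-1}\times(0,\infty)$, the stabilizer of $\infty$ in $\Isom(\HH^n)$ is canonically isomorphic to $\Isom(\EE^{n-1})$ (each Euclidean isometry extends uniquely as a horosphere-preserving hyperbolic isometry). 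Composing gives $\hat\rho:G(\ppsi)\hookrightarrow\Isom(\HH^n)$. Since $\Gamma$ is a lattice in $G(\ppsi)$ and $\rho$ is a closed embedding, $\Gamma':=\hat\rho(\Gamma)$ is a discrete subgroup preserving every horoball centered at $\infty$.

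Pick the horoball $B=\EE^{n-1}\times[1,\infty)\subset\HH^n$. Then $H:=B/\Gamma'$ is a standard hyperbolic cusp whose boundary horomanifold is $\EE^{n-1}/\Gamma'$. Build a diffeomorphism $f:C\to H$ as follows: the normalized horofunction metric (Corollary \ref{normalizedmetric}) identifies $(\bdy\Omega(\ppsi),\beta|\bdy\Omega(\ppsi))$ isometrically with $\EE^{n-1}\times\{1\}$, $\Gamma$-equivariantly with respect to $\hat\rho$; combine this with the product structures from Definition \ref{transversefoliations} (the radial flow on $\Omega(\ppsi)$ and the vertical flow $(x,t)\mapsto(x,e^s t)$ on $B$) to extend to a diffeomorphism $\tilde f:\Omega(\ppsi)\to B$ that is $\Gamma$-equivariant. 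This descends to $f:C\to H$, and we set $\kappa(C):=f^*g_{\HH^n}$. By construction $\bdy C$ is the quotient of the horosphere $\EE^{n-1}\times\{1\}$ by $\Gamma'$.

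For naturality, suppose $P:C\to C'$ is a projective diffeomorphism between two such cusps, with $C=\Omega(\ppsi)/\Gamma$ and $C'=\Omega(\ppsi')/\Gamma'$. Lift $P$ to $\tilde P\in\PGL(n+1,\RR)$ with $\tilde P(\Omega(\ppsi))=\Omega(\ppsi')$ and $\tilde P\Gamma\tilde P^{-1}=\Gamma'$. By Proposition \ref{invariantproduct}, $\tilde P$ carries the radial flow and horosphere foliation of $\Omega(\ppsi)$ to those of $\Omega(\ppsi')$; by Corollary \ref{normalizedmetric} (together with the $\psi=0$ case, handled separately since then $\Omega(0)$ is itself a horoball in $\HH^n$ and the claim is tautological), $\tilde P$ restricts on $\bdy\Omega(\ppsi)$ to an isometry of the normalized Euclidean structures. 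These are precisely the data used to define $\hat\rho$ and $f$, so the square relating $f$, $f'$, $\tilde P$, and the natural isomorphism $B/\Gamma'\to B/\Gamma''$ (conjugating $\Gamma'$ to $\Gamma''$ by an element of $\Isom(\HH^n)$) commutes. Pulling back the hyperbolic metric gives $P^*\kappa(C')=\kappa(C)$.

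The main obstacle is the $\psi=0$ exceptional case, where $\PGL(\Omega(0))$ strictly contains $G(0)$: the extra projective symmetries are hyperbolic elements of $\Isom(\HH^n)$ that permute horospheres, so different horoball choices give isometric but not identical identifications. One must check that this ambiguity is exactly absorbed by the conformal class of $\bdy C$'s Euclidean structure (similarity rather than isometry), and that any projective diffeomorphism between two standard cusps is automatically a hyperbolic isometry; both follow from the fact that $\PGL(\Omega(0))\subset\Isom(\HH^n)$. The less subtle verifications are that $\Gamma'$ is actually discrete (immediate from discreteness of $\Gamma$ and the fact that $\hat\rho$ is a closed embedding with compact kernel-quotient slice) and that $f$ really is a diffeomorphism (because both sides are trivial bundles over $\bdy C$ with interval fiber, and $f$ is a fiberwise identification).
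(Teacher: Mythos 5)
Your proposal is correct and follows essentially the same route as the paper: reduce to a $\psi$-cusp via Theorem \ref{genislambda}, use the flat metric on $\bdy\Omega(\psi)$ together with the radial-flow product structure to identify $C$ with a standard hyperbolic cusp bounded by a horomanifold, and derive naturality from the projective invariance of a normalized horofunction metric (your explicit lift $\Isom(\EE^{n-1})\hookrightarrow\mathrm{Stab}(\infty)\subset\Isom(\HH^n)$ just makes concrete what the paper phrases as ``there is a unique hyperbolic cusp $H$ with $\bdy H$ isometric to $(\bdy C,\kappa(C))$''). The only notable divergence is the normalization: the paper rescales the horofunction metric so that $\vol_{n-1}(\bdy C)=1$, which works uniformly in $\psi$ including $\psi=0$, whereas you invoke Corollary \ref{normalizedmetric} (valid only for $\psi\ne0$) and handle $\psi=0$ as a separate, essentially tautological case; both normalizations are projectively invariant, so both yield the required naturality.
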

\begin{proof} Suppose $C$ is a generalized cusp of dimension $n$ bounded by a horomanifold. {\red Then by Theorem \ref{genislambda}} $C=\Omega(\ppsi)/\Gamma$ is {\red equivalent to a} $\ppsi$-cusp. There is a unique horofunction metric, $\beta_C$,
 on $C$ such that the Euclidean volume of $\bdy C$ is $1$. This metric is $\kappa(C)$.
 If $C=\Omega/\Gamma$ and $C'=\Omega'/\Gamma'$ are generalized cusps, and $P:C\to C'$ is a projective diffeomorphism, then $P$ is covered by a projective isomorphism $\Omega\to\Omega'$, which
 is an
isometry between horofunction metrics. Thus $P$ is an isometry.

There is a unique hyperbolic cusp $H$ bounded by a horomanifold, with $\bdy H$ isometric to $(\bdy C,\kappa(C))$.
The restriction of the hyperbolic metric to $\bdy H$ equals the restriction of $\kappa(H)$ to $\bdy H$. Thus
there is an isometry $(C,\kappa(C))\to(H,\kappa(H))$, that identifies $C$ with a hyperbolic cusp.
\end{proof}

\
This raises several questions. For example, 
using this, one can assign a cusp shape, $z\in\CC$, to a generalized cusp
in a 3-manifold. If a hyperbolic 3-manifold with {\red one} cusp can be projectively deformed 
 can this shape change?

\if0
Suppose $G\subset \UT(n)$ is a translation group \marginnote{AL translation group?}.  The idea of the proof is as follows.  We use Lie theory of nilpotent groups to decompose $G$ into generalized weight spaces, splitting into a diagonal part, $H$,  and unipotent part, $P$, which we glue together with a short exact sequence. We show $H$ acts transitively on a simplex, and $P$ is the pull back under the quotient map $G \to H$, with $P$ preserving a strictly convex surface of lower dimension, and acting transitively on a simplex. \marginnote{AL added}

By Theorem \ref{uppertri} \marginnote{AL added} we may assume $G$ is upper triangular.
Then all the weights are real. Let $\psicoef_i:G\longrightarrow{\mathbb R}$ be the weights for $1\le i\le w$.
If $w=1$ then $G$ is unipotent and therefore a standard cusp Lie group, which is the case $w=0$. Thus
we may assume $w\ge 2$.

\marginpar{AL to break up: First set up notation with semisimple rank. Lemma: $\dim H =w-1$. Moreover $G$ is diagonal or acts transitively on a simplex}

The positive diagonal subgroup is $C_w\subset \GL(w,{\mathbb R})$. \marginnote{AL changed from $C_k$}
The weights give a homomorphism $W:G\longrightarrow C_w$ with $W(g)=diag(\psicoef_1(g),\cdots,\psicoef_k(g))$
and image $H=W(G)$.
We claim $\dim(H)=w-1$. Let $V_i$ be the generalized weights space for $\psicoef_i$ and  $n_i=\dim V_i$. Then
$\sum n_i\log\psicoef_i =0$ because the determinant of every element of $G$ is $1$.  This equation
 implies $H$ that is a subgroup of $C_w$ of
codimension at least $1$. Since $\dim C_w=w$ is follows that  $\dim(H)\le w-1$. 

There is a codimension-1 subspace $U_i\subset V_i$ which is $G$-invariant. Each $V_i$ corresponds to
a block in the upper-triangular group $G$. The subspace $U_i$ is the span of the all the columns except
the last one in the block for $V_i$. Set $U=\bigoplus U_i$. This gives a projection 
$\pi:V={\mathbb R}^n\longrightarrow W= V/U$
that is $G$-equivariant $\pi(g\cdot v)=\mathfrak{w}(g)\cdot \pi(v)$. We use the same symbol
to denote the induced projection $\pi:{\mathbb P}(V)\setminus{\mathbb P}(U)\longrightarrow{\mathbb P}(W)$.

Projectivizing gives an action
of $H$ on ${\mathbb P}(W)$.  We will show this action is simply transitive, or $H$ is diagonal.\marginnote{AL added} The standard basis $e_i$ of ${\mathbb R}^w$ 
are eigenvectors of $C_w$. There are $w$ projective hyperplanes $P_i$ in ${\mathbb P}(W)\cong\RP^{w-1}$
each of which contains all but one of the points $[e_i]$. 
The complement of these hyperplanes consists of $2^{w-1}$
(open) simplices. The group $C_w$ acts simply transitively on each simplex.

 Since $G$ is a translation group, \marginnote{AL added} there is a properly-convex $\Omega\subset\RP^{n-1}$ that is preserved by $G$ and $S=\partial\Omega$ is a strictly convex
hypersurface. Choose $y=[x]\in S$  \marginnote{AL changed from $p$}
then we claim \marginnote{AL added }$\pi y$ is in one of these open simplices $\Delta$. Otherwise by  $G$-invariance $\pi S$ is contained in some hyperplane 
$P_i\subset{\mathbb P}(W)$, but this
implies $S \subset \pi^{-1}(P_i)$ which is a hyperplane in ${\mathbb P}(V)$. This contradicts that $S$ is a strictly convex hypersurface.

We claim that either $G\subset C_n$ is diagonal or else $H$ acts transitively on  a simplex \marginnote{AL added} $\Delta$.  If $G$ is diagonal, then $H$ is diagonal, and our claim would be proved. \marginnote{AL added }
Since $G$ acts transitively on $S$ it follows that $G$ acts transitively on $\pi S$. The fiber 
$\pi^{-1}(\pi([x]))\subset{\mathbb P}(V)$ that contains $y$
is the affine subspace $U_y:=[x+U]$, \marginnote{AL: I think this is a new $U$ here from the $U_i$.  Change?} where $U \cong \RR^l$ \marginnote{AL added }. Suppose first that $S$ is not transverse to $U_y$. 
Then $U_y\cap S=y$ because $S$ is a strictly convex hypersurface. Since $G$ acts transitively on $S$ this condition holds
at every $y\in S$.  This implies $\pi|S$ is injective so $w\ge n-1$.  

We consider the three cases: $w=n$ and $w=n-1$ and $w \leq n-2$.  \marginnote{AL added}
If  $w=n$ then $G$ is diagonal  and $\dim G=\dim H=w-1$ as claimed. If $w=n-1$ then  by dimension count \marginnote{AL added} $\pi(S)$
contains an open subset of $\Delta$. But this is the $H$-orbit of a point so $\dim H=w-1$ as claimed. This leaves the
case $w \le n-2$ in which case $S$ is transverse to $U_w$, by contrapositive of the result of the previous paragraph. \marginnote{AL added} By transversality $\pi S$ contains an open subset of $\Delta$ so $H$
acts transitively on $\Delta$, which also implies $\dim H =w-1$ \marginnote{AL added} This proves $G$ is diagonal or acts transitively, and that $\dim H = w-1$.  \marginnote{AL added, end lemma}

In the case $G$ is diagonal it is easy to check $G$ is conjugate to $G(w,0,\lambda)$ for some positive weight $\lambda$
using the fact that the orbit of a point in $\partial\Omega$ is strictly convex. Since $H$ is diagonalizable \marginnote{AL added}, henceforth we assume $H$ acts (simply) transitively
on $\Delta$. 

Projection restricts to a $G$-equivariant surjection $\pi_{\Omega}:\Omega\longrightarrow \Delta$. 
The fiber $\Omega_y:=U_y\cap\Omega=\pi_{\Omega}^{-1}(y)$  is a properly-convex set which is preserved by
$K=\ker(W)\subset G$. Moreover by Proposition \ref{simptrans} by Theorem \ref{simp_trans} \marginnote{AL which?} $K$ acts simply transitively on $\partial\Omega_y$ and is unipotent. 
It follows
that $K$ is conjugate to the \marginnote{AL changed from "conjugate to standard parabolic" and reordered paragraph}
 upper-triangular subgroup of $G$ consisting of block-diagonal \marginnote{AL added}
 matrices $Id_{w-1}\oplus P$  \marginpar{AL why? quote result} where $P$ is the standard parabolic
group that is the image of the action of $K$ on $V$ \marginnote{AL changed from $V_p$ to match}, where (after relabeling) that $\dim V_i=1$ for $i<w$. 
Note $P$ is irreducible. We thus have a short exact sequence
$$1\longrightarrow P\longrightarrow G\longrightarrow H\longrightarrow 1$$
Since $H\cong {\mathbb R}^{w-1}$ this sequence splits. \marginnote{AL Is $P \cong \R ^l$, where $l$ is the dimension of the fibers previously?}    \marginnote{Also splits because any virtually nilpotent Lie group is diffeomorphic to Euclidean space.}

We claim the identity component of the
 normalizer in $G \cap Id_{w-1} \oplus SL(n -(w-1),{\mathbb  R})$\marginnote{AL changed from $SL(n , \R)$} of $P$ is the group $D\cong P\oplus{\mathbb R}$. \marginnote{AL removed "defined above."}  
This is because closures of the orbits of $P$ in ${\mathbb P}(U)$ \marginnote{AL changed from ${\mathbb P}(V_w)$  }
consists of a fixed point $z$, a hyperplane $\mathcal{H}$  \marginnote{AL changed letters} containing $z$, and a one parameter family of spheres, each tangent to $\mathcal{H}$ at $z$.
The normalizer of $P$ in $G$\marginnote{AL added} must thus preserve $z$ and $\mathcal{H}$ and permute the spheres. The result follows easily since $P$ preserves the spheres and $\R$ permutes them. \marginnote{AL added}

The splitting of Lie algebras \marginnote{AL added}  is thus given by a map $\phi:\mathfrak h\longrightarrow\mathfrak d$. Now $\mathfrak d=\mathfrak p\oplus {\mathbb R}a$
where $a$ is the elementary matrix with $1$ in the top right corner,  since $D$ centralizes $P$.\marginnote{AL added} We can change $\phi$ by adding any map into $\mathfrak p$
since $\mathfrak p\subset\mathfrak g$. 
Thus we may choose $\phi=\lambda$ \marginnote{AL: How is $\lambda$ related to the weights?}  a linear functional \marginnote{AL added} to have image in ${\mathbb R}a$.  
 Since $H$ is diagonal, and $P$ is conjugate to a standard parabolic and the short exact sequence splits,\marginnote{AL added} 
then $G=T(\ppsi)$.
The strictly convex hypersurface {$S=$} \marginnote{AL added}
$\partial\Omega$ is a $G$-orbit. It follows from (\ref{eq:1}) that $\lambda$ or $-\lambda$ is positive
and if $-\lambda$ is positive that $G$ is conjugate to $T(-\lambda)$. 
\fi

\section{Classification of $\ppsi$ cusps}\label{sec:classify}
  This section is devoted to the proofs of Theorem \ref{Classification Theorem} and Corollary \ref{cor:cusplattice}.
  But first we need:

\begin{lemma}\label{equivalentcusps} If $C=\Omega/\Gamma$ and $C'=\Omega'/\Gamma'$ are 
equivalent
generalized cusps of dimension $n$, then $\Gamma$ and $\Gamma'$ are conjugate subgroups
of $\PGL(n+1,\RR)$.
\end{lemma}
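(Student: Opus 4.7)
The plan is to use the definition of equivalence: there exists a third generalized cusp $C''=\Omega''/\Gamma''$ together with projective embeddings $f:C''\hookrightarrow C$ and $f':C''\hookrightarrow C'$ that are homotopy equivalences. The goal is to show that each such embedding conjugates holonomies by a single element of $\PGL(n+1,\RR)$, and then compose.

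First I would lift $f$ to a map $\tilde f:\Omega''\to\Omega$ between the universal covers (all three domains sit inside $\RP^n$). Because $f$ is a projective embedding, each point in $C''$ has a neighborhood on which $f$ is the restriction of an element of $\PGL(n+1,\RR)$; lifting these local projective charts and using that $\Omega''$ is connected and simply connected (it is a properly convex domain by Proposition~\ref{Omegaconvex}), analytic continuation / monodromy forces all these local projective transformations to agree. Hence there is a single $P\in\PGL(n+1,\RR)$ such that $\tilde f=P|_{\Omega''}$ and $P(\Omega'')\subset\Omega$. The standard equivariance of developing maps then gives, for each $\gamma\in\pi_1 C''\equiv\Gamma''$, the identity $P\gamma P^{-1}=f_\ast(\gamma)\in\Gamma$, where $f_\ast$ is the isomorphism on fundamental groups induced by the homotopy equivalence $f$. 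Thus $P\Gamma''P^{-1}=\Gamma$.

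Running the same argument with $f'$ in place of $f$ produces $P'\in\PGL(n+1,\RR)$ with $P'\Gamma''(P')^{-1}=\Gamma'$. Setting $Q=P'P^{-1}\in\PGL(n+1,\RR)$ then yields
\[
Q\,\Gamma\,Q^{-1}=P'P^{-1}(P\Gamma''P^{-1})PP'{}^{-1}=P'\Gamma''(P')^{-1}=\Gamma',
\]
which is exactly the conclusion.

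The only step with any content is verifying that the lift $\tilde f$ really is the restriction of a \emph{single} element of $\PGL(n+1,\RR)$, rather than merely a piecewise-projective map; this is where simple connectedness of $\Omega''$ is used, together with the fact that two elements of $\PGL(n+1,\RR)$ that agree on an open subset of $\RP^n$ are equal. The equivariance $P\gamma=f_\ast(\gamma)P$ is then automatic from the commutative diagram relating the two universal covers to their quotients, since both $\tilde f\circ\gamma$ and $f_\ast(\gamma)\circ\tilde f$ cover the same map $f\circ q=q\circ\tilde f$ and agree on an open set.
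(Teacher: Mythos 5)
Your proposal is correct and takes essentially the same route as the paper: the paper's phrase ``this amounts to performing a conjugacy'' is precisely your developing-map argument that a projective embedding between simply connected convex domains is the restriction of a single element of $\PGL(n+1,\RR)$, and the homotopy-equivalence hypothesis then identifies the conjugated holonomy group with the full target group. The only cosmetic difference is that the paper first reduces to a single embedding $C\hookrightarrow C'$ (using that conjugacy is transitive, so it suffices to treat the generating relation) and concludes $\Gamma=\Gamma'$ after nesting the domains, whereas you keep the intermediate cusp $C''$ and compose the two conjugating elements.
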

\begin{proof} The definition of equivalent cusps given in the introduction is not transitive, 
though it will follow from
 the classification that it is transitive. 
 In this proof we use the equivalence relation {\em generated} by
 the relation on pairs of cusps: {\em
Given $C$ and $C'$ there is a cusp $C''$, diffeomorphic to both of them,
and projective
embeddings {\red of $C''$}, that are also homotopy equivalences,  into both $C$ and $C'$.}
Thus it suffices to prove the lemma when there is a projective embedding of $C$ into $C'$ that is
also a homotopy equivalence.  
We may assume $C\subset C'$ and $\Omega\subset\Omega'$ (this amounts to performing a conjugacy).
Since the embeddings are homotopy equivalences
it follows that $\Gamma=\Gamma'$. 
\if0
Let $D$ be the cover of $C'$ given by the subgroup $\Gamma$.
Let $F\cong C$ be the {\em lift} of $C$ to this cover.
Then $D\cong\bdy D\times[0,\infty)$ is connected, so $\bdy D$ is connected,
 and is homotopy equivalent to $\bdy F$ since they both
 have contractible universal covers. Now $\bdy F$ is closed, so $H_{n-1}(\bdy F,\ZZ_2)\cong\ZZ_2$,
hence $H_{n-1}(\bdy D,\ZZ_2)\cong\ZZ_2$, thus $D$ is also closed. Thus $D$ is a finite cover
of $C'$, so $\Gamma$ has finite index in $\Gamma'$.

We may assume $F$ is contained in the interior of $D$.
Since $\bdy F$ is compact there is $t>0$ such that $F\subset Z:=\bdy D\times[0,t]\subset D$.
The inclusion $\bdy F\hookrightarrow Z$ is a homotopy equivalence, and $\bdy F$ is a closed
codimension-1 submanifold of $Z$, thus $\bdy F$ separates  $\bdy D\times0$
from $\bdy D\times t$. It follows that $F$ contains $\bdy D\times[t,\infty)$.
Hence $C$ contains $x\times[t,\infty)\subset C'$ for all $x\in \bdy C'$.

Let $p:D\to C'$ be the covering projection. If the of degree of this cover is $n\ge 1$
then there is a component, $E\ne F$ of $p^{-1}(C)$.
However $E$ contains a lift of $x\times[t,\infty)$ for some $x\in\bdy C'$.
But this is contained in $F$ so $E\cap F\ne\emptyset$
which is a contradiction.
It follows that $n=1$,
hence $C'=D$ so $\Gamma=\Gamma'$.\fi
 \end{proof}
  
  \begin{proof}[Proof of Theorem \ref{Classification Theorem}]  (i) It is clear that $c\Rightarrow a$. Also $a\Rightarrow b$ follows from Lemma \ref{equivalentcusps}.

For (i) $b\Rightarrow c$ and (ii).  Suppose $\Gamma\subset G(\ppsi)$ and $\Gamma'\subset G(\ppsi')$ are
 lattices and $P\in\PGL(n+1,\RR)$ with 
$P\Gamma P^{-1}=\Gamma'$.
 By Theorem \ref{uniquehull} $T(\ppsi)$ is the unique virtual e-hull of $\Gamma(\ppsi)$,
  thus $P T(\ppsi) P^{-1}=T(\ppsi')$. 

 Hence $U=P^{-1}(\Omega(\ppsi'))$ is a properly convex set that is preserved by $T(\ppsi)$.
 Moreover $U$ is irreducible, since this property is preserved by projective maps. By Lemma
  \ref{invariantdomains} there is $g\in \SS({\psi})\oplus\Phi^{\psi}$ such that $g(U)=\Omega(\ppsi)$. 
  Since $g$ centralizes $T(\ppsi)$ we may replace $P$ by $g\circ P$ and assume that $P(\Omega(\ppsi'))=\Omega(\ppsi)$.
  It follows that $P\cdot G(\ppsi)\cdot P^{-1}=G(\ppsi')$ proving one direction of (ii).  The converse of (ii) is obvious.
   If $G(\ppsi)=G(\ppsi')$ then $P$ preserves $\Omega(\ppsi)$ which proves  (i) $b\Rightarrow c$.
  
  \if0
   suppose 
   $P\in\PGL(n+1,\RR)$
   with $P\cdot\Gamma\cdot P^{-1}=\Gamma'$. By the first paragraph $P\cdot T(\ppsi)\cdot P^{-1}=T(\ppsi')$, and
    $T(\ppsi)$  preserves $U=P(\Omega(\ppsi))$,
    which is a properly convex set.
   Moreover $U$ is irreducible, since this property is preserved by projective maps.
     By  \ref{invariantdomains} it follows that there is $g=\Phi_t\circ\epsilon$  such that $U=g(\Omega(\ppsi))$\marginnote{SB: This implicily uses part 2 {\red how ?}so I think we should move the proof of that earlier, or possibly renumber the statements of the theorem}.
     Since $\Phi_t$ centralizes $\Gamma$ we may conjugate by $\Phi_t$ and assume without loss that $g=\epsilon$.
  Thus $U=\epsilon(\Omega(\ppsi))$ so $\epsilon$ is an involution that swaps $\Omega(\ppsi)$ with $U$. 
  Since $\Gamma$ preserves $\Omega(\psi)$ and $U$ it follows from (\ref{signedperm})
  that $\epsilon$ centralizes $\Gamma$. Then $\Gamma'=(\epsilon P)\Gamma(\epsilon P)^{-1}$ and $\epsilon P\in \PGL(\Omega(\ppsi))$
  as required.
\fi

(iii)  By Theorem \ref{Gppsistructure}(f),  $T(\ppsi)$ is a characteristic subgroup of $G(\ppsi)$: it is the subgroup
of elements all of whose eigenvalues are positive. 
Thus if $P$ conjugates $G(\ppsi)$
to $G(\ppsi')$ then it conjugates $T(\ppsi)$ to $T(\ppsi')$. By Proposition \ref{transgpscale}
this happens if and only if $\ppsi=t\cdot\ppsi$ for some $t>0$.
 
(iv)  follows from Proposition \ref{invariantEuclmetric}. (v) is done below.
    \if0 By definition $G(\psi)\subset\PGL(\Omega(\psi))$. Suppose $A\in\PGL(\Omega(\ppsi))$
    then $h_{\ppsi}\circ A=\lambda\cdot A$  by Corollary \ref{invariantproduct}. 
    When $\ppsi\ne 0$ the Euclidean metric on $\bdy\Omega$ is preserved by $A$. \marginnote{Where do we show this?}
There is $t\in T(\ppsi)$ such that $t\circ A$ fixes a point $b\in\partial\Omega$ \marginnote{AL and this?} and since $t$ preserves $h_{\ppsi}$ we may replace $A$ by
$t\circ A$ without changing $\lambda$. The group of isometries of $\bdy\Omega$ that preserves the convex hull, $Y$,
 of the closed ball of radius
$1$ in $\bdy\Omega$ centered at $b$. \marginnote{AL $A$ is contained in?} The set $Y$ is compact  and is preserved by $A$.
Hence the set of horospheres that meet $Y$ is compact. By replacing $A$ by $A^{-1}$ if
needed we may assume $\lambda\ge 1$. Let $\Hcal$ be a horosphere that meets $Y$. Then $A^k\Hcal$ meets $Y$ for all $k\in\ZZ$.
The horosphere $A^k\Hcal$ goes to infinity as $k\to\infty$ unless $\lambda=1$.
This implies $\lambda=1$
so $A\in G(\ppsi)$.\fi
 \end{proof} 
  
   \begin{proof}[Proof of Corollary \ref{cor:cusplattice}] To show $F$ is surjective, suppose $C$ is a generalized cusp of dimension $n$. 
  By Theorem \ref{genislambda} there is an equivalent cusp $\Omega(\ppsi)/\Gamma\in[C]$  
  for some lattice $\Gamma\subset G(\psi)$. By Theorem \ref{Classification Theorem} (i)(c) we may assume $\ppsi(e_1)=1$.
  Then $F([\Gamma])=[C]$
therefore $F:\clat^n\rightarrow\Ccal^n$ is surjective.
  
  To show $F$ is injective, suppose $F([\Gamma_1])=F([\Gamma_2])$ for lattices $\Gamma_i\subset G(\ppsi_i)$.
  By Lemma \ref{equivalentcusps} 
   $\Gamma_1$ and $\Gamma_2$ are conjugate subgroups of  $\PGL(n+1,\RR)$. Then by Theorem \ref{Classification Theorem}(ii) 
  $G(\psi_1)$ and $G(\psi_2)$ are conjugate in  $\PGL(n+1,\RR)$, and by Theorem \ref{Classification Theorem}(iii) 
  this implies $\ppsi_1=t\cdot\ppsi_2$ for some $t>0$. 
  Since $\ppsi_1(e_1)=\ppsi_2(e_1)$ then $\ppsi_1=\ppsi_2$. By Theorem \ref{Classification Theorem}(i)(c) 
  it follows that
  $\Gamma_1$ and $\Gamma_2$ are conjugate subgroups of $G(\ppsi_1)$ so $[\Gamma_1]=[\Gamma_2]$ and $F$ is injective.
   \end{proof}

Corollary \ref{cor:cusplattice} reduces the classification of equivalence classes of generalized cusps
  to the classification of conjugacy classes of lattices in each of the groups $G(\ppsi) \subset \PGL(n+1, \RR)$. This classification corresponds to {\em moduli space} {\red of $G(\ppsi)$}. 
  There is a finer classification using the notion of {\em marking} that results in an analog of {\em Teichmuller space}. We will show that
  a {\em marked} generalized cusp is parameterized by a marked Euclidean cusp, together with a left coset $A\cdot O(\ppsi)\in O(n-1)/O(\ppsi)$ called the {\em anisotropy parameter}.
   The classification of unmarked cusps is more complicated to state. 
   
   One  complication is that in general there are finitely
  many distinct isomorphism types of lattice in $G(\ppsi)$. To make these subtleties clear requires several definitions.
  
   A discrete subgroup $H$ of a Lie group $G$ is a {\em lattice} if $G/H$ is compact. 
  The {\em set of lattices}
   in $G$ is denoted $\lat(G)$. The quotient of this set by the action of $G$ by conjugacy gives the set of {\em conjugacy classes
   of lattices} in $G$ denoted $\clat(G)=\lat(G)/G$. {\red This set is partitioned into isomorphism classes}.
   Given a lattice $H$ in $G$, an {\em $H$-lattice} is a lattice $H'$ in $G$ with $H\cong H'$;
   and the {\em set of $H$-lattices} is the subset $\lat(G,H)\subset\lat(G)$. 
   The set of conjugacy classes of $H$-lattice is $\clat(G,H)=\lat(G,H)/G$ and is a subset of $\clat(G)$.
   
   A {\em marking}
   of an $H$-lattice $H'$ in $G$ is an isomorphism $\theta:H\to H'$, and $\theta$ is also called a {\em marked $H$-lattice}.
      The set of all {marked $H$-lattices} in
    $G$ is  denoted by $\mlat(G,H)$. Thus a lattice is a group, but a marked lattice is a homomorphism, and $\mlat(G,H)$ is 
    the subset of the
   representation variety $\Hom(H,{\red G})$ 
   consisting of those injective homomorphisms with image a lattice {\red of $G$}.
     Let $\Hcal$ be a set of lattices in $G$ that contains one lattice in each isomorphism class.
  The set of marked lattices in $G$ is $\mlat(G)=\cup\mlat(G,H)$ where the union is over $H\in\Hcal$.
   
    Two marked $H$-lattices
$\theta_1,\theta_2:H\rightarrow G$ are {\em conjugate} if
 there is $g\in G$ with $\theta_2=g^{-1}\cdot \theta_1\cdot g$, 
   and the set of {conjugacy classes of marked $H$-lattices} is $\cmlat(G,H)=\mlat(G,H)/G$. 
   The set of conjugacy classes
   of marked lattices in $G$ is $\cmlat(G)=\mlat(G)/G$.
      
As an example,  a lattice in $G=\Isom(\EE^2)$  is 
  a 2-dimensional Bieberbach group (wallpaper group), and there are 17 isomorphism types for $H$. These are also
  the isomorphism classes of compact Euclidean 2-orbifolds.
There is a natural bijection between $\cmlat(\Isom(\EE^2),\ZZ^2)$ and  marked 
 Euclidean structures on a torus $T^2$.
 It is well known that a marked Euclidean torus of area $1$ is parameterized by a point  in the upper half plane $\HH^2$.
  Moreover
$$\begin{array}{rcl}
\cmlat(\Isom(\EE^2),\ZZ^2) & \cong & \RR^+\times\{x+iy\in\CC: y>0\}\equiv\RR^+\times\HH^2\\
 \clat(\Isom(\EE^2),\ZZ^2)& \cong & \RR^+\times\HH^2/\PSL(2,\ZZ)
 \end{array}$$
the $\RR^+$ factor records the area of the torus that is the quotient of $\EE^2$ by the action of the lattice.

 Before proceeding to the proof of Theorem \ref{Classification Theorem}(v) we give an example  for 3-manifolds. For a generic diagonalizable
 generalized cusp Lie group, such as $\ppsi=(3,2,1)$,  then $G(\ppsi)\cong\RR^2$ and
 $O(\ppsi)$ is trivial.
 A $\ZZ^2$-lattice in $G(\psi)$ is  a subgroup $H=\ZZ u\oplus\ZZ v\subset\RR^2$ given by
 a pair of linearly independent vectors $u,v\in\RR^2$. Using the $\ZZ^2$-marking given by $(1,0)\mapsto u$ and $(0,1)\mapsto v$
 shows that the $2\times 2$ matrix $M=(u^t,v^t)$ determines a unique marked lattice, 
 so $$\cmlat(G(\psi),\ZZ^2)\cong\GL(2,\RR)$$
  There is a natural map $\cmlat(G(\ppsi),\ZZ^2)\to\cmlat(\Isom(\EE^2),\ZZ^2)$ and two lattices $M,M'\in GL(2,\RR)$ 
  have the same image
 if and only if  there is $A\in O(2)$ with $AM=M'$. It follows that $$\cmlat(G(\ppsi),\ZZ^2)\cong O(2)\times \cmlat(\Isom(\EE^2),\ZZ^2)$$ 
 This illustrates Theorem \ref{Classification Theorem}(v): a marked lattice in $G(\psi)$ is parameterized by
  a marked Euclidean lattice and a left coset of $O(\ppsi)$. In this
 case  $O(\ppsi)$ is trivial, so the left coset is just an element of $O(2)$.
 
Now consider unmarked lattices. A change of marking is a change of basis in $\ZZ^2$, and
this changes the lattice $M$ to $A.M$ where $A\in \GL(2,\ZZ)$.
 Thus $$\clat(G(\ppsi),\ZZ^2)\cong \GL(2,\ZZ)\backslash\GL(2,\RR)$$
  The left action of $\GL(2,\ZZ)$ on $\GL(2,\RR)$ is 
 free. However  the action of $\GL(2,\ZZ)$ on $\cmlat(\Isom(\EE^2),\ZZ^2)$ is not free: a $\pi/2$ rotation fixes an unmarked square torus.
 Thus 
 $$\clat(G(\ppsi),\ZZ^2)\ncong O(2)\times \clat(\Isom(\EE^2),\ZZ^2)$$
 which means unmarked lattices in $G(\psi)$ are {\bf not}  parametrized by an
 unmarked lattices in $\Isom(\EE^2)$ together with an anisotropy parameter. 

\begin{proof}[Proof of \ref{Classification Theorem}(v)]  For this proof we will identify $G(\psi)$ with 
the subgroup $\RR^{n-1}\rtimes O(\psi)$ of $\Isom(\EE^{n-1})$.
Since $\Isom(\EE^{n-1})/G(\psi)\cong O(n-1)/O(\psi)$ is compact,
every lattice in $G(\psi)$ is also a lattice in $\Isom(\EE^{n-1})$.
Let
 $\cmlat(\Isom(\EE^{n-1}),\ppsi)\subset\cmlat(\Isom(\EE^{n-1}))$ be the subset of conjugacy classes
 of lattice with rotational part in $O(\ppsi)$.
 The map $\pi:\mlat(G(\ppsi))\to \cmlat(\Isom(\EE^{n-1}),\ppsi)$ is surjective.
Choose a {\red right} inverse
$$\sigma: \cmlat(\Isom(\EE^{n-1}),\ppsi)\to\mlat(G(\ppsi))$$ so $\pi\circ\sigma=id$, 
  and define $
\param:\cmlat(\Isom(\EE^{n-1}),\ppsi)\times(O(n-1)/O(\ppsi))\rightarrow\cmlat(G(\ppsi))
$ by
 \begin{equation}\label{thetadef}
\param([\theta],g.O(\ppsi))=[g^{-1}\cdot\sigma([\theta])\cdot g].
\end{equation}
{\red This map is well defined because the equivalence class in $\cmlat(G(\ppsi))$ of a lattice
is not changed by an $O(\ppsi)$-conjugacy.}
Then Theorem \ref{Classification Theorem}(v) is the assertion that  $\param$ is a bijection.
Set $\Lcal=\Image(\sigma)$
then $\Lcal$ is a set of marked lattices in $G(\ppsi)$  that contains one representative of 
each $\Isom(\EE^{n-1})$-conjugacy class.
There is a  map 
$$
\widetilde{\param}:\Lcal\times \Isom(\EE^{n-1})\rightarrow\cmlat(G(\ppsi))
$$ given by 
$\widetilde{\param}(\theta,g)=[g^{-1}\circ \theta\circ g]$ 
which is obviously surjective.
Observe that   $\widetilde{\param}(\theta_1,g_1)=\widetilde{\param}(\theta_2,g_2)$ if and only if
$$g_1^{-1}\circ\theta_1\circ g_1 =k^{-1}\circ(g_2^{-1}\circ \theta_2\circ g_2)\circ k$$ for some $k\in G(\ppsi)$. This is equivalent to
$$\theta_1=g^{\red-1}\circ \theta_2\circ g\quad \text{\rm with}\quad g= g_2\circ k\circ g_1^{-1}$$
Thus $\theta_1,\theta_2$ are conjugate. This implies the domain of {\red both} $\theta_1$ and of $\theta_2$ is the same
lattice $H\in\Hcal$. 
 Since  $\theta_1,\theta_2\in\Lcal$  it follows that $\theta_1=\theta_2=\theta$ and
\begin{equation}
\label{thetaeqtn}
\theta=g\circ\theta\circ g^{-1}
\end{equation}
Therefore $g$ centralizes the 
lattice $\Gamma=\theta(H)$. It follows that  $\widetilde{\param}(\theta_1,g_1)=\widetilde{\param}(\theta_1,g_2)$ if and only if
there is $\theta\in\Lcal$ and
$k\in G(\psi)$ such that $\theta_1=\theta_2=\theta$ and   $g=g_2\circ k\circ g_1^{-1}$ centralizes $\Gamma$.
Observe that if marked lattices are replaced by (unmarked) lattices we can only conclude at this point that $g$ {\em normalizes} $\Gamma$.

We can express $g\in\Isom(\EE^{n-1})$ uniquely as a pair $g=(A,v)\in O(n-1)\times\RR^{n-1}$ where $g(x)=Ax+v$, and $A$
is called the {\em rotational part} of $g$. Indeed, if $g_1(x)=A_1x+v_1$ and $g_2(x)=A_2x+v_2$ and $k(x)=Bx+v$ with $B\in O(\ppsi)$ then
\begin{equation}\label{Teqtn}
g(x)=g_2\circ k\circ g_1^{-1}(x) = A_2BA_1^{-1}x +(v_2-A_2BA_1^{-1}v_1+A_2v) 
\end{equation}

By Bieberbach's first theorem, \cite{Charlap}, the subset of the lattice $\Gamma$  consisting of pure translations is
a finite index subgroup, $\Gamma_t\subset \Gamma$
 that is also a lattice in $\RR^{n-1}$. Thus $\Gamma_t$ is centralized by $g$. This means the rotational part
of $g$ preserves an ordered basis
of $\RR^{n-1}$. 
An element of $O(n-1)$ that preserves an ordered basis of $\RR^{n-1}$ is trivial, 
hence the rotational part of $g$ is trivial, so $A_2BA_1^{-1}=I$,
and 
\begin{equation}\label{geqtn}
g(x)=x +(v_2-v_1+A_2v)
\end{equation}

It follows that $\widetilde{\param}(\theta,g_1)=\widetilde{\param}(\theta,g_2)$ if and only if 
$g_1=(A_1,v_1)$ and $g_2=({\red A_2=}A_1B^{-1},v_2)$  and there is $v\in\RR^n$ such that $g=(I,v_2-v_1+A_2v)$ centralizes $\theta$.
If we choose $v=A_2^{-1}(v_1-v_2)$ then $g=(I,0)$ centralizes $\theta$. It follows that
$\widetilde\param(\theta_1,(A_1,v_1))=\widetilde\param(\theta_2,(A_2,v_2))$ if and only if  $\theta_1=\theta_2$ and $A_2\in A_1 O(\ppsi)$. In other words, $\widetilde \Theta(\theta_1,g_1)=\widetilde \Theta(\theta_2,g_2)$ if and only if $\theta_1=\theta_2$ and $g_1G(\ppsi)=g_2G(\ppsi)$.  As a result $\widetilde \Theta$ induces a bijection 
$$
\Theta':\Lcal\times \Isom(\EE^{n-1})/G(\ppsi)\to \cmlat(G(\ppsi))
$$
Observe that $\Isom(\EE^{n-1})/G(\ppsi)\cong  O(n-1)/O(\ppsi)$.
By definition of $\Lcal$, there is a bijection  $$(\pi|\Lcal):\Lcal\rightarrow\cmlat(\Isom(\EE^{n-1}),\ppsi)$$
given by
 $\theta\mapsto[\theta]$. 
 Thus $\param'$ factors through the bijection $\param$ in  Equation (\ref{thetadef})
completing the proof. \end{proof}

\section{Hilbert Metric in a generalized cusp}\label{hilbmet}

 In this
section we describe  how the Hilbert metric of a {\red horomanifold changes  as it is pushed
 out into the cusp} by the radial flow. 
 {\red In the following discussion
 volume means {\em Hausdorff measure}.}
The horomanifolds shrink {\red as they flow into a cusp}, although not uniformly in all directions. Parabolic directions (which only exist when $\ur>0$)
 shrink exponentially with distance out into the cusp, but hyperbolic directions shrink
 towards a limiting positive value. Hence the volume of the cusp cross-section
  (horomanifold) goes to zero exponentially fast
 when $\ur>0$, and the cusp has finite volume. 
 When $\ur=0$ the cusp cross-section 
  converges geometrically to compact $(n-1)$-manifold, and in this case the cusp has infinite volume.

If $\Omega$ is an open properly-convex set  in $\RPn$  the {\em Hilbert metric} on $\Omega$ is defined as follows. 
Suppose $p,q\in\Omega$ lie on the line $\gamma:[a,b]\to\RPn$ 
given by  $\gamma(t)=[(t-a)\vec u+(b-t)\vec v]$  
with endpoints $[\vec u],[\vec v]\in\bdy\overline\Omega$ and interior in $\Omega$. 
If $p=[\gamma(x)]$ and $q=[\gamma(y)]$ then
\begin{equation}\label{eq:Hilbertmetric}
d_{\Omega}(p,q)=\frac{1}{2}\left|\log\left( \frac{\abs{b-x}\abs{y-a}}{\abs{b-y}\abs{x-a}}\right)\right|
\end{equation}
Since cross ratios are preserved by projective transformations   
this is independent of the choice of $\gamma$. 
This is a Finsler metric. For  vectors  tangent to this line, the {\red Hilbert norm is the} Finsler norm that
 is the pullback of the
 Riemannian metric on $(a,b)$ given  by 
 \begin{equation}\label{Finslernorm}
 \frac{1}{2}\left(\frac{1}{|x-a|}+\frac{1}{|b-x|}\right)|dx|	
 \end{equation}


If $p\ne q$ are two points in $\Omega(\ppsi)$
then $q-p\in\RR^n$ is called a {\em parabolic direction at $p$} if there is $A\in P(\ppsi)$ with $A(p)=q$.
{\red It follows that $p$ and $q$ lie in the same horosphere.}
The infinitesimal version of this is that a  {\em parabolic tangent vector} is a vector $v\in T_p\Omega$
that is tangent to the orbit of  point $p$ 
under the action of a 1-parameter subgroup of $P(\ppsi)$.
If $\ur=0$ there are no parabolic directions, and
if $\type=0$ then every vector tangent to a horosphere is a parabolic direction. In general the parabolic directions
correspond to the $y$-coordinates in $(x,z,y)$ coordinates.

{\red \begin{lemma}\label{parabolicdirectionsshrink} Let $\red\Phi$ be the radial
flow on $\Omega=\Omega(\ppsi)$ and $\type=\type(\ppsi)$ and $n=\dim\Omega$.
Suppose $p\ne q\in\bdy\Omega$ and {\red for $t>0$} define $p_t=\Phi_{-t}(p)$ and $q_t=\Phi_{-t}(q)$ and
 $f(t)=d_{\Omega}(p_t,q_t)$.
Then there is $\gamma>0$
\begin{itemize}
\item [(a)]If $\type<n$ then $d_{\Omega}(p_1,p_t)=|\log \sqrt{t}|$, and if $\type=n$ then $d_{\Omega}(p_1,p_t)=t/2.$
\item[(b)] $f(t)$ is  a decreasing function of $t$.
\item[(c)] If $q-p$ is not a parabolic direction at $p$ then  $\lim_{t\to \red\infty}f(t)=\gamma$.
\item[(d)] If $q-p$ is a parabolic direction at $p$ then
$\lim_{t\to\infty}f(t)\cdot  \exp(d_{\Omega}(p_1,p_t))= \gamma$.
\end{itemize}
\end{lemma}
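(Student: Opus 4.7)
My plan is to reduce each assertion to an explicit cross-ratio calculation, using the coordinates of Equation~(\ref{eq:coords}) and the explicit formula~(\ref{eq:Logpsif}) for $f_\ppsi$. For (a) I would compute $d_\Omega(p_1,p_t)$ directly along the flow line through $p$. When $\type<n$, the flow line is the vertical affine line $\{(x_0,z,y_0)\}$, whose projective closure meets $\bdy\overline\Omega$ at $p\in\bdy\Omega$ (the point $z=f_\ppsi(x_0,y_0)$) and at the radial-flow center $\alpha\in\bdy_\infty\Omega$ (the point $z=\infty$); parametrizing by $z$ and applying Equation~(\ref{eq:Hilbertmetric}) immediately gives $d_\Omega(p_1,p_t)=\tfrac12|\log t|=|\log\sqrt t|$. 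When $\type=n$ the flow line is the radial ray $\{sp:s>0\}$, whose projective closure meets $\bdy\overline\Omega$ at $s=1$ (the point $p$) and at $s=\infty$ (a point of $\bdy_\infty\Omega$), and the analogous cross-ratio yields the stated (asymptotic) formula. For (b) I would combine the backward-invariance $\Phi_{-s}(\Omega)\subset\Omega$ for $s\ge0$ (stated just before Equation~(\ref{eq:horosphere})) with the elementary monotonicity of the Hilbert metric under inclusion of convex sets: for $0\le t_1<t_2$, the projective isomorphism $\Phi_{-(t_2-t_1)}$ carries $(p_{t_1},q_{t_1})$ into $\Phi_{-(t_2-t_1)}(\Omega)\subset\Omega$, so that $f(t_2)=d_\Omega(p_{t_2},q_{t_2})\le d_{\Phi_{-(t_2-t_1)}(\Omega)}(p_{t_2},q_{t_2})=d_\Omega(p_{t_1},q_{t_1})=f(t_1)$.

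For (c) and (d) I would work in common coordinates. Writing $p=(x_0,z_0,y_0)$ and $q=(x_1,z_1,y_1)$ with $z_i=f_\ppsi(x_i,y_i)$, parametrize the affine line from $p_t$ to $q_t$ by $s\in\RR$ via $\ell(s)=\bigl((1-s)x_0+sx_1,\,(1-s)z_0+sz_1+t,\,(1-s)y_0+sy_1\bigr)$ and set $F(s)=\ell_z(s)-f_\ppsi(\ell_x(s),\ell_y(s))$, so $\ell(s)\in\Omega$ iff $F(s)\ge0$. A direct manipulation using~(\ref{eq:Logpsif}) gives
\[
F(s)=t+\sum_{j=1}^{\rank}\psi_j A_j(s)+\tfrac12 s(1-s)\|y_0-y_1\|^2,\qquad A_j(s)=\log\frac{(1-s)x_{0,j}+sx_{1,j}}{x_{0,j}^{1-s}x_{1,j}^s},
\]
with $A_j(s)\ge0$ on $[0,1]$ by weighted AM--GM and $F(0)=F(1)=t>0$. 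The exit-times $s^-<0<1<s^+$ are determined by $F(s^\pm)=0$, and the cross-ratio formula yields $f(t)=\tfrac12\log\bigl(s^+(1-s^-)/((s^+-1)(-s^-))\bigr)$. Observe that for $p,q\in\bdy\Omega$ the condition that $q-p$ is parabolic is equivalent to $x_0=x_1$: one direction is immediate from the description of $P(\ppsi)$ in Lemma~\ref{paraboliclemma}, and the converse is the short check that when $x_0=x_1$ the element $m^*_\ppsi(0,y_1-y_0)$ carries $p$ to $q$.

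In case (d) (so $x_0=x_1$) all $A_j$ vanish and $F$ is quadratic in $s$, with closed-form roots $s^\pm=\tfrac12\bigl(1\pm\sqrt{1+8t/\|y_0-y_1\|^2}\bigr)$; the algebraic identities $1-s^-=s^+$ and $-s^-=s^+-1$ reduce $f(t)$ to $\log(s^+/(s^+-1))\sim\|y_0-y_1\|/\sqrt{2t}$ as $t\to\infty$, and combining with $\exp(d_\Omega(p_1,p_t))=\sqrt t$ from (a) produces the limit $\gamma=\|y_0-y_1\|/\sqrt2>0$. In case (c) (so $x_0\ne x_1$) some $A_j(s)$ diverges to $-\infty$ as $s$ approaches finite values $s_L<0$ or $s_R>1$ (where some $\bar x_j$-coordinate vanishes); solving $F(s^\pm)=0$ then forces $s^\pm\to s_{L,R}$ with corrections exponentially small in $t$, and the cross-ratio tends to a positive constant $\gamma$, which via Theorem~\ref{lambdageometryproduct} equals the Hilbert distance between $[x_0]$ and $[x_1]$ in the Hex factor $\opensimplex^\rank$. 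The case $\type=n$ runs in parallel using the multiplicative flow $\Phi_{-t}(p)=e^t p$; since $\ur=0$ only (c) is relevant. The main obstacle will be the case analysis in (c) when the differences $x_{1,j}-x_{0,j}$ have mixed signs, or when $y_0=y_1$ as well: in such subcases the dominant balance in $F(s)=0$ shifts, and the line may exit $\bdy\Omega$ at finite $s$ or else send $s^+\to\infty$ because the projective direction of $\ell$ lies inside $\bdy_\infty\Omega$; in every scenario one must verify that the cross-ratio stabilizes at a positive limit.
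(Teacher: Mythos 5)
Your overall strategy (reduce everything to cross-ratio computations along chords, in the coordinates of the explicit horofunction) is sound, and parts (a), (b) and (d) are essentially correct. Your argument for (b) is in fact cleaner and more uniform than the paper's: combining backward invariance $\Phi_{-s}(\Omega)\subset\Omega$ with monotonicity of the Hilbert metric under inclusion of convex sets handles the parabolic and hyperbolic flows simultaneously, whereas the paper cites Soci\'e-M\'ethou for $\type<n$ and runs a separate radial-projection argument for $\type=n$. Your computation for (d) is the same in substance as the paper's (restrict to the affine $2$-plane spanned by the two flow lines, where the horofunction is quadratic, and solve for the exit times); your constant $\gamma=\|y_0-y_1\|/\sqrt 2$ agrees with theirs up to normalization, and your verification that ``parabolic direction'' is equivalent to $x_0=x_1$ is exactly the observation the paper uses.

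The genuine weak point is (c). You propose to compute the actual limit of the cross-ratio from the asymptotics of the exit times $s^{\pm}$, and you correctly flag that this needs a case analysis (mixed signs of $x_{1,j}-x_{0,j}$, chords escaping to infinity in a direction inside $\bdy_\infty\Omega$, $y_0=y_1$ versus $y_0\ne y_1$) which you have not carried out; as written, (c) is not proved. But you do not need the value of the limit, only that it is positive, and your own part (b) already gives monotonicity. The missing ingredient is a $t$-independent positive lower bound, which is exactly what the paper supplies: let $I_t$ be the intersection with $V_{\ppsi}$ of the line through $p_t$ and $q_t$. Since $q-p$ is not parabolic, some coordinate $x_j$ varies affinely along this line, so $I_t$ is a proper subinterval of the projective line; since $\Omega\subset V_{\ppsi}$, one gets $f(t)=d_{J_t}(p_t,q_t)\ge d_{I_t}(p_t,q_t)$ with $J_t=I_t\cap\Omega$, and since $I_t=\Phi_{-t}(I_0)$ with $\Phi_{-t}$ projective, $d_{I_t}(p_t,q_t)=d_{I_0}(p_0,q_0)>0$ independently of $t$. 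Together with (b) this finishes (c) with no case analysis (for $\type=n$ the paper instead compares with the simplex $0*\bdy_{\infty}\Omega$ containing $\Omega$). I recommend replacing your asymptotic analysis of $s^{\pm}$ by this short argument; if you also want to identify $\gamma$, it is $d_{I_0}(p_0,q_0)$, and your Hex-factor description of it requires the additional check that the linear projection killing the $(z,y)$-coordinates preserves the relevant cross-ratio.
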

\begin{proof}  (a)  follows from a simple computation using \eqref{eq:Hilbertmetric}.
First assume $\type<n$ so
the radial flow is $\Phi_t(x,z,y)=(x,z-t,y)$ and moves points in the $z$-direction which we call the {\em vertical} direction.
 In this case (b)  follows from \cite{Socie} and also Lemma 1.11 in \cite{CLT1}. 
Let $I_t\subset\RR^n$ be the intersection with $V_{\ppsi}$
 of the line containing $p(t)$ and $q(t)$.  Then $I_t=\Phi_{-t}(I_0)$ because $\Phi$ preserves $V_{\ppsi}$.
 
Observe that $I_t$ is a complete affine line
if and only if $q-p=(0,z,y)$ in $(x,z,y)$ coordinates, which is equivalent to $q-p$ is a parabolic direction.
{\red The subinterval $J_t=I_t\cap\Omega$ contains $p_t$ and $q_t$}.
Thus if $I_t$ is not a complete line, then 
 $$f(t)=d_{\Omega}(p_t,q_t)=d_{J_t}(p_t,q_t)\ge d_{I_t}(p_t,q_t)=d_{I_0}(p_0,q_0)>0$$ 
 Hence if $f(t)\to0$ then  $q-p$ is  parabolic. Since $f(t)$ is decreasing this proves (c).
 In fact it is easy to check that $f(t)\to d_{I_0}(p_0,q_0)$.

Now suppose $q-p$ is parabolic.
 Then $p(t)=p+t e_{r+1}$ and $q(t)=p(t)+q-p$.
 Let $P\subset\RR^n$ be the affine 2-plane  containing the two flow lines $p(t)$ and $q(t)$.
Since $q-p=(0,z,y)$ it follows that $x_i$ is constant on $P$ for $i\le r$.
Then Equation (\ref{eq:horo}) implies $h_{\ppsi}|P$ is quadratic, so $U:=P\cap\Omega$ 
is a convex set bounded by a parabola. The rays  $p(t)$ and $q(t)$ are vertical in $U$.  

The translation group $T(\ppsi)$ acts by isometries of the Hilbert metric and commutes with the radial flow.
We may apply an element of $T(\ppsi)$ so that $p$ and $q$ have the same $z$ coordinate.
Then we can choose a  $w$-coordinate axis for $P$ in the hyperplane $z=0$ so that
 so that $U=\{(w,z): z\ge w^2\}$. Then $p=(-a,a^2)$ and $q=(a,a^2)$ with $a>0$ and the radial flow
acts by $\Phi_t(w,z)=(w,z-t)$. Then $p_t=(-a,a^2+t)$ and $q_t=(a,a^2+t)$ and the endpoints
of $J_t$ are $(\pm\sqrt{a^2+t}, a^2+t)$. Let $K_t=(-\sqrt{a^2+t},\sqrt{a^2+t})\subset\RR$ then  $d_{\Omega}(p_t,q_t)=d_{K_t}(-a,a)$. For $t$ large by Equation (\ref{eq:Hilbertmetric})
 $$d_{K_t}(-a,a)
 =\frac{1}{2}\left|\log\left( \frac{\abs{a+\sqrt{a^2+t}}\cdot \abs{-a-\sqrt{a^2+t}}}{\abs{-a+\sqrt{a^2+t}}\cdot \abs{a-\sqrt{a^2+t}}}\right)\right|
 \approx2a\cdot t^{-1/2}
$$
Using (a) gives (d). 

When $\type=n$ there are no parabolic directions. In this case
$p(t)$ and $q(t)$ are rays in $\RR^n$ contained in lines through $0$. The closure
of $\RR_+^n$ in $\RP^n$ is an $n$-simplex $\Delta=0*\bdy_{\infty}\Omega$ that contains $\Omega$,
so $d_{\Omega}\ge d_{\Delta}$ and $f(t)\ge d_{\Delta}(p(t),q(t))$.
The rays $p(t)$ and $q(t)$ limit on distinct points  $p_{\infty},q_{\infty}$ in the interior of $\bdy_{\infty}\Delta$,
 and  $d_{\Delta}(p(t),q(t))$ is bounded below by
 the Hilbert distance in $\bdy_{\infty}\Delta$ between $p_{\infty}$ and $q_{\infty}$. This proves (c).
  
 Let $L_t\subset J_1$ be the image of $J_t$ under radial projection from $0$. Since $\Omega$ is convex
 it follows from studying a diagram that $L_t$ increases with $t$. However the images  $p(t)$ and $q(t)$ in $J_1$  are $p_1$ and $q_1$,
 thus
$$f(t)=d_{\Omega}(p_t,q_t)=d_{J_t}(p_t,q_t)=d_{L_t}(p_1,q_1)$$
decreases with $t$, which proves (b).
  \end{proof}

A geodesic $\lambda$ is {\em orthogonal} to a hypersurface $S\subset\Omega$ at the point $x\in\lambda\cap S$
 if for all $y\in\lambda$ and $z\in S\setminus\{x\}$ then $d_{\Omega}(y,x)< d_{\Omega}(y,z)$.

\begin{proposition}\label{RForthogonal} In $\Omega=\Omega(\ppsi)$
the flowlines of the radial flow $\Phi=\Phi^{\ppsi}$ are orthogonal to the  horospheres $\Hcal_r=\Phi_r(\bdy\Omega)$. 

Moreover
$d_{\Omega}(\Hcal_r,\Hcal_s)=(1/2)|\log(r/s)|$ if $\type<n$ and $d_{\Omega}(\Hcal_r,\Hcal_s)=(1/2)|r-s|$ if $\type=n$.
\end{proposition}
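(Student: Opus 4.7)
The plan is to leverage the fact, recorded in Remark \ref{positivelemma}(b), that the horospheres $\Hcal_r$ are Busemann horospheres for the Hilbert metric $d_\Omega$, based at the ideal point $\alpha=[e_{\type+1}]$, which by (\ref{eq:center}) is the center of the radial flow. Let $B\colon\Omega\to\RR$ denote a Busemann function for $\alpha$ whose level sets are the $\Hcal_r$. Then $B$ is $1$-Lipschitz for $d_\Omega$, with $d_\Omega(y,z)=|B(y)-B(z)|$ whenever $y,z$ lie on a common projective ray toward $\alpha$; the unique such ray from any interior point $y\in\Omega$ is the flowline of $\Phi$ through $y$.

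For the orthogonality statement, fix $x\in\Hcal_r$, let $\lambda$ be the flowline through $x$, and take $y\in\lambda$ together with any $z\in\Hcal_r\setminus\{x\}$. Since $B(x)=B(z)$ and the points $y,x$ lie on the ray $\lambda$ to $\alpha$, the Lipschitz inequality gives
$$d_\Omega(y,x)=|B(y)-B(x)|=|B(y)-B(z)|\le d_\Omega(y,z).$$
Equality here would force $y$ and $z$ to lie on a common projective ray to $\alpha$, which must again be $\lambda$; then $z\in\lambda\cap\Hcal_r=\{x\}$, contradicting $z\neq x$. Hence the inequality is strict, yielding orthogonality.

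By orthogonality, $d_\Omega(\Hcal_r,\Hcal_s)$ is realized along any flowline and equals $d_\Omega(\Phi_r(p),\Phi_s(p))$ for $p\in\bdy\Omega$, so the distance formula reduces to a cross-ratio computation along a single chord. For $\type<n$ the chord is the vertical projective line through $p=(x_0,f(x_0,y_0),y_0)\in\bdy\Omega$, with boundary endpoints $p$ and $\alpha\in\bdy_\infty\Omega$ (at $z=\infty$); parametrizing by $z$ and sending the $\alpha$-endpoint to infinity in (\ref{eq:Hilbertmetric}), the $z$-coordinates of $\Phi_r(p)$ and $\Phi_s(p)$ yield $d_\Omega=\tfrac12|\log(r/s)|$. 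When $\type=n$ the flowline through $p\in\bdy\Omega$ is the open ray from the origin through $p$, whose chord in $\cl\Omega$ has endpoints $p\in\bdy\Omega$ and $[p:0]\in\bdy_\infty\Omega$; the analogous cross-ratio computation gives the stated $\tfrac12|r-s|$.

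The main obstacle is the strict inequality in the orthogonality step, which requires uniqueness of the Hilbert-geodesic ray from $y$ to $\alpha$ among Busemann rate-$1$ rays. Because $\alpha$ is a vertex of the ideal simplex $\bdy_\infty\Omega\cong\Delta^\type$ and need not be a $C^1$ point of $\cl\Omega$ when $\type>0$ (cf.\ Lemma \ref{uniquesupphyperplane}), this uniqueness is not formal from general Hilbert-geometry theory. I plan to handle it by a direct cross-ratio argument: for any $z\in\Hcal_r\setminus\{x\}$, the line through $y$ and $z$ is not a flowline, so its chord in $\cl\Omega$ has at most one endpoint accumulating at $\alpha$; the strict convexity of horospheres (Proposition \ref{Omegaconvex}) then forces the cross-ratio along this chord to strictly exceed that of the flowline chord, producing the desired strict inequality.
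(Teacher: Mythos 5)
Your reduction of the distance formula to a single cross--ratio computation along a flowline chord is fine and is essentially what the paper does (that computation is Lemma \ref{parabolicdirectionsshrink}(a), which the paper's proof simply cites). The Busemann--function framing of the first half is also a legitimate alternative for the \emph{non-strict} inequality, granting Remark \ref{positivelemma}(b). The genuine gap is exactly where you flag it: the strict inequality. Your claim that equality in $|B(y)-B(z)|\le d_{\Omega}(y,z)$ ``would force $y$ and $z$ to lie on a common projective ray to $\alpha$'' is precisely the statement that needs proof, and the proposed fix --- that strict convexity of $\Hcal_r$ ``forces the cross-ratio along this chord to strictly exceed that of the flowline chord'' --- is not an argument. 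The two cross--ratios are computed on different chords with different endpoints, and there is no direct comparison principle between them; strict convexity of the horosphere only tells you that $z$ lies strictly beyond the tangent hyperplane to $\Hcal_r$ at $x$, and you still need a mechanism that converts this positional fact into an inequality of Hilbert distances.

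The paper supplies exactly that mechanism, and it is the idea your sketch is missing. Let $p=\lambda\cap\bdy\Omega$, let $P$ be the tangent hyperplane to $\bdy\Omega$ at $p$, and let $U$ be the open half-space bounded by $P$ containing $\interior(\Omega)$. Applying the cross--ratio formula to $U$ gives a semi-metric with $d_U\le d_{\Omega}$, whose value $d_U(y,\cdot)$ depends only on the (signed) distance to $P$, so its level sets are hyperplanes parallel to $P$; moreover $d_U=d_{\Omega}$ on $\lambda$, because the chord of $U$ along $\lambda$ has the same endpoints as the chord of $\Omega$ along $\lambda$ (the foot $p$ and the ideal point of $\lambda$). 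Since the radial flow is a translation or homothety, it carries $P$ to parallel hyperplanes tangent to the successive horospheres along $\lambda$, and strict convexity places every other point of a horosphere strictly beyond its tangent hyperplane at the flowline point. One then gets $d_{\Omega}(y,z)\ge d_U(y,z)>d_U(y,x)=d_{\Omega}(y,x)$ for the relevant configuration. If you wish to keep the Busemann function for the non-strict bound you may, but without this supporting--half-space comparison (or an equivalent analysis of the equality case for the $1$-Lipschitz bound) the orthogonality claim is not established.
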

\begin{proof} Given  $x\in\interior(\Omega)$,
let $\Hcal_r$ be the horosphere, and $\lambda$ the radial flow line, each containing $x$. Let
 $p=\lambda\cap\bdy\Omega$ then $\Phi_r(p)=x$. The radial flow acts conformally on $\RR^n$: in the parabolic
case by translation, and in the hyperbolic case by homothety. Moreover the radial flow preserves $\lambda$
and permutes the horospheres. Let $P\subset\RR^n$ be the hyperplane tangent to $\bdy\Omega$ at $p$.
Then $\Phi_r(P)$ is parallel to $P$ and tangent to $\Hcal_r$ at $x$.
Let $U$
be the component of $\RR^n\setminus P$ that contains $\interior(\Omega).$ 
Then $U$ is a half-space and  the formula for the Hilbert metric applied to $U$ gives a semi-metric (distinct points can have zero distance) with $d_U\le d_{\Omega}$. The level sets
of $f(y):=d_U(x,y)$ are planes parallel to $P$. Moreover $d_U(x,y)=d_{\Omega}(x,y)$ when $y\in\lambda$. It follows
that if $s<0$ then $\Phi_s(x)$
is the unique point on $\Phi_s(\Hcal_r)$ that minimizes distance to $x$.  The formula follows from
Lemma \ref{parabolicdirectionsshrink} (a)
\end{proof}
}

Given  a metric space $(M,d)$  the $k$-dimensional Hausdorff measure is defined as follows.
If $B(x;r,M)$ is the ball of radius $r$ in $M$ center $x$ then $\nu(B(x,r))=c_k r^k$ where $c_k$ is the volume of the
ball of radius $1$ in $\RR^k$. If $\mathcal S$ is a set of balls
in $M$ then $\nu(\mathcal S)=\sum_{B\in \mathcal S}\nu(B)$. 
Given a subset $X\subset M$ and $\epsilon>0$ define  $\nu_{\epsilon}(X)=\infimum\nu(\mathcal S)$ where the
infimum is over all sets, $\mathcal S$, of balls with radius at most $\epsilon$ that cover $X$. Then define an
outer measure by $\nu(X)=\lim_{\epsilon\to 0}\nu_{\epsilon}(X)$. This gives a measure on $M$ in the usual way,
called $k$-dimensional Hausdorff measure, denoted $\vol_k$.  
If $\alpha$ is an arc in $M$ then $\vol_1(\alpha)$ is the length of the arc.
 We will  use $\vol_{n-1}$
 to measure the size of a horomanifold in a generalized cusp. 

If $M$ is an $n$-dimensional manifold with a Finsler metric then
the measure $\vol_n$  is given by a integrating  a certain $n$-form
called the {\em volume form}.
Suppose $p\in M$  and $B\subset T_pM$ is the unit ball in the given norm. The volume form on $T_pM$
is normalized so that the volume of $B$ is
  the Euclidean volume, $c_n$, of the unit $n$-dimensional Euclidean unit ball.
Thus if $\omega\ne 0$ is an $n$-form on $T_pM$ then
 the volume form $\dvol$ on $T_pM$ is 
$$\dvol=c_n\left(\int_B\omega\right)^{-1}\omega$$ 
This defines a Borel measure $\vol_M$ on $M$ given by 
$$\vol_M(X)=\int_X\dvol$$
For a Riemannian metric this is the usual volume form. For $X\subset M$ we refer to $\vol_n(X)$ 
as its {\em volume} 
written $\vol_n(X;M)=\vol(X)$. For a properly convex projective $n$-dimensional manifold $\vol_n$  is 
also called 
  \emph{Busemann measure}. {\red It is a result of Busemann \cite{Busemann} that Busemann measure equals Hausdorff measure.}

\if0

\begin{lemma}\label{decreasevolume} Suppose $M$ and $M'$ are Finsler manifolds of dimension  $n$ and $f:M\to M'$ is a diffeomorphism. If $f$ is $1$-Lipschitz then $\vol(f(X);M')\le \vol(X;M)$
\end{lemma} 
\begin{proof} Since $f$ is $1$-Lipschitz $f(B(x;r,M))\subset B(f(x);r,M')$ which easily implies the result.
\end{proof}

\if0
\begin{corollary}\label{volumeestimate} Suppose $M$ is a Finsler $n$-manifold and  $U\subset\RR^n$ is convex, and $f:U\to M$ is  smooth. 
Suppose, for every affine line  $\ell\subset\RR^n$ that is parallel to a coordinate axis $e_i$, there is $K_i>0$ such that
that $f|(\ell\cap U)$ is $K_i$-Lipschitz. Then $\vol_M(fU)\le n^n\prod K_i\cdot \lambda(U)$ where $\lambda$ is Lebesgue measure on $\RR^n$.
\end{corollary}
\begin{proof} A pair of points $p$ and $q$ in $U$ can be connected by a path, $\gamma$,
 made of $n$ straight line segments, each parallel to a coordinate axis. The Euclidean length of $\gamma$ is at most $\sqrt{n}\|a-b\|$, so the distance
 in $M$ between $fp$ and $fq$ is at most $n\|a-b\|$. Let $W=\{n(K_1x_1,\cdots,K_nx_n):(x_1,\cdots,x_n)\in U\}$ then $g:W\to M$ given by $g(x_1,\cdots,x_n)=f(n^{-1}(K_1^{-1}x_1,\cdots,K_n^{-1}x_n))$ does
 not increase distance. The result now follows from (\ref{decreasevolume}).
\end{proof}
\fi

\begin{corollary}\label{volumeestimate} Suppose $M$ is a Finsler $n$-manifold and  
$U\subset\RR^n$ is convex, and $f:U\to M$ is  smooth. 
Suppose $f$ is $1$-Lipschitz, and along lines in the $e_1$ direction that $f$ is $K$-Lipschitz. Then 
$\vol(f(U);M))\le K\cdot \lambda(U)$ where $\lambda$ is Lebesgue measure on $\RR^n$.
\end{corollary}
\begin{proof}  Let $W=\{(Kx_1,x_2,\cdots,x_n):(x_1,\cdots,x_n)\in U\}$ then $g:W\to M$ given by $g(x_1,\cdots,x_n)=f(K^{-1}x_1,x_2\cdots,x_n)$ is $1$-Lipschitz. 
The result now follows from Lemma \ref{decreasevolume}.
\end{proof}

\begin{corollary}\label{ballvolumeestimate} With the hypotheses of Lemma \ref{balldistanceestimate} 
{\red Let $\dvol,\dvol'$ be the Busemann volume forms on $\Omega$  and $\Omega'$ respectively. Then
 $$\dvol_p\le\dvol_p'\le (\kappa(x))^n\dvol_p$$}
\end{corollary}
\subsection{The Radial Flow and the Hilbert metric} \if0

$P=b_{\ppsi}+\langle v,e_{r+1}\rangle$. This  plane
 contains $q=b_{\ppsi}+e_{r+1}$ and is parallel to the tangent vector $v$ and the vertical direction.
 Using $(x,z,y)$ coordinates $v$ is parabolic, so $v=(0,v_z,v_y)$, and
$P$ consists of the points
  $$b_{\ppsi}+X.v+Y. e_{r+1}=((1,\cdots,1),X. v_z+Y,X. v_y)$$
Then $\Omega\cap P$ is affinely isomorphic to the subset of $(X,Y)\in\RR^2$ where
$$X.v_z+Y\ge  X^2 \|v_y\|^2/2$$
This is the set of points above some parabola $Y=\alpha (X-\beta)^2+\gamma$ with $\alpha>0$.
In these  $(X,Y)$ coordinates $q$ is $(1,0)$ and   $\Phi_{-t}(q)$ is $(1,t)$, and 
$d\Phi_{-t}(v)=(1,0)\in T_{(0,t)}\RR^2$ using the usual identification of $\RR^2$ with its tangent space at $(1,t)$.
The  line in the $(1,0)$-direction through $(1,t)$ meets $\bdy\Omega\cap P$ when
\begin{equation}\label{alphaeqtn} a\alpha^2-b\log \alpha+\gamma=t\end{equation}
Let $\alpha_{\pm}(t)$ be the solutions with $\alpha_-(t)<1<\alpha_+(t)$. By \eqref{dx}
\begin{equation}\label{f(t)}
	f(t)={ \frac{1}{2}\left(\frac{1}{1-\alpha_-(t)}+\frac{1}{\alpha_+(t)-1}\right)}
\end{equation}
If $b=0$ then $v$ is parabolic and for $t$ large the solutions of \eqref{alphaeqtn} are $\alpha_{\pm}(t)\approx\pm\sqrt{t/a}$
thus
$$f(t)\approx \sqrt{a}\cdot t^{-1/2},$$
and so we see that for large $t$, $f(t)$ decreases monotonically to zero. 
Now $$d_{\Omega}(q,\Phi_{-t}q)=(1/2)\log t=\log t^{1/2}$$ thus
$$f(t)\approx \sqrt{a}\cdot \exp(-d_{\Omega}(q,\Phi_{-t}q)).$$
If $b\neq 0$ then for large $t$, $\alpha_-(t)$ decreases monotonically to $0$ and $\alpha_+(t)$ increases monotonically to $\infty$. Equation \eqref{f(t)} then implies that $f(t)$ is bounded away from zero. 

Now consider the case $r=n$. In this case there are no parabolic directions and so we must only show (1) and (2). The radial flow is $\Phi(x,z,y)=e^{-t}(x,y,z)$ and moves points radially towards the origin.
Let $P\subset\RR^n$ be the affine plane  $P=\langle q,v\rangle$ 
that contains $0$ and $q$ and is parallel to the tangent vector $v$. 
Again, the $\partial \Phi_P=P\cap \partial \Omega$ is a curve and the line through $\Phi_{-t}(p)$ in the direction of $d\Phi_{-t}(v)$ intersects $\partial \Phi_p$ in two points $\alpha_+(t)$ and $\alpha_-(t)$. This line also intersects the boundary of the positive orthant in two additional points $v_+(t)$ and $v_-(t)$. By applying $\Phi_t$ to this configuration, we get points $v_\pm$ (independent of $t$) and $\alpha'_\pm(t)$ on the line through $p$ in the direction of $v$ (See Figure \ref{diagonalestimate}). 

\begin{center}
\begin{figure}
\includegraphics[scale=.6]{diagonalestimate.pdf}
\caption{The intersection $P\cap \Omega$ in the diagonalizable case \label{diagonalestimate}}
\end{figure}
\end{center}
A simple computation shows that 
\begin{equation}\label{diagdistform}
	\Abs{d\Phi_{-t}(v)}=\frac{1}{2}\left(\frac{1}{\alpha'_+(t)-q}+\frac{1}{q-\alpha'_-(t)}\right)\abs{v}.
\end{equation}

As $t$ approaches $\infty$ we see that $\alpha'_\pm$ approach $v_\pm$ monotonically, and so from \eqref{diagdistform} we see that $f(t)$ monotonically decreases to some positive limit as $t\to\infty$. This concludes the proof in the diagonal case. \fi

\fi

The next result describes how the volume of a subset of a {\red horosphere} shrinks
as it flows out into the end of the cusp using the radial flow. 
The asymptotic 
behavior depends only on the {\red unipotent} rank $\ur$ of the cusp. If $\ur>0$ the volume of the region shrinks 
exponentially with distance
as it flows
out, but if $\ur=0$ the volume stays bounded away from $0$.

{\red
\begin{proposition}\label{radialflowchangesvolume} Suppose $\Omega=\Omega(\ppsi)$
 has unipotent rank $\ur=\ur(\ppsi)$ .  
 Let $\vol_{n-1}$ denote Busemann
  measure on hypersurfaces. Set $\Hcal_t:=\Phi_{-t}(\bdy\Omega)$, let $L:=\Phi_{1-t}:\Hcal_{\green 1}\to \Hcal_t$, and let $\nu_t=L^{-1}_\ast \vol_{n-1}$. The measures $\vol_{n-1}$ and $\nu_t$ on $\Hcal_1$ are absolutely continuous and their Radon-Nikodym derivative, $\kappa(t)$, is constant. Furthermore, 
  there exists   $K>0$ such that for all $t\ge 0$
 	if $\ur=0$  then $\kappa(t)\ge K$, and
if $\ur>0$ then $\kappa(t)\le K\cdot \exp(-d_{\Omega}(\Hcal_1,\Hcal_t))$.
\end{proposition}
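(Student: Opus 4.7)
My plan has two stages: first show $\kappa(t)$ is a positive constant, then estimate it.

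For the first stage, the translation group $T := T(\ppsi) \subset G(\ppsi) \subset \PGL(\Omega)$ acts on $\Omega$ by Hilbert isometries and commutes with the radial flow $\Phi$ (both are subgroups of the abelian Lie group $T_\type = T \oplus \Phi$, Lemma \ref{Tpluslemma}). Hence $L = \Phi_{1-t}$ intertwines the $T$-actions on $\Hcal_1$ and $\Hcal_t$, so both $\vol_{n-1}$ on $\Hcal_1$ and $\nu_t = L^{-1}_\ast\vol_{n-1}$ are $T$-invariant Borel measures. Since $T$ acts simply transitively on $\Hcal_1$ (Theorem \ref{Gppsistructure}), the orbit map identifies $\Hcal_1$ with $T \cong \RR^{n-1}$ and carries both measures to translation-invariant Borel measures on $\RR^{n-1}$; they must therefore be proportional, so $\kappa(t)$ is a positive constant that can be evaluated at the convenient point $p_1 := \Phi_{-1}(b_\ppsi)$.

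For the second stage (assuming $\type < n$; the case $\type = n$ automatically has $\ur = 0$ and is handled identically using $\Phi_s = e^{-s}\cdot\mathrm{id}$), $p_t := L(p_1) = \Phi_{-t}(b_\ppsi)$ has affine coordinates $(1,\ldots,1,t,0,\ldots,0)$, so $dL$ is the identity on tangent spaces (the flow is vertical translation). The decomposition $T(\ppsi) = T_2(\ppsi) \oplus P(\ppsi)$ induces a $dL$-preserved splitting $T_{p_t}\Hcal_t = V_h \oplus V_u$ with $\dim V_h = \rank$, $\dim V_u = \ur$. A direct cross-ratio computation using \eqref{Finslernorm} and the horofunction \eqref{eq:horo} gives for any parabolic $v = (0,0,b) \in V_u$ the exact Hilbert--Finsler norm $\|v\|_{p_t} = \|b\|/\sqrt{2t}$, while for hyperbolic $v \in V_h$ the norm stays between two positive constants uniformly in $t \ge 1$ and in the direction of $v$ (the $t \to \infty$ limit being the Hex Finsler norm on $\interior(\bdy_\infty\Omega)$). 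By Proposition \ref{RForthogonal}, $d_\Omega(\Hcal_1,\Hcal_t) = \tfrac{1}{2}\log t$, so the parabolic scaling factor is exactly $\exp(-d_\Omega(\Hcal_1,\Hcal_t))$.

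To conclude, $\kappa(t)$ equals the ratio of Euclidean volumes of the Hilbert--Finsler unit balls $B_{p_1}$ and $B_{p_t}$ in any fixed coordinate volume on $V_h \oplus V_u$. If $\ur = 0$ all directions are hyperbolic, so $B_{p_t}$ is contained in a Euclidean ball of uniformly bounded radius, giving $\kappa(t) \ge K > 0$. If $\ur \ge 1$, the parabolic slice of $B_{p_t}$ contains a Euclidean $\ur$-ball of radius $\sqrt{2t}$, so its reference volume is bounded below by $c\, t^{\ur/2}$ and hence $\kappa(t) \le K t^{-\ur/2} \le K\exp(-d_\Omega(\Hcal_1,\Hcal_t))$. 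The main obstacle will be the uniform (in both $t$ and direction) control of the Hilbert--Finsler norm on $V_h$, since Lemma \ref{parabolicdirectionsshrink}(c) provides only pointwise $t\to\infty$ behavior; this requires an infinitesimal refinement of that lemma combined with compactness of the direction sphere in $V_h$ and continuity of the limiting Hex Finsler norm.
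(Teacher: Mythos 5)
Your proposal is correct in substance and arrives at the stated bounds, but it runs along a more computational route than the paper's own proof, so a comparison is worthwhile. For constancy of $\kappa(t)$ the paper observes that $L$ commutes with the simply transitive isometric $T(\ppsi)$-action and is therefore \emph{linear} in the induced normed-vector-space structure on each horosphere; your appeal to uniqueness of translation-invariant Borel measures on $\RR^{n-1}$ is an equivalent (and equally clean) way to say the same thing. For the estimates, the paper does not compute any Finsler norms: it quotes Lemma \ref{parabolicdirectionsshrink}(b) to get that $L$ is $1$-Lipschitz, Lemma \ref{parabolicdirectionsshrink}(d) to produce a single vector whose norm decays like $t^{-1/2}$ (giving $\kappa(t)\le K\,t^{-1/2}$ when $\ur>0$), and Lemma \ref{parabolicdirectionsshrink}(c) to assert that $L^{-1}$ is uniformly Lipschitz when $\ur=0$. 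You instead compute the Hilbert--Finsler norm at $p_t$ exactly in the parabolic directions ($\|(0,0,b)\|_{p_t}=\|b\|/\sqrt{2t}$, which checks out against the horofunction) and compare unit-ball volumes; this is self-contained, and in fact yields the sharper decay $\kappa(t)\le K\,t^{-\ur/2}$ rather than the single factor $t^{-1/2}$ the paper extracts. The ``main obstacle'' you flag --- a lower bound on the norm in hyperbolic directions that is uniform in both $t$ and the direction, whereas Lemma \ref{parabolicdirectionsshrink}(c) is a pointwise $t\to\infty$ statement for a fixed pair of points --- is a genuine issue, but note that the paper's own proof has exactly the same gap (it asserts a uniform Lipschitz constant for $L^{-1}$ directly from part (c) without addressing uniformity over directions); your proposed remedy (monotonicity in $t$ plus positivity and continuity of the limiting norm on the compact direction sphere, i.e.\ a Dini-type argument exploiting linearity of each $L$) is the right way to close it for both arguments. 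One small caveat common to both proofs: the monotonicity and $1$-Lipschitz statements only hold for $t\ge1$, so the range $0\le t\le1$ needs the separate (easy) remark that $\kappa$ is continuous and positive there.
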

\begin{proof}   We may regard  the Hilbert norm for $\Omega$ restricted to $\Hcal_t$  as a normed vector space
 $(\Hcal_t,\|\cdot\|_t)$
because $T=T(\ppsi)$ acts simply transitively on $\Hcal_t$ by isometries of the Hilbert metric. {Then $L$} is $1$-Lipschitz by
Lemma \ref{parabolicdirectionsshrink}(b), and $L$ commutes with $T$ so $L$ is linear. Hence the measures are absolutely continuous and $
\kappa(t)\le 1$ is constant.

If $\ur>0$ then by Lemma \ref{parabolicdirectionsshrink}(d) there is $T>0$, $\gamma>0$, and a vector 
$v\ne 0$ such that  for all  $t\ge T$
$$\|L(v)\|_t\le 2\gamma\cdot t^{-1/2}\|v\|_1$$  Since $\|L(v)\|_t\le \|v\|_1$ for $t\le T$, this inequality holds for all $t>0$ with $2\gamma$ replaced by 
$K=\max(2\gamma, \sqrt{T})$.
The result  then follows for $\ur>0$
 using $t^{-1/2}=\exp(-d_{\Omega}(\Hcal_1,\Hcal_t))$ by Proposition \ref{RForthogonal}.

When $\ur=0$ by Lemma \ref{parabolicdirectionsshrink}(c) there is $\delta>0$, independent of $T$, so that the map $L^{-1}$ is $\delta^{-1}$-Lipschitz. 
Then for $X\subset\Hcal_1$ $$\vol_{n-1}(X)=\vol_{n-1}(L^{-1}(L X))\le (\delta^{-1})^{n-1}\vol_{n-1}(LX)$$ and the result follows with $K=\delta^{n-1}$.
\end{proof}
}

\begin{lemma}\label{balldistanceestimate} There is a decreasing function $\kappa:\RRP\to(1,\infty)$ such that 
$\lim_{x\to0}\kappa(x)=1$ with the following property.
Suppose $\Omega'\subset\Omega\subset\RPn$ are both open and properly-convex. Let $\|\cdot\|'$ and $\|\cdot\|$ be the
Hilbert norms on $\Omega'$ and $\Omega$.
Suppose $p\in\Omega'$ and $d_{\Omega}(p,\Omega\setminus\Omega ')>x$ then 
$\|\cdot\|_p\le \|\cdot\|'_p\le \kappa(x)\|\cdot\|_p$.
\end{lemma}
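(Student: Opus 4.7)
The plan is to compute both Hilbert norms at $p$ along an arbitrary tangent direction using the explicit cross-ratio formula, and then exploit the hypothesis $d_\Omega(p,\Omega\setminus\Omega')>x$ to get uniform two-sided bounds.

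Fix a tangent vector $v\in T_p\RPn$ and choose an affine patch containing $\cl(\Omega)$. Parametrize the line $\ell$ through $p$ in direction $v$ by signed distance from $p$, and let $\ell\cap\bdy\Omega=\{a,-b\}$ (at signed distances $\alpha,-\beta$ with $\alpha,\beta>0$) and $\ell\cap\bdy\Omega'=\{a',-b'\}$ (at signed distances $\alpha',-\beta'$). Since $\Omega'\subset\Omega$ we have $0<\alpha'\le\alpha$ and $0<\beta'\le\beta$, and the Finsler formula (\ref{Finslernorm}) gives
\[
\|v\|_p=\tfrac{|v|}{2}\!\left(\tfrac{1}{\alpha}+\tfrac{1}{\beta}\right),\qquad
\|v\|'_p=\tfrac{|v|}{2}\!\left(\tfrac{1}{\alpha'}+\tfrac{1}{\beta'}\right).
\]
The first inequality $\|v\|_p\le\|v\|'_p$ is then immediate.

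For the second inequality, I will bound $\alpha'$ from below (and $\beta'$ symmetrically). Either $a'\in\bdy\Omega$, in which case $\alpha'=\alpha$, or $a'\in\Omega\setminus\Omega'$, in which case the hypothesis forces $d_\Omega(p,a')>x$. Applying (\ref{eq:Hilbertmetric}) along $\ell$ with $\lambda:=e^{2x}$, the latter reads
\[
\frac{\alpha(\beta+\alpha')}{\beta(\alpha-\alpha')}>\lambda,
\]
which rearranges to $\alpha'(\alpha+\lambda\beta)>(\lambda-1)\alpha\beta$, hence
\[
\frac{1}{\alpha'}<\frac{1}{(\lambda-1)\beta}+\frac{\lambda}{(\lambda-1)\alpha}.
\]
This inequality also holds in the boundary case $\alpha'=\alpha$. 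Combining with the analogous bound for $1/\beta'$ yields
\[
\frac{1}{\alpha'}+\frac{1}{\beta'}\le\frac{\lambda+1}{\lambda-1}\!\left(\frac{1}{\alpha}+\frac{1}{\beta}\right),
\]
so $\|v\|'_p\le\kappa(x)\|v\|_p$ with
\[
\kappa(x):=\frac{e^{2x}+1}{e^{2x}-1}=\coth(x).
\]
This $\kappa$ is strictly decreasing on $(0,\infty)$, takes values in $(1,\infty)$, and satisfies $\kappa(x)\to 1$ as $x\to\infty$, giving the required function.

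The only mild subtlety is the dichotomy for $a'$ (on $\bdy\Omega$ versus interior to $\Omega$); once it is observed that the derived bound on $1/\alpha'$ is valid in both cases, the rest is an elementary algebraic manipulation of the cross-ratio formula, and no further geometric input is needed.
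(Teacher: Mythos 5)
Your proof is correct and takes essentially the same route as the paper's: both reduce to the restriction of the Hilbert metric to the line through $p$ in the direction of $v$ and then compute with the cross-ratio formula, except that the paper normalizes the chord to $(-1,1)$ and declares the one-dimensional computation "easy," whereas you carry out the algebra in full and extract the explicit constant $\kappa(x)=\coth(x)$. Note also that you have (rightly) read the limit condition as $\lim_{x\to\infty}\kappa(x)=1$, which is what the statement must intend, since a decreasing function with values in $(1,\infty)$ cannot tend to $1$ as $x\to 0$.
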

\begin{proof} Since the definition of the Hilbert metric only involves a line segment, it suffices to prove the result  in dimension $n=1$ with $\Omega=(-1,1)$ and $\Omega'=(-u,u)$ and $0<u<1$.  It is easy to do this.
\end{proof}

\begin{proof}[Proof of Theorem \ref{volthm}] {\red Let $C'$ be a smaller cusp  contained in a larger cusp $N\subset C$ so that $\bdy C'$ and $\bdy N$ are both horomanifolds.
By Lemma \ref{balldistanceestimate} it suffices to prove the theorem  for $C'\subset N$ instead
of $C\subset M$. {\blue  Suppose $V$ is a normed vector space and  $U\subset V$ is a codimension-1 subspace.
Suppose $A$ is a compact subset of $U$, and $v\in V$ satisfies $\|v\|=\min_{u\in U} \|u-v\|$
 then
 $A(v):=\{a+tv:a\in A,\ 0\le t\le 1\}\subset V$ is called a {\em  cylinder} and is diffeomorphic to $A\times I$.

Write $x\sim_K y$ to mean $K^{-1}x\le y\le Kx$. Given $n$ 
there is a constant $K>0$ such that  in a normed vector space $V$ of dimension $n$, if $A$ is a cylinder
 then  (see \cite[\S 5.5]{BurBurIvan})
$$\|v\|\vol_{n-1}(A)\sim_K \vol_n(A(v)),$$

There is a diffeomorphism $f:\bdy C'\times[0,\infty)\to C'$ given by $f(x,s)=\Phi_t(x)$ where $s=t/2$ if $\type=n$ and $s=|\log t|/2$ if $\type<n$.
By Lemma \ref{parabolicdirectionsshrink}(a) $f|x\times[0,\infty)$ has image a flow line parameterized at unit speed. Then $C_t=f(\bdy C',t)$ is compact so $\vol_{n-1}(C_t)<\infty$.

Using $f$ and volume with respect the  Busemann measure on $N$ it follows that
\begin{equation}\label{volumeintegral}
	\vol_n(C)\sim_K\int_1^\infty \vol_{n-1}(C_t)dt
\end{equation}
}

When $\ur>0$ it follows from Lemma \ref{parabolicdirectionsshrink} and Proposition \ref{radialflowchangesvolume} that $\vol_{n-1}(C_t)=O(e^{-t})$, and so the integral converges. 
On the other hand if $\ur=0$, Lemma \ref{parabolicdirectionsshrink} and Proposition \ref{radialflowchangesvolume} implies there is $A>0$ such that $\vol_{n-1}(C_t) > A$ for all $t$, and in this case the volume is infinite. }
\end{proof}

\if0

\begin{proof}[of Theorem \ref{volthm}] Let $\Omega$ be the universal cover of $C$.
By Lemma \ref{balldistanceestimate}, outside a neighborhood of $\bdy C$ in $C$ the Hilbert metrics $d_{\Omega}$ and $d_{\Omega'}$
agree to within a fixed multiple. Therefore the same is true of the Busemann measures.
So it suffice to uses the metric $\Omega$ to estimate the volume of a smaller cusp $C'$ bounded
by a horomanifold
in the interior
of $C$.

Let $C=\Omega/\Gamma$ be a generalized cusp in a properly convex manifold $M$ such that $\Gamma$ is conjugate into $G(\ppsi)$.   From the construction and description of $G(\ppsi)$ it is easy to see that $G(\ppsi)$ contains a parabolic element if and only $\ur>0$. Furthermore from Theorem \ref{fv} it follows that $C$ has finite volume in $M$ if and only if $\ur>0$. 
\end{proof}

 We replace $C$ by its interior for this proof.  Then $C=\Omega/\Gamma$  with 
$\Omega=\interior(\Omega(\ppsi))$ for some $\ppsi\in A$ and some lattice $\Gamma\in G(\ppsi)$.
The horoball $\mathcal B=h^{-1}_{\ppsi}(-\infty,-1]\subset\Omega$ is $\Gamma$-invariant and $C'=\mathcal B/\Gamma\subset C$
is a smaller cusp  with $\cl(C\setminus C')$ compact, so $0<\vol_M(C)-\vol_M(C')<\infty$.

 Let $D$ be a fundamental domain
for the action $\Gamma$ on $\partial\mathcal B$ 
and let $X\subset\Omega$ be the subset swept out by $D$ using the backwards radial flow then
$$X=\bigcup_{t\le 0}\Phi_t(D)$$ 
is a fundamental domain for $C'$ so $\vol_C(C')=\vol_{\Omega}(X)$.

First suppose $p\ge 2$. Since $\vol_M(C')<\vol_C(C')$ so it suffices to show $\vol_C(C')<\infty$.
Consider two  radial flow lines   $\gamma_i(t)= (x_i,t,y_i)$ for $i\in\{1,2\}$. 
The function $f(t)=d_{\Omega}(\gamma_1(t),\gamma_2(t))$ is non-increasing. If $x_1=x_2$ then these flow lines
lie in the affine plane $P$ given by  $$(x_1,0,y_1)+\{ u(0,1,0)+v(0,0,y_2-y_1):u,v\in\RR\}$$
The horofunction on $P$ is  $u - v^2/2+C$ for some constant $C$ thus $\Omega\cap P$ is $u\ge v^2-C$.  It follows that $P\cap\Omega$
is isometric to $\HH^2$ and the distance between $\gamma_1$ and $\gamma_2$ goes to $0$ exponentially fast with respect
to distance along them.

 Since $p\ge2$ the radial flow is $\Phi_t(x,z,y)=(z,z-t,y)$ so
 $$X=\{(x,t+t',y): (x,t',y)\in D,\ \ t\ge 0\}$$ 
Choose a box $E=\prod_{i=1}^{n-1}[a_i,b_i]\subset U(\ppsi)$ which contains $\pi(D)\subset U(\ppsi)$
 and set $$Y=h^{-1}_{\ppsi}(-\infty,-1]\cap\pi^{-1}(\pi E)$$ so $X\subset Y$.
 By compactness, there is $z_0>0$ such that every point in $Y\cap\bdy\mathcal B$ has $z<z_0$. The hyperplane $z=z_0$ in $\RR^n$ divides $Y$ into two components
 $Y_1$ (where $z<z_0$) is bounded, and $Y_2$ where $z>z_0$. Thus $\vol_{\Omega}(Y_1)<\infty$ and $Y_2=E\times(z_0,\infty)$.
 
 For $t_2>t_1$ the vertical distance  in the $d_C$ metric between $E_1:=E\times z_1$ and $E:=E\times z_2$ is $\delta:=\log(z2/z1)$.
 Vertical projection $E\times z_1\to E\times z_2$ is distance non-increasing. Moreover it decreases distance in the $y$ coordinates
 by a factor that is exponential function of $\delta$. It follows from (\ref{volumeestimate})
 that $\vol_C(Y_2)<\infty$ and
the result for $p\ge2$ now follows.

The remaning case is $p\le 1$. Let $M=\Omega_M/\Gamma_M$ then $\bdy_{\infty}\Omega_M$ is an $(n-1)$-simplex.
There is an open simplex $\Delta\supset\Omega_M$ and a face $\Delta'\subset\Delta$  with $\bdy_{\infty}\Omega_M\subset\Delta'$.
Since $d_{\Delta}\le d_{\Omega_M}$ it follows that $\vol_{\Delta}\le\vol_{\Omega_M}$. We claim that $\vol_{\Delta}(X)=\infty$.

When $p=0$ the radial flow is $\Phi_t(x,z,y)=e^{-t}(x,y,z)$ so $X$ contains a  cone $$W=\interior\left(q*\bdy_{\infty}\Omega_M\right)\subset\Delta$$ 
 with base $\bdy_{\infty}\Omega_M\subset\Delta'$ and apex $q\in D$. In this case it is easy to check $\vol_{\Delta}(W)=\infty$.
 When $p=1$ the radial flow is $\Phi_t(x,z,y)=(x,z-t,y)$ so $X$ is the join $[e_n]*D$. Again it is easy to check $\vol_{\Delta}(X)=\infty$.
\fi

\section{Dimension 2}\label{surfaces}

In this section we describe 2-dimensional generalized cusps in a way that illuminates the higher 
dimensional cases, and can be read before
the rest of the paper.    

A {\em generalized cusp}, $C$, in a properly-convex surface, $M$,  is a
 convex submanifold  $C\cong S^1\times[0,\infty)$ of $M$ with $M\setminus C$
 connected and $\bdy C$ is a strictly convex curve in the interior of $M$.
  Thus $C=\Omega/\Gamma$, where $\Gamma$ is an infinite cyclic group generated
by some element $[A]\in \PGL(3,\R)$, and $\Omega$ is properly-convex, and homeomorphic to a closed disc with
one point deleted from the boundary, and $\bdy\Omega:=\Omega\setminus\interior(\Omega)$ is a strictly convex
curve that covers $\bdy C$. {\red Consideration of the Jordan normal form readily shows: }

\begin{theorem}\label{dim2_class} A generalized cusp has holonomy  conjugate to a group
generated by $[A]$ where  either $A$ is diagonal with three distinct positive eigenvalues, or else
is one of
$$\begin{pmatrix} e^a & 0 & 0 \\
0 & 1 &  1 \\
0 & 0 & 1
\end{pmatrix}
\qquad
\begin{pmatrix}
1 & 1 & 0\\
0 & 1 & 1\\
0 & 0 & 1 
\end{pmatrix} \qquad a\ne 0
$$
\end{theorem}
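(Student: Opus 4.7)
The plan is to invoke the Uniformization Theorem \ref{genislambda} to replace $C$ by an equivalent $\psi$-cusp $\Omega(\psi)/\Gamma$ with $\Gamma\subset G(\psi)$ and $\psi\in A\subset\Hom(\RR^2,\RR)$. Since $\bdy C\cong S^1$ we have $\Gamma\cong\ZZ$, which is torsion-free and infinite cyclic. In dimension $n=2$ the type $\type_\psi$ is one of $0,1,2$, and I will handle each case separately using the explicit descriptions of $T(\psi)$ from Equations (\ref{eq:Tmatrix})--(\ref{eq:Tmatrix2}) together with Proposition \ref{olambda}.

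First I would show that, after conjugation in $\PGL(3,\RR)$, $\Gamma\subset T(\psi)$; this is the main step. Proposition \ref{olambda} gives that for $n=2$ the group $O(\psi)$ is trivial except when $\type=0$ (where $O(\psi)=O(1)$) or when $\type=2$ and $\psi_1=\psi_2$ (where $O(\psi)\cong S_2$); in the remaining middle case $G(\psi)=T(\psi)$ so there is nothing to do. In the two exceptional cases $O(\psi)\cong\ZZ/2$ is generated by an involution $a$, and a direct computation using the matrix forms shows that conjugation by $a$ inverts every element of $T(\psi)\cong\RR$: it reverses the parabolic direction when $\type=0$, and swaps the two nontrivial diagonal entries when $\type=2$ with $\psi_1=\psi_2$. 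Consequently, for any $t\in T(\psi)$ one has $(at)^2 = a^2(a^{-1}ta)t = t^{-1}t = I$, so every coset of $T(\psi)$ in $G(\psi)$ other than the trivial coset consists of involutions. Since $\Gamma$ is infinite cyclic and torsion-free, no generator of $\Gamma$ can lie outside $T(\psi)$, and hence $\Gamma\subset T(\psi)$.

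Having established that $\Gamma$ is cyclic in the one-parameter group $T(\psi)$, I would read off a generator $[A]$ in each case from the matrix formulas. For $\type=0$, Equation (\ref{eq:Tmatrix}) gives $[A]=m^*_0(Y)$ for some $Y\neq0$, which is a single unipotent Jordan block of size $3$ with eigenvalue $1$, and hence $\GL(3,\RR)$-conjugate to the rightmost matrix in the statement. For $\type=1$ (so $\rank=1$, $\ur=0$, $\psi_1>0$), Equation (\ref{eq:Tmatrix1}) gives $[A]$ of the form $\diag(e^X,1,1)$ with an additional $(2,3)$-entry equal to $-\psi_1 X$, and $X\neq0$; conjugating by $\diag(1,-\psi_1 X,1)$ normalizes that entry to $1$, producing the middle form with $a=X\neq0$. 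For $\type=2$, Equation (\ref{eq:Tmatrix2}) gives $[A]=\diag(e^{X_1},e^{X_2},1)$ with $X\in\ker\psi\setminus\{0\}$; since $\psi_1,\psi_2>0$, the relation $\psi_1X_1+\psi_2X_2=0$ forces $X_1$ and $X_2$ to be nonzero and distinct, so $[A]$ has three distinct positive eigenvalues.

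To conclude, it suffices to note that the three normal forms listed are pairwise non-conjugate in $\PGL(3,\RR)$, since their Jordan structures (a single size-$3$ unipotent block; an eigenvalue $e^a\neq 1$ together with a size-$2$ unipotent block; and a diagonalizable matrix with three distinct positive eigenvalues) are pairwise distinct. The main obstacle I anticipate is the first reduction—verifying directly that every element of $G(\psi)\setminus T(\psi)$ is an involution—since once that is in place the classification is essentially a matter of reading off the matrix form of a generator of the translation subgroup $T(\psi)\cong\RR$.
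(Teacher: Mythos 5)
Your proposal is correct, but it takes a genuinely different route from the paper. The paper proves Theorem \ref{dim2_class} by an elementary Jordan-normal-form analysis of the single matrix $A$: the eigenvalues must be real (a complex eigenvalue would force, via the Brouwer fixed point theorem, a fixed point in the interior of $\Omega$ about which $[A]$ rotates, contradicting freeness of the action), they must all have the same sign (orientability), and a repeated eigenvalue must carry a non-diagonal Jordan block because the orbit of a point of the strictly convex curve $\bdy\Omega$ cannot lie in a projective line. You instead invoke the Uniformization Theorem \ref{genislambda} together with Lemma \ref{equivalentcusps}, reduce to a lattice in $G(\psi)$, kill the $O(\psi)$-part by observing that every element of $G(\psi)\setminus T(\psi)$ is an involution in the two cases where $O(\psi)$ is nontrivial, and then read the normal forms off Equations (\ref{eq:Tmatrix})--(\ref{eq:Tmatrix2}); all of these steps check out (your verification that conjugation by the involution inverts $T(\psi)$, hence $(at)^2=I$, is the right observation, and in type $2$ the constraint $\psi_1X_1+\psi_2X_2=0$ with $\psi_1,\psi_2>0$ does force three distinct positive eigenvalues). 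There is no circularity, since the surface case is not used in the proofs of Theorems \ref{main} and \ref{genislambda}. What you lose is that Section \ref{surfaces} is deliberately written to be readable \emph{before} the general theory, so the paper's direct argument is self-contained and much lighter; what you gain is that the three normal forms appear transparently as generators of the three families $T(\psi)$, consistent with the higher-dimensional classification. One trivial slip: to normalize the $(2,3)$-entry $-\psi_1X$ to $1$ you should conjugate by $\Diag(1,(-\psi_1X)^{-1},1)$ rather than $\Diag(1,-\psi_1X,1)$, since conjugating by $\Diag(1,c,1)$ multiplies that entry by $c$.
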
 

\if0
\begin{proof}   
All the eigenvalues of $A$ must be real, otherwise there is a unique real eigenvalue and corresponding fixed
point $p\in\RP^2$. Since $[A]$ maps the disc $\overline\Omega$ to itself,  the Brouwer fixed point theorem
implies $p\in\overline\Omega$. Furthermore the action of $[A]$ rotates  $\RP^2$ around $p$, so $p\in\interior\Omega$.
But this contradicts that the holonomy of $C$ acts freely on $\Omega$.

If $A$ has eigenvalues of different sign, then the action of $[A]$ on $\Omega$ is orientation reversing,  
which contradicts that $C$ is orientable.
Thus, after replacing $A$ by $-A$ if needed, we may
assume all the eigenvalues are positive. Using Jordan normal form, it follows that  $A$ is conjugate (over $\RR$)
 to an upper triangular matrix.
 
If the eigenvalues are distinct, the result follows. Scaling $A$ by $\lambda$ leaves $[\lambda A]$ unchanged. 
If there is a repeated eigenvalue we may scale $A$ so that the repeated eigenvalue  is $1$. 
The requirement that $[A]$ preserves 
 $\Omega$ means that the orbit of a point in the strictly convex curve $\bdy\Omega$
  is not contained in a projective line. This implies the
 Jordan block for eigenvalue $1$ is  not diagonal. If there are two eigenvalues this now gives the form shown.
 If all eigenvalues are $1$ the same requirement implies there is a single Jordan block of size $3$.
 \end{proof} 
\fi

We regard $\Aff(\RR^2)$ as a subgroup of $\PGL(3,\RR)$.
For  each $\ppsi=(\ppsi_1,\ppsi_2)\in\RR^2$  with $\ppsi_1\ge\ppsi_2\ge0$
 there is a
 one-dimensional subgroup  $T(\ppsi)\subset\Aff(\RR^2)$
$$  T(\ppsi)=\begin{array} {ccc}
\psi_1\ge\psi_2>0 & \psicoef_1>\psi_2=0 & \psi_1=\psi_2=0 \\
\begin{pmatrix} e^{x} & 0 &0 \\
0 & e^{-x.\psicoef_2/\psicoef_1} & 0 \\
0 & 0 & 1
\end{pmatrix} 
& 
\begin{pmatrix} e^{x} & 0 & 0 \\
0 & 1 &  -\psicoef_1  x \\
0 & 0 & 1
\end{pmatrix}
 &
\begin{pmatrix}
1 & x & x^2/2\\
0 & 1 & x\\
0 & 0 & 1 
\end{pmatrix} 
\end{array}\qquad x\in\RR
$$
The holonomy of a generalized cusp is conjugate in $\PGL(3,\RR)$ into one of these groups. 
The orbit of {\red the basepoint (see Definition \ref{basepointdef})} under each of these Lie groups is a convex curve 
$\gamma$  in $\RR^2$ and the convex hull of $\gamma$ is a properly-convex closed set $\Omega=\Omega(\ppsi)\subset \RR^2$ as shown in Figure {\red\ref{dim2aff}},  that is preserved by the group.

The closure of $\Omega$ in $\RP^2$ is $\overline\Omega=\Omega\sqcup\bdy_{\infty}\Omega$ where
$\bdy_{\infty}\Omega\subset\RP_1^{\infty}$,
and $\bdy_{\infty}=[e_1]$ for $T(0,0)$, and  it is the closed line segment 
$\{[te_1+(1-t)e_2]: 0\le t\le 1\}$
with endpoints $[e_1]$ and $[e_2]$ in the remaining cases
as shown in Figure \ref{dim2proj}.

\begin{figure}[h]
\flushleft
\begin{subfigure}[b]{0.25\textwidth}
\centering
\includegraphics[scale=.7]{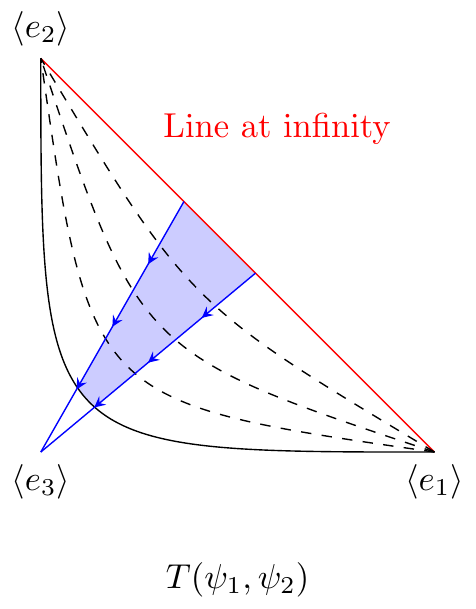}
\end{subfigure}
\begin{subfigure}[b]{0.25\textwidth}
\centering
\includegraphics[scale=.7]{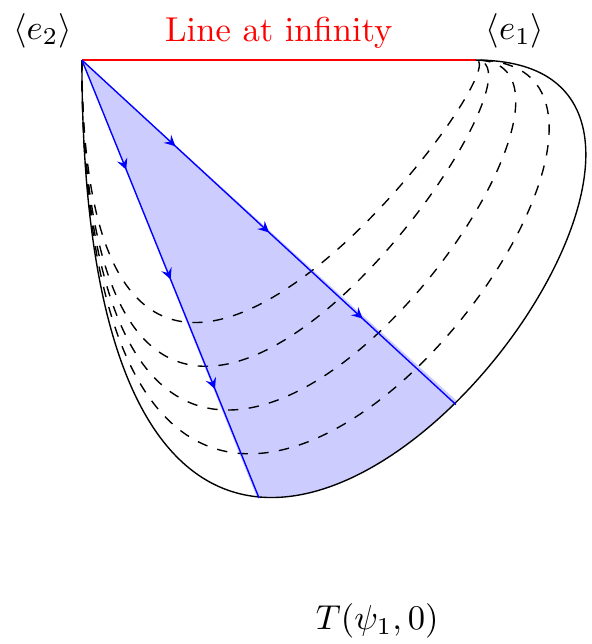}
\end{subfigure}
\hspace{.6 cm}
\begin{subfigure}[b]{0.25\textwidth}
\centering
\includegraphics[scale=.7]{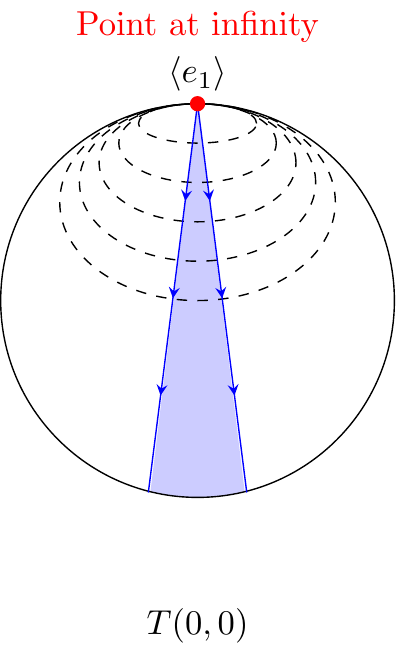}
\end{subfigure}
\caption{Generalized Cusps: Projective View} 
\label{dim2proj} 
\end{figure} 

 Goldman classified convex projective structures on closed surfaces \cite{Goldman2}, and 
 Marquis \cite{Marquis2}, \cite{Marquis1} shows that if $S$ is a finite type surface without boundary,
 then a properly-convex projective structure on $S$ has finite area if and 
 only if the holonomy of each end of $S$ 
is unipotent: conjugate into $T(0,0)$. 

Each domain $\Omega(\ppsi)$ has two foliations that are preserved by $T(\ppsi)$. A {\em horocycle}
is the orbit of a point under $T(\ppsi)$. The {\em radial flow} is a one parameter subgroup
$\Phi^{\ppsi}\subset\PGL(3,\RR)$ that only depends on {\red the type} $\type=\type(\ppsi)$, which is the number of non-zero coordinates of $\ppsi$.  
$$ \begin{array} {cccc}
 & \type=2 & \type=1 & \type=0 \\
\Phi^{\ppsi}(t)= &\begin{pmatrix} 1 & 0 &0 \\
0 & 1 & 0 \\
0 & 0 & e^t
\end{pmatrix} 
& 
\begin{pmatrix} 1 & 0 & 0 \\
0 & 1 &  t \\
0 & 0 & 1
\end{pmatrix}
 &
\begin{pmatrix}
1 & 0 & t\\
0 & 1 & 0\\
0 & 0 & 1 
\end{pmatrix} 
\\
center = & [e_3] & [e_2] & [e_1]
\end{array}$$
This group centralizes $T(\ppsi)$. 
 The $\Phi$-orbit of a (non-stationary) 
point is called a {\em radial flow line} and
is contained in a projective line. All these lines meet at a single point 
called the {\em center of the radial flow}.
The foliation of $\Omega$ by (subarcs of) radial flow lines is transverse to the horocycle foliation. 
The domain $\Omega$ is backwards invariant under 
the radial flow: $\Phi_t(\Omega)\subset\Omega$ for
$t\le 0$.

The group $T(\type):=T(\ppsi)\oplus\Phi^{\ppsi}$ is 
called the {\em enlarged translation group} Equation (\ref{enlargedtranslationgrp})  is
$$  T(\type)=\begin{array} {ccc}
\type=2 &\type=1 & \type=0 \\
\begin{pmatrix} e^{x} & 0 &0 \\
0 & e^{y} & 0 \\
0 & 0 & 1
\end{pmatrix} 
& 
\begin{pmatrix} e^{x} & 0 & 0 \\
0 & 1 & y \\
0 & 0 & 1
\end{pmatrix}
 &
\begin{pmatrix}
1 & x & y\\
0 & 1 & x\\
0 & 0 & 1 
\end{pmatrix} 
\end{array}\qquad x,y\in\RR
$$
and $T(\ppsi)$ is the kernel of a homomorphism $T(\type)\to\RR$  derived from $\ppsi$.

A fundamental domain for a generalized
cusp is obtained by taking an interval  $J\subset\bdy\Omega$ that is a fundamental
domain for the  action there, and taking the backward orbit $\cup_{t\le0}\Phi_t(J)$
 under the radial flow. We now describe
these foliations, 
see Figures \ref{dim2proj} and \ref{dim2aff}.

For $T_0$, the domain $\Omega=\{(x_1,x_2):x_1\ge x_2^2/2\}$, and the horocycles are $x_1=C+x_2^2/2$, and the radial flowlines are $x_2=C$. There is an identification of $\Omega$ with a horoball  $B\subset \HH^2$.  
The action of $T_0$ on $\Omega$
is then conjugated to the action of those parabolic isometries that preserve $B$.
Horocycles in $\Omega$ map to horocycles in $B$ and  radial flow lines in $\Omega$
map to hyperbolic geodesics 
that are  orthogonal to the horocycles. In $\RP^2$ the horocycles for $\Omega$ are ellipses of 
unbounded eccentricity, all tangent at $[e_1]$.

The group $T_2(\psicoef_1, \psicoef_2)$
 preserves the positive quadrant  $\Delta=\{(x_1,x_2): x_1,x_2>0\}$. 
 The domain $\Omega$ is the subset of $\Delta$
with $x_1^{\psicoef_2}x_2^{\psicoef_1}\ge 1$, and is foliated by the horocycles  
$x_1^{\psicoef_2}x_2^{\psicoef_1}=C$. Each horocycle limits
 on the points $[e_1],[e_2]\in\RP^1_{\infty}$
 that are the attracting and repelling fixed points of the holonomy.  The radial flow lines in $\RR^2$
 are straight lines through
 the origin, which is the neutral fixed point of the holonomy.

 For $T_1(\psicoef_1)$ the domain $\Omega=\{(x_1,x_2):x_2\ge  -\psicoef_1\log x_1,\ \ x_1>0\}$.
The horocycles are ${x_2 = -\psicoef_1\log x_1 +C}$. At 
 $[e_2]\in\bdy_\infty\Omega$  the 
 horocycles are transverse to $\bdy_{\infty}\Omega$, but at $[e_1]$ they are tangent to
 $\bdy_{\infty}\Omega$.   The radial flow lines are the straight lines  $x_1=C$.

\begin{figure}[h]
\flushleft
\addtocounter{figure}{-1}
\begin{subfigure} [b]{0.3\textwidth}
\begin{tikzpicture}[scale=.5] 
 \draw[ domain=1/6:6, smooth,variable=\x] plot ({\x},{1/(\x)});
  \draw[dashed, domain=2/6:6, smooth,variable=\x] plot ({\x},{2/(\x)});
   \draw[dashed, domain=3/6:6, smooth,variable=\x] plot ({\x},{3/(\x)});
   \draw[dashed, domain=4/6:6, smooth,variable=\x] plot ({\x},{4/(\x)});
  \draw[blue, postaction={on each segment={mid arrow=blue, mid arrow1= blue, mid arrow2=blue}}] (2, 3)--(0,0);
\draw[blue, postaction={on each segment={mid arrow=blue, mid arrow1= blue, mid arrow2=blue}}] (3, 2)--(0,0);
    \draw (3.5,5) node[anchor=north]{$T(\psicoef_1, \psicoef_2)$};
    \draw (4,-1) node[anchor=north]{$x_1^{\psicoef_2}x_2^{ \psicoef_1} \ge 1$};
  \begin{scope}[on background layer] 
\clip (0,0) -- (2, 3) -- (3,2) --cycle; 
\fill[blue, opacity= 0.2] (0,0) -- (2, 3) -- (3,2) --cycle; 
\end{scope} 
  \begin{scope}[on background layer] 
\clip (0,0) -- (3/2, 8/3) -- (5/2,8/5) --cycle; 
 \fill[ white, domain= 0.25:4, variable=\x] (0,0) -- plot ({\x},{1/(\x)}) ; 
\end{scope} 
\end{tikzpicture} 
\end{subfigure} 
\begin{subfigure}[b]{0.3\textwidth}
\begin{tikzpicture}[scale=.5]
 \draw[ domain=0.005:4, smooth,variable=\x] plot ({\x},{-ln(\x)});
  \draw[dashed, domain=0.01:4, smooth,variable=\x] plot ({\x},{-ln(\x)+1});
   \draw[dashed, domain=0.05:4, smooth,variable=\x] plot ({\x},{-ln(\x)+2});
  \draw[dashed, domain=0.1:4, smooth,variable=\x] plot ({\x},{-ln(\x)+3});
   \draw (4,4) node[anchor=south]{$T(\psicoef_1,0)$};
    \draw (4,-3) node[anchor=south]{$x_2 \ge - \log x_1$};
\draw[blue, postaction={on each segment={mid arrow=blue, mid arrow1= blue, mid arrow2=blue}}](1, 5.5)--(1,0) ; 
\draw[blue, postaction={on each segment={mid arrow=blue, mid arrow1= blue, mid arrow2=blue}}]  (2, 5.5)--(2, -.7); 
\begin{scope}[on background layer] 
\clip  (1, 5.5) -- ( 1, -.2)-- (2, -.7)-- (2, 5.5) --cycle; 
\fill[blue, opacity=0.2, domain= 0.005:4, smooth,variable=\x] (2,5.5)-- (1, 5.5) -- plot ({\x},{-ln(\x)})-- cycle;
\end{scope}
\end{tikzpicture}
\end{subfigure}
\begin{subfigure}[b]{0.3\textwidth}
\begin{tikzpicture}[scale=.5] 
\draw[blue, postaction={on each segment={mid arrow=blue, mid arrow1= blue, mid arrow2=blue }}](.25,4)-- ( 0.25,0) ; 
\draw[ blue, postaction={on each segment={mid arrow=blue, mid arrow1= blue, mid arrow2=blue}}](-.25,4)-- ( -0.25,0) ; 
\draw (-4,4)  parabola bend (0,0) (4,4) ;
\draw[dashed] (-3,4)  parabola bend (0,1) (3,4) ;
\draw[dashed] (-2,4)  parabola bend (0,2) (2,4) ;
\draw[ dashed ] (-1,4) parabola bend (0,3) (1,4);
   \fill [blue, opacity = 0.2 ,domain=-.25:.25, variable=\x]
      (-.25, 0)
    -- (-.25, 4) 
    -- (.25, 4) 
      -- (.25, 0)
     -- plot ({\x}, {\x*\x})
      -- cycle;
    \draw (2,5) node[anchor=north]{$T(0,0)$}; 
    \draw (1,-1) node[anchor=north]{$x_1\ge x_2^2$}; 
\end{tikzpicture} 
\end{subfigure} 
\caption{Generalized Cusps: Affine View} 
\label{dim2aff}
\end{figure}
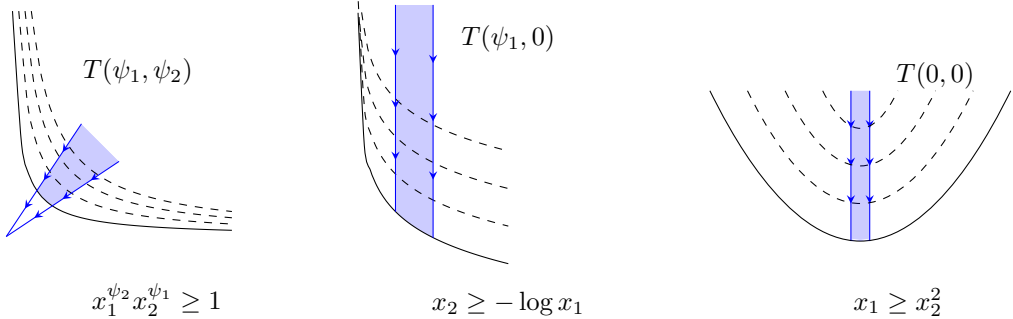
The subgroup $O(\ppsi)\subset\PGL(\Omega(\ppsi))$
is the stabilizer of a point.
This group is trivial unless $\psicoef_1=\psicoef_2$
in which case $O(\ppsi)\cong\mathbb Z_2$. The action of $O(\ppsi)$ is easily described in homogeneous coordinates on $\RP^2$. When $\lambda=(0,0)$ it is generated by the reflection
$[x_1:x_2:x_3]\mapsto[x_1:-x_2:x_3]$ and otherwise by
$[x_1:x_2:x_3]\mapsto[x_2:x_1:x_3]$. In each case  this preserves 
$\Omega(\ppsi)$. If $\psicoef_1\ne\psicoef_2$ then $\PGL(\Omega(\ppsi))=T(\ppsi)$ and
acts freely on 
$\Omega(\ppsi)$.

In all dimensions, a generalized cusp is determined by a lattice in a generalized cusp Lie group. 
For a surface, a lattice is infinite cyclic, and is 
determined by a nontrivial element 
 of some $T(\ppsi)$ up to replacing the element by its inverse. A {\em marked lattice}
 is a lattice with a choice of basis. Thus conjugacy classes of lattices correspond to moduli space
 and conjugacy classes of marked lattices to Teichmuller space. 
 
There is an equivalence relation on {\em marked} generalized cusps {\red generated by projectively embedding one in another}.
Let $\mathcal T$ be the  (Teichmuller) space  of equivalence classes of {\em marked} generalized cusps for surfaces.
There is an identification of  $\mathcal T$  with a subspace of $\SL(3,\RR)$ modulo conjugacy
that sends a marked generalized cusp to
the conjugacy class, $[A]$, of the holonomy of the chosen generator. {\red The eigenvalues 
$\{\exp(x_1),\exp(x_2),\exp(x_3)\}$ of $A$
determine $[A]$ and satisfy $x_1+x_2+x_3=0$. Thus a generalized cusp is determined by $(x_1,x_2,x_3)$
up to permutations.}
 
 Let {\red $X=\RR^2/S_3$} {\red (closed Weyl chamber)} where we identify $\RR^2$  
with the plane $x_1 + x_2 +x_3 =0$ in $\R^3$, and the  quotient is
by the action of the symmetric group $S_3$ on the coordinates.  
Then $X$ can be identified with the fundamental domain for this action: $X=\{(x_1,x_2,x_3):x_1+x_2+x_3=0,\ \  x_1 \geq x_2 \geq x_3\}$; which can be identified with $Y=\{(y_1,y_2):y_2\ge y_1\ge 0\}$ via $y_2=x_1-x_3$ and $y_1=x_2-x_3$.

\begin{figure}[h]
\centering
\begin{tikzpicture}[scale=.8]
\draw[->] (0,0) -- (0,3.5); 
\draw[->] (0,0) -- (3.5,0); 
\draw (0,0) -- (3.5,3.5); 
\fill [blue, opacity =0.2] (0,3.5) -- (0,0) -- (3.5,3.5) ; 
\draw (5,0) node [anchor= north] {$y_1$}; 
\draw (0,3.5) node [anchor= east] {$y_2$}; 
\draw (-1.3,.5) node [anchor= east] 
{$\bpmat 1 & 1 & 1 \\ 0 & 1  &1 \\ 0 & 0 & 1 \epmat$}; 
\draw (-6,1.4) node [anchor= east] {$\alpha>1$};
\draw (-1.3,2.7) node [anchor= east] 
{$\bpmat \alpha ^2 & 1 & 1 \\ 0 & \alpha^{-1}  &1 \\ 0 & 0 & \alpha^{-1} \epmat$}; 
\draw (4.4,2.8) node [anchor= north] { $ \bpmat \alpha  & 1 & 1 \\ 0 & \alpha  &1 \\ 0 & 0 &\alpha^{-2} \epmat  $}; 
\draw [->] (-1.2, 0.5) -- ( -.1,0) ;
\draw [->] (2.7, 2.1) -- ( 2.5,2.5) ; 
\draw [->] (-1.2, 2.8) -- ( 0,2.8) ; 
\end{tikzpicture} 
\caption{Y $\equiv$  {\red Parameter} space  of 2 dimensional cusps}
\label{def2}
\end{figure}
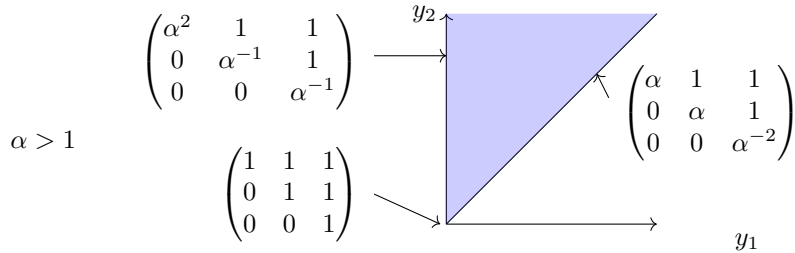

\begin{proposition}\label{defsp2} There is a homeomorphism $f:Y\to\mathcal T$ given by
$$ f(y_1,y_2) = \left[\begin{pmatrix} \exp((2y_2-y_1)/3) &1 & 1\\
0 & \exp((2y_1-y_2)/3) & 1 \\
0 & 0 & \exp((-y_1-y_2)/3) 
\end{pmatrix}\right]
$$
\end{proposition}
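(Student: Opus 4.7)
My plan is to show that $f$ is a continuous bijection with continuous inverse by matching $\PGL(3,\RR)$-conjugacy classes via eigenvalues and Jordan type, then invoking Theorem \ref{dim2_class} to identify the image as precisely the set of marked cusp conjugacy classes.

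First I would check the eigenvalue data of $f(y_1,y_2)$. A direct calculation shows the three diagonal entries are $\exp(x_1),\exp(x_2),\exp(x_3)$ with $x_1=(2y_2-y_1)/3$, $x_2=(2y_1-y_2)/3$, $x_3=-(y_1+y_2)/3$, so $x_1+x_2+x_3=0$ and $f(y_1,y_2)\in\SL(3,\RR)$. The inequalities $y_2\ge y_1\ge 0$ translate to $x_1\ge x_2\ge x_3$, and the map $(y_1,y_2)\mapsto(x_1,x_2,x_3)$ is a homeomorphism from $Y$ onto the closed Weyl chamber $X\subset\RR^2$.

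Next I would determine the Jordan type of $f(y_1,y_2)$ stratum by stratum on $Y$. On the open stratum $y_2>y_1>0$ all three eigenvalues are distinct, so $f(y_1,y_2)$ is diagonalizable. On the edge $y_1=0<y_2$ the repeated eigenvalue is $\exp(-y_2/3)$; inspecting $f(0,y_2)-\exp(-y_2/3)I$, whose bottom-right $2\times 2$ block retains the superdiagonal $1$, one sees this matrix has rank $2$, so the $\exp(-y_2/3)$-eigenspace is $1$-dimensional and the corresponding Jordan block has size $2$. The symmetric computation on $y_1=y_2>0$ yields a single $2\times 2$ Jordan block for $\exp(y_1/3)$ sitting in the top-left. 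Finally $(f(0,0)-I)^2\ne 0$, so $f(0,0)$ is a single Jordan block of size $3$ with eigenvalue $1$. Comparing with the three normal forms of Theorem \ref{dim2_class}, $f(y_1,y_2)$ is in every case $\PGL(3,\RR)$-conjugate to the holonomy generator of a marked generalized cusp, so $f$ is well-defined as a map $Y\to\mathcal T$ and is manifestly continuous.

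For injectivity I would use that the $\PGL(3,\RR)$-conjugacy class of $f(y_1,y_2)$ is determined by its Jordan form, hence by its multiset of eigenvalues; the ordering $x_1\ge x_2\ge x_3$ selects a unique representative of that multiset, and the inverse map $(x_1,x_2,x_3)\mapsto(x_2-x_3,x_1-x_3)=(y_1,y_2)$ recovers $(y_1,y_2)$ uniquely. For surjectivity, given a marked cusp with holonomy generator $[A]$, Theorem \ref{dim2_class} places $A$ in one of the three normal forms with positive eigenvalues; after rescaling to $\SL(3,\RR)$ and conjugating by a permutation so that the eigenvalues appear in decreasing order on the diagonal (with the Jordan block in the position dictated by step $2$), I obtain an upper-triangular representative with the same Jordan type as some $f(y_1,y_2)$, hence conjugate to it in $\PGL(3,\RR)$.

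Continuity of $f^{-1}$ follows because the ordered-log-eigenvalue tuple $(x_1,x_2,x_3)$ is a continuous conjugation-invariant function on $\mathcal T$, and $(y_1,y_2)=(x_2-x_3,x_1-x_3)$ depends continuously on it. The main subtlety will be matching the position of the $2\times 2$ Jordan block on each boundary edge of $Y$ with the corresponding normal form of Theorem \ref{dim2_class}; this is exactly what the stratum-by-stratum calculation in step two is designed to verify, and once it is checked the remaining identifications are routine.
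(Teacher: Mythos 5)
Your argument is correct and follows essentially the same route as the paper's proof: continuity of $f$ is immediate, surjectivity and well-definedness come from matching against the normal forms of Theorem \ref{dim2_class}, injectivity from the ordered eigenvalue data, and continuity of $f^{-1}$ from the continuity of eigenvalues as functions of the matrix. Your stratum-by-stratum verification of the Jordan block sizes and positions is a correct expansion of what the paper dismisses as ``easy to check,'' so nothing further is needed.
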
 
\begin{proof}  By Theorem \ref{dim2_class} the matrix shown determines a generalized cusp. 
Clearly $f$ is continuous.  It is easy to check  that $f$ is surjective. Consideration of eigenvalues
shows $f$ is injective. 

Suppose $A\in \SL(3,\RR)$ and $[A]\in\mathcal T$, then $A$ has real positive eigenvalues.
Let $\lambda_1,\lambda_2,\lambda_3$ be the eigenvalues of $A$ in decreasing order  and 
define $g([A])=(\log \lambda_1,\log \lambda_2,\log \lambda_3)$.
Since the eigenvalues  of a matrix are continuous functions of
the matrix, $g$ is continuous.  But $g$ is the inverse of 
$f$, so $f$ is a homeomorphism.
\end{proof}

The groups $T(\ppsi)$  and $T(\ppsi')$ are conjugate in $\PGL(3,\RR)$ if and only if $\ppsi=t\ppsi'$ for some $t>0.$ It follows
that the space of conjugacy classes of  translation subgroup is the non-Hausdorff space obtained by taking the quotient of $X$ by this
equivalence relation. This is the union of a compact Euclidean interval $[0,1]$ and one extra point which only has one neighborhood.

\section{Dimension 3}\label{3mfd}

Let $C=\Omega/\Gamma$ be an
orientable 3-dimensional generalized cusp,  
then $C$ is diffeomorphic to $T^2 \times [ 0, \infty)$.
Given $\ppsi=(\ppsi_1,\ppsi_2,\ppsi_3)$ with $\ppsi _1 \geq \psicoef_2   \geq \psicoef_3 \geq 0$ there is a  Lie subgroup $G(\ppsi)=T(\ppsi)\rtimes O(\ppsi)$ of $\PGL(4,\RR)$, 
where $T(\ppsi)\cong \RR^2$
is called the {\em translation group}, and $O(\ppsi)$ is compact.
Then $\Gamma$ is conjugate to a lattice in some $T(\ppsi)$, and $\ppsi$ is unique up to 
multiplication by a positive scalar.

The Lie groups $T(\ppsi)$ fall into 4 families, depending on the {\em type} $\type=\type_\ppsi$, which is the number 
 of non-zero components of $\ppsi$.  

  $$
\begin{array}{cc} 
\type=0 & \type=1\\
 \begin{pmatrix}
1 & y_1 & y_2 & \frac{1}{2}( y_1 ^2 + y_2 ^2) \\
0 & 1 &0 & y_1 \\
0 & 0 & 1 & y_2 \\
0 & 0 & 0 & 1
\end{pmatrix} &
\begin{pmatrix} 
e^{x_1} & 0 & 0 & 0\\
0 & 1 & y_1 & \frac{1}{2} y_1 ^2 - \psicoef_1 x_1 \\
0 & 0 & 1 & y_1 \\
0 & 0 & 0 & 1
\end{pmatrix}\\
\\
\type=2 & \type=3\\
\begin{pmatrix} 
e^{x_1} & 0 & 0 & 0 \\
0 & e^{x_2} & 0 & 0 \\
0 & 0 & 1 & - \psicoef_1 x_1 - \psicoef_2 x_2 \\
0 & 0 & 0 & 1
\end{pmatrix} &
\begin{pmatrix} 
e^{x_1} & 0 & 0 & 0\\
0 & e^{x_2} & 0 & 0 \\
0 & 0 & e^{(-\psicoef_1 x_1 - \psicoef_2 x_2)/\psicoef_3} &0 \\
0 & 0 & 0 & 1
\end{pmatrix}
\end{array}$$
  The group $T(\ppsi)$ preserves a properly convex domain $\Omega(\ppsi)\subset\RR^3$ that
 is  the convex hull of the $T(\ppsi)$-orbit of   the {\em basepoint}, (see Definition \ref{basepointdef}). 
It has a foliation
  by convex surfaces called {\em horospheres}, that are $T(\ppsi)$-orbits.
Moreover $\Omega(\ppsi)$ is the epigraph of a convex function, see  Equation (\ref {eq:Logpsif}), and is shown
in Figure \ref{projective3d}.

\begin{center}
 \begin{figure}[ht]
  \includegraphics[scale=.4]{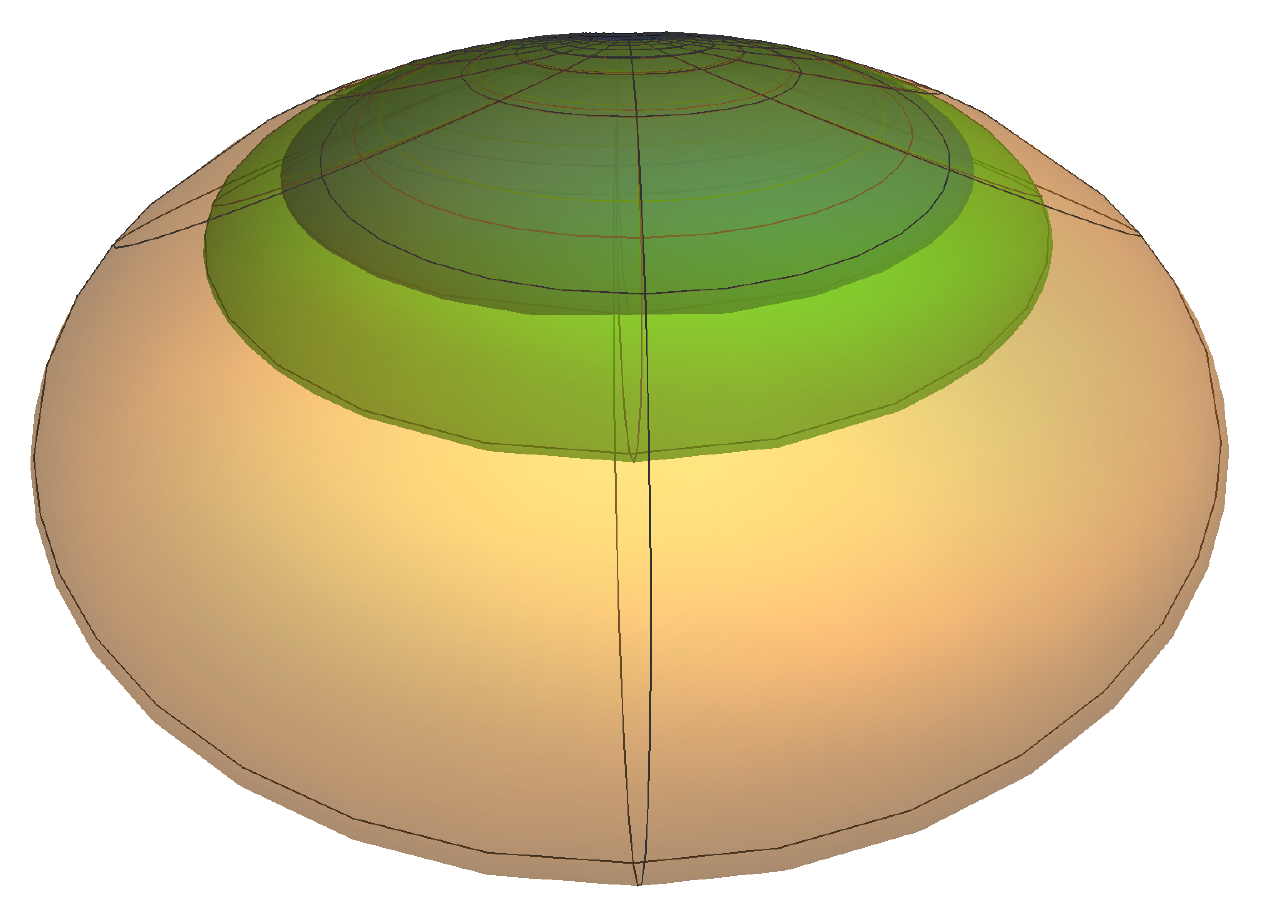}
  \includegraphics[scale=.4]{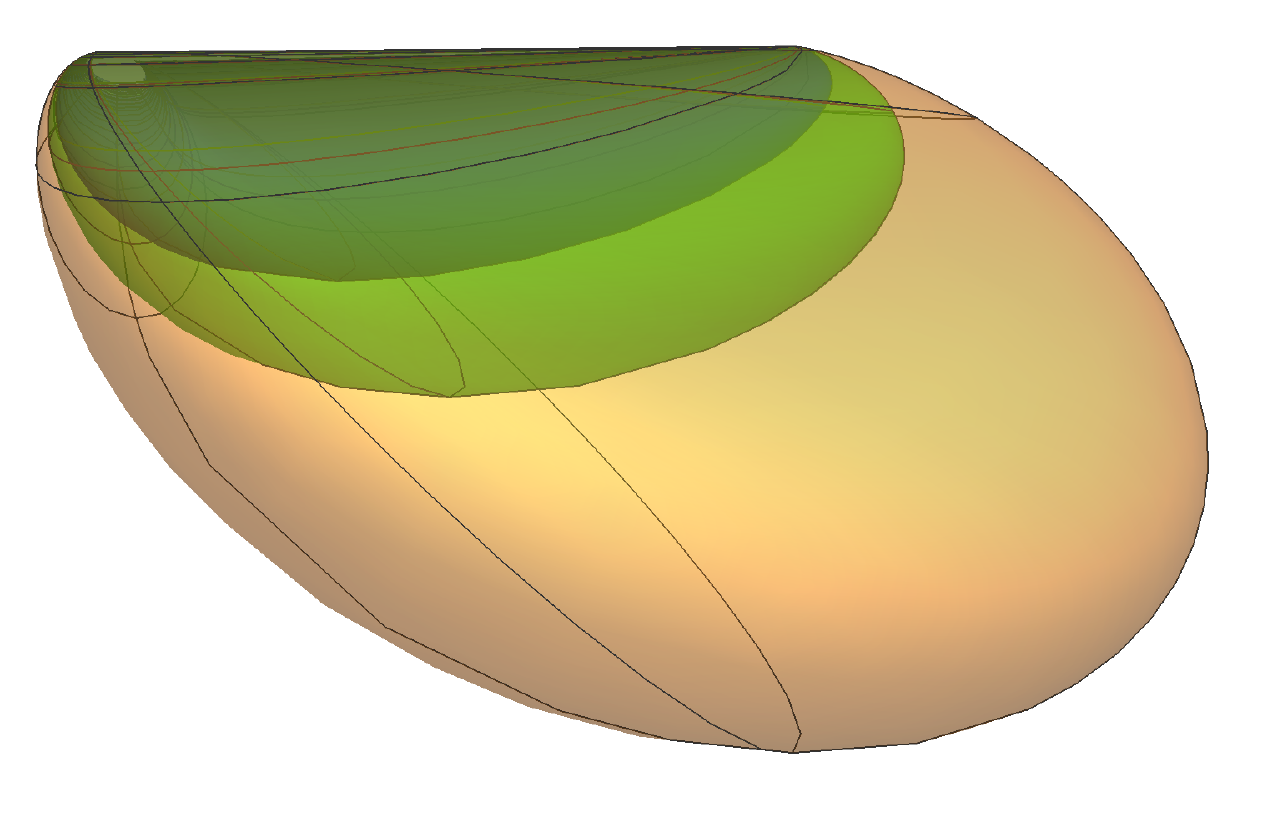}
   \includegraphics[scale=.4]{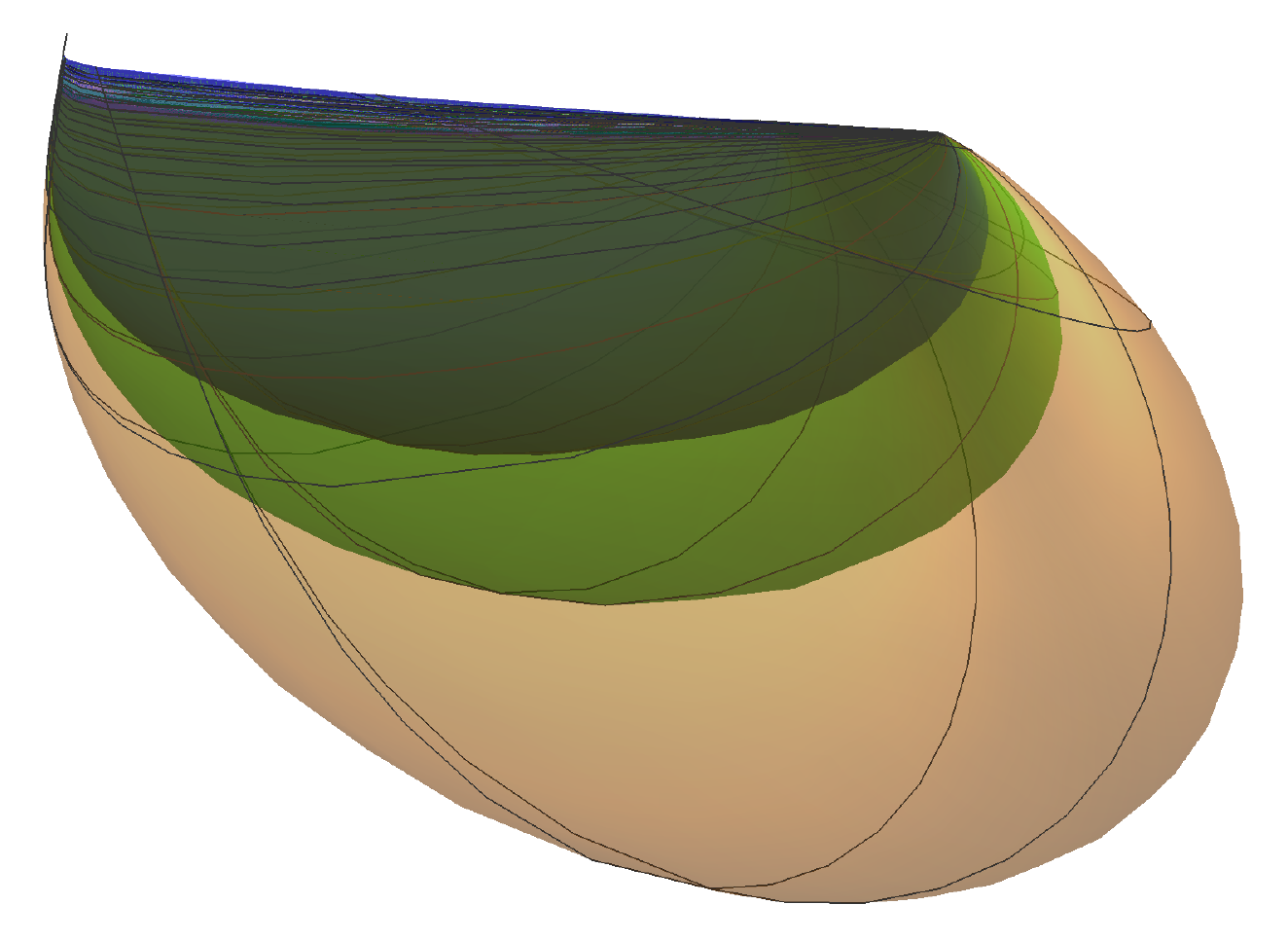}
    \includegraphics[scale=.3]{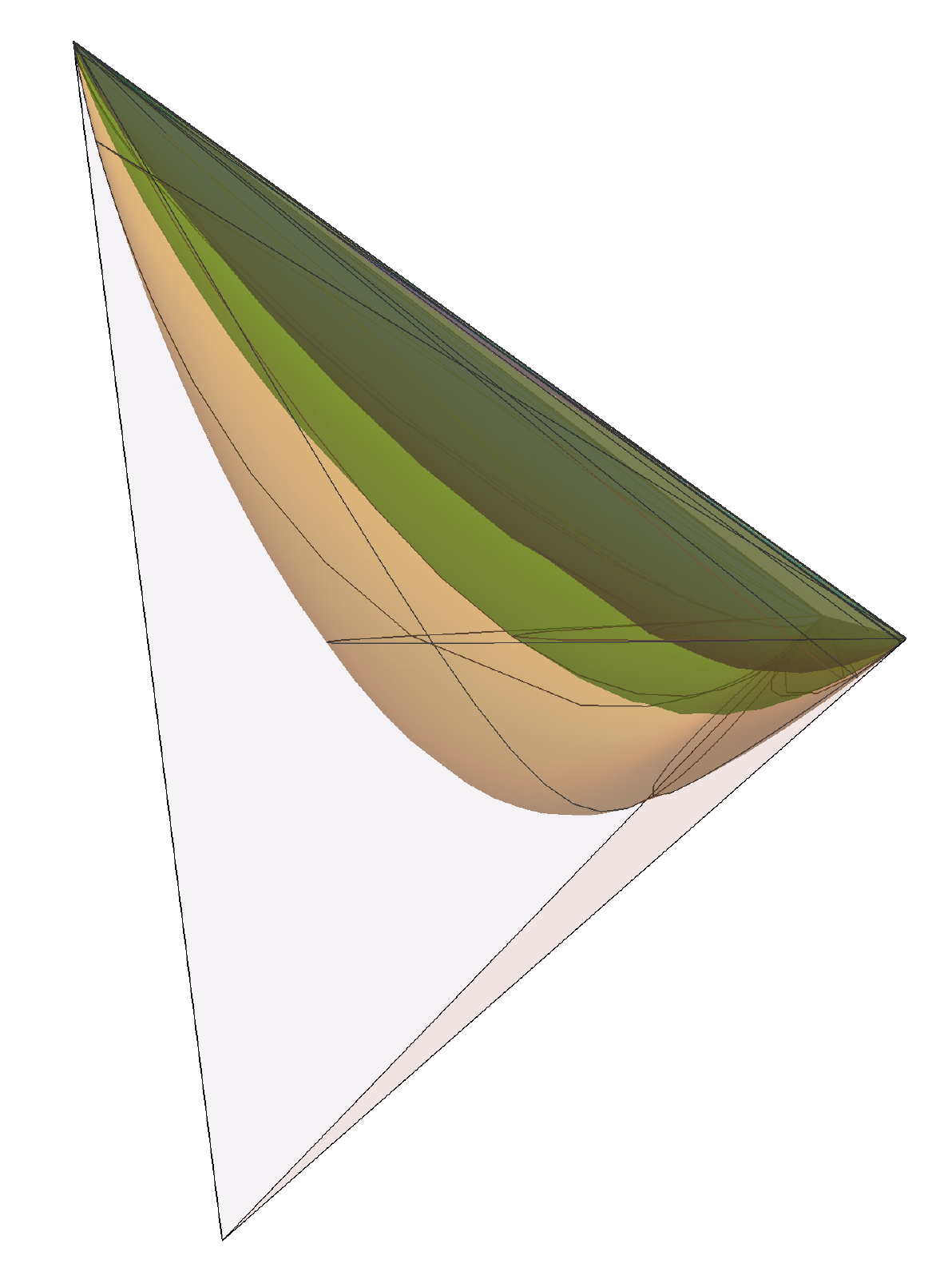}
    \caption{\label{projective3d}3-dimensional generalized cusp domains and their foliation by horospheres in projective space. From left to right, top to bottom the domains are  $\Omega(0,0,0)$, $\Omega(1,0,0)$, $\Omega(1,1,0)$, finally $\Omega(1,1,1)$ is shown inside a simplex }
 \end{figure}
 \end{center}

  The radial flow Equation (\ref{eq:radialflow}) is a one-parameter affine group $\Phi^{\ppsi}$ that
  centralizes $T(\ppsi)$, and $\Phi^{\ppsi}$-orbits give a foliation by a pencil of lines transverse to the horospheres.
  The {\em enlarged translation group}, Equation (\ref{enlargedtranslationgrp}), is $T_\type=T(\ppsi)\oplus\Phi^{\ppsi}\cong\RR^3$. 
  It is obtained by
  replacing the most complicated term in the matrix for $T(\ppsi)$ by $z$. There are 4 such groups, depending only on $\type$. Then
  $T_\ppsi$ is the kernel of a homomorphism $T_\type\to\RR$ obtained from $\ppsi$. The group $T(\type)$
  acts simply transitively on $\RR_+^{\red\type} \times\RR^{3-\red\type}$,  and the latter contains $\Omega(\ppsi)$.

  The group, $O(\ppsi)$, is the subgroup of $G(\ppsi)$ that fixes the basepoint (see Definition \eqref{basepointdef}). It is computed in Proposition \ref{olambda}, and
$O(0,0,0)\cong O(2)$, 
and $O(\ppsi_1,0,0)\cong O(1)$ when $\ppsi_1\ne 0$. 
For the remaining cases,
 $O(\ppsi)$  is the group of  coordinate permutations of $\RR^3$ that preserve $\ppsi$. 
 In particular $O(\ppsi)$ is finite unless $\ppsi=0$.  
 
 There is a 6 parameter family of marked, 3-dimensional generalized cusps.
 As described in Theorem \ref{Classification Theorem} they are parameterized by a triple 
 $(\ppsi,\Gamma,A\cdot O(\ppsi))$
 with $\ppsi$ as above, and $\Gamma$ is a marked lattice of co-area $1$ in $\RR^2$, and $A\cdot O(\ppsi)\in O(2)/O(\ppsi)$ is
  a left coset.
  
   In \cite{Leitner}, the third author showed that in dimension 3, every translation group, 
 as defined in Definition \ref{translationgrpdef}, is conjugate into one of these 4 families. This, together with \cite{BallasFig8},
  provided the {\em impetus} for the present paper.

\if0
 \begin{center}
 \begin{figure}[ht]
 \includegraphics[scale=.3]{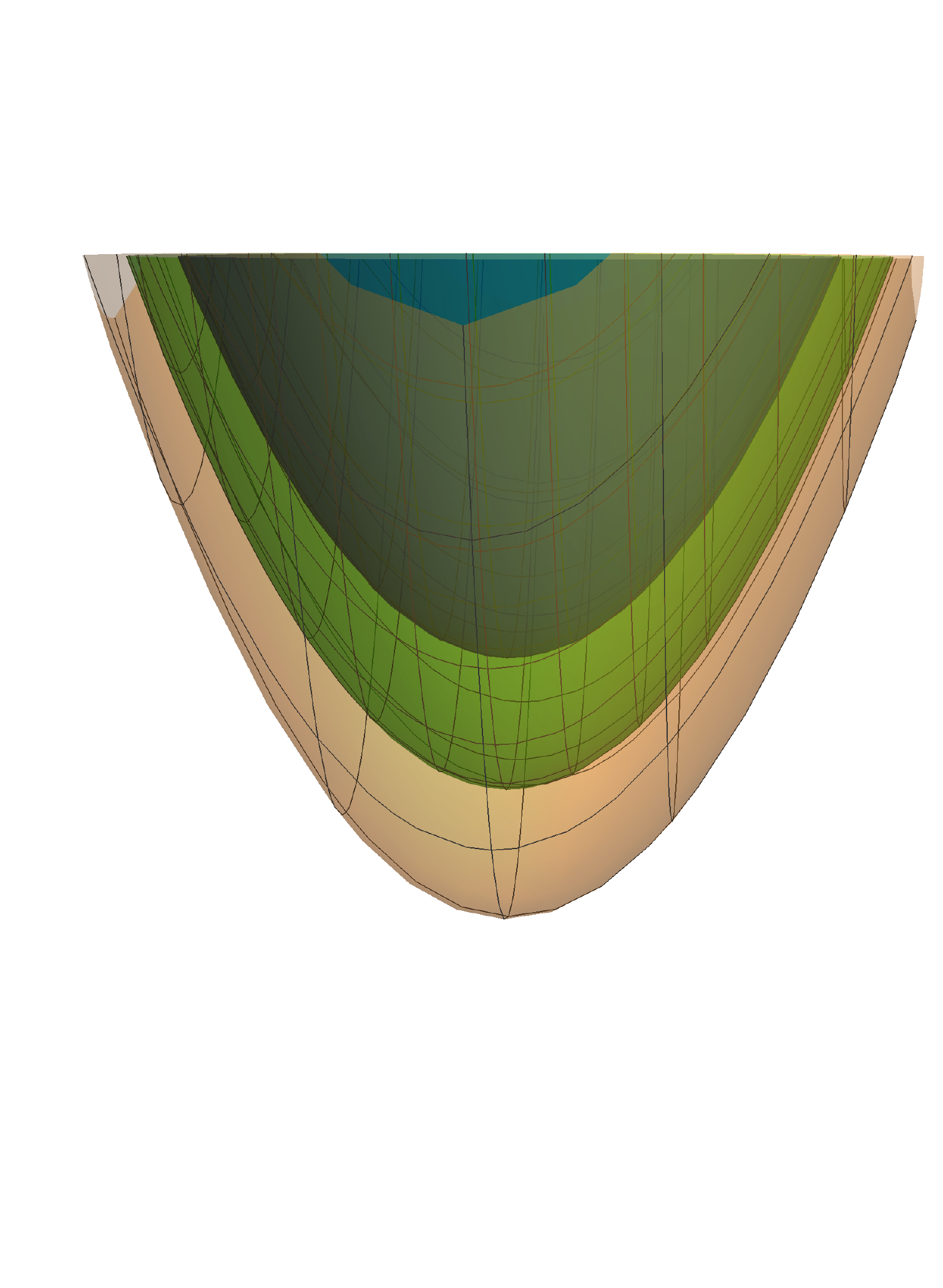}
  \includegraphics[scale=.3]{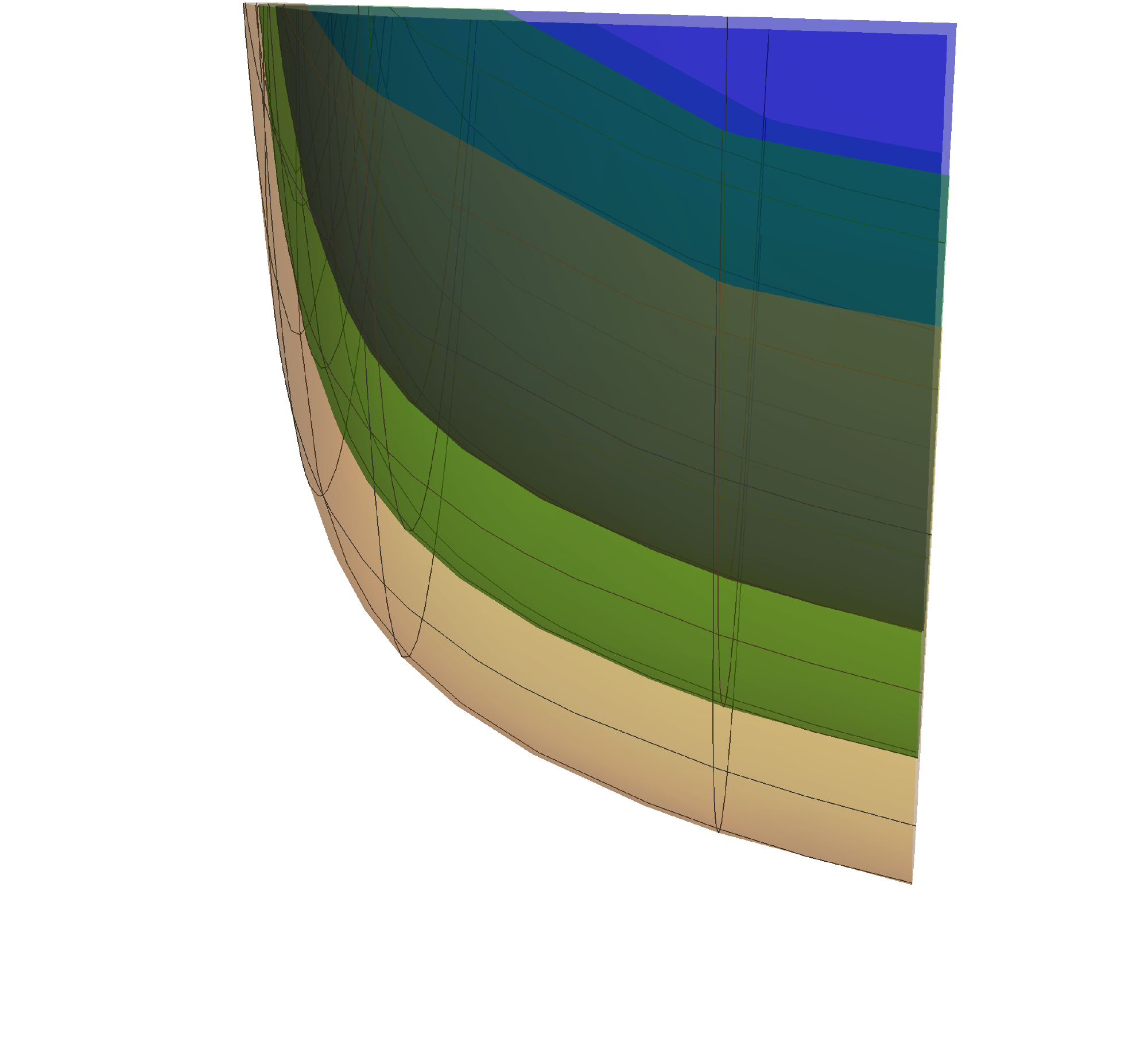}\\
  
   \includegraphics[scale=.3]{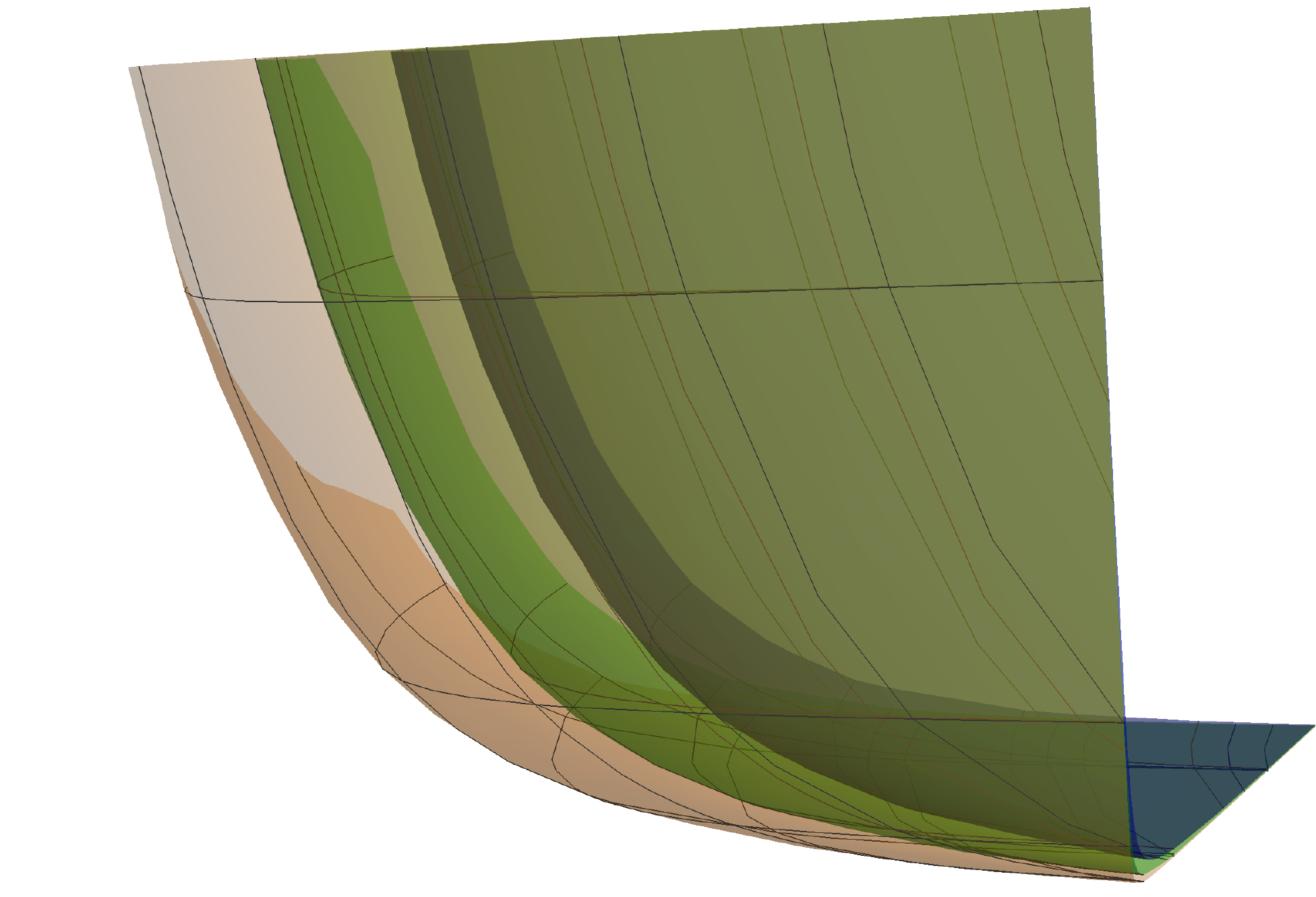}
    \includegraphics[scale=.3]{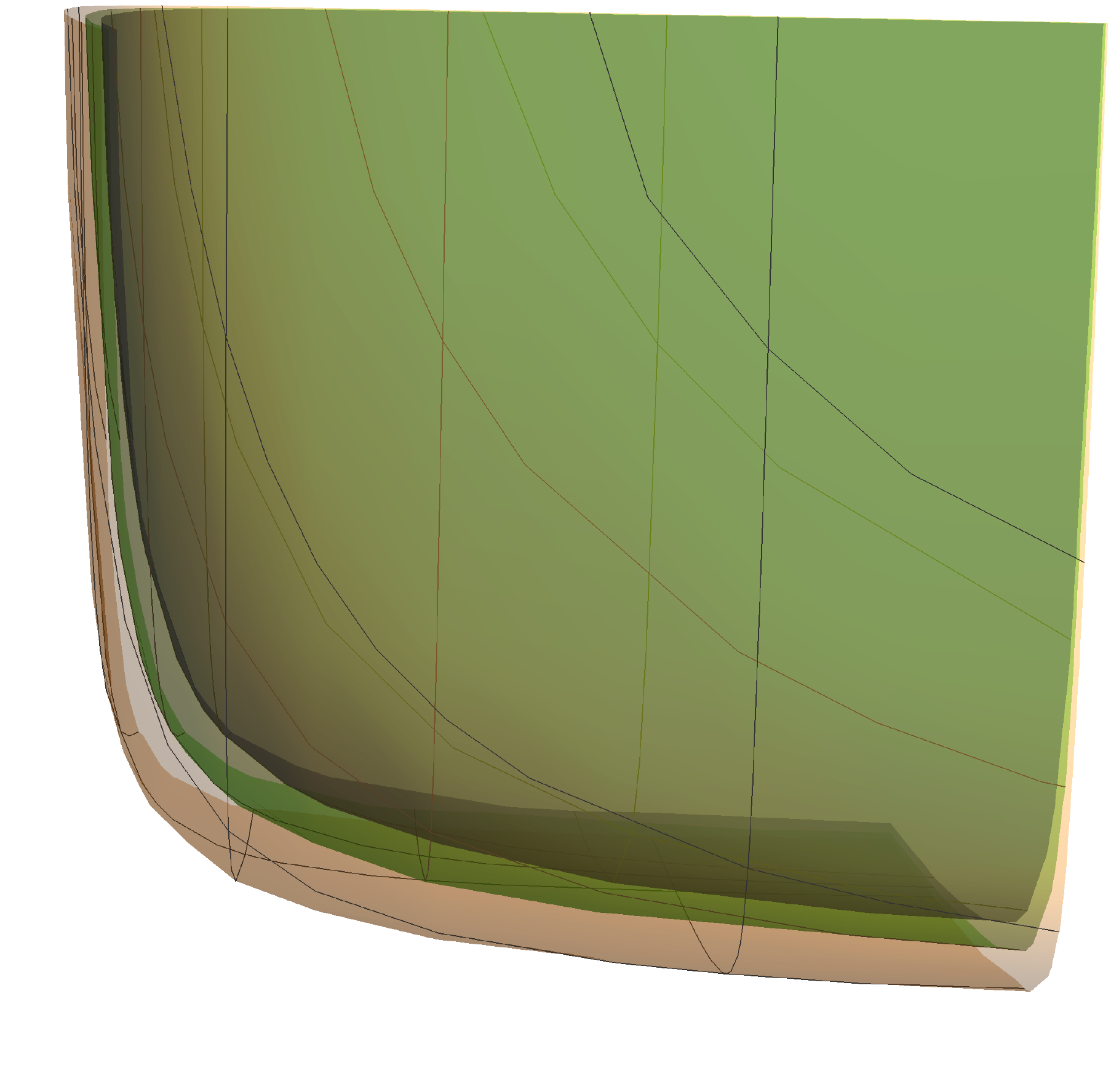}
    \caption{3-dimensional generalized cusp domains and their foliation by horospheres in affine space. From left to right, top to bottom the domains are of type $\Omega(0,0,0)$, $\Omega(1,0,0)$, $\Omega(1,1,0)$, and $\Omega(1,1,1)$}
     \label{affine3d}
 \end{figure}
 \end{center}
\fi

We now describe some geometric properties of these domains and discuss relevant examples from the literature. The interior of $\Omega(0,0,0)$ is projectively equivalent to $\HH^3$. If $C\cong \Omega(0,0,0)/\Gamma$ then $\Gamma$ is conjugate into $\PO(3,1)$. Cusps of finite volume hyperbolic 3-manifolds give rise to generalized cusps of this type. The ideal boundary, see Equation (\ref{eq:bdyinfinity}), of $\Omega(0,0,0)$ consists of a single point which is stabilized by $G(\ppsi)$, and  $C$ admits a compactification by a singular projective manifold obtained by adjoining this ideal boundary point. 

For a generalized cusp $C=\Omega/\Gamma_C$ modelled on $\Omega=\Omega(1,0,0)$ 
the ideal boundary, $\bdy_{\infty}\Omega$ is a projective line segment $J$. 
The action of $\Gamma_C$ on $I=\interior(J)$ is discrete
iff $\Gamma$ contains a parabolic. In this case $C$ has a compactification 
$\overline C=(\Omega\cup I)/\Gamma_C$
that is a projective manifold that is singular along the circle $S^1=I/\Gamma_C$.

In \cite{BallasFig8}, the first author found, for $t\in [0,\infty)$, a continuous family of properly convex manifolds projectively
equivalent to
$M_t=\Omega_t/\Lambda_t$, and 
diffeomorphic to the figure-8 knot complement, $X=S^3\setminus K$, and $M_0$ is  the complete hyperbolic structure.
Moreover the end of $M_t$ is projectively equivalent to $\Omega(t,0,0)/\Gamma_t$, where $\Gamma_t\subset T(t,0,0)$ is a lattice containing parabolics. As a result, for $t>0$, there is a compactification 
$\overline M(t)=\Omega^+_t/\Lambda_t$ that
is a projective structure on $S^3$ that is singular along $K$, and  $M_t=\overline M(t)\setminus K$
is a properly convex structure on $X$. Here $\Omega^+_t\supset\Omega_t$ and also contains
the $\Gamma_t$-orbit of an open segment in $\bdy_{\infty}\Omega(t,0,0)$.
 The cusp of the hyperbolic manifold $M_0$ deforms to a  generalized cusp of a different type. 
 As the deformation proceeds, an ideal boundary point of $\HH^3$ {\it opens up} into an ideal boundary segment. 
 This is an example of a \emph{geometric transition}; the hyperbolic 
 cusp $\Omega(0,0,0)/\Gamma_0$ geometrically transitions to the non-hyperbolic cusp 
 $\Omega(t,0,0)/\Gamma_t$ as $t$ moves away from zero,  cf. \cite{DancigerThesis} and \cite{CDW}. Higher dimensional examples of hyperbolic manifolds deforming to properly convex manifolds with type 1 cusps can be found in \cite{BallasMarquis}. Furthermore, in subsequent work, the authors will show that every generalized cusp arises as a deformation of a hyperbolic cusp in this way.

The domains of the form $\Omega(\ppsi_1,\ppsi_2,0)$ have  ideal boundary  a 2-simplex, $\Delta$. The interior of one of the edges of  $\Delta$ consists of $C^1$ points, and the remainder of the 1-skeleton of $\Delta$ consists of non-$C^1$ points. In particular, the fixed point of the radial flow 
is the intersection of the two edges of non-$C^1$ points of $\Delta$, see Lemma \ref{characterizecenterRF}. Any lattice in $T(\ppsi_1,\ppsi_2,0)$ acts properly discontinuously on $\Delta$. Thus $C=\Omega(\ppsi_1,\ppsi_2,0)/\Gamma$ has a manifold compactification by adjoining $\Delta/\Gamma$. {\red Recently, Martin Bobb produced the first examples of hyperbolic 3-manifolds with type 2 cusps \cite{Bobbtype2}. Roughly speaking, his examples are constructed by starting with a certain arithmetic hyperbolic 3-manifold and successively bending along a pair of orthogonal totally geodesic hypersurfaces. The first author has also been able to show, using different techniques, that there are infinitely many hyperbolic 1 cusped hyperbolic 3-manifolds that admit properly convex structures with type 2 cusps (see \cite{Ballastype2}).}


Finally, the domains of the form $\Omega(\ppsi_1,\ppsi_2,\ppsi_3)$ also have ideal boundary consisting of a 2-simplex $\Delta$. However, in this case each point of the 1-skeleton of $\Delta$ is a non-$C^1$ point. As in the previous case, if $\Gamma$ is a lattice in $T(\ppsi_1,\ppsi_2,\ppsi_3)$ then $\Gamma$ acts properly discontinuously on $\Delta$ and there is
a compactification of $C$ by adjoining $\Delta/\Gamma$. There are examples of properly convex deformations of the complete hyperbolic structure on finite volume hyperbolic 3-manifolds whose topological ends are of the form $\Omega(\ppsi_1,\ppsi_2,\ppsi_3)/\Gamma$, where $\Gamma\leq T(\ppsi_1,\ppsi_2,\ppsi_3)$. {\red The first such examples were constructed by Benoist \cite{Benoist} using Coxeter orbifolds. These ideas were extended and generalized by Marquis in \cite{Marquismirrors} allowing him to construct further examples. Other} examples were constructed for the figure-eight knot complement and the figure-eight sister by Gye-Seon Lee \cite{Lee}. His examples are constructed by gluing together two projective {\em ideal} tetrahedra using the combinatorial pattern that produces the {\red figure}-eight knot complement (see  Chapter 3 of \cite{Thurston} for details). Subsequent work of the first author, J. Danciger and G-S.\ Lee showed that any finite volume hyperbolic 3-manifold that satisfies a mild cohomological condition  (that is known to be satisfied by infinitely many hyperbolic 3-manifolds, ({\red for example, by applying \cite{HP}, Theorem 1.4 to the Whitehead link}) also admits deformations all of whose ends are projectively equivalent to $\Omega(1,1,1)/\Gamma$, where $\Gamma\leq T(1,1,1)$, thus producing many additional examples.  

Furthermore, as explained in Section \ref{extendedomainssec}, the lack of $C^1$ points in the 1-skeleton of the ideal 
boundary allows properly convex manifolds with ends projectively equivalent to 
quotients of $\Omega(\ppsi_1,\ppsi_2,\ppsi_3)$ to sometimes be glued together to 
produce new properly convex manifold. This idea is explored in detail in \cite{BDL} and 
using these techniques it is possible to find properly convex projective structures on non-hyperbolic 3-manifolds.
 This was first done by Benoist \cite{Benoist} using Coxeter orbifolds.



\end{document}